\documentclass[11pt, a4paper]{amsart}

\calclayout

\usepackage[table]{xcolor}
\usepackage{tikz}
\usepackage{graphicx}

\usepackage{amsmath}
\usepackage{amssymb}
\usepackage{amsthm}
\usepackage{amsfonts}

\usepackage{graphicx}
\usepackage{subcaption}
\usepackage{cprotect}
\usepackage{cite}

\usepackage[T1]{fontenc}

\usepackage{enumerate}

\usepackage{todonotes, changes}
\usepackage{comment}

\usepackage[shortlabels]{enumitem}
\newlist{mycases}{enumerate}{1}
\setlist[mycases]{
    label=({\roman*}), 
    ref=(\roman*),                  
    leftmargin=2cm
}

\newcommand{\Tterm}{T_0}
\newcommand{\Tst}{\Delta_2}

\newcommand{\RR}{\mathbb{R}}
\newcommand{\KK}{\mathbb{K}}
\newcommand{\ZZ}{\mathbb{Z}}
\newcommand{\NN}{\mathbb{N}}
\newcommand{\QQ}{\mathbb{Q}}

\newcommand{\K}{\mathcal{K}}

\newcommand{\conv}{\mathrm{conv}}

\newcommand{\pos}{\mathrm{pos}}

\newcommand{\aff}{\mathrm{aff}}

\newcommand{\inter}{\mathrm{int}}
\newcommand{\relint}{\mathrm{relint}}

\DeclareMathOperator{\vol}{vol}

\newcommand{\LE}{\mathrm{G}}
\newcommand{\LEinter}[1]{\hyperlink{def:Gcirc}{\color{black}\LE^\circ(#1)}}

\newcommand{\wdt}{\hyperlink{def:wdt}{\color{black}\mathrm{w}}}
\newcommand{\BP}{\hyperref[def:blocking-polytope]{\color{black}B_P}}
\newcommand{\Bhex}{\hyperlink{target:Bhex}{\color{black}B^{\mathrm{hex}}}}
\newcommand{\Bpent}{\hyperlink{target:Bpent}{\color{black}B^{\mathrm{pent}}}}
\newcommand{\Bcross}{\hyperlink{target:Bcross}{\color{black}B^{\mathrm{cross}}}}
\newcommand{\Bkite}{\hyperlink{target:Bkite}{\color{black}B^{\mathrm{kite}}}}
\newcommand{\Bpyr}{\hyperlink{target:Bpyr}{\color{black}B^{\mathrm{pyr}}}}
\newcommand{\Btrap}{\hyperlink{target:Btrap}{\color{black}B^{\mathrm{trap}}}}
\newcommand{\Bst}{\hyperlink{target:Bst}{\color{black}B^{\mathrm{st}}}}
\newcommand{\Bterm}{\hyperlink{target:Bterm}{\color{black}B^{\mathrm{term}}}}

\DeclareRobustCommand{\flt}{\hyperlink{target:flt}{\color{black}\mathrm{Flt}}}

\newcommand{\GL}{\mathrm{GL}}

\newcommand{\va}{{\boldsymbol a}}
\newcommand{\vb}{{\boldsymbol b}}
\newcommand{\vc}{{\boldsymbol c}}
\newcommand{\vd}{{\boldsymbol d}}
\newcommand{\ve}{{\boldsymbol e}}

\newcommand{\vo}{{\boldsymbol o}}
\newcommand{\vp}{{\boldsymbol p}}
\newcommand{\vq}{{\boldsymbol q}}
\newcommand{\vr}{{\boldsymbol r}}
\newcommand{\vs}{{\boldsymbol s}}
\newcommand{\vt}{{\boldsymbol t}}

\newcommand{\vu}{{\boldsymbol u}}
\newcommand{\vv}{{\boldsymbol v}}
\newcommand{\vw}{{\boldsymbol w}}
\newcommand{\vx}{{\boldsymbol x}}
\newcommand{\eins}{{\boldsymbol 1}}

\newcommand{\vy}{{\boldsymbol y}}
\newcommand{\vz}{{\boldsymbol z}}

\newcommand{\sph}{\mathbb{S}}

\newcommand{\hex}{\mathcal{A}}

\renewcommand{\vec}[1]{\boldsymbol{#1}}

\usepackage{hyperref}
\hypersetup{
colorlinks=true,
  citecolor=magenta, linkcolor=cyan, urlcolor=blue}
  
\title[Exact flatness constant]{Exact flatness constant for one-point convex bodies and the discrete isominwidth problem: the planar case}
\author[G.\ Averkov]{Gennadiy Averkov}
\author[G.\ Codenotti]{Giulia Codenotti}
\author[A.\ Freyer]{Ansgar Freyer}
\author[K.\ Huang]{Kyle Huang}
\address{FU Berlin, AG Diskrete Geometrie und Topologische Kombinatorik, Arnimallee 2, 14195 Berlin, Germany}
\email{\{giulia.codenotti, a.freyer\}@fu-berlin.de} 
\date{}
\address{Brandenburg University of Technology Cottbus-Senftenberg, Platz der Deutschen Einheit~1, 03046 Cottbus, Germany}
\email{\{averkov, kylehuang\}@b-tu.de}
\thanks{All of the authors are funded by the Deutsche Forschungsgemeinschaft (DFG, German Research Foundation) -
539867386.}

\numberwithin{equation}{section}

\usepackage{mathtools}
\mathtoolsset{showonlyrefs=true}

\theoremstyle{plain}
\newtheorem{theorem}{Theorem}[section]
\newtheorem{lemma}[theorem]{Lemma}
\newtheorem{corollary}[theorem]{Corollary}

\newtheorem{proposition}[theorem]{Proposition}
\newtheorem*{proposition*}{Proposition}

\newtheorem{thmx}{Theorem}

\newtheorem{definition}[theorem]{Definition}

\newtheorem{remark}[theorem]{Remark}
\newtheorem{claim}{Claim}
\newtheorem*{claim*}{Claim}
\newtheorem*{remark*}{Remark}

\newcommand{\ansgar}[1]{\todo[size=\tiny,color=blue!30]{#1 \\ \hfill --- A.}}
\newcommand{\Ansgar}[1]{\todo[size=\tiny,inline,color=blue!30]{#1 \\ \hfill --- A.}}

\newcommand{\kyle}[1]{\todo[size=\tiny,color=red!30]{#1 \\ \hfill --- K.}}

\newcommand{\giulia}[1]{\todo[size=\tiny,color=green!30]{#1 \\ \hfill --- G.}}

\begin{document}

\begin{abstract}
A variant of the flatness problem from integer programming is studied, in which one considers convex bodies in $\RR^d$ with at most $k$ interior lattice points. The maximum lattice width of such a body is denoted by $\flt(d,k)$ and it is related to the classical flatness constant as well as a conjectural dual version of Minkowski's convex body theorem due to Makai. Moreover, it is shown that $\flt(2,1)=3$, i.e., any planar convex body with at most one interior point has lattice width at most three. This leads to an isominwidth inequality for the lattice point enumerator of planar convex bodies.
\end{abstract}

\maketitle

\section{Introduction}

\subsection{Setting and main results} 

We investigate flatness-type problems from integer programming and geometry of numbers. The prototypical problem motivating our study is the classical flatness problem whose objective is maximizing the lattice width among all hollow convex bodies $K \subset \RR^d$ in a given dimension $d$. Here, a \emph{convex body} is a non-empty compact convex set. The set of convex bodies in $\RR^d$ is denoted by $\K^d$. The body $K$ being \emph{hollow} means that the interior of $K$  contains no lattice points. The \emph{lattice width} is defined as the infimum of the width function 
\[
	\wdt(K,\vu) := \max_{\vx \in K} \langle \vu, \vx \rangle - \min_{\vx \in K } \langle \vu , \vx \rangle
\]
of the underlying body $K$ over non-zero lattice vectors: 
\hypertarget{def:wdt}{} 
\[
\wdt(K) := \inf_{\vu \in \ZZ^d \setminus \{0\}} \wdt(K,\vu)
\]
If $K$ has non-empty interior the infimum in the definition of $\wdt(K)$ is actually a minimum and is a positive number. Otherwise, the infimum is zero and it is not necessarily attained (cf.\ Remark \ref{wdt:0:remark}). 
We also use $\wdt(K;X) := \inf_{\vu \in X} \wdt(K,\vu)$ for some set of non-zero vectors $X\subset\RR^d$.  In particular, $\wdt(K) = \wdt(K;\ZZ^d\setminus\{0\})$.
The lattice width is invariant with respect to \emph{unimodular transformations}, i.e., affine maps of the form $\vx \mapsto U\vx + \vt$, where $U\in\GL_d(\ZZ)$ and $\vt\in\ZZ^d$. Here $\GL_d(\ZZ)$ denotes the group of matrices $U\in\ZZ^{d\times d}$ for which $U^{-1}$ exists in $\ZZ^{d\times d}$. We say that two sets $A,B\subset\RR^d$ are \emph{unimodularly equivalent}, if there exists a unimodular transformation $g$ such that $A=g(B)$.

In generalized flatness-type problems, one still maximizes the lattice width but replaces the hollowness constraint by another condition. The natural more general condition, which was already addressed in the seminal paper by  Kannan and Lov\'asz \cite[(4.1)]{coveringminima}, is having a fixed number $k$  of interior lattice points. The case $k=0$ corresponds to the classical setting (see Section \ref{subsec:classical_flatness} below). We call the respective convex bodies \emph{$k$-point convex bodies} and we study
\hypertarget{target:flt}{} 
\begin{equation}
\label{eq:fltdk}
\mathrm{Flt}(d,k) := \sup\{ \wdt(K) : K\in\K^d,~\LEinter{K} \leq k\},
\end{equation}
where $\hypertarget{def:Gcirc}{\LE^\circ(K)}$ denotes the number of interior lattice points of $K$. 

We establish structural results for $\flt(d,k)$ in arbitrary dimension (Section~\ref{sec:ddim}). One of our general insights is the algebraicity of the number $\flt(d,k)$ for any choice of $d$ and $k$. This makes the problem of determination of the exact value of $\flt(d,k)$ for given $d$ and $k$ a well-defined task. The problem of determining $\flt(d,k)$ is non-trivial already in the case $d=2, k=0$. For this case, Hurkens determined that $\flt(2,0) = 1+\tfrac 2 {\sqrt{3}}$ \cite{hurkens} (cf.\ Theorem \ref{thm:hurkens}). 
When $d=2$ and $k$ is getting larger, the complexity of the problem seems to increase substantially. Already for $d=2, k=1$ determination of $\flt(2,1)$ is very challenging, in particular, because the number of combinatorial situations to be considered grows dramatically and the cases themselves are more complex (with more degrees of freedom involved). Our main theorem states that $\flt(2,1) = 3$ and that, up to unimodular equivalence, the standard triangle scaled by factor $3$, is the unique maximizer of the flatness problem among planar $1$-point convex bodies.  We write $\Tst$ for the \emph{standard lattice triangle}, that is, the triangle with vertices $(0,0)$, $(0,1)$ and $(1,0)$. 

\begin{theorem}
\label{thm:flt21}
Let $K\subset\RR^2$ be a convex body with $\LEinter{K} \leq 1$. Then, $\wdt(K) \leq 3$. Equality holds if and only if $K$ is unimodularly equivalent to $3 \Tst$.
\end{theorem}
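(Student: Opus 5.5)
We first reduce to lattice-free-type extremal configurations. Let $K$ be a planar convex body with $\LEinter{K}\le 1$ and $\wdt(K)\ge 3$. If $\LEinter{K}=0$, Hurkens' theorem (Theorem~\ref{thm:hurkens}) gives $\wdt(K)\le 1+\tfrac{2}{\sqrt 3}<3$. So we may assume that $K$ has exactly one interior lattice point, which after a unimodular translation is the origin $\vo$.

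Next we enlarge $K$ as far as possible while keeping $\LEinter{K}\le 1$. Enlarging never decreases the width. A standard compactness argument shows that $K$ can be assumed maximal with respect to inclusion among convex bodies with exactly one interior lattice point. For such a maximal $K$ the structure is polyhedral. Every edge of $K$ must contain a lattice point in its relative interior, or else the edge can be pushed outward. Hence $K$ is a polygon with at most $4$ edges, as in the Lovász-type description of maximal lattice-free sets. Each edge passes through a lattice point lying in the boundary, and apart from $\vo$ the interior contains no lattice points.

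With this in hand we consider $Q=\conv(K\cap\ZZ^2)$, a lattice polygon with exactly one interior lattice point $\vo$. Such polygons are classified: there are the $16$ reflexive polygons. Every boundary lattice point of $K$ lies in $\partial Q$, and $Q$ has interior point $\vo$. It follows that, up to unimodular equivalence, $Q$ is contained in one of the $16$ reflexive polygons and contains $\vo$ in its interior. We therefore enumerate the finitely many lattice polygons $Q$ with interior point $\vo$. For each $Q$ we describe the region $\BP$ of all $K\supseteq Q$ with $\LEinter{K}=\{\vo\}$: every edge line of $K$ must separate $\vo$ from the lattice points $\vp\notin Q$ strictly, up to the boundary.

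For each case we then bound $\wdt(K)$ in one of two ways. Either we exhibit a primitive direction $\vu$ in which every admissible $K$ is thin, or we use that among the directions $\ve_1,\ve_2,\ve_1+\ve_2$ (or $\ve_1-\ve_2$) one must give width at most $3$. The second approach works by summing width inequalities: each width $\wdt(K,\vu)$ is bounded by the positions of the supporting lines, and the edges are constrained to pass through boundary lattice points. The inequality $\min_{\vu}\wdt(K,\vu)\le 3$ is then a finite linear or low-degree optimization over the edge parameters.

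The equality analysis comes from tracking when every one of these bounds is tight. This forces $K$ to be the triangle with vertices $(-1,-1),(2,-1),(-1,2)$, which is a translate of $3\Tst$ with interior point $\vo$.

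The main obstacle is the case analysis over configurations of maximal $K$. When a boundary lattice point of an edge is not a vertex of $Q$, the edges of $K$ have rotational freedom about it, and the width becomes a nonlinear function of several slopes. For these cases I would first try to prove the combined inequality
\[
\wdt(K,\ve_1)+\wdt(K,\ve_2)+\wdt(K,\ve_1+\ve_2)\le 9,
\]
possibly with another triple of directions. Its right-hand side is attained by $3\Tst$, and it would give the bound directly. Where this fails, I would compute the optimum of the width over the edge parameters, which is where algebraic but non-rational extremal values could arise and must be checked to stay below $3$.
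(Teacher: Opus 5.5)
Your first two steps (Hurkens for the hollow case, then enlarging $K$ to a $1$-maximal body) agree with the paper. The reduction that follows, however, fails in two places.

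First, the bound of four edges is the Lov\'asz bound for lattice-free sets and does not hold here. The hexagon with vertices $(-\tfrac23,-\tfrac43)$, $(\tfrac23,-\tfrac23)$, $(\tfrac43,\tfrac23)$, $(\tfrac23,\tfrac43)$, $(-\tfrac23,\tfrac23)$, $(-\tfrac43,-\tfrac23)$ has $\vo$ as its only interior lattice point. Its six edges contain $(0,-1),(1,0),(1,1),(0,1),(-1,0),(-1,-1)$ in their relative interiors, so it is $1$-maximal.

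Second, and more seriously, $Q=\conv(K\cap\ZZ^2)$ need not contain $\vo$ in its interior. All you can say is that $Q$ is either reflexive with interior point $\vo$, or hollow with $\vo\in\partial Q$. For instance, $K=\conv\{(-\tfrac27,-\tfrac{12}7),(1,\tfrac32),(-\tfrac{11}7,-\tfrac37)\}$ is $1$-maximal: its edges pass through $(0,-1)$, $(-1,0)$, $(-1,-1)$, and $\vo$ is its only interior lattice point. Yet $Q=[-1,0]^2$, with $\vo$ as a vertex. In the quadrilateral example below, $\vo$ lies on an edge of $Q$.

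So enumerating the $16$ reflexive polygons silently drops whole families. These families are not harmless: $3\Tst$ is a limit of each of them, so each needs its own sharp bound. Once $Q$ may be hollow, there are also infinitely many candidates. For example, take triangles with a long base on $\{y=-1\}$ and the other two edges through $(\pm1,0)$. You therefore need an extra ingredient to reach a finite list.

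The paper supplies it with a lemma: if an edge of $K$ contains three collinear lattice points, then $\wdt(K)\le3$, with equality only for $3\Tst$. The argument puts the edge in $\{y=-1\}$ and the interior point at $\vo$. If $\wdt(K,\ve_2)\ge3$, concavity in $t$ of the length of $K\cap\{y=t\}$ bounds the edge length by $3$. The two adjacent edges cut off $(\pm1,0)$, so this length equals $\wdt(K,\ve_1)$.

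The paper then works with the blocking polygon $B_P$ (the hull of the lattice points in relative interiors of edges) rather than with $Q$. This leaves eight cases:
\begin{itemize}
\item a unimodular triangle with the interior point outside it;
\item two hollow polygons with $\vo$ on an edge;
\item five reflexive polygons all of whose edges are primitive.
\end{itemize}

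The third gap is that the width estimates, which are the real content of the theorem, are only announced. No bound on a sum of widths over several non-parallel directions can hold uniformly, because some vertex regions are unbounded. For example, $\conv\{(-10,\tfrac12),(-\tfrac12,\tfrac{37}{36}),(10,\tfrac49),(-\tfrac1{17},-\tfrac8{153})\}$ is $1$-maximal with blocking points $(\pm1,0)$, $(-1,1)$, $(0,1)$. It has $\wdt(K,\ve_2)<1.1$, but $\wdt(K,\ve_1)=20$ and $\wdt(K,\ve_1\pm\ve_2)>19$. The same construction with $\pm M$ in place of $\pm10$ makes these widths arbitrarily large while $\wdt(K,\ve_2)\to1$.

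So the directions and the form of the estimate must be chosen case by case, with each estimate sharp at $3\Tst$. The paper does this as follows:
\begin{itemize}
\item hexagon, pentagon and cross-polygon: $\wdt(K,\ve_1)+\wdt(K,\ve_2)\le6$, derived from collinearity;
\item long triangle, kite, trapezoid and standard triangle: explicit one- and two-parameter rational functions with polynomial sign checks;
\item terminal triangle: a perturbation showing two of the widths in directions $\ve_1,\ve_2,\ve_2-\ve_1$ may be assumed equal, Lagrange multipliers to exclude interior maximizers, and an elimination ideal showing that where all three widths agree one lies on the ellipse $x^2+xy+y^2-3x=0$, on which $x+y\le3$.
\end{itemize}
None of this, nor the equality analysis it yields, is present in your proposal. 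As it stands it is an outline rather than a proof.
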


Our main result implies two geometric inequalities, which we find remarkable. We discuss one here and postpone the other to Section~\ref{sssec:transf}.

A central geometric inequality in metric geometry is the isoperimetric inequality in Euclidean geometry, which relates volume to surface area. A related version of this result is the isodiametric inequality. It relates volume to diameter and states that, among convex bodies of a given diameter, Euclidean balls maximize the volume. 

Our main result is intimately related to the lattice version of the dual counterpart to the isodiametric inequality, namely the isominwidth inequality. 
While in the isodiametric case one minimizes the length of a longest segment at fixed volume, in the isominwidth setting one maximizes the width of a smallest enclosing strip at fixed volume. Note that segments and strips are dual objects.

Analogous problems arise in lattice geometry when the Euclidean minimum width is replaced by the lattice width. One may either keep a continuous volume or replace it by a discrete analogue (lattice point enumerator). Isominwidth problems, both in the Euclidean and in the lattice setting, are notoriously difficult. Exact solutions are typically known only in low dimensions, if any:
The exact solution to the Euclidean isominwidth problem is only known for dimension two, which is a result of  P\'al \cite{pal} stating that for  every $K\subset\RR^2$ with $\vol(K) > 0$ one has
\begin{equation}
\label{eq:pal}
\frac{\wdt(K;\sph^1)}{\vol(K)^{1/2}} \leq \frac{\wdt(T_{\mathrm{reg}};\sph^1)}{\vol(T_{\mathrm{reg}})^{1/2}}.
\end{equation}
Here, $\sph^1$ is the Euclidean unit circle and
$T_{\mathrm{reg}}$  is a regular triangle. Generalizing P\'al's inequality to higher dimensional convex bodies is a longstanding open problem. 
The reader should be warned that the regular simplex is known to be \emph{not} an optimal solution in dimensions $d > 2$. A conjectural solution for $d=3$ is described in \cite{horvath}.

The version of the isominwidth problem for lattice width and volume was solved Fejes-T\'oth and Makai  \cite{FejMak,Makai} in the plane. The solution is the inequality
\begin{equation}
\label{eq:makai}
\frac{\wdt(K)}{\vol(K)^{1/2}} \leq \frac{\wdt(\Tterm)}{\vol(T_0)^{1/2}} = \sqrt{\frac 83},
\end{equation}
with the \emph{terminal triangle} 
$T_0 = \conv\{(-1,-1),(0,1),(1,0)\}$ being the unique equality case up to unimodular equivalence and dilation. 
The corresponding inequality in three dimensional space was recently proven by Aliev in \cite{Aliev}.

Here we complement the picture by providing what we call 
an inhomogeneous isominwidth inequality, in which the lattice point enumerator takes the role of the volume.

\begin{corollary}
\label{cor:lattice_isominwidth}
Let $K\subset\RR^2$ be a convex body with $\LEinter{K} > 0$. Then,
\[
\frac{\wdt(K)}{\LEinter{K}^{1/2}} \leq \frac{\wdt(3\Tst)}{\LEinter{3\Tst}^{1/2}}
\]
and equality holds if and only if $K$ is unimodularly equivalent to $3 \Tst$.
\end{corollary}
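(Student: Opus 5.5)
We want $\wdt(K)\le \sqrt{\LEinter{K}}$ (since $\wdt(3\Tst)=3$ and $\LEinter{3\Tst}=1$), with equality only for $3\Tst$. We split according to the value $k=\LEinter{K}\ge 1$.

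\emph{Case $k=1$.} Here the claim is exactly Theorem~\ref{thm:flt21}: $\wdt(K)\le 3=3\cdot 1^{1/2}$, with equality if and only if $K$ is unimodularly equivalent to $3\Tst$.

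\emph{Case $k\ge 2$.} We need to show the strict inequality $\wdt(K)<3\sqrt{k}$. The plan is to bound the lattice width of $K$ by that of the convex hull of its interior lattice points plus a bounded additive constant. Set $P=\conv(\inter K\cap\ZZ^2)$. Every point of $P$ lies in $\inter K$, so $\inter K$ contains $k$ lattice points. Hence $\LEinter{K}=k$ forces $K$ to be close to $P$ in the sense of lattice width. Concretely, I would apply a flatness-type bound relative to the sublattice structure as follows.

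Let $\vu$ be a primitive direction. The lines $\langle\vu,\vx\rangle=c$, $c\in\ZZ$, that meet $\inter K$ in a segment of length large enough must contain interior lattice points. The crude route is to compare with the hollow case. Let $m=\wdt(P,\vu)$. The interior lattice points of $K$ lie on $m+1$ consecutive lattice lines in direction $\vu$. The region of $K$ beyond these lines, say $\{\vx\in K:\langle\vu,\vx\rangle> \max_P\langle\vu,\cdot\rangle\}$, has interior free of lattice points. A Hurkens-type argument (Theorem~\ref{thm:hurkens}) then bounds how far $K$ can extend: roughly, $\wdt(K,\vu)\le m+2+2/\sqrt3$ in a suitable direction.

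Next, choose $\vu$ to be a lattice-width direction of $P$. Known planar results give $\wdt(P)\le$ a function of order $\sqrt{k}$. Specifically, for a lattice polygon with $k$ lattice points, $\wdt(P)\le \sqrt{8k/3}+O(1)$, obtained from \eqref{eq:makai} together with Pick's formula $\vol(P)\le k$. Combining these gives $\wdt(K)\le \sqrt{8k/3}+c$ for an absolute constant $c\approx 4$. This is below $3\sqrt k$ for all $k$ beyond a small threshold; for instance, $3\sqrt k-\sqrt{8k/3}\ge c$ once $k\gtrsim 14$.

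\emph{Small $k$.} The main obstacle is the remaining range $2\le k\le k_0$, where the additive constants are not negligible. There I would argue more carefully. One option is to use monotonicity via a subdivision: remove from $K$ a slab so that the remainder has fewer interior points, and use induction on $k$ with $\flt(2,j)$ bounds. In particular, the paper's general structural results on $\flt(d,k)$, such as subadditivity-type estimates $\flt(2,k)\le \flt(2,1)+\dots$, would likely give $\flt(2,k)\le 3+ (k-1)\cdot$const. Alternatively, I would prove directly $\wdt(K)\le \flt(2,k)$ with explicit small-$k$ bounds, such as $\flt(2,2)\le 4<3\sqrt2$. If the paper's later sections provide bounds like $\flt(2,k)\le k+2$ or similar, then since $k+2<3\sqrt k$ for $2\le k\le 6$, and the asymptotic bound covers larger $k$, strictness follows throughout. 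Thus equality occurs only when $k=1$, and then only for $3\Tst$.
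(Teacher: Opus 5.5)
Your case $k=1$ is correct; it is exactly Theorem~\ref{thm:flt21}. The case $k\ge 2$, however, has two genuine gaps.

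\textbf{The comparison step is false.} Hurkens' theorem bounds the \emph{lattice width} of a hollow cap of $K$, that is, its width in the cap's own best direction. It does not bound the cap's extent in a direction $\vu$ fixed in advance as a lattice-width direction of $P=\conv(\inter K\cap\ZZ^2)$. Here is a counterexample. Let $P=[(0,0),(1,0)]$, so that $\vu=\ve_2$ and $\wdt(P,\vu)=0$. Let $K=\conv\{(-\epsilon,0),(1+\epsilon,0),(\tfrac{1}{2},\pm(1+\tfrac{1}{2\epsilon}))\}$.
\begin{itemize}
\item The points $(0,\pm1)$ and $(1,\pm1)$ lie on $\partial K$.
\item For $|j|\ge 2$, the section of $K$ at height $j$ is $[\epsilon(|j|-1),1-\epsilon(|j|-1)]$, whose interior contains no integer.
\item Hence $\LEinter{K}=2$, yet $\wdt(K,\ve_2)=2+1/\epsilon$ is unbounded.
\end{itemize}
So $\wdt(K)\le\sqrt{8k/3}+c$ does not follow from your argument. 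Choosing $\vu$ as a width direction of $K$ instead would destroy the comparison with $\wdt(P)$.

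\textbf{The range $2\le k\le k_0$ is left conjectural.} Even the bound you hope for would not suffice: $k+2<3\sqrt{k}$ holds only for $1<k<4$, with equality at $k=4$.

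\textbf{What is missing.} The missing ingredient is the volume/covering-radius comparison \eqref{eq:wdt_discrepancy}. It gives \eqref{eq:fltdk_ub}, namely $\flt(2,k)\le\flt(2,0)+\flt(2,\infty)\sqrt{k}$. Insert Hurkens' value $1+2/\sqrt{3}$ and the Fejes T\'oth--Makai constant $\sqrt{8/3}$ from \eqref{eq:makai}. This yields
$\wdt(K)/\sqrt{k}\le\sqrt{8/3}+(1+2/\sqrt{3})/\sqrt{k}$,
which is decreasing in $k$ and about $2.88<3$ already at $k=3$.

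That leaves only $k=2$. Here you need the exact value $\flt(2,2)=\tfrac{10}{3}$ from Theorem~\ref{thm:flt22}, which gives a ratio of about $2.36$. Neither \eqref{eq:fltdk_ub} (about $3.16$) nor Proposition~\ref{change:k:ineq} (giving $2+4/\sqrt{3}>3\sqrt{2}$) is strong enough there. This is precisely how the paper proves the corollary.
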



\subsubsection*{Outline}
In the remainder of this section, we discuss the role of $\flt(d,k)$ in several branches of mathematics. In Section \ref{sec:ddim}, we prove results on $\flt(d,k)$ in arbitrary dimension. 
  Section \ref{sec:flt21}  is devoted to the proof of Theorem \ref{thm:flt21}, which is carried out by a classification of inclusion maximal 1-point bodies.  In Section~\ref{sec:consequences} we apply the main theorem to prove Corollary \ref{cor:lattice_isominwidth} and we obtain explicit bounds in two transference theorems. We conclude with open questions in Section \ref{sec:outlook}.

\subsection{Motivation and applications}
\label{sec:studies}
The flatness problem plays a fundamental role in many research areas, including integer programming, algebra, and algebraic geometry. In this section we explain the connection of flatness to these other topics, review results from these areas, many of which are also needed for our arguments, and present some consequences of our main results.

\subsubsection{The classical flatness problem}
\label{subsec:classical_flatness}
In recent years, substantial progress has been made in understanding the asymptotic behavior of the optimal value attained in the flatness problem -- known as the \emph{flatness constant} -- as a function of the dimension $d$, see the recent breakthrough by Reis and Rothvoss \cite{RR23}. One motivation for this is the study of asymptotic efficiency of algorithms for solving integer linear optimization problems, when the number $d$ of integer variables goes to infinity. The first results of this type   were given by Lenstra \cite{lenstra}, with Reis and Rothvoss \cite{RR23} providing a recent update on the asymptotic complexity of integer programming in dimension $d$. 

Complementary to the asymptotic studies, determining the flatness constant  $\flt(d,0)$ exactly for concrete values of $d$ is a natural problem, in particular, because there is not a canonical candidate for an extremal body. 
In fact, knowledge of values of  $\flt(d,0)$ is very limited. Trivially, one has $\flt(1,0)=1$ while for $d=2$ Hurkens obtained the following result:

\begin{thmx}[Hurkens, \cite{hurkens}]
\label{thm:hurkens}
Let $K\subset\RR^2$ be a convex body with $\LEinter{K}=0$. Then, $\wdt(K) \leq 1+\tfrac {2}{\sqrt{3}}$ and the inequality is tight.
\end{thmx}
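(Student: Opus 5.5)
We would argue by compactness and reduction to inclusion-maximal hollow bodies. Since the lattice width is monotone under inclusion and invariant under unimodular maps, it suffices to bound $\wdt(K)$ for hollow convex bodies $K$ that are inclusion-maximal with respect to hollowness. We first rule out unbounded maximal hollow sets as irrelevant: if $K$ sits in a strip between two adjacent lattice lines, its width is at most $1$. For the remaining case we use the classical description of maximal hollow (lattice-free) convex sets in the plane, due to Lov\'asz. Such a body is a polygon with at most four edges, and every edge contains a lattice point in its relative interior. Up to unimodular equivalence, this leaves two configurations: triangles and quadrilaterals, each with at least one lattice point in the relative interior of every edge. A compactness argument in the space of such polygons, with the edge lattice points normalized, shows that the supremum is attained. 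This lets us work with an actual maximizer.

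\emph{Quadrilateral case.} Here we expect the four edge lattice points to be normalizable to the vertices of the unit square $[0,1]^2$. Hollowness then forces $K$ to lie in the strip $-1\le x_1\le 2$, so $\wdt(K,\ve_1)\le 3$. To get the sharper bound we intersect $K$ with the lines $x_1=-1,0,1,2$ and $x_2=-1,0,1,2$ and use convexity. We expect this to show $\min\{\wdt(K,\ve_1),\wdt(K,\ve_2),\wdt(K,\ve_1+\ve_2),\wdt(K,\ve_1-\ve_2)\}\le 2$. This is strictly smaller than $1+\tfrac{2}{\sqrt3}$, so quadrilaterals cannot be extremal.

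\emph{Triangle case.} Here we normalize so that the three edge lattice points are $(0,0),(1,0),(0,1)$, the vertices of $\Tst$. We first argue that there are no other boundary lattice points in the optimal situation. Then $K$ is determined by three lines, one through each of these points, giving a three-parameter family given by the slopes. We compute $\wdt(K,\vu)$ explicitly for the three directions $\vu\in\{\ve_1,\ve_2,\ve_1+\ve_2\}$, which are the directions in which the width of $\Tst$ is $1$. We then maximize the minimum of these three functions. By a perturbation argument, at the optimum all three widths must coincide. Using the symmetry of $\Tst$ under the unimodular group fixing it, the optimum should be the symmetric configuration. Solving the resulting equation should produce the value $1+\tfrac{2}{\sqrt3}$. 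Finally, we must check that every other primitive direction gives width at least this value for that triangle. This holds because any other primitive $\vu$ satisfies $\wdt(\Tst,\vu)\ge 2$, and $K\supset\Tst$ extends beyond it.

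Tightness follows by exhibiting this optimal triangle explicitly. It is the affine image of a regular triangle, with edges passing through the three vertices of $\Tst$ as interior points, and its width in each of the three directions equals $1+\tfrac2{\sqrt3}$. The main obstacle is the triangle optimization. The difficulty is less the calculus than justifying that the min-of-three-widths objective is maximized at the balanced, symmetric point. It is also not automatic that restricting to the three directions $\ve_1,\ve_2,\ve_1+\ve_2$ loses nothing near the optimum. We would handle both by showing that each width function is monotone in the appropriate slope parameters, so that any unbalanced configuration can be improved.
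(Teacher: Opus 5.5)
Your reduction to maximal lattice-free polygons whose blocking polygon is $[0,1]^2$ or $\Tst$ is the right framework. It is the $k=0$ analogue of the setup in Section~\ref{sec:flt21}; note that the paper does not reprove this theorem but cites \cite{hurkens} and the alternative proof in \cite{codenottifreyer}. The genuine gap is the triangle case. It carries the whole content of the theorem, and there the decisive steps are asserted rather than proved.

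(i) Your optimization over the three-slope family never imposes hollowness. Without it, the minimum of the three widths is unbounded: put a vertex at $(N,N)$ and the edge through $\vo$ on $\{y=-x\}$, and all three widths are about $N$. The constraints keeping $(1,1),(-1,1),(1,-1),\dots$ outside $K$ (e.g.\ the cones $\sigma_{\vq}$ of Lemma~\ref{lemma:forbidden_cones}) must be built in. You must then show the maximum is not on the boundary of the resulting region.

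(ii) The easy perturbation only works when a single direction attains the minimum; this is exactly Lemma~\ref{lem:terminal-triangle-one-width-direction}. When two widths tie, you need to know that both can be increased at once. That is a regularity statement about gradients on the constraint set, which the paper establishes for $\Bterm$ with Lagrange multipliers.

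(iii) The symmetry step is not an argument, and in its natural reading it is false. The only triangle with an edge through each of $(0,0),(1,0),(0,1)$ that is invariant under the full unimodular stabiliser of $\Tst$ is $\conv\{(1,1),(-1,1),(1,-1)\}$, which has lattice width $2$. The extremal triangles are
\[
\conv\bigl\{(\tfrac23+\tfrac{1}{\sqrt3},\tfrac23),\ (-\tfrac13-\tfrac{1}{\sqrt3},\tfrac23+\tfrac{1}{\sqrt3}),\ (\tfrac23,-\tfrac13-\tfrac{1}{\sqrt3})\bigr\}
\]
and its mirror image in $\{y=x\}$. They are invariant only under the $3$-cycle $(x,y)\mapsto(1-x-y,x)$. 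So the optimum genuinely breaks part of the symmetry, and symmetry cannot locate it. You need an actual computation, for instance eliminating variables from the equal-width system and maximizing on the resulting curve, as the paper does for $\Bterm$ with an elimination ideal. The slope-monotonicity you mention at the end is the kind of lemma required, but as stated it is only a hope.

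There are also errors in the other steps. In the quadrilateral case, the claim $K\subset\{-1\le x_1\le 2\}$ is false. The rhombus $\conv\{(-\tfrac32,\tfrac12),(\tfrac12,\tfrac76),(\tfrac52,\tfrac12),(\tfrac12,-\tfrac16)\}$ is a maximal lattice-free quadrilateral whose blocking points are the corners of $[0,1]^2$, yet $\wdt(K,\ve_1)=4$.

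The bound $2$ is nevertheless true, and there is a short proof you could use instead of ``we expect''. Write the vertices as $(-a,\alpha),(\beta,-b),(1+d,\delta),(\gamma,1+c)$ with $\alpha,\beta,\gamma,\delta\in[0,1]$. The four collinearities give $1/a=b/\beta+c/\gamma$ and $1/d=b/(1-\beta)+c/(1-\gamma)$. Superadditivity of $(x,y)\mapsto xy/(x+y)$, applied to $(\beta/b,\gamma/c)$ and $((1-\beta)/b,(1-\gamma)/c)$, yields $a+d\le 1/(b+c)$. Hence $\min\{\wdt(K,\ve_1),\wdt(K,\ve_2)\}=\min\{1+a+d,\,1+b+c\}\le 2$.

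Your claim that extra boundary lattice points can be discarded also needs its argument: maximal lattice-free triangles with a lattice vertex or with two lattice points on one edge have width at most $2$.

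Finally, for tightness, $\wdt(\Tst,\vu)\ge 2$ only gives $\wdt(K,\vu)\ge 2<1+\tfrac{2}{\sqrt3}$. You must actually exhibit the triangle; being an affine image of a regular triangle is no restriction. Then check the remaining directions directly. For the triangle above, for example, the width in direction $\ve_1-\ve_2$ is $2+\sqrt3$.
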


An alternative proof of Hurkens' result can be found in \cite{codenottifreyer}, significantly reducing the complexity of the resulting optimization problem. The proof of Theorem \ref{thm:flt21} borrows some ideas from this proof (cf.\ Section \ref{sssec:terminal}).

Another way to phrase Theorem \ref{thm:hurkens} is to say that $\flt(2,0) =1+\tfrac {2}{\sqrt{3}}$. Beyond the case $d=2$, no exact value of the flatness constant has been determined so far. Based on a symmetry argument, it has been conjectured in \cite{acms_local_3d_2021} that $\flt(3,0) = 2 + \sqrt{2}$. So, the non-asymptotic version of the classical flatness problem remains a challenge even in the dimension $d=3$.  Mayrhofer, Schade and Weltge have provided conjectural realizers of $\flt(4,0)$ and $\flt(5,0)$ \cite{MSW}, following the ideas from \cite{acms_local_3d_2021}.

While the flatness problem in the fixed-dimensional setting is less well known than its asymptotic counterpart, it is nonetheless of significant importance. 
Cutting plane theory, within integer optimization, is one context where this plays a key role. 
One looks for additional linear inequalities in order to get closer to an integer solution. The new linear inequality is called a \emph{cutting plane}.
%
%
In \cite{balas_cuts_1972}, Balas established a general way to derive cutting planes from general hollow bodies. The respective cutting planes are known as \emph{intersection cuts} (see \cite{balas2018disjunctive} and \cite{conforti2014integer}).

Cutting planes are derived from hollow bodies in fixed small dimensions. 
For example, mixed-integer rounding cuts, which are omnipresent in integer optimization solvers, 
are intersection cuts from the $1$-dimensional hollow segments $[a,a+1]$ with $a \in \ZZ$. But it turns out that by making use of hollow bodies of higher dimensions, one can generate cuts that are significantly stronger; see Theorem~2 in  \cite{ave_basu_paat_2018}.

Understanding the strength of the cutting planes
depends crucially on the value of the flatness constant, and the values in fixed small dimensions are of particular importance. For instance, Dash et al.\ \cite{dash2014lattice} 
show that so-called $21$-branch disjunctions are enough to derive any valid inequality for mixed integer models with three integer variables
by making use of the upper bounds on $\flt(3,0)$. 

Computing the flatness constants in fixed dimensions also arises in toric geometry. Here, the study of the singularities corresponds to the analysis and classification of the so-called empty simplices--hollow lattice simplices whose only lattice points are their vertices. For $d=3$, White's theorem says that all empty simplices have lattice width exactly one. But for $d=4$, the situation is much more complicated. The classification project started by Mori, Morrison and Morrison \cite{mmm1988four}, and has been completed only recently by Iglesias-Valiño and Santos \cite{iglesias_santos}, making use of the classification of maximal hollow lattice tetrahedra from \cite{aww2011maximal} and \cite{akw2017notions} and, in turn, the classification result in \cite{akw2017notions} relies on the knowledge of bounds for the flatness constant in dimension $3$.



\subsubsection{Flatness problem for \(k\)-point convex bodies}
We now discuss what is known about the $k$-point body version of the flatness problem, first addressed by Kannan and Lov\'asz in \cite{coveringminima}.  Theorem~(4.1) from \cite{coveringminima} states that $\flt(d,k) \leq \mathcal O (d^2(k+1)^{1/d})$. 
This was subsequently improved to $\mathcal O(d(d^{1/2} + k^{1/d}))$ by Banaszczyk, Litvak, Pajor and Szarek in \cite{banalitvak}. As we will prove in Theorem \ref{thm:asymp}, an improvement on the asymptotic order of $\flt(d,0)$ also gives improved bounds on the order of $\flt(d,k)$. By the aforementioned bound of Reis and Rothvoss, we obtain $\flt(d,k) \leq \mathcal{O} (d(\log(d)^3 + k^{1/d}))$ (cf.\ Corollary \ref{cor:k_flatness_bound}).

In integer programming, \(k\)-point convex bodies can be used to generate cutting planes, as suggested by Averkov, Gonz\'alez Merino, Paschke, Schymura, and Weltge \cite[Section 3]{dhelly}. 
At the same time, in toric geometry, rational polygons with a single interior point encode so-called log canonical del Pezzo surfaces, see \cite{bohnert_springer_classifying_2024, hhs_classifying_2025}. 

Determining \(\flt(2,2)\) follows easily from work of Codenotti, Hofscheier, and Hall \cite{gen_flatness_chh21}. A convex body in the plane which contains at most $2$ interior lattice points is in particular a $\ZZ$-$\Delta_2$-free convex body, as defined by Averkov, Hofscheier and Nill in
 \cite{AHN}: these are convex bodies in the plane whose interiors do not contain a unimodular triangle. Since the maximum width of such $\ZZ$-$\Delta_2$-free bodies was determined by Codenotti, Hall and Hofscheier in \cite{gen_flatness_chh21}, this gives an upper bound for $\flt(2,2)$. To show that this bound is equal to $\flt(2,2)$, it is sufficient to observe that the unique maximizer of the width of $\ZZ$-$\Delta_2$-free (see Figure 9 in \cite{gen_flatness_chh21}) contains exactly $2$ interior lattice points. Thus, with $\Tterm = \conv\{(-1,-1),(0,1),(1,0)\}$, we have the following.

\begin{thmx}[Codenotti, Hofscheier, Hall, \cite{gen_flatness_chh21}]
\label{thm:flt22}

Let $K\subset \RR^2$ be a convex body with $\LEinter{K} \leq 2$. Then, $\wdt(K) \leq \tfrac{10}{3}$. Equality holds if and only if $K$ is unimodularly equivalent to $\tfrac 53 T_0+\tfrac 13\ve_1$.
\end{thmx}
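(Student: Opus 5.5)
The plan is to derive the statement from the known solution of the flatness problem for $\ZZ$-$\Delta_2$-free convex bodies due to Codenotti, Hall and Hofscheier \cite{gen_flatness_chh21}. I would use that result as a black box: every planar convex body whose interior contains no unimodular lattice triangle has lattice width at most $\tfrac{10}{3}$, and the unique maximizer up to unimodular equivalence is $\tfrac 53 \Tterm + \tfrac 13\ve_1$. The argument then splits into an upper bound, a tightness check, and the equality case.

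\emph{Step 1 (reduction).} Let $K\subset\RR^2$ be a convex body with $\LEinter{K}\le 2$. Suppose the interior of $K$ contained a unimodular triangle $T$. The three vertices of $T$ are distinct lattice points, and they lie in $\inter K$. That would give $\LEinter{K}\ge 3$, a contradiction. Hence $K$ is $\ZZ$-$\Delta_2$-free in the sense of \cite{AHN}, and the result of \cite{gen_flatness_chh21} yields $\wdt(K)\le \tfrac{10}{3}$. I need to check the exact convention in \cite{AHN}, namely whether the triangle is required to lie in the interior or only to have its vertices there. In either reading the argument goes through, because the vertices being interior lattice points is all that is used.

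\emph{Step 2 (tightness).} Let $K_0=\tfrac 53 \Tterm+\tfrac 13\ve_1$. Its vertices are $(-\tfrac43,-\tfrac53)$, $(\tfrac13,\tfrac53)$ and $(2,0)$. Since $\Tterm$ has lattice width $2$, we get $\wdt(K_0)=\tfrac{10}{3}$. For the width of $\Tterm$, it suffices to check the finitely many short directions $\pm\ve_1$, $\pm\ve_2$ and $\pm(\ve_1-\ve_2)$, and to note that any other primitive direction gives width at least $2$. Next, I list the lattice points in $\inter K_0$ using the three facet inequalities, restricted to the bounding box $[-1,1]\times[-1,1]$. I expect exactly two of them, for instance $\vo$ and $\ve_1$; this is exactly the observation on Figure~9 of \cite{gen_flatness_chh21} mentioned in the paper. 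Hence $K_0$ is feasible with $\LEinter{K_0}=2$, and $\flt(2,2)=\tfrac{10}{3}$.

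\emph{Step 3 (equality).} If $\LEinter{K}\le 2$ and $\wdt(K)=\tfrac{10}{3}$, then by Step~1 the body $K$ is a $\ZZ$-$\Delta_2$-free body attaining the maximal width. The uniqueness part of \cite{gen_flatness_chh21} then forces $K$ to be unimodularly equivalent to $K_0$.

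I expect the main obstacle to be bookkeeping rather than new mathematics. One point is to confirm that the uniqueness statement in \cite{gen_flatness_chh21} covers arbitrary convex bodies, not only rational or lattice polygons, and is stated up to unimodular equivalence. The other is the explicit enumeration of the interior lattice points of $K_0$. If the uniqueness there were only proved for polygons, I would first add a short approximation and compactness argument: reduce to inclusion-maximal $\ZZ$-$\Delta_2$-free bodies, which are polygons.
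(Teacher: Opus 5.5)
Your proposal is correct and is essentially the paper's argument: a body with at most two interior lattice points is $\ZZ$-$\Delta_2$-free, so the Codenotti--Hall--Hofscheier bound and its uniqueness statement transfer directly, and tightness holds because their unique maximizer itself has exactly two interior lattice points. Your explicit checks are also right: $\tfrac53\Tterm+\tfrac13\ve_1=\{-2x+y\le 1,\ x+y\le 2,\ x-2y\le 2\}$ has width $\tfrac{10}{3}$, and its interior lattice points are exactly $\vo$ and $\ve_1$.
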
 

Our proof of Theorem \ref{thm:flt21} extends the methods of \cite{gen_flatness_chh21}, although our casework is notably more involved, as the maximizer $3 \Tst$ arises as a limit of every parameter space requiring us to prove several sharp bounds, see Section \ref{sec:flt21}. 


\subsubsection{Transference theorems}
\label{sssec:transf}

For a convex body $K$ with $\vo\in\inter(K)$, one defines its $i$th successive minimum as
\begin{equation}
\label{eq:succ_min_def}
\lambda_i(K) = \min\{\lambda\geq 0 :\dim (\lambda K \cap \ZZ^d) \geq i\}
\end{equation}
so that, in particular, 
\[
\lambda_1(K) = \min\{\lambda \geq 0 : \lambda K \cap \ZZ^d \neq \{\vo\}\}.
\]
The successive minima are critical parameters in the geometry of numbers as they give a good idea of the scale of the body $K$ relative to the lattice.
It is easy to verify that $\lambda_1(K) \geq 1$ holds if and only if $\inter K \cap \ZZ^d = \{\vo\}$. Moreover, we have $\lambda_1((K-K)^*) = \wdt(K)$, where $X^* = \{\vy\in\RR^d \colon \langle \vx,\vy\rangle\leq 1,~\forall \vx\in X\}$ is the polar set and $K-K = \{\vx-\vy \colon \vx,\vy \in K\}$ is the so-called difference body.
In this setting, an equivalent formulation of Theorem \ref{thm:flt21} is as follows (simply by applying the theorem to $\frac{1}{\lambda_1(K)}K$).

\begin{corollary}
\label{cor:non_sym_transf}
For any convex body $K\subset\RR^2$ with $\vo\in\inter(K)$, we have
\[
\lambda_1(K) \,\lambda_1((K-K)^*) \leq 3,
\]
and equality holds, if and only if \[
K = \lambda U \cdot\conv\{(-1,-1),(-1,2),(2,-1)\},
\]
for some $\lambda > 0$ and $U\in\GL_2(\ZZ)$.
\end{corollary}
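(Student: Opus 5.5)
The plan is to deduce Corollary \ref{cor:non_sym_transf} directly from Theorem \ref{thm:flt21} by a scaling argument, using the two facts recalled before the statement: $\lambda_1(K)\geq 1$ if and only if $\inter K\cap\ZZ^2=\{\vo\}$, and $\lambda_1((K-K)^*)=\wdt(K)$.

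\textbf{Inequality.} Put $\lambda=\lambda_1(K)>0$, which is positive since $\vo\in\inter(K)$, and let $K'=\lambda K$.
\begin{itemize}
\item[(i)] The first step is to check $\lambda_1(K')=1$. This holds because $\lambda_1$ is homogeneous of degree $-1$ under dilation: $\lambda_1(\mu K)=\mu^{-1}\lambda_1(K)$ for $\mu>0$.
\item[(ii)] By the equivalence above, $\inter K'\cap\ZZ^2=\{\vo\}$. So $\LEinter{K'}=1\le 1$, and Theorem \ref{thm:flt21} gives $\wdt(K')\le 3$.
\item[(iii)] The width function is $1$-homogeneous, so $\wdt(K')=\lambda\,\wdt(K)$. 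Hence
\[
\lambda_1(K)\,\lambda_1((K-K)^*)=\lambda\,\wdt(K)=\wdt(K')\le 3.
\]
\end{itemize}

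\textbf{Equality.} If equality holds, the equality case of Theorem \ref{thm:flt21} gives $K'=g(3\Tst)$ for some unimodular map $g(\vx)=U\vx+\vt$. Here $3\Tst$ has exactly one interior lattice point, namely $(1,1)$, and $K'$ has $\vo$ as its unique interior lattice point. Since $g$ preserves $\ZZ^2$ and interiors, $g(1,1)=\vo$. Therefore
\[
K'=U\bigl(3\Tst-(1,1)\bigr)=U\cdot\conv\{(-1,-1),(-1,2),(2,-1)\},
\]
and $K=\lambda^{-1}K'$ has the claimed form.

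Conversely, let $K=\lambda U S$ with $S=\conv\{(-1,-1),(-1,2),(2,-1)\}$. Both $\lambda_1$ and the lattice width are invariant under $U\in\GL_2(\ZZ)$, so it suffices to treat $U=I$.
\begin{itemize}
\item The only interior lattice point of $S$ is $\vo$, while $\vo$ lies on the boundary of $\mu S$ only for $\mu=0$. For $\mu<1$ the set $\mu S$ lies in $\inter S$, so its only lattice point is $\vo$; and $S$ itself contains the nonzero lattice points $(-1,-1)$, etc. Hence $\lambda_1(S)=1$.
\item $\wdt(S)=\wdt(3\Tst)=3$.
\end{itemize}
By homogeneity the product is therefore $\lambda^{-1}\cdot 1\cdot\lambda\cdot 3=3$.

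The only point needing care is the use of the equality characterization: it requires that the unimodular equivalence sends the unique interior lattice point to $\vo$, which kills the translation part. Everything else is routine homogeneity. All substantial difficulty lies in Theorem \ref{thm:flt21}, which we take as given.
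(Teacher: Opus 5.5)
Your proof is correct and follows the route the paper indicates: dilate $K$ so that $\lambda_1=1$, apply Theorem \ref{thm:flt21}, and use $1$-homogeneity of the width together with $\lambda_1((K-K)^*)=\wdt(K)$. Your observation that the unimodular map must send $(1,1)$ to $\vo$ supplies the equality case that the paper leaves implicit.

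Your dilation factor $\lambda_1(K)$ is the right one under definition \eqref{eq:succ_min_def}; the paper's parenthetical ``applying the theorem to $\frac{1}{\lambda_1(K)}K$'' has the scaling inverted.
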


This inequality is an example of a ``transference theorem'', by which one means an inequality relating a lattice quantity of $K$ to its polar.
Another example is the inequality
\begin{equation}
\label{eq:hurkens_trans}
\mu(K)\lambda_1((K-K)^*) \leq 1+ \tfrac 2{\sqrt{3}},
\end{equation}
where $\mu(K)$ is the \emph{covering radius} of $K$ (cf.\ Section \ref{sec:ddim}). 
This is simply a reformulation of Hurkens' Theorem (cf.\ Theorem \ref{thm:hurkens}) using the alternative definition of the flatness constant in Equation \eqref{eq:covrad_flatness}.

Interest in such transference theorems also stems from the interpretation of a convex body as the unit ball of a certain norm. Since this is most natural if $K$ is origin symmetric, i.e., $-K=K$ holds, it is common to work in this setting. 
Using bounds on the so-called Mahler volume one can show that
\begin{equation}
\label{eq:mahler_transf}
\lambda_1(K) \lambda_1(K^*) \leq cd
\end{equation}
holds for any origin symmetric convex body $K\in\K^d$ with non-empty interior and some universal constant $c>0$. However, the constant $c$ that is obtained this way is not optimal.

In \cite[Theorem 3]{baranyfuredi}, Barany and F\"uredi sketch an argument that shows that the quotient of lattice width and ``lattice diameter'' of a planar body is bounded by $\tfrac 43$. 
From this one obtains an explicit upper bound of $\tfrac 43$ in \eqref{eq:mahler_transf} via a simple limit argument. As a byproduct of our investigation we obtain a different proof of this fact, which additionally characterizes the equality case.

\begin{theorem}
\label{thm:transf}
Let $K\subset\RR^2$ be an origin symmetric convex body with $\vo\in\inter (K)$. Then,
\begin{equation}
\label{eq:transf_dimtwo}
\lambda_1(K)\,\lambda_1(K^*) \leq \tfrac 43.
\end{equation}
Moreover, equality holds if and only if \[
K=\lambda U \cdot \conv\{\pm (2,1),\pm (1,2), \pm (-1,1) \}
\] for some $\lambda > 0$ and $U\in\GL_2(\ZZ)$.
\end{theorem}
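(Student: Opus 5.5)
After rescaling, the statement reduces to a symmetric flatness problem. Both sides of \eqref{eq:transf_dimtwo} are invariant under $K\mapsto \lambda K$ (since $\lambda_1(\lambda K)=\lambda^{-1}\lambda_1(K)$ and $(\lambda K)^*=\lambda^{-1}K^*$). So I will normalize $\lambda_1(K)=1$, which means $\inter(K)\cap\ZZ^2=\{\vo\}$. For origin symmetric $K$ we have $K-K=2K$, hence $\lambda_1(K^*)=\tfrac12\lambda_1((K-K)^*)=\tfrac12\wdt(K)$. The claim therefore becomes: \emph{every origin symmetric planar convex body whose only interior lattice point is $\vo$ has $\wdt(K)\le \tfrac83$, with equality only for $\tfrac23 U\cdot\conv\{\pm(2,1),\pm(1,2),\pm(-1,1)\}$.} Theorem~\ref{thm:flt21} gives only $3$, so symmetry must be used to sharpen the bound.

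\textbf{Step 1: reduce to maximal bodies.} Enlarging $K$ does not decrease $\wdt(K)$. By a compactness argument, $K$ is contained in an inclusion-maximal origin symmetric convex body $M$ with $\inter(M)\cap\ZZ^2=\{\vo\}$. I will show that such an $M$ is a centrally symmetric polygon. Each edge of $M$ contains a nonzero lattice point in its relative interior, since otherwise the edge could be pushed outward (together with its antipodal edge) while keeping $\inter M\cap\ZZ^2=\{\vo\}$.

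\textbf{Step 2: normalize the boundary lattice points.} Let $\va,\vb$ be two nonzero lattice points on $\partial M$ lying on different edges. Using $\lambda_1(M)=1$ and Minkowski's theorem ($\vol(M)\le 4$), I will argue that there are at most three antipodal pairs of edges. I will also show that, after a unimodular change of coordinates, the boundary lattice points are among $\pm\ve_1,\pm\ve_2,\pm(\ve_1+\ve_2)$. The idea is that $\conv\{\pm\va,\pm\vb\}\subset M$ contains no further lattice points in its interior, which forces $|\det(\va,\vb)|=1$. Hence $M$ is either a parallelogram with edges through $\pm\ve_1,\pm\ve_2$, or a hexagon whose three pairs of edges pass through $\pm\ve_1$, $\pm\ve_2$, $\pm(\ve_1+\ve_2)$. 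Each such polygon is described by the slopes of its three supporting lines, a small parameter space of dimension at most three.

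\textbf{Step 3: optimize.} In each family I bound the lattice width from above by $\min\{\wdt(M,\ve_1),\wdt(M,\ve_2),\wdt(M,\ve_1-\ve_2)\}$. For the hexagon I would also check directions like $\ve_1+\ve_2$ if needed. Each of these widths is twice a maximum of linear functionals evaluated at the vertices, and the vertices are intersections of consecutive supporting lines, hence rational functions of the slopes. The parallelogram case should give $\wdt\le 2$ directly. For the hexagon, I expect the minimum of these three widths to be maximized exactly when the hexagon is $\tfrac23$ times the extremal one, with vertices $\tfrac23(\pm(2,1),\pm(1,2),\pm(-1,1))$ and all three widths equal to $\tfrac83$. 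The equality characterization then follows by tracing equality through Steps 1 to 3: $K=M$, and $M$ must be this hexagon.

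The main obstacle is Step 3, the sharp optimization over the hexagon parameters. Here I would first try to exploit the symmetry of the configuration $\{\ve_1,\ve_2,\ve_1+\ve_2\}$ under the order-six unimodular group permuting these three vectors up to sign. The aim is to show that the minimum of the three widths is a quasi-concave function of the parameters, so its maximum is attained at the fully symmetric point. Failing that, I would do a direct case analysis on which vertex attains each support value, as in the casework for Theorem~\ref{thm:flt21}. The classification in Step 2 (ruling out lattice points on $\partial M$ beyond a unimodular triple) is the second delicate point.
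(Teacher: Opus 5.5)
Your route matches the paper's. You normalize $\lambda_1(K)=1$, use $\lambda_1(K^*)=\tfrac12\wdt(K)$, and pass to a maximal symmetric polygon. Its blocking polygon is a centrally symmetric lattice polygon with $\vo$ as its only interior lattice point. You then bound the width using the directions $\ve_1,\ve_2,\ve_2-\ve_1$ in the cross-polygon and hexagon cases. However, the claims that carry Steps 2 and 3 are wrong or unsupported.

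\textbf{Step 2.} Having no further interior lattice points in $\conv\{\pm\va,\pm\vb\}$ does not force $|\det(\va,\vb)|=1$: take $\va=(1,1)$, $\vb=(1,-1)$. You must use that $\va,\vb$ lie in the relative interiors of different, non-antipodal edges. If $|\det|=2$, Pick's theorem forces $(\va\pm\vb)/2\in\ZZ^2\cap\partial M$, which puts $\va,\vb$ on a common edge. Your list also misses maximal bodies with an edge carrying three lattice points, such as $M=[-1,1]^2$. These are harmless (width $2$) but must be handled.

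\textbf{Parallelogram case.} It does \emph{not} give $\wdt\le 2$. Write $M=\{\vx:|x_1+sx_2|\le 1,\ |tx_1+x_2|\le 1\}$ with $|s|,|t|<1$. The condition $\pm(1,\pm1)\notin\inter M$ forces $st\le 0$, and
\[
\wdt(M,\ve_1)=\frac{2(1+|s|)}{1+|st|},\qquad \wdt(M,\ve_2)=\frac{2(1+|t|)}{1+|st|}.
\]
For $s=-t=\sqrt2-1$ this is a square whose only interior lattice point is $\vo$ and whose lattice width is $1+\sqrt2>2$. The correct estimate is $\min\{\wdt(M,\ve_1),\wdt(M,\ve_2)\}\le 2(1+m)/(1+m^2)\le 1+\sqrt2<\tfrac83$ with $m=\min\{|s|,|t|\}$. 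This matches the bound the paper gets via Lagrange multipliers, so the case survives, but not for the reason you give.

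\textbf{Hexagon case (the real gap).} This is the heart of the theorem, and your proposal offers only an expectation. Symmetry does not place the optimum at the symmetric point. Quasi-concavity of the minimum of three widths, over a parameter set cut out by three collinearity conditions, is neither evident nor argued. The paper's missing idea is a parametrization that makes the widths linear and the collinearities multiplicative.
\begin{itemize}
\item Take the regions $S_i=\conv\{\vq_{i-1},\vr_i,\vq_i\}$ of Lemma~\ref{lem:hexagon-shards}, with blocking points $\vq_i$, $\vr_i=\vq_{i-1}+\vq_i$ and $\vq_i=\vq_{i-1}-\vq_{i-2}$. Write $\vp_i=\alpha_i\vq_{i-1}+\beta_i\vr_i+\widetilde\alpha_i\vq_i$ in barycentric coordinates.
\item The regions fix which vertices realize the support values. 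With central symmetry, the widths in directions $\ve_1,\ve_2,\ve_2-\ve_1$ are $2+2\beta_1,2+2\beta_2,2+2\beta_3$ up to relabeling.
\item Collinearity of $\vp_{i-1},\vq_{i-1},\vp_i$ is equivalent to $\beta_{i-1}\beta_i=\alpha_{i-1}\widetilde\alpha_i$.
\item Multiply three consecutive relations and use $\beta_{i+3}=\beta_i$ (likewise for $\alpha_i,\widetilde\alpha_i$). AM--GM then gives
\[
(\beta_1\beta_2\beta_3)^2=\prod_{i=1}^3\alpha_i\widetilde\alpha_i\le\prod_{i=1}^3\Bigl(\frac{1-\beta_i}{2}\Bigr)^2 .
\]
\item Hence $\min_i\beta_i\le\tfrac13$. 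Equality holds only when all barycentric coordinates equal $\tfrac13$, which is exactly $\tfrac23U\cdot\conv\{\pm(2,1),\pm(1,2),\pm(-1,1)\}$.
\end{itemize}
Without this argument or an equivalent explicit computation, your Step 3 proves neither the bound $\tfrac83$ nor the equality case.
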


A similar theorem for the product of the first and second minimum of a planar convex body and its polar was proven by Ambro and Ito in \cite{ambroito}.
On the asymptotic side, strong transference theorems have been obtained by Banaszczyk \cite{banas1,banas2,banas3}, who obtained, for instance,
\begin{equation}
\label{eq:banaszczyk}
\lambda_1((K-K)^*)\lambda_d(K-K) \leq c\,d\log(d)
\end{equation}
for an absolute constant $c>0$ and any $K\in\K^d$.

\subsubsection{Discretization of convexity}
\label{sssec:disc_conv}

Our Corollary \ref{cor:lattice_isominwidth} can be seen as a ``discretization'' of the isominwidth inequality, where the area is replaced by the lattice point enumerator. This is inspired by the interpretation of volume as the lattice point count with respect to an infinitesimally fine lattice. More precisely, the volume of a compact set can be computed as the Riemann integral of its indicator.

The question that arises naturally is which theorems concerning the volume functional already hold for the lattice point enumerator? In recent years this question was studied for many classical theorem in convex geometry, such as the Brunn-Minkowski inequality \cite{GardnerGronchi, Halikiasetal, Iglesiasetal} the Rogers-Shephard inequality \cite{alonsoetal} as well as several results from discrete tomography \cite{ahz, ggz, dcgpaper, freyerhenk}.

\section{Results in arbitrary dimension}
\label{sec:ddim}

\subsection{Asymptotic results and general inequalities} 

We give a summary of inequalities that link $\flt(d,k)$ for different choices of $(d,k)$ and asymptotic results following from these inequalities. These inequalities and asymptotic statements are either directly available in the literature or follow directly from known results and techniques in the literature. While it is not our primary purpose to provide a survey, since results for $\flt(d,k)$ are scattered in the literature, we find providing an overview helpful. Further, we need some of the summarized results in our subsequent discussion. 

Codenotti and Santos  \cite{codenottisantos} established that the sequence of flatness constants $\flt(d,0)$ for hollow convex bodies grows strictly with the dimension $d$. 
\begin{theorem}[Codenotti \& Santos] 
	\label{flt:monotonicity} 
	One has 
	$\flt(m+n, 0) \ge \flt(m,0) + \flt(n,0)$ 
	for all $m,n \in \ZZ_{>0}$. 
	In particular, $\flt(d+1,0) \ge \flt(d,0)+ 1$ for all 
	$d \in \ZZ_{>0}$. 
\end{theorem}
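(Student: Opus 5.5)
The natural approach is a product construction. I would use the covering-radius formulation of the flatness constant referred to in the introduction (equation \eqref{eq:covrad_flatness}): $\flt(d,0)$ is the supremum of $\mu(K)\,\wdt(K)$ over convex bodies $K\in\K^d$ with non-empty interior. Here $\mu(K)=\min\{\mu>0: \mu K+\ZZ^d=\RR^d\}$, and a hollow body can be translated and rescaled so as to be "critical".

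It is equivalent, and cleaner, to work directly with hollow bodies. Fix $\varepsilon>0$ and choose $K_1\subset\RR^m$ and $K_2\subset\RR^n$ such that
\[
\mu(K_1)=\mu(K_2)=1 \quad\text{and}\quad \wdt(K_i)\ge \flt(\cdot,0)-\varepsilon .
\]
Rescaling a near-optimal hollow body by $1/\mu$ achieves this, since $\mu(K)\ge 1$ for hollow $K$. Then form the "free sum"-type body
\[
K=\conv\big( (K_1\times\{\vo\}) \cup (\{\vo\}\times K_2)\big)
\]
after suitable translations. Equivalently, by the covering-radius formulation, one can look for $K$ with $\mu(K)=1$ and
\[
\wdt(K)\ge \wdt(K_1)+\wdt(K_2).
\]

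The width computation is the easy half. For $\vu=(\vu_1,\vu_2)\in\ZZ^{m+n}\setminus\{0\}$, the support function of a convex hull of a union is the maximum of the support functions. Hence, placing the bodies so that the origin lies suitably, the width in direction $\vu$ is at least $\wdt(K_1,\vu_1)$ when $\vu_2=0$, and similarly for $\vu_1=0$. When both are non-zero the width should be the sum, provided the $K_i$ are positioned so that $\max_{K_1}\langle \vu_1,\cdot\rangle\ge 0\ge\min$.

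A convex hull of a union does not achieve the sum directly, so the better construction is the one Codenotti--Santos use, a "lattice free sum" adjusted with a mixed scaling:
\[
K=\bigcup_{t\in[0,1]} (1-t)K_1\times tK_2 .
\]
Its covering radius I would bound by $\mu(K_1)+\mu(K_2)$-type additivity, using $\mu((1-t)K_1\times tK_2)\le \max\big(\mu(K_1)/(1-t),\mu(K_2)/t\big)$. Balancing $t$ gives $\mu(K)\le \mu(K_1)+\mu(K_2)=2$. The width satisfies
\[
\wdt(K,\vu)\ge \max\big(\wdt(K_1,\vu_1),\wdt(K_2,\vu_2)\big),
\]
so the product $\mu\cdot\wdt$ must be handled carefully. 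I would instead normalize by $\wdt(K_1)=\wdt(K_2)=1$, and show that $K$ has width at least $1$ and that $\mu(K)\ge\mu(K_1)+\mu(K_2)$. The lower bound on $\mu$ is the key step. Covering $\RR^{m+n}$ by $\mu K+\ZZ^{m+n}$ forces, on a fibre over a point of $\RR^m$ deep in an uncovered region of $sK_1$, a covering of $\RR^n$ by scaled $K_2$ with the remaining budget. This gives $\mu\ge\mu_1+\mu_2$.

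This superadditivity of the covering radius under the free-sum construction is the main obstacle. I would first try the fibre argument: pick $\vx\in\RR^m$ not covered by $(\mu_1-\delta)K_1+\ZZ^m$. Then every translate $\mu K+\vz$ meets the fibre $\{\vx\}\times\RR^n$ only in parts where the $K_1$-coordinate is scaled by at least $\mu_1-\delta$, leaving at most $(\mu-\mu_1+\delta)K_2$ for the second factor, which must then cover $\RR^n$.

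The second claim, $\flt(d+1,0)\ge\flt(d,0)+1$, follows at once by taking $n=1$ and $\flt(1,0)=1$ (realized by $[0,1]$).
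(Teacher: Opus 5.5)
Your final argument is correct, once you remove the false starts. The paper gives no proof of this statement and only cites \cite{codenottisantos}, so it has to stand on its own.

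Translate so that $\vo\in\inter K_1$ and $\vo\in\inter K_2$; this changes neither $\mu$ nor $\wdt$. Normalize $\wdt(K_1)=\wdt(K_2)=1$ and let $K=\conv\bigl((K_1\times\{\vo\})\cup(\{\vo\}\times K_2)\bigr)$. Since $K$ contains both $K_1\times\{\vo\}$ and $\{\vo\}\times K_2$, you get $\wdt(K,\vu)\ge\max\{\wdt(K_1,\vu_1),\wdt(K_2,\vu_2)\}$, hence $\wdt(K)\ge 1$.

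Your fibre argument for $\mu(K)\ge\mu(K_1)+\mu(K_2)$ is complete as sketched. With $\mu=\mu(K)$ and $\mu_1=\mu(K_1)$ one has $\mu K=\bigcup_{0\le s\le\mu}sK_1\times(\mu-s)K_2$. A point $\vx\notin(\mu_1-\delta)K_1+\ZZ^m$ exists by minimality of $\mu_1$. It forces $s>\mu_1-\delta$ in every translate $\mu K+\vz$ meeting $\{\vx\}\times\RR^n$, and therefore $\RR^n=(\mu-\mu_1+\delta)K_2+\ZZ^n$. Both steps use only $\vo\in K_i$, which gives $sK_i\subseteq s'K_i$ for $0\le s\le s'$.

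Then $\flt(m+n,0)\ge\mu(K)\wdt(K)\ge\mu(K_1)\wdt(K_1)+\mu(K_2)\wdt(K_2)$. Taking $K_i$ optimal in \eqref{eq:covrad_flatness} proves the theorem; that quantity is scale-invariant, so the normalization costs nothing.

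Delete the following from the write-up:
(i) The width of the free sum in a mixed direction is \emph{not} the sum of the two widths, only at least their maximum.
(ii) $\bigcup_{t\in[0,1]}(1-t)K_1\times tK_2$ is not a better construction; it is literally the same set as the convex hull.
(iii) The bound $\mu(K)\le\mu(K_1)+\mu(K_2)$ is true but points the wrong way for this theorem; together with the fibre bound it gives equality.
(iv) The initial normalization $\mu(K_i)=1$ is never used.

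A more elementary route, essentially the free-sum construction behind \cite{codenottisantos}, avoids covering radii altogether. Take hollow $K_1,K_2$ of widths $w_1,w_2$, points $\vc_i\in\inter K_i$, and set $\alpha=\frac{w_1+w_2}{w_1}$, $\beta=\frac{w_1+w_2}{w_2}$. The body $(\vc_1,\vc_2)+\conv\bigl(\alpha(K_1-\vc_1)\times\{\vo\}\cup\{\vo\}\times\beta(K_2-\vc_2)\bigr)$ has width at least $\max\{\alpha w_1,\beta w_2\}=w_1+w_2$. It is hollow: every interior point has the form $\bigl(\vc_1+t\alpha(\va-\vc_1),\,\vc_2+(1-t)\beta(\vb-\vc_2)\bigr)$ with $\va\in\inter K_1$, $\vb\in\inter K_2$, $t\in(0,1)$. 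Since $\frac1\alpha+\frac1\beta=1$, either $t\alpha\le1$ or $(1-t)\beta\le1$, which puts one coordinate block in $\inter K_1$ or $\inter K_2$.

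Applied to the hollow translates $\mu(K_i)K_i+\vv_i$ with $\vc_i=\vv_i$, this produces exactly $(\mu(K_1)+\mu(K_2))K+(\vv_1,\vv_2)$, so both proofs build the same body. The direct version needs neither the reformulation \eqref{eq:covrad_flatness} nor the existence of uncovered points. Yours is scale-free and yields the cleaner identity $\mu(K)=\mu(K_1)+\mu(K_2)$.
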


In \cite{acddf_chvatal_gomory_2013}, the generalized flatness constant for $k=1$ appears in the context of the cutting plane theory of integer optimization problems; this paper studies the so-called reverse \emph{Chv\'atal-Gomory rank}. As an auxiliary result, Corollary~5 in \cite{acddf_chvatal_gomory_2013} gives an equality that can be interpreted as $\flt(d,1) \le 2 \flt(d,0)$. Based on the same idea, we can generalize this inequality as follows. 

\begin{proposition}
	\label{change:k:ineq}
	For any  $d \in \ZZ_{>0}$ and $k,m\in\ZZ_{\geq 0}$, one has
	\[
	\flt(d,k) \leq m \cdot \flt(d, \lfloor \tfrac {k} {m^d} \rfloor).
	\]
	In particular, 
	\[
	\flt(d,k) \le \left\lceil (k+1)^{1/d}  \right\rceil  \cdot \flt(d,0). 
	\]
\end{proposition}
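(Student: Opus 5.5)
We prove the first inequality by scaling down. Let $K$ be a convex body with $\LEinter{K}\le k$, and consider $K' = \frac1m K$. Since $\wdt(K') = \frac1m \wdt(K)$, it suffices to show that some unimodular translate of $K'$ has at most $\lfloor k/m^d\rfloor$ interior lattice points. Then $\wdt(K)=m\,\wdt(K')\le m\,\flt(d,\lfloor k/m^d\rfloor)$, and taking the supremum over $K$ gives the first claim. If $m=0$ the inequality is vacuous or degenerate, so we assume $m\ge1$.

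To find a good translate, look at translates $K'+\vt$ with $\vt\in\frac1m\ZZ^d$. Interior lattice points of $K'+\vt$ correspond to points $\vx\in\ZZ^d$ with $\vx-\vt\in\inter K'$. Equivalently, $m\vx - m\vt \in \inter K$, where $m\vt\in\ZZ^d$. Write $\vs=-m\vt$ and let it range over representatives of $\ZZ^d/m\ZZ^d$. The points $m\vx+\vs$ with $\vx\in\ZZ^d$ then run exactly over the residue class $\vs+m\ZZ^d$.

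Hence
\[
\sum_{\vs\in\ZZ^d/m\ZZ^d} \LEinter{K'-\tfrac1m\vs} = \LEinter{K}\le k,
\]
a sum of $m^d$ terms. By averaging, some term is at most $k/m^d$, and since it is an integer it is at most $\lfloor k/m^d\rfloor$.

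The translation by $\frac1m\vs$ is not unimodular, but this causes no difficulty. Lattice width is invariant under all translations, and $\flt$ is defined as a supremum over all convex bodies with the given interior-point bound. So $\wdt(K')\le\flt(d,\lfloor k/m^d\rfloor)$ holds as needed. The only subtle point is making the residue-class bijection precise, and that step is routine.

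For the second statement, take $m=\lceil (k+1)^{1/d}\rceil$. Then $m^d\ge k+1>k$, so $\lfloor k/m^d\rfloor=0$, and the first inequality gives $\flt(d,k)\le m\,\flt(d,0)$.
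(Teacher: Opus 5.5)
Your proof is correct and follows essentially the paper's argument: pigeonhole over the $m^d$ cosets of $m\ZZ^d$ gives a coset containing at most $\lfloor k/m^d\rfloor$ interior lattice points of $K$, and a translation plus rescaling by $\frac1m$ then reduces to $\flt(d,\lfloor k/m^d\rfloor)$. Your order of operations (scale to $\frac1m K$ first, then translate by $-\frac1m\vs$) is the same construction as the paper's lattice translation followed by passing to $m\ZZ^d$. Your explicit residue-class bijection also makes the rescaling step slightly cleaner than the paper's phrasing in terms of widths relative to $m\ZZ^d$.
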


\begin{proof}
	Let $K\subset \RR^d$ be a $k$-point body. By the pigeonhole principle, there is a coset of $\ZZ^d / m\ZZ^d$ that contains at most $\ell = \lfloor \tfrac k {m^d} \rfloor$ lattice points of $\inter(K)$. After applying a lattice translation, we may assume that this coset is the zero coset. Hence, $K$ is an $\ell$-point body with respect to $m\ZZ^d$. Thus,
	\[
	\tfrac 1m \wdt(K) = \wdt(K;m \ZZ^d\setminus\{\vo\}) \leq \flt(d, \ell),
	\]
	which proves the first part of the claim. The second part follows from the first one by taking $m = \left\lceil (k+1)^{1/d}  \right\rceil$. 
\end{proof}

An alternative access to the growth of $\flt(d,k)$ is through volume approximation. To this end, we consider the constant
\[
\flt(d,\infty) = \sup\{\wdt(K) : K\in\K^d,~ \vol(K)\leq 1\}.
\]
It was conjectured by Makai that $\flt(d,\infty)$ is realized by the simplex $\conv\{-\eins,\ve_1,\dots,\ve_d\}$ \cite{Makai}. The conjecture is proven to be true for $d\leq 3$ \cite{FejMak,Aliev} (cf.\ \eqref{eq:makai})


Intuitively, as $k$ grows, the value $\flt(d,k)/k^{1/d}$ should approach $\flt(d,\infty)$, since the latter quantity ``counts lattice points in an infinitesimally fine lattice''.
We can verify and quantify this convergence:

\begin{theorem}
	\label{thm:asymp}
	Let $d,k\in\NN$ and $k\geq 1$. Then,
	\[
	\left|\frac{\flt(d,k)}{k^{1/d}} - \flt(d,\infty)\right| \leq \frac{\flt(d,0)}{k^{1/d}}.
	\]
	In particular, we have $\lim k^{-1/d}\,\flt(d,k) = \flt(d,\infty)$ as $k\to\infty$.
\end{theorem}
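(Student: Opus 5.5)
The plan is to prove two one-sided estimates,
\[
\flt(d,k) \le k^{1/d}\flt(d,\infty) + \flt(d,0), \qquad \flt(d,k) \ge k^{1/d}\flt(d,\infty) - \flt(d,0).
\]
Both rest on one tool: the covering-radius version of the flatness constant (the alternative definition \eqref{eq:covrad_flatness}). It says that every convex body $C$ with non-empty interior satisfies $\mu(C)\,\wdt(C)\le \flt(d,0)$, where $\mu(C)$ is the least $\lambda$ with $\lambda C+\ZZ^d=\RR^d$. Whenever $\lambda C+\ZZ^d=\RR^d$, one can choose a measurable fundamental domain $F$ of $\ZZ^d$ with $F\subset \lambda C$: write $\RR^d$ as the union of translates of $\lambda C$, then remove overlaps modulo $\ZZ^d$. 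The same holds for $-C$, since $\mu(-C)=\mu(C)$. Each of the two inequalities then follows from a packing/covering volume comparison between lattice translates of $F$ and a dilate of the body.

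\emph{Upper bound.} Let $K$ be a $k$-point body. We may assume $\wdt(K)>\flt(d,0)$, since otherwise there is nothing to prove; in particular $K$ has interior. Put $\lambda=\flt(d,0)/\wdt(K)<1$, so that $\lambda\ge\mu(K)$. Fix a fundamental domain $F\subset -\lambda K$.

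For $x\in(1-\lambda)K$, write $x=z+f$ with $z\in\ZZ^d$ and $f\in F\subset-\lambda K$. Then $z=x-f\in(1-\lambda)K+\lambda K=K$. Hence
\[
(1-\lambda)K\subset \bigcup_{z\in K\cap\ZZ^d}(z+F).
\]
To replace $K\cap \ZZ^d$ by interior points, I will run the argument with $\lambda$ replaced by $\lambda+\varepsilon$ and a point $c\in\inter K$ used as the centre of the dilation. Then $z$ lies in $(1-\lambda-\varepsilon)K+(\lambda+\varepsilon)K'$ for a slightly shrunk copy $K'$ of $K$ around $c$, which sits inside $\inter K$; at the end let $\varepsilon\to0$.

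Comparing volumes gives $(1-\lambda)^d\vol(K)\le k$. By the definition of $\flt(d,\infty)$ and homogeneity, $\wdt(L)\le\flt(d,\infty)\vol(L)^{1/d}$ for every body $L$. Applying this to $L=(1-\lambda)K$ yields
\[
(1-\lambda)\wdt(K)\le \flt(d,\infty)\,k^{1/d},
\]
that is, $\wdt(K)-\flt(d,0)\le k^{1/d}\flt(d,\infty)$.

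\emph{Lower bound.} Take $C$ with $\vol(C)=1$ and $\wdt(C)$ arbitrarily close to $\flt(d,\infty)$. Set $s=k^{1/d}$ and $\lambda=\flt(d,0)/\wdt(C)\ge\mu(C)$; we may assume $\lambda<s$, since otherwise the bound is trivial. Choose a point $c\in C$ and a fundamental domain $F\subset\lambda(C-c)$.

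Consider $K=(s-\lambda)C+\lambda c$. For distinct lattice points $z\in\inter K$, the sets $z+F$ are pairwise disjoint, and they lie in $(s-\lambda)C+\lambda c+\lambda(C-c)=sC$. Therefore
\[
\LE^\circ(K)\le\vol(sC)=k.
\]
Moreover $\wdt(K)=(s-\lambda)\wdt(C)=k^{1/d}\wdt(C)-\flt(d,0)$. Taking the supremum over $C$ gives the lower bound. Dividing both bounds by $k^{1/d}$ gives the stated estimate, and the limit statement follows immediately.

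The step I expect to need the most care is the boundary bookkeeping in the upper bound: making sure the covering uses only \emph{interior} lattice points. The $\varepsilon$-shrinking described above should handle it. The only other delicate point is the existence of a measurable fundamental domain inside a covering body, which is standard.
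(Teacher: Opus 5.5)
Your proof is correct, and its overall structure matches the paper's: two one-sided bounds, each obtained by comparing lattice-point counts with volume, with the error controlled through $\mu(K)\,\wdt(K)\le\flt(d,0)$. The difference lies in what is taken as given.

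The paper imports the discrepancy inequalities $\vol(K)^{1/d}(1-\mu(K))\le\LEinter{K}^{1/d}$ and $\LE(K)^{1/d}\le\vol(K)^{1/d}(1+\mu(K))$ from the literature. It packages them as \eqref{eq:wdt_discrepancy} and then argues with near-optimal bodies $K_k$ and $K_\infty$. You instead re-derive exactly the special cases you need, using a measurable fundamental domain contained in a suitable translate of $-\lambda K$, respectively $\lambda C$. For the upper bound on $\flt(d,k)$ this is a covering of $(1-\lambda)K$. Your $\varepsilon$-shrinking toward $c\in\inter K$ works: with shrink factor $\lambda/(\lambda+\varepsilon)$, the dilate $(\lambda+\varepsilon)K'$ is a translate of $\lambda K$, so it still covers. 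Moreover $(1-\lambda-\varepsilon)K+(\lambda+\varepsilon)K'=(1-\varepsilon)K+\varepsilon c\subset\inter K$, so only interior lattice points are used. For the lower bound it is a packing into $k^{1/d}C$.

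In the direction $\flt(d,k)\ge k^{1/d}\flt(d,\infty)-\flt(d,0)$ your argument also proceeds differently. The paper takes the largest dilate $\lambda K_\infty$ with at most $k$ interior lattice points and applies the upper discrepancy bound to $(\lambda+\varepsilon)K_\infty$. You write down the explicit witness $(k^{1/d}-\lambda)C+\lambda c$ and check directly that it has at most $k$ lattice points (in fact in total, not only in the interior).

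Your route is self-contained and avoids the maximal-dilate and $\varepsilon$ bookkeeping in that direction. The price is the standard fact that a body whose lattice translates cover $\RR^d$ contains a measurable fundamental domain. The paper's route is shorter, given the cited results.
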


For the proof we use bounds on the lattice point enumerator in terms of volume and covering radius.
Let $\mu(K)$ denote the covering radius of a convex body $K\subset\RR^d$ with respect to the integer lattice, i.e.,
\[
\mu(K) = \min\{\mu\geq 0 : \mu K + \ZZ^d = \RR^d\}.
\]
An alternative description of the covering radius is as the maximum dilate that admits a hollow translate, i.e.,
\[
\mu(K) = \max\{\mu\geq 0 : \exists \vt\in\RR^d ~\LEinter{\mu K-\vt} = 0\}.
\]
Therefore, the classical flatness constant may be expressed as
\begin{equation}
\label{eq:covrad_flatness}
\flt(d,0) = \max\{\mu(K)\cdot \wdt(K) : K\in \K^d\}.
\end{equation}
In \cite{dadush,mofmpaper} it was shown that
\[
\vol(K)^{1/d}(1-\mu(K)) \leq \LEinter{K}^{1/d} \leq \LE(K)^{1/d} \leq \vol(K)^{1/d}(1+\mu(K)),
\]
where for the lower bound $\mu(K) \leq 1$ is necessary, i.e., $K+\ZZ^d = \RR^d$.
Combining this with \eqref{eq:covrad_flatness} yields
\begin{equation}
\label{eq:wdt_discrepancy}
\vol(K)^{1/d} \big( 1 - \tfrac{\flt(d,0)}{\wdt(K)}\big) \leq \LEinter{K}^{1/d} \leq \LE(K)^{1/d} \leq \vol(K)^{1/d}\big(1+\tfrac{\flt(d,0)}{\wdt(K)}\big).
\end{equation}
We are now ready for the proof of Theorem \ref{thm:asymp}.

\begin{proof}[Proof of Theorem \ref{thm:asymp}]
	Let $\varepsilon>0$. Let $K_k\in\K^d$ be such that $\wdt(K_k)\geq\flt(d,k)-\varepsilon$, for $k\in\NN\cup\{\infty\}$.
	First, we will show that
	\begin{equation}
	\label{eq:asymp_lower_bound}
	k^{1/d} \geq \frac{\flt(d,k) - \flt(d,0)}{\flt(d,\infty)}.
	\end{equation}
	For this, we may assume that $\flt(d,k) > \flt(d,0)$. Otherwise, the inequality becomes trivial. For $1\leq k < \infty$ we have $\LEinter{K_k} \leq k$.
	We apply the lower bound of \eqref{eq:wdt_discrepancy} and obtain
	\[
	k^{1/d} \geq \LEinter{K_k}^{1/d} \geq \vol(K_k)^{1/d} \left(1-\frac{\flt(d,0)}{\wdt(K_k)}\right).
	\]
	By the homogeneity of width and volume and the definition of $\flt(d,\infty)$, we have
	\[
	k^{1/d} \geq \frac{\wdt(K_k)}{\flt(d,\infty)}\left(1-\frac{\flt(d,0)}{\wdt(K_k)}\right) \geq \frac{\flt(d,k) - \varepsilon - \flt(d,0)}{\flt(d,\infty)}.
	\]
	We obtain Equation \eqref{eq:asymp_lower_bound} by letting $\varepsilon\to 0$.
	For the reverse direction we want to prove
	\begin{equation}
	\label{eq:asymp_upper_bound}
	k^{1/d} \leq \frac{\flt(d,k) + \flt(d,0)}{\flt(d,\infty)}.
	\end{equation}
	After applying a dilation, we may assume that $\vol(K_\infty) = 1$. We choose $\lambda\in\RR_{>0}$ to be maximal such that $\LEinter{\lambda K_\infty} \leq k$ and, consequently, $k < \LEinter{(\lambda+\varepsilon)K_\infty}$.
	The upper bound from \eqref{eq:wdt_discrepancy} applied to $(\lambda+\varepsilon) K_\infty$ gives
	\[\begin{split}
	k^{1/d} & < \LEinter{(\lambda+\varepsilon)K_\infty}^{1/d}\leq(\lambda+\varepsilon) \left(1+\frac{\flt(d,0)}{\wdt((\lambda+\varepsilon) K_\infty)}\right)\\
	& = \left (\lambda + \varepsilon + \frac{\flt(d,0)}{\wdt(K_\infty)}\right) \leq \frac{\flt(d,k)+\flt(d,0)}{\flt(d,\infty)-\varepsilon} + \varepsilon,
	\end{split}\]
	where we used
	\[
	\lambda\wdt(K_\infty)= \wdt(\lambda K_\infty) \leq \flt(d,k)
	\]
	for the last inequality. Equation \eqref{eq:asymp_upper_bound} follows from letting $\varepsilon\to 0$.
\end{proof}

From Theorem \ref{thm:asymp} we obtain the upper bound
\begin{equation}
\label{eq:fltdk_ub}
\flt(d,k) \leq \flt(d,0) + \flt(d,\infty)k^{1/d}.
\end{equation}
While neither $\flt(d,0)$, nor $\flt(d,\infty)$ are determined explicitly, there are very good asymptotic bounds on these quantities. According to Reis and Rothvoss \cite{RR23} we have $\flt(d,0)\leq  O(d\log(d)^3)$, whereas an application of the best-known bounds on the so-called Mahler volume quickly yields
\begin{equation}
\label{eq:makai_weak}
\flt(d,\infty) \leq \tfrac 8 \pi (d!)^{1/d} \leq \left( \tfrac{8}{e\pi} + o (1)\right)d.
\end{equation}
We refer to \cite[page 3]{HenkXue} for details. As a consequence, we have
\begin{corollary}
	\label{cor:k_flatness_bound}
	Let $d,k\in\NN$ and $k\geq 1$. Then,
	\begin{equation}
	\label{eq:fltdk_asymp}
	\flt(d,k) 
	\leq \mathcal{O}(d(\log(d)^3 + k^{1/d})).
	\end{equation}	
\end{corollary}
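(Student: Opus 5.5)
The plan is to combine the upper bound \eqref{eq:fltdk_ub}, which follows from Theorem~\ref{thm:asymp}, with the two asymptotic estimates quoted just before the corollary. No new geometry is needed. For $k\geq 1$, Theorem~\ref{thm:asymp} gives
\[
\frac{\flt(d,k)}{k^{1/d}} \leq \flt(d,\infty) + \frac{\flt(d,0)}{k^{1/d}}.
\]
Multiplying by $k^{1/d}$ yields $\flt(d,k)\leq \flt(d,0)+\flt(d,\infty)\,k^{1/d}$, which is exactly \eqref{eq:fltdk_ub}.

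Next I would bound the two constants separately. For $\flt(d,0)$, I would invoke the bound of Reis and Rothvoss \cite{RR23}, namely $\flt(d,0)\leq C_1\, d\log(d)^3$ for an absolute constant $C_1$ and all $d\geq 2$. For $\flt(d,\infty)$, I would use \eqref{eq:makai_weak}. Since $(d!)^{1/d}\leq d$, it gives $\flt(d,\infty)\leq \tfrac{8}{\pi}\, d$ for every $d$, which avoids any $o(1)$ term.

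Substituting both bounds gives $\flt(d,k)\leq C_1 d\log(d)^3 + \tfrac 8\pi d\,k^{1/d}\leq C\, d(\log(d)^3+k^{1/d})$ with $C=\max\{C_1,8/\pi\}$. The only point needing care is small $d$: for $d=1$ one has $\log(d)^3=0$. There I would use $\flt(1,0)=1\leq k^{1/d}$, since $k\geq 1$, and absorb it into the second term. Equivalently, one can enlarge $C$ to cover the finitely many small dimensions, which is harmless because the statement is only asymptotic.

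I expect no real obstacle. The content lies entirely in the cited results of \cite{RR23} and the Mahler-volume bound; the remaining work is bookkeeping of constants.
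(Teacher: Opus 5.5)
Your proposal is correct and follows the paper's argument. You derive \eqref{eq:fltdk_ub} from Theorem~\ref{thm:asymp}, then bound $\flt(d,0)$ by Reis--Rothvoss and $\flt(d,\infty)$ by \eqref{eq:makai_weak}; using $(d!)^{1/d}\le d$ and treating $d=1$ separately only tidy up the constants.
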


\begin{remark} \label{wdt:0:remark}
	As a byproduct of Equation \eqref{eq:makai_weak}, we see that $\wdt(K) = 0$ when $K \in \K^d$ is not full-dimensional: 
Since width and volume are homogeneous functionals, we can read \eqref{eq:makai_weak} as
\begin{equation}
\label{eq:makai_weak2}
\wdt(K)\leq (c_d \vol(K))^{1/d},
\end{equation}
where $c_d = (\tfrac 8\pi)^d \,d!$, provided that $\vol(K) >0$. For $\vol(K)=0$ we have
\[
\wdt(K) \leq \wdt(K+\varepsilon B_d) \leq (c_d \vol(K+\varepsilon B_d))^{1/d}, 
\]
where $B_d$ denotes the Euclidean unit ball and $\varepsilon>0$ is arbitrary. This shows that \eqref{eq:makai_weak2} holds for all convex bodies $K\in\K^d$.

This insight is not entirely trivial, as one may see by considering $K$ being the irrational segment $K = \conv \{ (0,0), (\sqrt{2},1) \}$. With this choice of $K$, the infimum in the definition of $\wdt(K)$ is not attained, which suggests that there is a need for justification why the infimum is in fact $0$. Diophantine approximation could be one line of justification for this particular $K$. This suggests that the equality $\wdt(K) = 0$ for non-full-dimensional $K$ is a disguised qualitative version of the Diophantine approximation.
\end{remark}

From the asymptotic perspective, it is natural to ask for the ratio of $\flt(d,1)/\flt(d,0)$ as $d$ grows large. 
From \eqref{eq:fltdk_ub} we obtain an upper bound:
\begin{equation}
\label{eq:flt_ratio}
1 \leq \limsup_{d\to\infty} \frac{\flt(d,1)}{\flt(d,0)} \leq 1 + \lim\sup_{d\to \infty}\frac{\flt(d,\infty)}{\flt(d,0)} \leq 1 + \tfrac 4{e\pi}\approx 1.468,
\end{equation}
where we used a result of Mayrhofer, Schade and Weltge stating that $\flt(d,0)\geq 2d-o(1)$ \cite{MSW} in combination with \eqref{eq:makai_weak} to obtain the last inequality.

Most likely, the above estimate of $\flt(d,1)$ through $\flt(d,0)$ is not sharp asymptotically, even if Makai's conjecture was confirmed (in which case the upper bound would improve to $1+\tfrac 1 e$). 

Restricting ourselves to simplices, the following proposition shows that the ratio converges to $1$ as $d \to \infty$. Let $\flt_s(d,k)$ the maximum width of $k$-point simplices in dimension $d$. Then we have 
\begin{proposition} 
	$\flt_s(d,1) \le (1+ \frac{1}{d} ) \flt_s(d,0)$. In particular, 
	\[\lim_{d \to \infty} \frac{\flt_s(d,1) }{ \flt_s(d,0) } = 1.
	\] 
\end{proposition}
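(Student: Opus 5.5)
The plan is to reduce a $1$-point simplex to a hollow simplex by a homothety centred at one of its vertices, losing only a factor $1-\tfrac{1}{d+1}$ in width.

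\textbf{Trivial cases.} Let $S=\conv\{\vv_0,\dots,\vv_d\}\subset\RR^d$ be a simplex with $\LEinter{S}\le 1$. If $S$ is not full-dimensional, then $\wdt(S)=0$ by Remark~\ref{wdt:0:remark}. If $\LEinter{S}=0$, then $\wdt(S)\le\flt_s(d,0)$ by definition. So we may assume $S$ is full-dimensional and $\inter(S)\cap\ZZ^d=\{\vp\}$ for a single lattice point $\vp$.

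\textbf{Choosing the vertex.} Write $\vp=\sum_{i=0}^d\lambda_i\vv_i$ in barycentric coordinates. Since $\vp$ is interior, all $\lambda_i>0$, and $\sum_i\lambda_i=1$. Choose an index $j$ with $\lambda_j$ minimal, so $\lambda_j\le\tfrac1{d+1}$.

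\textbf{The hollow sub-simplex.} Consider
\[
S'=\vv_j+(1-\lambda_j)(S-\vv_j).
\]
This is exactly the set of points of $S$ whose $j$-th barycentric coordinate is at least $\lambda_j$. It is a simplex contained in $S$, so $\inter(S')\cap\ZZ^d\subseteq\inter(S)\cap\ZZ^d=\{\vp\}$. The point $\vp$ has $j$-th coordinate exactly $\lambda_j$, so it lies on the facet of $S'$ opposite $\vv_j$. Hence $\vp\notin\inter(S')$ and $S'$ is hollow. Since lattice width is translation invariant and positively homogeneous,
\[
(1-\lambda_j)\,\wdt(S)=\wdt(S')\le\flt_s(d,0).
\]
Using $1-\lambda_j\ge\tfrac{d}{d+1}$ gives $\wdt(S)\le(1+\tfrac1d)\flt_s(d,0)$. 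Taking the supremum over all such $S$ yields the inequality.

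\textbf{The limit.} Every hollow simplex is a $1$-point simplex, so $\flt_s(d,1)\ge\flt_s(d,0)$. Combined with the upper bound,
\[
1\le\frac{\flt_s(d,1)}{\flt_s(d,0)}\le 1+\frac1d,
\]
and the limit equals $1$ as $d\to\infty$.

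\textbf{Main point to check.} The only step needing care is that $S'$ is genuinely hollow. This uses that $\vp$ lies on $\partial S'$ rather than in $\inter(S')$, and that no new interior lattice points can appear, since $\inter(S')\subseteq\inter(S)$. Both follow from the barycentric description of $S'$.
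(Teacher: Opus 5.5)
Your proof is correct and is essentially the paper's argument: take the vertex at which the interior lattice point has its smallest barycentric coordinate, and shrink $S$ towards that vertex to obtain a hollow simplex. The only differences are minor. The paper shrinks by the fixed factor $\frac{d}{d+1}$ rather than $1-\lambda_j$, and it works directly with an assumed maximizer. You instead bound every admissible simplex and take the supremum, and you also state explicitly the trivial bound $\flt_s(d,1)\ge\flt_s(d,0)$, which the limit needs.
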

\begin{proof} 
	In dimension $d$, consider the $1$-point simplex $S$ of maximum lattice width. Let $\vc$ be the unique interior lattice point of $S$ and let $\vv_0,\ldots,\vv_d$ be the vertices of $S$. Then $\vc =\sum_{i=0}^d \lambda_i \vv_i$ with $\lambda_i \ge 0$ and $\sum_{i=0}^d \lambda_i =1$. Assume that $\lambda_0$ is the smallest barycentric coordinate. Then we have $\lambda_0 \le \frac{1}{d+1}$. After translation we can also assume that $\vv_0=0$. It follows that $ \frac{d}{d+1}  S$ is a hollow $d$-dimensional simplex. Consequently, $\flt_s(d,0) \ge \frac{d}{d+1} \flt_s(d,1)$ and the assertion follows. 
\end{proof} 

It would be interesting to understand the relation of $\flt_s(d,k)$ and $\flt(d,k)$. It seems plausible that $\flt_s(d,0) = \flt(d,0)$. For larger $k$, it might be that $\flt_s(d,k)$ and $\flt(d,k)$ are close to each other or even coinciding. Indeed, Theorems \ref{thm:hurkens}, \ref{thm:flt21} and \ref{thm:flt22} as well as Equation \eqref{eq:makai} seem to  support this conjecture.

\subsection{$\flt(d,k)$ is attained}
\label{ssec:realizers}

We clarify why the supremum in the definition of $\flt(d,k)$ is a maximum attained on a convex body. 

A part of our attainability argument is topological, as we show that $\flt(d,k)$ is the problem of  maximization of a continuous functional on a closed set. 
Let $B_d$ be the unit Euclidean ball in $\RR^d$ centered in $\vo$. The \emph{Hausdorff distance} of convex bodies  $K,L \in \K^d$ is the minimum $\rho \ge 0$ such that $K \subseteq L + \rho B_d$ and $L \subseteq K + \rho B_d$.
 The Hausdorff distance is a metric on $\K^d$ so that it is also called the Hausdorff metric. It is well-known that the family $\K^d$ of convex bodies endowed with the Hausdorff metric is a complete metric space. 
For more details, see \cite{schneider}. 

 
In order to keep the paper self-contained, we want to give a rigorous proof of the continuity of the lattice width.

\begin{lemma} 
	The lattice width functional on $\K^d$ is continuous with respect to the Hausdorff metric. 
\end{lemma}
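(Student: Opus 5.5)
We will show the lattice width is upper and lower semicontinuous separately, working with the support function $h_K(\vu)=\max_{\vx\in K}\langle \vu,\vx\rangle$. Then $\wdt(K,\vu)=h_K(\vu)+h_K(-\vu)$. The basic estimate is: if the Hausdorff distance of $K$ and $L$ is $\rho$, then $|h_K(\vu)-h_L(\vu)|\le \rho\|\vu\|$. This follows directly from $K\subseteq L+\rho B_d$ and $L\subseteq K+\rho B_d$. Hence
\[
|\wdt(K,\vu)-\wdt(L,\vu)|\le 2\rho\|\vu\|
\]
for every fixed $\vu$.

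\emph{Upper semicontinuity.} The lattice width is an infimum of functions, each continuous by the estimate above, so it is upper semicontinuous. Explicitly, let $K_n\to K$ and $\varepsilon>0$. Choose $\vu\in\ZZ^d\setminus\{\vo\}$ with $\wdt(K,\vu)<\wdt(K)+\varepsilon$. Then $\wdt(K_n)\le \wdt(K_n,\vu)\le \wdt(K,\vu)+2\rho_n\|\vu\|$, so $\limsup\wdt(K_n)\le\wdt(K)+\varepsilon$. This argument needs no attainment of the infimum, so it also covers lower-dimensional $K$.

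\emph{Lower semicontinuity.} This is the main obstacle, because for a fixed body the bound $2\rho\|\vu\|$ is not uniform in $\vu$. Let $K_n\to K$ and set $w=\wdt(K)$. If $w=0$ there is nothing to prove. Otherwise Remark~\ref{wdt:0:remark} gives that $K$ is full-dimensional, so $K$ contains a ball $\vx_0+rB_d$ with $r>0$. For $n$ large, $K_n$ then contains $\vx_0+\tfrac r2 B_d$; this is the standard fact that Hausdorff convergence to a body with interior preserves a slightly smaller inner ball, which I would justify by a separation argument. Consequently $\wdt(K_n,\vu)\ge r\|\vu\|$ for all $\vu$ and all large $n$.

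Now restrict to $\|\vu\|\le R:=(w+1)/r$. Every $\vu$ outside this range gives $\wdt(K_n,\vu)>w+1$, so it is irrelevant. The vectors $\vu\in\ZZ^d\setminus\{\vo\}$ with $\|\vu\|\le R$ form a finite set. On this set the uniform estimate gives
\[
\wdt(K_n,\vu)\ge \wdt(K,\vu)-2\rho_nR\ge w-2\rho_nR.
\]
Therefore $\wdt(K_n)\ge\min\{w+1,\,w-2\rho_nR\}\to w$. Combining both directions yields continuity. The only delicate point is the inner-ball stability under Hausdorff convergence; the remaining steps are the routine support function bounds above.
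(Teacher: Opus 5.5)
Your proof is correct. For a full-dimensional limit it is the paper's argument. You get an inner ball $\vx_0+\tfrac r2 B_d\subseteq K_n$ for large $n$, which bounds $\wdt(K_n,\vu)$ below by $r\|\vu\|$. This confines the relevant lattice directions to a bounded set, on which the width functions converge uniformly. The paper obtains the inner ball from the cancellation law; your separation argument is the support-function proof of that law. If $\vy\in\vx_0+\tfrac r2 B_d$ were separated from $K_n$ by a unit vector $\vu$, then $h_{K_n}(\vu)<\langle\vu,\vx_0\rangle+\tfrac r2$ while $h_K(\vu)\ge\langle\vu,\vx_0\rangle+r$, forcing $\rho_n>\tfrac r2$.

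The genuine difference is how the argument is organized and how a degenerate limit is handled. The paper splits by whether the limit is full-dimensional. When it is not, the paper combines continuity of volume with the quantitative bound $\wdt(K_t)\le (c_d\vol(K_t))^{1/d}$ from \eqref{eq:makai_weak2} to force $\wdt(K_t)\to 0$. You split into upper and lower semicontinuity instead. Upper semicontinuity is automatic, since $\wdt$ is an infimum of continuous functions, and it already covers lower-dimensional limits. You then need only the qualitative fact from Remark~\ref{wdt:0:remark}, in its contrapositive form $\wdt(K)>0\Rightarrow\inter(K)\neq\emptyset$, to reach the inner-ball case. Your route is slightly more economical: it needs no volume continuity and no bound on the approximating bodies, and it isolates exactly where the Diophantine input (width zero for lower-dimensional bodies) enters. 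The paper's route controls $\wdt(K_t)$ directly through $\vol(K_t)$, which is more than continuity requires.
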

\begin{proof} 
	Consider a sequence $(K_t)_{t \in \ZZ_{>0}}$ of convex bodies in $\K^d$ that converges to a convex body $K' \in \K^d$ in the Hausdorff metric. It is known that the volume is continuous in the Hausdorff metric. Thus, the limit of $\vol(K_t)$, as $t \to \infty$, is $\vol(K')$. If $\vol(K') = 0$, then by Equation \eqref{eq:makai_weak2}, $\wdt(K_t)$ is upper bounded by $( c_d \vol(K_t) )^{1/d}$, a value converging to $0$. So, $\wdt(K_t) \to 0$. Again by Equation \eqref{eq:makai_weak2}, $\wdt(K')=0$, as $K'$ is not full-dimensional. Consequently, $\wdt(K_t) \to \wdt(K')$ if $K'$ is not full-dimensional. 
	
	Consider the case of a full-dimensional limit $K'$. Pick a ball $\rho B_d + \vc$ of a radius $\rho>0$, which is a subset of $K'$. When $t$ is large enough, we have $K' \subseteq K_t + \frac{\rho}{2} B_d$ and $K_t \subseteq K' + \frac{\rho}{2} B_d$. Taking into account $\rho B_d + \vc \subseteq K'$, we obtain $\rho B_d + \vc \subseteq K_t + \frac{\rho}{2} B_d$. Applying the cancellation law for convex bodies, we obtain $\frac{\rho}{2} B_d + \vc \subseteq K_t$ for all sufficiently large $t$. The latter implies that $\wdt(K,\vu) \ge \rho \| \vu\|_2$ for $\vu \in \RR^d$, where $\| \vu \|_2$ is the Euclidean length of $\vu$. On the other hand, in view of $K_t \subseteq K' + \frac{\rho}{2} B_d$, one has $\wdt(K_t) \le \wdt(K'+ \frac{\rho}{2} B_d) =: \gamma$. Hence, if $\|\vu\|_2 > \frac{\gamma}{\rho}$, then $\wdt(K,u) > \wdt(K)$. This allows us to determine $\wdt(K_t)$ as a minimum $\wdt(K_t) = \min_{\vu \in U} \wdt(K_t,\vu)$ on the finite set $U$ of all $\vu \in \ZZ^d \setminus \{0\}$ that satisfy $\|\vu\|_2 \le \frac{\gamma}{\rho}$. As the width function is continuous in the Hausdorff metric for each fixed choice of direction $\vu$, we conclude that $\wdt(K_t) \to \wdt(K')$, as $t \to \infty$. 
 \end{proof}  

In the appendix of \cite{codenottifreyer} it is shown that $\flt(d,0)$ is attained on some convex body. Our aim is to provide a somewhat more general statement for generic flatness-type problems. 
\newcommand{\cX}{\mathcal{X}}
\begin{lemma} \label{gen:flt:attained}
	Let $\cX$ be a non-empty subfamily of $\K^d$, with the following properties: 
	\begin{enumerate} 
		\item All convex bodies in $\cX$ are full-dimensional. 
		\item $\cX$ is closed with respect to the Hausdorff metric. 
		\item Every convex body unimodularly equivalent to a convex body in $\cX$ belongs to $\cX$. 
		\item $\epsilon(\cX):=\inf \{\lambda_1(K-K) \colon K \in \cX \} >0$. 
	\end{enumerate} 
	Then, whenever $\flt(\cX):=\sup \{ \wdt(K) \colon K \in \cX \}$ is finite, it is attained on some $K' \in \cX$. 
\end{lemma}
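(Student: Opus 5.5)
Take a maximizing sequence $K_t\in\cX$ with $\wdt(K_t)\to\flt(\cX)<\infty$. The goal is to normalize the $K_t$ by unimodular transformations (allowed by property (3)) so that they lie in a common bounded region. Then the Blaschke selection theorem gives a convergent subsequence, property (2) puts its limit in $\cX$, and continuity of the lattice width (the previous lemma) shows that the limit attains $\flt(\cX)$.

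For boundedness, the key quantities are the successive minima of the difference body $D_t=K_t-K_t$, an origin-symmetric full-dimensional body by property (1). Property (4) gives $\lambda_1(D_t)\ge\epsilon:=\epsilon(\cX)>0$. On the polar side, $\lambda_1(D_t^*)=\wdt(K_t)\le \flt(\cX)=:F$. I would apply a transference bound relating $\lambda_d(D_t)$ to $\lambda_1(D_t^*)$, for instance Banaszczyk's \eqref{eq:banaszczyk} or the classical bound $\lambda_i(D)\lambda_{d-i+1}(D^*)\le c_d$ (Mahler). This gives $\lambda_d(D_t)\le c_d/\lambda_1(D_t^*)$, but that is an upper bound, and what we need is the reverse. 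The correct route is through volume. By Minkowski's second theorem, $\lambda_1(D_t)\cdots\lambda_d(D_t)\vol(D_t)\le 2^d$. Hence
\[
\vol(D_t)\le 2^d/\epsilon^d,
\]
so $\vol(K_t)$ is bounded above. This is not yet enough, because $K_t$ could be long and thin.

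To control the shape, I would pick, for each $t$, a reduced lattice basis adapted to $D_t$, namely linearly independent $\vb_1,\dots,\vb_d\in\ZZ^d$ realizing the successive minima up to a factor depending only on $d$. Such a basis exists by Mahler's lemma: a basis of $\ZZ^d$ with $\vb_i\in c_d\lambda_i(D_t)D_t$. Applying the unimodular map sending $\vb_i\mapsto\ve_i$, and translating by an integer vector so that $K_t$ meets a fixed fundamental cell, we get $\ve_i\in c_d\lambda_i D_t$ with each $\lambda_i\ge\epsilon$. This bounds $D_t$ from below in a controlled way but not from above. For the upper bound I would argue via the polar. The bodies $D_t^*$ have $\lambda_1(D_t^*)\le F$, and more importantly all $\lambda_i(D_t^*)\le c_d/\lambda_{d-i+1}(D_t)\le c_d/\epsilon$. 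So for the dual basis, $D_t^*$ contains $d$ independent lattice vectors scaled by at most $c_d/\epsilon$. Making the reduced bases compatible, which is a standard fact, $D_t^*$ then contains a fixed ball $r B_d$ with $r$ depending only on $d,\epsilon$. Consequently $D_t\subseteq r^{-1}B_d$, so each $K_t$ has bounded diameter, and after the integer translation all $K_t$ lie in a fixed ball.

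With uniform boundedness in hand, Blaschke selection yields a subsequence converging to some $K'$. Closedness (2) gives $K'\in\cX$, and continuity of $\wdt$ gives $\wdt(K')=\flt(\cX)$; full-dimensionality of $K'$ is automatic from (1). I expect the main obstacle to be the uniform-boundedness step: obtaining a unimodular normalization under which $D_t^*$ contains a fixed ball. This requires carefully combining the lower bound $\lambda_i(D_t)\ge\epsilon$ with Mahler-type transference and a basis reduction valid simultaneously for $D_t$ and $D_t^*$. Notably, only (4) is needed here, not finiteness of $\flt(\cX)$, which only enters the continuity/limit step.
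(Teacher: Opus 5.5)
Your proposal is correct and follows essentially the paper's route. The decisive estimate is the transference bound $\lambda_d(D_t^*)\le c_d/\lambda_1(D_t)\le c_d/\epsilon$, and from there a lattice basis in a bounded dilate of $D_t^*$, a unimodular change of coordinates putting the $K_t$ in a fixed box, Blaschke selection, closedness and continuity finish the argument. The volume detour and the $D_t$-adapted basis are unnecessary, and the ``compatibility'' you invoke is just the fact the paper cites: $d\lambda_d(C)C$ contains a basis of $\ZZ^d$ for $\vo$-symmetric $C$. Applied to $C=D_t^*$, this gives a lattice basis in which every width of $K_t$ is at most $d c_d/\epsilon$.
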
 
\begin{proof}
	By  Equation \eqref{eq:banaszczyk}, 
	\(
		\lambda_d((K-K)^\ast) \lambda_1(K-K) \le c\,d\log(d),
	\)
	where $c>0$ is an absolute constant. 
	This yields 
	\[
		\lambda_d((K-K)^\ast) \le \frac{c \,d\log(d) }{\epsilon(\cX)}
	\]
	for $K \in \cX$. 
	It is known that if $C$ is a $\vo$-symmetric convex body in $\RR^d$, then $d \cdot \lambda_d(C) \cdot C$ contains a basis of the lattice $\ZZ^d$, see \cite[equation (11) on page~68]{geometryofnumbers} . Thus, for
	\[
		\gamma_d(\cX) := \frac{c \, d^2 \log(d)}{\epsilon(\cX)}
	\]
	we have that 
	 every convex body $K \in \cX$ has the property that $\gamma_d(\cX) \cdot (K-K)^\ast$ contains a basis of the lattice $\ZZ^d$. Since  $(K-K)^\ast = \{ \vu \in \RR^d \colon \wdt(K,\vu) \le 1\}$,
	 this means that there exists a basis $\vb_1,\ldots,\vb_d$ of $\ZZ^d$ such that $\wdt(K,\vb_i) \le \gamma_d(\cX)$ for every $i  \in \{1,\ldots,d\}$. Applying a unimodular transformation to $K$, we thus may assume that a translation of $K$ is contained  in the box $[0,\gamma_d(\cX)]^d$. But then, using translations by lattice vectors, we can assume that $K$ is in the slightly expanded box $[0,\gamma_d(\cX)+1]^d$.   Consequently, $\flt(\cX)$ is the limit 
	 $\flt(\cX) = \lim_{t \to \infty} \wdt(K_t)$ 
	 for a sequence 
	 $(K_t)_{t \in \ZZ_{>0}}$ 
	 of convex bodies 
	 $K_t \in \cX$ satisfying 
	 $K_t \subseteq [0,\gamma_d(\cX)+1]^d$.  By  Blaschke's selection theorem (see \cite{schneider}), we can pass to a subsequence convergent in the Hausdorff metric. Let $K'$ be a compact convex set being the limit of $K_t$ in the Hausdorff metric, as $t \to \infty$. As $\cX$ is closed in the Hausdorff metric, we have $K' \in \cX$. Thus, since the lattice-width functional on $\K^d$ is continuous in the Hausdorff metric, we have $\wdt(K' ) = \lim_{t \to \infty} \wdt(K_t) = \flt(\cX)$. 
\end{proof} 

In order to apply Lemma~\ref{gen:flt:attained} to show that $\flt(d,k)$ is attained, we need to suggest a family $\cX$ of convex bodies with at most $k$ interior lattice points that satisfies $\flt(d,k) = \flt(\cX)$ and the assumptions of Lemma~\ref{gen:flt:attained}. 

\begin{lemma} \label{lambda1:bound:k}
	For every $d$-dimensional convex body $K \in \K^d$ with at most $k$ interior lattice points, one has 
	\[
		 \lambda_1(K-K) \ge \min \left\{ \frac{1}{\wdt(K)} \, , \,  \frac{1}{k+1} \left( 1 - \frac{\flt(d-1,0)}{\wdt(K)}  \right) \right\}. 
	\]
\end{lemma}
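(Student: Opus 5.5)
The idea is to read $\lambda_1(K-K)$ as the reciprocal of the longest lattice-direction chord of $K$. Then we project along that chord and use the classical flatness constant $\flt(d-1,0)$ to find a long fibre of $K$ over a lattice point of the projection. If that fibre were too long, it would contain more than $k$ interior lattice points of $K$.

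\emph{Setup.} Put $\lambda := \lambda_1(K-K)>0$ and $\ell := 1/\lambda$. By definition of $\lambda_1$ there is a nonzero lattice vector $\vv \in \ell(K-K)$, which we may take primitive. So there are points $\vx, \vx + \ell\vv \in K$, and hence $K$ contains the segment $s = [\vx, \vx+\ell\vv]$. Write $F := \flt(d-1,0)$.

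\emph{Trivial cases.} If $\wdt(K)\le F$, the second term of the minimum is $\le 0$ and the claim is trivial. The same happens for $d=1$, where the statement reduces to $\lambda_1(K-K) \ge 1/\wdt(K)$; this holds since in dimension one $\lambda_1(K-K) = 1/\mathrm{length}(K) = 1/\wdt(K)$. The first term $1/\wdt(K)$ is kept in the minimum to cover such degenerate situations. So assume $d\ge 2$ and $\wdt(K) > F$.

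\emph{Projection.} Let $\pi$ be the projection along $\vv$ onto $\RR^d/\RR\vv$. Since $\vv$ is primitive, $\pi(\ZZ^d)$ is a lattice of rank $d-1$, and after a unimodular change of coordinates it is $\ZZ^{d-1}$. Lattice functionals on the quotient are exactly those $\vu\in\ZZ^d$ with $\langle \vu,\vv\rangle=0$. Hence $\wdt(\pi K)\ge \wdt(K)>F$.

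\emph{Main construction.} Fix $t\in(0,1)$ with $(1-t)\wdt(K) > F$. By convexity,
\[
M_t := (1-t)K + t s \subseteq K .
\]
Its projection $\pi M_t = (1-t)\pi K + t\pi(\vx)$ has lattice width at least $(1-t)\wdt(K) > F$. By the definition of $\flt(d-1,0)$, the body $\pi M_t$ is not hollow, so there is a lattice point $\vp \in \inter(\pi M_t)\subseteq \inter(\pi K)$. Every point of $\pi M_t$ has a fibre in $M_t$ containing a translate of $t s$. Therefore the fibre $\pi^{-1}(\vp)\cap K$ has length at least $t\ell$ in units of $\vv$.

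\emph{Counting lattice points.} Because $\vp$ lies in $\inter(\pi K)$, the relative interior of this fibre lies in $\inter(K)$. This is the one point needing care: for a full-dimensional convex body, a relative-interior point of the fibre over an interior point of the projection is an interior point. The line $\pi^{-1}(\vp)$ is a lattice line with lattice spacing $\vv$. So an open segment of $\vv$-length greater than $k+1$ on it contains at least $k+1$ lattice points. Since $\LEinter{K}\le k$, we conclude $t\ell\le k+1$.

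\emph{Conclusion.} Letting $t \uparrow 1 - F/\wdt(K)$ gives $\ell \le (k+1)/(1-F/\wdt(K))$, i.e.
\[
\lambda_1(K-K) \ge \frac{1}{k+1}\Bigl(1-\frac{F}{\wdt(K)}\Bigr).
\]

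\emph{Main obstacle.} I expect the main difficulty to be the step producing a lattice point in $\inter(\pi M_t)$ rather than merely in $\pi M_t$, together with the interior-fibre argument. The non-hollowness from $\wdt(\pi M_t) > \flt(d-1,0)$ handles the first part directly, and choosing $t$ strictly below the threshold gives the needed strict inequalities.
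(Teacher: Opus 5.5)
Your proof is correct and is essentially the paper's argument: project along the primitive direction realizing $\lambda_1(K-K)$, shrink $K$ toward the long chord, invoke $\flt(d-1,0)$ to get a lattice point in the interior of the projection, and count lattice points on the resulting interior fibre. There is one small slip: you want $\ell\vv\in K-K$, i.e.\ $\vv\in\lambda(K-K)$, not $\vv\in\ell(K-K)$. The one genuine difference is that the paper first restricts to $\lambda_1(K-K)<1/\wdt(K)$ to get $\wdt(\pi(K))=\wdt(K)$. You instead observe that $\wdt(\pi K)\ge\wdt(K)$ holds unconditionally, so your argument actually proves the bound without the term $1/\wdt(K)$ in the minimum.
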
 
\begin{proof} 
	We assume that $\lambda_1(K-K) < \frac{1}{\wdt(K)}$ and $1 - \frac{\flt(d-1,0)}{\wdt(K)} > 0$, as otherwise the assertion is satisfied. Assume, without loss of generality,  that $\lambda_1(K-K)$ is attained in direction $\ve_d$, so  that $\lambda \cdot (\vp - \vq) = \ve_d$ for some $\vp,\vq \in K$. As $\wdt(K) < \frac{1}{\lambda_1(K-K)}$, the width of $K$ is necessarily attained in a direction orthogonal to $\ve_d$, because the points $\vp$ and $\vq$ ensure that the width function of $K$ in directions not orthogonal to $\ve_d$ is at least  $\frac{1}{\lambda_1(K-K)}$. 
	
	We consider the coordinate projection $\pi(x_1,\ldots,x_d) := (x_1,\ldots,x_{d-1})$. Let $\vr = \pi(\vp) = \pi(\vq)$. We scale $K$ down to a smaller convex body $L = (1-\alpha) K + \alpha \vp$, where $0 < \alpha < 1$. If $\wdt(L) > \flt(d-1,0)$, then $\pi(L)$ is not hollow and so it contains an interior lattice point, which can be given as  $\vz = \pi((1-\alpha) \vv + \alpha \vp)$ with $\vv \in \inter(K)$. It follows that $I:=(1-\alpha) \vv + \alpha \cdot \conv \{\vp,\vq\}$ is a segment whose relative interior is a subset of $\inter(K)$ of length $\frac{\alpha}{\lambda_1(K-K)}$. The segment $I$ gets projected by $\pi$ onto the  lattice $\vz$. As $K$ has at most $k$ interior lattice points, the length of segment $I$ is at most $k+1$. We conclude that $\frac{\alpha}{\lambda_1(K-K)} \le k+1$, whenever $\wdt(L) > \flt(d-1,0)$. But
	$\wdt(L) = (1-\alpha) \wdt(\pi(K)) = (1- \alpha) \wdt(K)$. 
	So, we see that $(1-\alpha) \wdt(K) > \flt(d-1,0)$ implies $\alpha \le (k+1) \lambda_1(K-K)$. Taking $\alpha = 1 - \flt(d-1,0)/ \wdt(K) - \epsilon$, where $\epsilon>0$ is small, we have 
	\[
		1 - \flt(d-1,0) / \wdt(K)  - \epsilon  \le (k+1) \lambda_1(K-K). 
	\]
	Taking the limit as $\epsilon \to 0$, we obtain $1 - \flt(d-1,0) / \wdt(K) \le (k+1) \lambda_1(K-K)$. 
\end{proof}

\begin{theorem} 
For every $d \in \ZZ_{>0}$ and every $k \in \ZZ_{\ge 0}$, 
the generalized flatness constant $\flt(d,k)$ is attained on a compact convex set $K' \in \K^d$ with $\LEinter{K'} \le k$. Furthermore, every such $K'$ satisfies $\lambda_1(K'-K') \ge \frac{1}{(1+k) \flt(d,k) }$. 
\end{theorem}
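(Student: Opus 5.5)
We apply Lemma~\ref{gen:flt:attained} to a suitably restricted family. Finiteness of $\flt(d,k)$ follows from Corollary~\ref{cor:k_flatness_bound} (or from \eqref{eq:fltdk_ub}). Since the problem is trivial if $\flt(d,k)=0$, which cannot happen because, e.g., a unit cube has positive width and no interior lattice points, we have $\flt(d,k)>0$. Fix a threshold $w_0$ with $\max\{\flt(d-1,0),0\}<w_0<\flt(d,k)$. Such a $w_0$ exists because $\flt(d,k)\ge\flt(d,0)\ge \flt(d-1,0)+1$ by Theorem~\ref{flt:monotonicity} (for $d=1$ we interpret $\flt(0,0)=0$). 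Define
\[
\cX := \{K\in\K^d : \LEinter{K}\le k,\ \wdt(K)\ge w_0\}.
\]
Then $\flt(\cX)=\flt(d,k)$, since bodies of width below $w_0$ are irrelevant to the supremum.

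We verify the four hypotheses of Lemma~\ref{gen:flt:attained}. (1) Every $K\in\cX$ is full-dimensional, because $\wdt(K)\ge w_0>0$ and lower-dimensional bodies have width $0$ by Remark~\ref{wdt:0:remark}. (3) $\cX$ is invariant under unimodular maps, since both interior lattice point count and width are. (4) Lemma~\ref{lambda1:bound:k} combined with $w_0\le\wdt(K)\le\flt(d,k)$ gives
\[
\lambda_1(K-K)\ge \min\Bigl\{\tfrac{1}{\flt(d,k)},\ \tfrac{1}{k+1}\bigl(1-\tfrac{\flt(d-1,0)}{w_0}\bigr)\Bigr\}>0 .
\]
(2) The main point is closedness. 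Let $K_t\to K'$ in the Hausdorff metric with $K_t\in\cX$. Continuity of the width gives $\wdt(K')\ge w_0>0$, so $K'$ is full-dimensional. Suppose $K'$ had $k+1$ interior lattice points $\vz_0,\dots,\vz_k$. Then some ball $\vz_j+\rho B_d$ lies in $\inter(K')$ for every $j$. By the cancellation argument from the continuity lemma, $\vz_j+\tfrac\rho2 B_d\subseteq K_t$ for all large $t$, so every $\vz_j$ lies in $\inter(K_t)$, a contradiction. Hence $\LEinter{K'}\le k$ and $\cX$ is closed. Lemma~\ref{gen:flt:attained} now yields a maximizer $K'$.

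For the second assertion, let $K'$ be any body with $\LEinter{K'}\le k$ and $\wdt(K')=\flt(d,k)$. Apply Lemma~\ref{lambda1:bound:k} with $\wdt(K')=\flt(d,k)$:
\[
\lambda_1(K'-K')\ge\min\Bigl\{\tfrac1{\flt(d,k)},\ \tfrac{1}{k+1}\cdot\tfrac{\flt(d,k)-\flt(d-1,0)}{\flt(d,k)}\Bigr\}.
\]
By Theorem~\ref{flt:monotonicity}, $\flt(d,k)-\flt(d-1,0)\ge\flt(d,0)-\flt(d-1,0)\ge 1$. Therefore the second term is at least $\frac{1}{(k+1)\flt(d,k)}$, and so is the first. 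The delicate points are the choice of $w_0$, which is needed to make hypothesis (4) uniform, and the upper semicontinuity of $\LEinter{\cdot}$ in the closedness step. Both are handled by the monotonicity theorem and the ball-cancellation argument above.
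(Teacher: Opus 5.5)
Your proof is correct and follows essentially the same route as the paper: restrict to bodies with at most $k$ interior lattice points whose width is at least a threshold, get the uniform lower bound on $\lambda_1(K-K)$ from Lemma~\ref{lambda1:bound:k} together with Theorem~\ref{flt:monotonicity}, and apply Lemma~\ref{gen:flt:attained}. The differences are refinements. The paper takes the threshold $\flt(d,0)$, so when $\flt(d,k)=\flt(d,0)$ it needs the known attainment of $\flt(d,0)$ to ensure its family is non-empty, and it never checks closedness. Your threshold $w_0<\flt(d,k)$ removes the first issue, and your ball-cancellation argument for upper semicontinuity of the interior lattice point count settles the second.
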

\begin{proof}
	It is known that $\flt(d,k)$ is a finite positive number and from the definition of $\flt(d,k)$ it can be seen that $\flt(d,k) \ge \flt(d,0)$. So, let $\cX$ be the family of all convex bodies $K$ with at most $k$ interior lattice points that satisfy $\flt(d,0) \le \wdt(K)$. We will use Lemma~\ref{gen:flt:attained} to verify the assertion. For our choice of $\cX$, we have $\flt(\cX) = \flt(d,k)$. 
	Lemma~\ref{lambda1:bound:k} yields 
	\[
		\lambda_1(K-K) \ge \min
		\left \{ 
			\frac{1}{\flt(d,k)} , 
			\frac{1}{k+1} \left( 
				 1-  \frac{\flt(d-1,0)}{\flt(d,0)}
			\right) 
		\right \} 
	\]
	for every $K \in \cX$. 
	By Lemma~\ref{flt:monotonicity}, $\flt(d-1,0) \le \flt(d,0) -1$, which implies 
	\[
\lambda_1(K-K) \ge \min
\left \{ 
\frac{1}{\flt(d,k)} , 
\frac{1}{(k+1) \flt(d,0)} 
\right \}. 
	\]
	In view of  Proposition~\ref{change:k:ineq}, $\flt(d,k) \le \left\lceil (k+1)^{1/d} \right\rceil \flt(d,0) \le (k+1) \flt(d,0)$.

	This implies that $\inf \{ \lambda_1(K-K) \colon K \in \cX \} \ge \frac{1}{(k+1) \flt(d,k)}$ for every $K \in \cX$. Thus, the assumption $\epsilon(\cX)>0$ in Lemma~\ref{gen:flt:attained} is satisfied. 
	
	So, by  Lemma~\ref{gen:flt:attained}, $\flt(\cX) = \flt(d,k)$ is attained on some $K' \in \cX$ and that every such $K'$ satisfies $\lambda_1(K' - K') \ge \frac{1}{(k+1) \flt(d,k)}$. 
\end{proof} 

We can also derive an analogous statement for $\flt(d,\infty)$. 

\begin{theorem} 
 Let $d \in \ZZ_{>0}$. 
 Then	$\flt(d,\infty) = \sup \{\wdt(K) \colon K \in \K^d, \ \vol(K) \le 1\}$ is attained on some $K' \in \{ K \in \K^d \colon \vol(K) \le 1\}$. 
\end{theorem}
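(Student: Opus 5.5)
We will apply Lemma~\ref{gen:flt:attained} to a suitably normalized family. Two issues must be handled. First, the constraint $\vol(K)\le 1$ allows lower-dimensional bodies, which have width $0$. Second, the lemma requires $\epsilon(\cX)>0$. Since width is homogeneous of degree one and volume of degree $d$, any full-dimensional $K$ with $\vol(K)<1$ can be dilated to volume exactly $1$, which increases its width. Lower-dimensional bodies have $\wdt(K)=0$ by Remark~\ref{wdt:0:remark}, and $\flt(d,\infty)>0$ (witnessed by the unit cube). Hence
\[
\flt(d,\infty)=\sup\{\wdt(K) : K\in\K^d,\ \vol(K)=1,\ \wdt(K)\ge c\}
\]
for any fixed $0<c<\flt(d,\infty)$, for instance $c=1$, which the unit cube attains. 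We let $\cX$ be this family. It is nonempty, and every member is full-dimensional because its volume is $1$. It is invariant under unimodular maps, which preserve volume and lattice width. It is closed in the Hausdorff metric, because volume and (by the continuity lemma above) lattice width are both continuous. Finiteness of $\flt(\cX)$ follows from \eqref{eq:makai_weak}.

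The main step is to verify condition (4), a uniform positive lower bound on $\lambda_1(K-K)$ over $\cX$, playing the role that Lemma~\ref{lambda1:bound:k} played for $k$-point bodies. We argue by projection as in that proof. Suppose $\lambda:=\lambda_1(K-K)$ is attained at a primitive vector, which after a unimodular change we take to be $\ve_d$. Then $K$ contains a segment of length $1/\lambda$ in direction $\ve_d$. Let $\pi$ be the coordinate projection onto the first $d-1$ coordinates, so that $\pi(\ZZ^d)=\ZZ^{d-1}$. Every chord of $K$ parallel to $\ve_d$ has length at most $1/\lambda$, by the definition of $\lambda_1(K-K)$ (note $K-K$ is symmetric). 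Therefore
\[
1=\vol(K)\le \tfrac1\lambda\,\vol_{d-1}(\pi(K)).
\]
On the other hand, a chord of length $1/\lambda$ in direction $\ve_d$ means that $\wdt(K,\vu)\ge |u_d|/\lambda$ for every $\vu$. Once $1/\lambda>\wdt(K)$, the lattice width must be attained in some $\vu$ with $u_d=0$, and so $\wdt(K)=\wdt(\pi(K))$, the lattice width of the projection in $\ZZ^{d-1}$.

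Combining this with \eqref{eq:makai_weak2} in dimension $d-1$ only bounds $\wdt(\pi(K))$ from above by a power of $\vol_{d-1}(\pi(K))$. That is the wrong direction, so we instead bound the volume of $K$ from above. Since $\wdt(K,\ve_d)\le \wdt(K)$ is not available, we use the Steinhagen-type estimate $\vol(K)\le C_d\,\wdt(K,\ve_d)\cdot\vol_{d-1}(\pi(K))$ only after first controlling $\pi(K)$. The cleaner route is through Lemma~\ref{gen:flt:attained}'s own mechanism. By \eqref{eq:banaszczyk} applied to the polar, or simply by the bound $\lambda_1((K-K)^*)\lambda_d(K-K)\le c\,d\log d$ read the other way round, it suffices to bound $\lambda_d(K-K)$ from above uniformly; uniform bounds on $\lambda_1(K-K)$ from below then follow from Minkowski's second theorem. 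Indeed,
\[
\lambda_1(K-K)\,\lambda_d(K-K)^{d-1}\ \ge\ \lambda_1\cdots\lambda_d\ \ge\ \frac{2^d}{d!\,\vol(K-K)}\ \ge\ \frac{2^d}{d!\binom{2d}{d}},
\]
using Rogers--Shephard and $\vol(K)=1$. For the upper bound on $\lambda_d(K-K)$, note that $\wdt(K)\ge 1$ gives $\lambda_1((K-K)^*)\ge 1$, and the classical transference $\lambda_d(K-K)\,\lambda_1((K-K)^*)\le c\,d\log d$ is exactly \eqref{eq:banaszczyk}. Hence $\lambda_d(K-K)\le c\,d\log d$, and therefore
\[
\epsilon(\cX)\ \ge\ \frac{2^d}{d!\binom{2d}{d}\,(c\,d\log d)^{d-1}}\ >\ 0.
\]
This last chain is the step most likely to need care. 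It requires that the inequality in \eqref{eq:banaszczyk} is being used with the right pairing of the polar and the difference body, and that the normalization $\wdt(K)\ge 1$ is exactly what converts it into a uniform bound.

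With (1)--(4) verified, Lemma~\ref{gen:flt:attained} yields $K'\in\cX$ with $\wdt(K')=\flt(\cX)=\flt(d,\infty)$. Since $\vol(K')=1$, this $K'$ is the required maximizer.
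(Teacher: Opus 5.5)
Your proof is correct and is essentially the paper's. You use the same normalized family $\{K : \vol(K)=1,\ \wdt(K)\ge 1\}$ and verify $\epsilon(\mathcal{X})>0$ by the same chain: Banaszczyk's transference with $\lambda_1((K-K)^*)=\wdt(K)\ge 1$, then Minkowski's second theorem, then Rogers--Shephard. This gives the identical bound $2^d/\bigl(d!\binom{2d}{d}(c\,d\log d)^{d-1}\bigr)$; the projection argument you start and then abandon plays no role and can be deleted.
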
 
\begin{proof} 
	We recall that $\flt(d,\infty)$ is the supremum of the lattice width functional over convex bodies $K \in \K^d$ that satisfy $\vol(K) \le 1$. 
When calculating $\flt(d,\infty)$ we can restrict attention to convex bodies with $\vol(K) =1$, because the lattice width of convex bodies $K$ with $\vol(K) = 0$ is zero and each convex body with the volume $0 < \vol(K) < 1$ can be re-scaled to a larger convex body with the volume $1$. Such a re-scaling makes the lattice width larger. As the lattice width of the unit cube $[0,1]^d$ is $1$ and its volume is also $1$, we can restrict attention to convex bodies of the lattice width at least $1$.

We thus can define $\flt(d,\infty)$ as  $\flt(d,\infty) = \flt(\cX)$, where $\cX = \{ K \in \K^d \colon \vol(K)=1, \wdt(K) \ge 1 \}$. This family satisfies the assumptions of Lemma~\ref{gen:flt:attained}. The only assumption whose verification is not straightforward is $\epsilon(\cX)>0$. 
Using Banaszczyk's transference theorem \eqref{eq:banaszczyk} as well as the lower bound in Minkowski's second theorem on successive minima
\[
\frac{2^d}{d!}\leq \vol(K-K)\lambda_1(K-K)\cdots\lambda_d(K-K),
\] 
the Rogers-Shephard inequality for the difference body
\[
\vol(K-K) \leq \binom{2d}{d} \vol(K),
\]
and our assumption $\vol(K) =1$, we eventually obtain  $\lambda_1(K-K) \ge \frac{2^d}{d! \cdot \binom{2d}{d} (c d\log(d))^{d-1} }$. Thus, one has $\epsilon(\cX) > 0$. 
Theorem~\ref{gen:flt:attained} implies that $\cX$ is attained on some $K' \in \K^d$. 
\end{proof} 

\subsection{$\flt(d,k)$ is attained on polytopes}
\label{ssec:max}
It turns out that $\flt(d,k)$ is attained on polyhedra and, for $k<\infty$, there are two classes of polyhedra that can be shown to contain optimizers of $\flt(d,k)$: lattice reduced bodies and $k$-maximal bodies. Lattice reduced bodies were introduced in \cite{codenottifreyer}. 
\begin{definition} 
	A $d$-dimensional convex body $K \in \K^d$ is called lattice reduced if for every convex body $L$  that is a proper subset of $K$ one has $\wdt(L) < \wdt(K)$. 
\end{definition} 

In~\cite{codenottifreyer}, the following has been shown. 

\begin{theorem} \label{reduced:thm} 
		Every $d$-dimensional convex body $K \in \K^d$ contains a reduced convex body of the same lattice width. Every $d$-dimensional reduced convex body is a polytope with at most $2^{d+1}-2$ vertices. 
\end{theorem}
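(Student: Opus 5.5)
We have to prove Theorem \ref{reduced:thm}, which has two parts: every full-dimensional body contains a reduced body of the same lattice width, and every reduced body is a polytope with at most $2^{d+1}-2$ vertices.

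\textbf{Existence of a reduced subbody.} Fix a full-dimensional $K$ and let $\mathcal{F}$ be the family of convex bodies $L\subseteq K$ with $\wdt(L)=\wdt(K)$. We order $\mathcal{F}$ by inclusion and apply Zorn's lemma to find a minimal element. Let $(L_i)$ be a chain in $\mathcal F$. Its intersection $L$ is a nonempty compact convex set, and the chain converges to $L$ in the Hausdorff metric. By the continuity lemma for lattice width proved above, $\wdt(L)=\lim\wdt(L_i)=\wdt(K)>0$. Then Remark \ref{wdt:0:remark} gives that $L$ is full-dimensional, so $L\in\mathcal F$. A minimal element of $\mathcal F$ is then reduced by definition, since any proper convex subbody of it has strictly smaller width.

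\textbf{Step 1: structure of the minimal directions.} Let $K$ be reduced with $w=\wdt(K)$. As in the proof of the continuity lemma, only finitely many lattice directions $\vu$ have $\wdt(K,\vu)\le w+1$. Let $U=\{\vu\in\ZZ^d\setminus\{\vo\}:\wdt(K,\vu)=w\}$, taken up to sign. For $\vu\in U$, write $F^+_\vu$ and $F^-_\vu$ for the faces of $K$ where $\langle\vu,\cdot\rangle$ attains its maximum and minimum.

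\textbf{Step 2: every vertex is essential for some width direction.} Let $\vx$ be a point of $K$ that is not in $\conv$ of the union of all $F^\pm_\vu$. Choose a small cap $C$ cut off from $K$ near $\vx$ that is disjoint from all these faces, and set $L=\overline{K\setminus C}$. For $\vu\in U$, both supporting faces survive in $L$, so $\wdt(L,\vu)=w$. The finitely many other relevant directions have width $>w$ in $K$, and a small enough cap keeps their widths $>w$ by continuity. Every remaining direction has width $>w+1$ and stays above $w$. Hence $\wdt(L)=w$, contradicting reducedness. Therefore
\[
K=\conv\bigcup_{\vu\in U}(F^+_\vu\cup F^-_\vu).
\]
A sharper version of the same argument should show that each face $F^\pm_\vu$ can be replaced by a single point of it. The reason is that width in direction $\vu$ only needs one point on each supporting hyperplane. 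The delicate part is keeping all the other directions of $U$ at width $\ge w$ simultaneously, which I would handle by the counting in Step 3.

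\textbf{Step 3: counting vertices (main obstacle).} We now need to bound the number of points required so that $\vx\mapsto$ the pattern of which supporting hyperplanes contain $\vx$ determines a polytope with at most $2^{d+1}-2$ vertices.

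The natural approach is to show that $K$ is a polytope whose vertices are determined by a set $U$ that can be shrunk to a set of directions in which every subset is essential. Then one bounds $|U|$ via reducedness. If the points $\vp^\pm_\vu$ are chosen and $K'=\conv\{\vp^\pm_\vu\}$, then $\wdt(K',\vv)$ for $\vv\notin U$ must stay $\ge w$. By Step 2, $K=K'$.

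The bound $|U|\le 2^d-1$ is the step I expect to be hardest. The count $2(2^d-1)=2^{d+1}-2$ strongly suggests it. I would try a parity argument: if two directions in $U$ are congruent modulo $2\ZZ^d$, then their half-sum or half-difference $\vv$ is a lattice vector satisfying
\[
\wdt(K,\vv)\le\tfrac12\bigl(\wdt(K,\vu_1)+\wdt(K,\vu_2)\bigr)=w.
\]
Analyzing the equality case should show that one of $\vu_1,\vu_2$ is redundant: its supporting points are forced by those of the others, or the inequality is strict and contradicts minimality of $w$. This leaves at most one direction per nonzero class of $\ZZ^d/2\ZZ^d$, so $|U|\le 2^d-1$, and hence $K$ has at most $2^{d+1}-2$ vertices.

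Making the redundancy claim rigorous, including choosing the points $\vp^\pm_\vu$ consistently so that removing the redundant direction's points does not lower any width, is the main technical obstacle.
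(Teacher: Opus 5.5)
You have not proved the second assertion; the vertex bound and polytopality are missing. Your existence argument is fine: for a decreasing chain one even gets $\wdt(L,\vu)=\inf_i\wdt(L_i,\vu)\ge\wdt(K)$ for each $\vu$ directly, so no limit of nets is needed. Step 2 also correctly gives $K=\conv\bigcup_{\vu\in U}(F^+_\vu\cup F^-_\vu)$. But the faces $F^\pm_\vu$ may a priori be infinite sets, so Step 2 yields neither polytopality nor a vertex bound.

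Step 3 is only a plan, and its key claim is false as stated. The cross-polygon $\conv\{\pm\ve_1,\pm\ve_2\}$ is reduced: a proper convex subbody misses a vertex, so its width in $\ve_1$ or $\ve_2$ drops below $2$. Yet its width directions are $\ve_1,\ve_2,\ve_1+\ve_2,\ve_1-\ve_2$, and the last two are congruent modulo $2$. So $|U|\le 2^d-1$ fails, and in this example \emph{both} $\ve_1\pm\ve_2$ are superfluous, not just one of them. Your alternative, choosing one point $\vp^\pm_\vu$ per face and showing that $\conv\{\vp^\pm_\vu\}$ keeps width $\wdt(K)$ in every other lattice direction, is circular. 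By reducedness this holds if and only if the chosen points already include every extreme point of $K$, which is exactly what has to be shown.

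The missing idea is to argue with extreme points rather than faces; write $w=\wdt(K)$.

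\textbf{Extreme points are singleton width faces.} If $K$ is reduced and $\vx$ is an extreme point, then $F^+_\vu=\{\vx\}$ or $F^-_\vu=\{\vx\}$ for some $\vu\in U$. Suppose not, and let $L$ be the convex hull of $K$ minus the open $\varepsilon$-ball around $\vx$. Since $K\setminus\{\vx\}$ is convex, $\vx\notin L$, so $L\subsetneq K$. Each width face not containing $\vx$ lies in $L$ for small $\varepsilon$. Each width face containing $\vx$ also contains some $\vy\neq\vx$, which lies in $L$ once $\varepsilon<\|\vx-\vy\|$. All other lattice directions keep width $>w$ for small $\varepsilon$, exactly as in your Step 2. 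Hence $\wdt(L)=w$, contradicting reducedness. No counting is needed here.

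\textbf{Parity, with the correct conclusion.} Suppose $\vu_1\neq\pm\vu_2$ lie in $U$ and $\vu_1\equiv\vu_2 \bmod 2\ZZ^d$. Then $\tfrac12(\vu_1+\vu_2)$ and $\tfrac12(\vu_1-\vu_2)$ are nonzero lattice vectors of width at most $w$. So equality holds in both support-function estimates, which means $F^{s}_{\vu_1}\cap F^{t}_{\vu_2}\neq\emptyset$ for all four sign choices $s,t\in\{+,-\}$. If, say, $F^+_{\vu_1}=\{\vx\}$, then $\vx\in F^+_{\vu_2}\cap F^-_{\vu_2}$ and $\wdt(K,\vu_2)=0$, which is absurd. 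So the width directions that have a singleton face lie in pairwise distinct classes of $(\ZZ^d/2\ZZ^d)\setminus\{\vo\}$, since width directions are primitive and $\vu\equiv-\vu$. There are at most $2^d-1$ such directions, each accounts for at most two extreme points, and therefore $K$ has at most $2^{d+1}-2$ extreme points and is a polytope.

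For comparison, the paper does not prove this theorem itself but quotes it from \cite{codenottifreyer}.
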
 

As an immediate corollary of this result, we obtain. 

\begin{theorem} 
	Let $d \in \ZZ_{>0}$. Then 
	every optimizer of $\flt(d,\infty)$ is a polytope with at most $2^{d+1}-2$ vertices and, for every $k \in \ZZ_{>0}$, there exists an optimizer of $\flt(d,k)$ that is a polytope with at most $2^{d+1} - 2$ vertices. 
\end{theorem}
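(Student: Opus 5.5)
The statement follows from Theorem~\ref{reduced:thm} combined with the attainment results of Section~\ref{ssec:realizers}. The key observation is that both constraints defining the two flatness problems are monotone under inclusion. If $L\subseteq K$, then $\vol(L)\le\vol(K)$ and $\inter(L)\subseteq\inter(K)$, so $\LEinter{L}\le\LEinter{K}$. Therefore, passing from an optimizer to a reduced subbody of the same lattice width keeps us inside the feasible family and does not change the objective value.

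For $k\in\ZZ_{>0}$ I would proceed as follows.
\begin{enumerate}
\item Take an optimizer $K'$ of $\flt(d,k)$, which exists by the attainment theorem above.
\item Observe that $K'$ is $d$-dimensional. Indeed, $\wdt(K')=\flt(d,k)\ge\flt(d,0)>0$, while non-full-dimensional bodies have width $0$ by Remark~\ref{wdt:0:remark}.
\item Apply Theorem~\ref{reduced:thm} to obtain a reduced body $L\subseteq K'$ with $\wdt(L)=\wdt(K')$. Note that $L$ is $d$-dimensional, since it has positive width.
\item Conclude that $L$ is feasible because $\LEinter{L}\le\LEinter{K'}\le k$. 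Hence $L$ is an optimizer, and by Theorem~\ref{reduced:thm} it is a polytope with at most $2^{d+1}-2$ vertices.
\end{enumerate}

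For $\flt(d,\infty)$ the statement is stronger: \emph{every} optimizer must be such a polytope. Let $K'$ be an optimizer. As before, $K'$ is full-dimensional, because its width is at least that of the unit cube, namely $1>0$. I claim that $K'$ is itself reduced. Suppose not. Then there is a convex body $L\subsetneq K'$ with $\wdt(L)=\wdt(K')$; any proper subbody of width at least $\wdt(K')$ has width exactly $\wdt(K')$ by monotonicity of the width. Since $L$ has positive width, it is full-dimensional. Because $K'$ is a convex body and $L$ is a proper closed convex subset, we get $\vol(L)<\vol(K')\le 1$. To justify the strict inequality, pick a point of $K'\setminus L$ and separate it from $L$. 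A small cap of $K'$ near that point is then disjoint from $L$, and this cap has positive volume because $K'$ is full-dimensional.

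Now rescale $L$ about an interior point by the factor $t=\vol(L)^{-1/d}>1$. The body $tL$ has volume $1$ and width $t\,\wdt(L)>\flt(d,\infty)$, contradicting the definition of $\flt(d,\infty)$. So $K'$ is reduced, and Theorem~\ref{reduced:thm} gives the polytope conclusion.

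The only mildly delicate points are the following:
\begin{itemize}
\item the strict volume inequality for a proper full-dimensional subbody, handled by the cap argument above;
\item ensuring full-dimensionality of $K'$ and of $L$, so that Theorem~\ref{reduced:thm} applies. This follows from the positivity of the widths involved together with Remark~\ref{wdt:0:remark}.
\end{itemize}
I expect the strict volume decrease to be the main step requiring care. Everything else is a direct combination of Theorem~\ref{reduced:thm} with the attainment results.
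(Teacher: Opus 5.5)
Your proposal is correct and follows essentially the same route as the paper. For $\flt(d,k)$ you pass to a reduced subbody of equal width via Theorem~\ref{reduced:thm}; it remains feasible because the number of interior lattice points can only drop under inclusion. For $\flt(d,\infty)$ you show every optimizer is reduced, since otherwise a proper subbody of equal width, rescaled to volume one, would beat it. Your explicit arguments for full-dimensionality and for the strict volume decrease of a proper subbody fill in details the paper leaves implicit.
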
 
\begin{proof} 
	Consider an optimizer $K'$ of $\flt(d,\infty)$. If $K'$ is not reduced, by Theorem~\ref{reduced:thm}, there would exist a reduced convex body $L'$ properly contained in $K'$ with the same lattice width. But then $\frac{1}{\vol(L')^{1/d}} L'$ is a convex body of volume one and with a larger lattice width than $K'$, which contradicts the choice of $K'$. So, $K'$ is necessarily reduced and by Theorem~\ref{reduced:thm}, $K'$ is a polytope with at most $2^{d+1} -2$ vertices. 
	
	For $\flt(d,k)$ with $k \in \ZZ_{>0}$ the argument is similar. Consider an optimizer $K'$ of $\flt(d,k)$. Then a reduced convex body $L'$ being a subset of $K'$ and having the same lattice width is also an optimizer of $\flt(d,k)$ and, by Theorem~\ref{reduced:thm} , $L'$ is a polytope with at most $2^{d+1} - 2$ vertices. 
\end{proof} 

While lattice reduced optimizers of $\flt(d,k)$ arise from the incentive to make an optimizer of $\flt(d,k)$ inclusion-minimal, we may have the opposite incentive of making a given optimizer of $\flt(d,k)$ inclusion-maximal. For such inclusion-maximal sets, it will be convenient to distinguish the exact numbers of lattice points, which is why we use the following definition.

\begin{definition} \label{def:k-maximal}
Let $K\subset\RR^d$ be  closed convex set and let $k \in \ZZ_{\ge 0}$. We say that $K$ is $k$-maximal, if $\LEinter{K}=k$ and for any closed convex set $L\supseteq K$ with $\LEinter{L}=k$, we have $L=K$.
\end{definition}

\begin{proposition} \label{inclusion:maximization}
	Let $d \in \ZZ_{>0}$ and $k \in \ZZ_{\ge 0}$. Then every closed convex set $K$ with $k$ interior lattice points is a subset of a $k$-maximal closed convex set. 
\end{proposition}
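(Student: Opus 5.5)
We apply Zorn's lemma to the family
\[
\mathcal{F} := \{ L \subseteq \RR^d : L \text{ closed convex},\ L\supseteq K,\ \LEinter{L} = k\},
\]
partially ordered by inclusion. The family is non-empty, since $K\in\mathcal{F}$. A maximal element of $\mathcal{F}$ is $k$-maximal in the sense of Definition~\ref{def:k-maximal}: any closed convex $L'\supseteq L\supseteq K$ with $\LEinter{L'}=k$ lies in $\mathcal{F}$, so maximality forces $L'=L$. It therefore suffices to show that every chain in $\mathcal{F}$ has an upper bound in $\mathcal{F}$.

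Let $\mathcal{C}\subseteq\mathcal{F}$ be a chain. Put $U := \bigcup_{L\in\mathcal{C}} L$ and $M := \overline{U}$. Since $\mathcal{C}$ is totally ordered, $U$ is convex: two points of $U$ lie in a common member of the chain. Hence $M$ is a closed convex set containing every $L\in\mathcal{C}$, and also containing $K$. It remains to prove $\LEinter{M}=k$.

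For the lower bound, $\inter L\subseteq \inter M$ for every $L\in\mathcal{C}$, so $\LEinter{M}\ge k$. If $\mathcal{C}$ is empty we simply use $K$ as the upper bound.

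For the upper bound, which is the only substantive step, we use the standard fact that a convex set and its closure have the same interior: $\inter \overline{U} = \inter U$ for convex $U$. This holds in the ambient $\RR^d$. If $U$ has empty interior, so does $\overline U$, since both then lie in a common proper affine subspace. If $U$ has non-empty interior, the claim follows from the usual relative-interior argument for convex sets.

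Now let $\vz\in\ZZ^d\cap\inter M = \ZZ^d\cap \inter U$. Then $\vz\in\inter U$, and $U$ contains a small simplex $\conv\{\vx_0,\dots,\vx_d\}$ having $\vz$ in its interior. Each $\vx_i$ lies in some member of $\mathcal{C}$, and since $\mathcal{C}$ is a chain, all $d+1$ points lie in a single member $L_{\vz}\in\mathcal{C}$. Consequently $\vz\in\inter L_{\vz}$.

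Suppose $\inter M$ contained $k+1$ distinct lattice points $\vz_1,\dots,\vz_{k+1}$. Taking the largest of the finitely many sets $L_{\vz_j}$ in the chain gives a single $L\in\mathcal{C}$ with $k+1$ interior lattice points. This contradicts $\LEinter{L}=k$, so $\LEinter{M}\le k$.

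Hence $M\in\mathcal{F}$ is an upper bound for $\mathcal{C}$, and Zorn's lemma yields a maximal element of $\mathcal{F}$, which is a $k$-maximal closed convex set containing $K$. The main point to handle carefully is the upper bound just proved, namely that passing to the closure of the union creates no new interior lattice points. The simplex argument combined with $\inter\overline U=\inter U$ settles it.
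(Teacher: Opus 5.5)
Your proof is correct. It differs from the paper's in where the work is done.

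The paper's two-line argument runs Zorn's lemma on \emph{open} convex sets and then takes the closure of a maximal element $U'$. The intended poset is evidently the open convex $U\supseteq\inter(K)$ with $U\cap\ZZ^d=\inter(K)\cap\ZZ^d$; the printed condition $U\subseteq\inter(K)$ looks like a slip.

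With open sets the chain step is trivial. The union of a chain of open convex sets is open and convex, and a lattice point lies in the union only if it lies in some member, so no new lattice points appear.

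The cost moves to the end, where one must check that $\overline{U'}$ is $k$-maximal. This uses the same fact $\inter\overline{U'}=U'$ that you invoke. It also needs the observation that for a closed convex $L\supseteq\overline{U'}$ with $k$ interior lattice points, $\inter(L)$ belongs to the poset, so $\inter(L)=U'$ and $L=\overline{\inter(L)}=\overline{U'}$. Finally, the inclusion $K\subseteq\overline{U'}$ relies on $K=\overline{\inter(K)}$. That fails when $\inter(K)=\emptyset$, which is possible for $k=0$, e.g.\ a long lattice segment in the plane.

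Your version works with closed sets throughout. The only real step is showing that closing the union of a chain creates no new interior lattice points, which you handle cleanly with $\inter\overline U=\inter U$ and the finite-simplex-in-one-member argument. After that, $k$-maximality of a maximal element and the inclusion $K\subseteq M$ follow directly from the definitions. In particular, your argument covers convex sets with empty interior without a separate case.
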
 
\begin{proof} 
	Let $Z = K \cap \ZZ^d$ and consider the partially ordered set consisting of all open convex set $U$ satisfying 
	$U \subseteq \inter(K)$. Zorn's lemma guarantees that this poset has a maximal element $U'$. The topological closure of $U'$ is a $k$-maximal closed convex set. 
\end{proof} 

A 0-maximal closed convex set is  also known as an ``inclusion maximal hollow'' body in the literature. Lov\'asz gave a characterization of 0-maximal closed convex sets (see also \cite{Av} for a short proof). 

\begin{theorem}[Lov\'asz, \cite{lovasz_2d}] 
	\label{lov:characterization}
	 The following statements are true. 
\begin{enumerate} 
\item A subset $K$ of $\RR^d$ is $0$-maximal if and only $K$ is a polyhedron with no interior lattice points and such that the relative interior of each facet of $K$ contains a lattice point. 
\item A unbounded subset $K$ of $\RR^d$ is $0$-maximal if and only if it is unimodularly equivalent to $\RR^m \times K'$, where $m \in \{1,\ldots,d-1\}$ and $K$ is a bounded $0$-maximal set in $\RR^{d-m}$. 
\item Every $0$-maximal subset of $\RR^d$ is a polyhedron with at most $2^d$ facets. Furthermore, there exist polytopes with $2^d$ facets that are $0$-maximal. 
\end{enumerate} 
\end{theorem}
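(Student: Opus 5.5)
Since this is a known theorem of Lovász that the paper only cites, I will sketch a self-contained route to all three parts. The route follows the approach in \cite{Av}. The single key tool is a separation argument showing that a $0$-maximal set is cut out by finitely many halfspaces, each supported at a lattice point.

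\emph{Step 1 (maximal sets are polyhedra with lattice points on facets).} Let $K$ be $0$-maximal. The first claim is that $K$ is the intersection of closed halfspaces $H_{\vz}^-$, $\vz\in\ZZ^d$, chosen as follows. For every lattice point $\vz$ with $\vz\notin\inter(K)$, separate $\vz$ from the open convex set $\inter(K)$ and let $H_{\vz}^-$ be the resulting closed halfspace containing $K$. Then $L=\bigcap_{\vz}H_{\vz}^-$ is a closed convex set containing $K$. By construction no lattice point lies in $\inter(L)$. Maximality therefore gives $L=K$.

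To see that finitely many halfspaces suffice, we may assume $K$ is full-dimensional, since a $0$-maximal set is clearly full-dimensional. Fix an interior point $\vc$. Only lattice points close to $K$ matter, and a compactness argument should show that finitely many halfspaces already cut out a set without interior lattice points. Concretely, take the finitely many lattice points in a large ball around $\vc$, and use that unbounded directions of $K$ must be rational. I expect this finiteness to be the main obstacle in the unbounded case. I would handle it by first proving part (2), reducing to the bounded case via the recession cone: the recession cone of $K$ must be a linear rational subspace. Otherwise a ray $\vc+\RR_{\ge0}\vv$ inside $\inter(K)$ with irrational or non-lineal direction would, by Dirichlet/Kronecker approximation, yield interior lattice points in a thin cylinder around it.

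Given finiteness, each facet $F$ must contain a lattice point in its relative interior. Otherwise we could push $F$ outward slightly: the rotation/translation of its supporting hyperplane only sweeps in a region meeting $\ZZ^d$ in points of $\partial F$ or none. This enlarges $K$ without creating interior lattice points and contradicts maximality. Conversely, suppose $K$ is a lattice-free polyhedron with a lattice point $\vz_F\in\relint F$ for each facet $F$. Any convex $L\supsetneq K$ contains a point beyond some facet $F$. Then $\vz_F$ lies in the interior of $\conv(K\cup\{\vp\})\subseteq L$, so $L$ has an interior lattice point. This proves (1).

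\emph{Step 2 (unbounded case).} Let the lineality space be the rational subspace $V$ of dimension $m$. Choose a unimodular map sending $V$ to $\RR^m\times\{0\}$. Then $K=\RR^m\times K'$, and $K'$ is $0$-maximal in $\RR^{d-m}$ by (1) applied to the projection. This gives (2).

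\emph{Step 3 (facet bound).} Suppose $K$ has more than $2^d$ facets. Pick lattice points $\vz_F\in\relint F$, one per facet. By pigeonhole, two of them, $\vz_F$ and $\vz_G$, are congruent mod $2\ZZ^d$. Their midpoint is then a lattice point. Because it is the midpoint of points in the relative interiors of two distinct facets, it lies in $\inter(K)$, a contradiction. For tightness, the cube $[0,2]^d$ shifted so that the lattice points sit in the facet centres, e.g. a suitable $0$-maximal cross-polytope-type example with $2^d$ facets, such as $\{\vx:\sum_i|x_i-\tfrac12|\le \tfrac d2\}$, has each facet containing a lattice point in its relative interior and no interior lattice points. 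Checking this relative-interior condition is a routine verification.
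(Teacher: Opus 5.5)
The paper gives no proof of this theorem; it quotes Lov\'asz and points to \cite{Av}. Your sketch therefore has to stand on its own. For full-dimensional $K$, three parts are fine: the facet arguments in (1), the parity/midpoint argument in (3), and the cross-polytope example.

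The genuine gap is exactly at the step you flag as the main obstacle. Your reason why $\mathrm{rec}(K)$ is a rational linear subspace is false. A ray $\vc+\RR_{\ge 0}\vv\subseteq\inter(K)$ with non-lineal or irrational direction need not produce interior lattice points. For instance, $[0,1]\times\RR_{\ge 0}$ is lattice-free and has the non-lineal recession direction $\ve_2$. Also, $\RR^2\times[0,1]$ is even $0$-maximal and has the irrational recession direction $(1,\sqrt{2},0)$. For $d\ge 3$ an irrational ray may be dense modulo $\ZZ^d$ only in a coset of a proper subtorus, and that coset can avoid the lattice.

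Kronecker's theorem has to be used in the opposite direction, together with maximality. Let $\vv\in\mathrm{rec}(K)$ and let $W$ be the smallest rational subspace containing $\vv$. Then $\{t\vv : t\ge 0\}+\ZZ^d$ is dense in $W+\ZZ^d$. Suppose a lattice point $\vz=\vy+\vw$ had $\vw\in W$ and $\vy+\varepsilon B_d\subseteq K$. You could pick $t\ge 0$ and $\vz'\in\ZZ^d$ with $\|t\vv-\vw-\vz'\|<\varepsilon$. The lattice point $\vz+\vz'$ would then lie in $\inter(\vy+t\vv+\varepsilon B_d)\subseteq\inter(K)$, which is impossible. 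Hence $\inter(K+W)=\inter(K)+W$ is lattice-free, and so is the closure of $K+W$. Maximality gives $K+W=K$, i.e.\ $W\subseteq\mathrm{lin}(K)$, the lineality space. Consequently $\mathrm{rec}(K)=\mathrm{lin}(K)$ is a sum of rational subspaces, hence rational. Without this lemma, neither (2) nor polyhedrality of unbounded $0$-maximal sets is established.

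There are three further corrections.

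First, in the bounded case, intersecting the $H_{\vz}^-$ over the lattice points of a large ball is not enough by itself. That finite intersection may be unbounded and contain distant lattice points in its interior. The compactness argument should run as follows. Since $K=\bigcap_{\vz}H_{\vz}^-$ is compact, finitely many of the open sets $\RR^d\setminus H_{\vz}^-$ cover a sphere $\vc+R\,\partial B_d$ enclosing $K$. The corresponding finite intersection therefore lies in $\vc+RB_d$. Then add the finitely many $H_{\vz}^-$ with $\vz\in(\vc+RB_d)\cap\ZZ^d$.

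Second, $0$-maximal sets are not ``clearly full-dimensional''. The irrational line $\{x_1+\sqrt{2}\,x_2=0\}\subset\RR^2$ is $0$-maximal under the paper's definition. Every strictly larger closed convex set contains an open slab bordering the line, and such a slab meets $\ZZ^2$ because $\ZZ+\sqrt{2}\,\ZZ$ is dense in $\RR$. This set contradicts (2) as printed. In the other direction, the rational line $\RR\times\{0\}$ satisfies the hypotheses of the ``if'' part of (1) without being $0$-maximal. So full-dimensionality has to be added as a hypothesis rather than derived; standard formulations list irrational hyperplanes as a separate case.

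Third, $[0,2]^d$ has only $2d$ facets. Your cross-polytope $\{\vx:\sum_i|x_i-\tfrac12|\le\tfrac d2\}$ is the right example: its boundary lattice points $\{0,1\}^d$ are exactly the barycentres of its $2^d$ facets.
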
 

For $k=0$, we thus have the following. 
\begin{proposition} 
	For each $d \in \ZZ_{>0}$, the flatness constant $\flt(d,0)$ is attained on a $0$-maximal polytope with at most $2^d$ facets. 
\end{proposition}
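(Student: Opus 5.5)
The plan is to combine the existence of an optimizer with inclusion-maximization and then apply Lovász's characterization (Theorem~\ref{lov:characterization}).

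\emph{Step 1: an optimizer that is a convex body.} The earlier theorem on attainment shows that $\flt(d,0)$ is attained on some $K'\in\K^d$ with $\LEinter{K'}=0$. Since $\wdt(K')=\flt(d,0)\ge 1>0$ (the unit cube is hollow and has width $1$), $K'$ is full-dimensional.

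\emph{Step 2: pass to a $0$-maximal set.} By Proposition~\ref{inclusion:maximization}, $K'\subseteq M$ for some $0$-maximal closed convex set $M$. The width function is monotone under inclusion, so $\wdt(M,\vu)\ge \wdt(K',\vu)$ for every $\vu$. If $M$ is bounded, then $M$ is a convex body with no interior lattice points. Hence $\wdt(M)\le\flt(d,0)$, and also $\wdt(M)\ge\wdt(K')=\flt(d,0)$, so $M$ is itself an optimizer. By part (3) of Theorem~\ref{lov:characterization}, $M$ is a polyhedron with at most $2^d$ facets, and being bounded it is a polytope.

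\emph{Step 3: rule out unbounded $M$.} This is the only real obstacle. By part (2) of Theorem~\ref{lov:characterization}, after a unimodular transformation $M=\RR^m\times M''$ with $m\ge1$. Take $\vu=\ve_d$, or more generally any nonzero lattice vector orthogonal to the $\RR^m$ factor, which exists since $m\le d-1$. Then $\wdt(M,\vu)$ equals the width of the bounded factor $M''\subset\RR^{d-m}$ in that direction, and $M''$ is a hollow body in dimension $d-m<d$.

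To turn this into a contradiction, I would compare widths across dimensions as follows:
\begin{itemize}
\item $\wdt(K')\le \wdt(M;\{0\}\times\ZZ^{d-m}\setminus\{0\})=\wdt(M'')\le \flt(d-m,0)$;
\item by Theorem~\ref{flt:monotonicity}, $\flt(d-m,0)\le \flt(d,0)-m<\flt(d,0)$.
\end{itemize}
This contradicts $\wdt(K')=\flt(d,0)$. Hence $M$ is bounded, and Step 2 applies.

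The points needing care are that the width of $M''$ in lattice directions of $\ZZ^{d-m}$ coincides with the width of $M$ in the corresponding lattice directions of $\ZZ^d$ (this holds because the unimodular map preserves the lattice), and that $d-m\ge1$, so that $\flt(d-m,0)$ makes sense.
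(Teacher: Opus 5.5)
Your proposal is correct and follows essentially the same route as the paper. You take an optimizer $K'$, enlarge it to a $0$-maximal set via Proposition~\ref{inclusion:maximization}, exclude the unbounded case using the product decomposition $\RR^m\times M''$ from Theorem~\ref{lov:characterization} together with $\flt(d-m,0)<\flt(d,0)$ from Theorem~\ref{flt:monotonicity}, and read off the bound of $2^d$ facets. Your width comparison, $\wdt(K')\le\wdt(M'')$ via lattice directions orthogonal to the $\RR^m$ factor, is if anything slightly more careful than the paper's chain $\wdt(K')=\wdt(L')\le\wdt(M')$.
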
 
\begin{proof} 
	Let $K'$ be an arbitrary $0$-point convex body with $\wdt(K') = \flt(d,0)$ and a $0$-maximal closed convex set $L'$ that contains $K'$ as a subset. Such a set exists by Proposition~\ref{inclusion:maximization}. By Theorem~\ref{lov:characterization}, $L'$ is a polyhedron. This polyhedron is bounded. In fact, if $L'$ were unbounded, then up to unimodular transformation it would have the form $L' = \RR^m \times M'$, where $m \in \{1,\ldots,d-1\}$ and $M'$ is a bounded $0$-maximal set in dimension $d-m$. But in view of  Theorem~\ref{flt:monotonicity}, $\wdt(K') = \wdt(L') \le \wdt(M') \le \flt(d-m,0) < \flt(d,0)$, which is a contradiction to the choice of $K'$. Hence, $L'$ is a bounded $0$-maximal set on which $\flt(d,0)$ is attained. Theorem~\ref{lov:characterization} gives bound $2^d$ on the number of facets of $L'$.  
\end{proof} 

In the case $k>0$, $k$-maximal sets are necessarily bounded, but similarly to the case $k=0$, we can characterize $k$-maximal bodies as polytopes with exactly $k$ interior points and at least one lattice point inside each facet.

\begin{theorem}[{see \cite{dhelly}}]
\label{theorem:k-maximality}
Let $d, k \in \ZZ_{>0}$. Then a subset of $\RR^d$ is $k$-maximal if and only if it is a polytope with $k$ interior lattice points and the relative interior of each facet containing a lattice point. Furthermore, there exists a finite upper bound $h(d,k)$ on the number of facets of $k$-maximal polytopes in $\RR^d$. 
\end{theorem}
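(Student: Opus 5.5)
The plan splits the statement into three parts: boundedness, the "if and only if" characterization, and the finiteness of $h(d,k)$.

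\emph{Boundedness.} Let $K$ be $k$-maximal with $k\ge 1$, and suppose $K$ is unbounded. Then $K$ has a nonzero recession direction $\vr$. Pick an interior lattice point $\vz$. The open half-line $\vz+\RR_{>0}\vr$ lies in $\inter(K)$, and so does a small open cylinder around it. If $\vr$ is, up to scaling, rational, this half-line already contains infinitely many lattice points. If $\vr$ is irrational, Dirichlet/Kronecker approximation shows that the cylinder contains infinitely many lattice points. Either way $\LEinter{K}=\infty>k$, a contradiction, so $K$ is bounded.

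\emph{Necessity.} Here I adapt Lov\'asz's argument from Theorem~\ref{lov:characterization}. The set $K\cap\ZZ^d$ is finite, and the only constraint is that no lattice point outside $\inter(K)$ becomes interior. I claim
\[
K=\bigcap_{\vz\in\ZZ^d\setminus\inter(K)} H_{\vz},
\]
where each $H_{\vz}$ is a closed halfspace containing $K$ with $\vz\notin\inter(H_{\vz})$. Only finitely many such halfspaces are needed, namely those for lattice points close to $K$; far-away points are irrelevant after intersecting with a large box of halfspaces that is itself valid by maximality. Hence $K$ is a polytope.

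Now suppose some facet $F$ of $K$ has no lattice point in its relative interior. Tilt or push $F$ outward slightly. The only lattice points that could newly become interior are those in the closed facet (which is impossible for points in $\relint F$, since there are none) or points just outside $F$. Boundary points of $F$ lie in other facets, so they stay on the boundary under a small enough move. Points outside $K$ have positive distance from it, so a small enough move keeps them out. This gives a strictly larger set with exactly $k$ interior lattice points, contradicting maximality.

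\emph{Sufficiency.} Let $P$ be a polytope with $k$ interior lattice points and a lattice point $\vz_F\in\relint F$ for each facet $F$. Take any closed convex $L\supsetneq P$. Then $L$ contains a point $\vx\notin P$, which violates some facet inequality of $P$, say for the facet $F$. The set $\conv(P\cup\{\vx\})\subseteq L$ contains $\vz_F$ in its interior, because $\vz_F$ lies in the relative interior of $F$ and $\vx$ lies beyond $F$. So $\LEinter{L}\ge k+1$, and $P$ is $k$-maximal.

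\emph{Finite bound $h(d,k)$.} This is the main obstacle. Each facet carries a lattice point in its relative interior, and these points are pairwise distinct. First, by Lemma~\ref{lambda1:bound:k} together with a flatness/volume bound, I would show that after a unimodular transformation a $k$-maximal polytope lies in a box of size depending only on $d$ and $k$. Then the number of lattice points in $P$ is bounded, and so the number of facets is bounded. A cleaner alternative is a parity argument: if the number of facets exceeds $2^d$ times a bound on the boundary lattice structure, then two facet points $\vz_F,\vz_G$ are congruent modulo $2$, and their midpoint is a lattice point in $\inter(P)$. One then refines this modulo $m$ with $m^d>k$ so as to produce more than $k$ interior lattice points. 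I would try this pigeonhole route first, since it gives a bound of the form $(k+1)\cdot m^d$.
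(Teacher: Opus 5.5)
Your proofs of boundedness, of polyhedrality (intersecting the halfspaces $H_{\vz}$ with a large box), of the blocking-point condition, and of sufficiency via $\vz_F\in\inter\conv(P\cup\{\vx\})$ are sound. For the blocking-point condition, push the facet parallel, replacing $\langle\va,\vx\rangle\le b$ by $\langle\va,\vx\rangle\le b+\delta$, so that the enlarged set stays bounded and only finitely many lattice points are in play.

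The genuine gap is the bound $h(d,k)$. The paper only quotes this theorem, so the bound has to come from your argument, and your first route rests on a false claim. $k$-maximal polytopes do \emph{not} fit, after a unimodular map, into a box depending only on $d$ and $k$. For $N\ge 3$, consider the triangle $P_N=\conv\{(-N,-1),(N,-1),(0,\tfrac{1}{N-1})\}$. It has $(\pm1,0)$ in the relative interiors of its slanted edges, $(0,-1)$ in that of its base, and $\inter(P_N)\cap\ZZ^2=\{\vo\}$. By your own sufficiency argument it is $1$-maximal, yet its area is $N^2/(N-1)$ and it contains $2N+4$ lattice points. Lemma~\ref{lambda1:bound:k} cannot prevent this. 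Its bound $\frac{1}{k+1}\bigl(1-\flt(d-1,0)/\wdt(K)\bigr)$ degenerates as $\wdt(K)\downarrow\flt(d-1,0)$, and here $\wdt(P_N)=\tfrac{N}{N-1}\to 1=\flt(1,0)$.

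Your second route is the right one, but as stated it does not close. A single pair of blocking points congruent modulo $2$ yields only one interior lattice point, which is no contradiction for $k\ge 1$. Also, $m^d>k$ is not the relevant condition. Two facts make the route work:
\begin{itemize}
\item Choosing one $\vz_F\in\relint F\cap\ZZ^d$ per facet gives pairwise distinct points.
\item For $F\ne G$ the open segment $(\vz_F,\vz_G)$ lies in $\inter(P)$; this is the fact the paper uses in Lemma~\ref{lemma:disjoint-casework-on-blocking-polytopes}. Indeed, a supporting hyperplane at a boundary point of that open segment would contain the whole segment. The face it cuts out would then contain relative interior points of $F$ and of $G$, hence both facets, forcing $F=G$.
\end{itemize}
Now sort the $\vz_F$ into the $2^d$ classes of $\ZZ^d/2\ZZ^d$. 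If one class contained $\vz_0,\dots,\vz_{k+1}$, the midpoints $\tfrac12(\vz_0+\vz_i)$, $1\le i\le k+1$, would be $k+1$ distinct lattice points of $\inter(P)$. So each class has at most $k+1$ members and $h(d,k)\le 2^d(k+1)$, which is your announced bound with $m=2$. Alternatively, with more than $(k+2)^d$ facets, two blocking points are congruent modulo $k+2$. The $k+1$ lattice points strictly between them are then interior, giving $h(d,k)\le (k+2)^d$.
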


We refer to the lattice points in the relative interior of facets of $K$ as \emph{blocking points}.

As a direct consequence of Proposition~\ref{inclusion:maximization} and Theorem~\ref{theorem:k-maximality}, we obtain 

\begin{proposition} 
	For all $d \in \ZZ_{>0}$ and $k \in \ZZ_{\ge 0}$, $\flt(d,k)$ is attained on an $\ell$-maximal polytope with $\ell \in \{0,\ldots,k\}$. Furthermore, if $\ell < k$, then $\flt(d,\ell) = \flt(d, \ell+1) = \cdots = \flt(d,k)$. 
\end{proposition}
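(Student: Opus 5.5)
Let $K'$ be an optimizer of $\flt(d,k)$; it exists by the preceding theorem, and $\LEinter{K'} \le k$. Since $K'$ is a compact convex set with $\ell' := \LEinter{K'}$ interior lattice points, Proposition~\ref{inclusion:maximization} gives an $\ell'$-maximal closed convex set $L' \supseteq K'$. The lattice width is monotone under inclusion, so $\wdt(L') \ge \wdt(K') = \flt(d,k)$.

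We need $L'$ to be bounded in order to call it a polytope. If $\ell' > 0$, Theorem~\ref{theorem:k-maximality} says that $L'$ is a polytope with $\ell'$ interior lattice points. If $\ell' = 0$, we argue as in the preceding proposition for $k=0$. Suppose $L'$ were unbounded. By Theorem~\ref{lov:characterization} it is then unimodularly equivalent to $\RR^m \times M'$ with $M'$ a bounded $0$-maximal set in dimension $d-m < d$. This gives $\wdt(L') \le \flt(d-m,0) < \flt(d,0) \le \flt(d,k)$ by Theorem~\ref{flt:monotonicity}, contradicting $\wdt(L') \ge \flt(d,k)$. So $L'$ is bounded and hence, by Theorem~\ref{lov:characterization}, a polytope.

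In both cases $L'$ is a convex body with $\LEinter{L'} = \ell \le k$, where $\ell := \ell'$. Therefore $\wdt(L') \le \flt(d,k)$, and combined with the reverse inequality we get $\wdt(L') = \flt(d,k)$. Thus $\flt(d,k)$ is attained on the $\ell$-maximal polytope $L'$ with $\ell \in \{0,\ldots,k\}$.

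For the second claim, the map $j \mapsto \flt(d,j)$ is nondecreasing, because the feasible sets in \eqref{eq:fltdk} grow with $j$. Also $L'$ is feasible for $\flt(d,\ell)$, so $\flt(d,\ell) \ge \wdt(L') = \flt(d,k)$. Together these give $\flt(d,\ell) \le \flt(d,\ell+1) \le \cdots \le \flt(d,k) \le \flt(d,\ell)$, so all these values are equal. The only delicate point in the whole argument is boundedness in the hollow case $\ell = 0$, which we handled with the strict monotonicity of Theorem~\ref{flt:monotonicity}.
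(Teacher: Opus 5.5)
Your proof is correct and follows essentially the same route as the paper, which obtains the statement directly from the existence of an optimizer, the inclusion-maximal enlargement of that optimizer, polytopality of $k$-maximal sets, and monotonicity of $\flt(d,\cdot)$. Your explicit treatment of the hollow case $\ell=0$ fills in a detail the paper leaves implicit: there the $k$-maximality theorem (stated only for $k>0$) does not apply, so boundedness has to come from Lov\'asz's characterization together with the strict inequality $\flt(d-m,0)<\flt(d,0)$.
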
 

If we knew that $\flt(d,k)$ is strictly increasing in $k$, we could strengthen the assertion of the above proposition by setting $\ell=k$. The following proposition implies that we can define $\flt(d,k)$ as $\sup \{ \wdt(K) \colon K \in \K^d, \LEinter{K}=k\}$, using convex bodies satisfying $\LEinter{K} =k$ 
rather than the bodies satisfying $\LEinter{K} \le k$, but in such a definition it is not clear whether the supremum is attained.

We use it to prove that the $\leq$ sign in the definition of $\flt(d,k)$ can (almost) be replaced by an $=$ sign.

\begin{proposition}\label{prop:monotonicity_k}
For any $d \in \ZZ_{>0}$ and $k,\ell\in\ZZ_{\geq 0}$ with $k\leq \ell$ we have
\[
\sup\{\wdt(K) \colon K\in \K^d,~\LEinter{K}=k\} \leq \sup\{\wdt(K) \colon K\in \K^d,~\LEinter{K}=\ell\}.
\]
\end{proposition}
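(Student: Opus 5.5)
Fix a convex body $K\in\K^d$ with $\LEinter{K}=k$. It suffices to produce, for every $\varepsilon>0$, a convex body $L\supseteq K$ with $\LEinter{L}=\ell$. Since enlarging a body never decreases the width, this gives $\wdt(L)\geq\wdt(K)$, and taking suprema yields the claim. (The $\varepsilon$ is not even needed; monotonicity of $\wdt$ under inclusion does all the work.) By induction on $\ell$, it is enough to treat the case $\ell=k+1$: given $K$ with exactly $k$ interior lattice points, we want a convex body $L\supseteq K$ with exactly $k+1$ interior lattice points.

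We may assume $K$ is full-dimensional. If it is not, then for $k\geq 1$ it has no interior at all, which is impossible. For $k=0$ we may replace $K$ by a slightly thickened body that is still hollow. For instance, take a hollow full-dimensional body containing $K$: translate so that $K$ lies in a hyperplane slab $\{a\le \langle \vu,\vx\rangle\le a+\delta\}$ that contains no lattice points in its interior. Such a slab exists after perturbing to a rational hyperplane $\langle \vu,\vx\rangle = c$ with $c\notin\ZZ$ near $\aff K$, or more simply by intersecting with a large box. In any case $\wdt$ does not decrease.

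The main construction is as follows. Choose a lattice point $\vz\notin\inter(K)$ minimizing some strictly convex gauge, such as the distance to $K$ measured by $\rho(\vz)=\min\{t\geq 1 : \vz\in \vc + t(K-\vc)\}$ for a fixed interior point $\vc$ of $K$. Then set $L=\conv(K\cup\{\vz'\})$, where $\vz'$ is a point slightly beyond $\vz$ along the ray from $\vc$, so that $\vz\in\inter(L)$. The key step is to show that no other lattice point becomes interior. Since the lattice is discrete and $K$ is bounded, only finitely many lattice points have $\rho\le \rho(\vz)+1$.

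There is a subtlety when several lattice points attain the same minimal gauge value. To handle this, I would first perturb $\vc$, or replace the gauge by a generic linear tie-breaker, so that the minimizer $\vz$ is unique. I then need every other lattice point $\vw\notin\inter(K)$ to stay outside $\inter(L)$ for $\vz'$ sufficiently close to $\vz$. Points $\vw$ with $\rho(\vw)>\rho(\vz)$ lie outside $\vc+\rho(\vz)(K-\vc)$, which contains $L$ in the limit, so they are fine by a compactness argument over the finitely many relevant $\vw$. This step is the main obstacle. A lattice point $\vw$ lying on the boundary of $K$, or on the boundary of $\conv(K\cup\{\vz\})$, could be swallowed into the interior when we push past $\vz$.

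To control this, I would instead take $L=\conv(K\cup B)$, where $B$ is a tiny ball around $\vz$ of radius $r$. A lattice point $\vw\neq\vz$ in $\inter(L)$ lies in the interior of $\conv(K\cup\{\vz\})+rB_d$. If $\vw$ lies in $\conv(K\cup\{\vz\})$ but not in $\inter K$, then $\vw$ lies in a segment $[\vz,\vy]$ with $\vy\in K$, which forces $\rho(\vw)\le\rho(\vz)$, with equality only when $\vw$ is on the same supporting structure. Uniqueness from the generic tie-breaking excludes this. So for $r$ small, only $\vz$ is added, giving $\LEinter{L}=k+1$ and completing the induction.
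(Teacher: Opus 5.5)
\textbf{The proof has a genuine gap: its central claim is false.} Your reduction rests on the claim that every $K$ with $\LEinter{K}=k$ is contained in some convex body $L$ with $\LEinter{L}=k+1$.

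Take $d=2$, $k=1$ and $K=3\Tst=\conv\{(0,0),(3,0),(0,3)\}$. Its only interior lattice point is $(1,1)$, and each edge carries two lattice points in its relative interior. Any convex $L\supsetneq K$ contains a point $\vp$ strictly beyond the line of some edge $e$. Then $\conv(K\cup\{\vp\})$ contains all of $\relint(e)$ in its interior, because $K$ covers one side of $e$ and the triangle $\conv(e\cup\{\vp\})$ covers the other. Hence $\LEinter{L}\geq 3$, and no superset of $K$ has exactly $2$ interior lattice points.

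This is exactly where your argument breaks. With $\vz=(1,0)$, the ball around $\vz$ pushes $L$ past the bottom edge and swallows $(2,0)$, which has the same gauge value $\rho=1$ as $\vz$. A tie-breaker only makes $\vz$ the unique \emph{selected} point. It cannot stop other lattice points in the relative interior of the same facet (your ``same supporting structure'') from entering $\inter(L)$. Perturbing the center does not help either, since two such points on a common facet have equal gauge for every center near the interior. Your remark that the $\varepsilon$ is not needed is the symptom: the proposition is stated for suprema precisely because one cannot in general stay above $K$.

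The missing idea is to give up $L\supseteq K$ and accept an arbitrarily small loss of width.
\begin{enumerate}
\item Replace $K$ by a $k$-maximal polytope containing it; this does not decrease the width. For $k=0$, use a bounded $0$-maximal body attaining $\mathrm{Flt}(d,0)$ instead.
\item Pick a facet $F$ with blocking points $Z_F=\relint(F)\cap\ZZ^d$ and a vertex $\vz$ of $\conv(Z_F)$.
\item Choose a $(d-2)$-plane $A\subset\aff(F)$ separating $\vz$ from $Z_F\setminus\{\vz\}$.
\item Tilt $F$ slightly about $A$, so that the part near $\vz$ moves outward and the rest moves inward.
\end{enumerate}
Then $\vz$ becomes interior while the other blocking points of $F$ are cut off. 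The new polytope is Hausdorff-close to $K$, so its width is within any prescribed $\varepsilon$ of $\wdt(K)$.
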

\begin{proof}
	It suffices to prove the assertion for $\ell = k+1$. If $k=0$, fix a $0$-maximal convex body $K$ that attains $\flt(d,0)$. As we have observed, such a body $K$ is a polytope. In the case $k=1$, take $K$ to be an $k$-maximal polytope.  
	Consider a facet $F\subset K$ and let $Z_F = \relint(F) \cap\ZZ^d$. Moreover, let $\vz$ be a vertex of $\conv( Z_F )$. We choose a $(d-2)$-plane $A\subset \aff(F)$ that separates $\vz$ from $\conv( Z_F \setminus\{\vz\})$. Rotating $F$ around $A$ slightly yields a new polytope $K'$ with $\inter(K')\cap\ZZ^d = (\inter(K)\cap\ZZ^d) \cup \{\vz\}$ whose width is arbitrarily close to the width of $K$. This shows that
\[
\sup\{\wdt(K) \colon K\in \K^d,~\LEinter{K}=k\} \leq \sup\{\wdt(K) \colon K\in \K^d,~\LEinter{K}= k+1\}.\qedhere
\]
\end{proof}

As a consequence, we see that
\begin{equation}
\label{eq:fltdk_eq}
\begin{split}
\flt(d,k) &= \sup\{ \wdt(K) : K\in\K^d,~\LEinter{K} = k\}.
\end{split}
\end{equation}
Unlike the maximum in \eqref{eq:fltdk} it is not clear that the supremum in \eqref{eq:fltdk_eq} is attained. The main difference is that the set of convex bodies that satisfies $\LEinter{K}=k$ is not closed for $k>0$, which is why the usual compactness arguments don't apply. 

\subsection{$\flt(d,k)$ is an algebraic number} We show that $\flt(d,k)$ is algebraic for all $d, k$. Previously, it was not known if even $\flt(d, 0)$ is always an algebraic number. 


\begin{theorem} \label{alg:number}
	Let $d \in \ZZ_{\ge 1}$ and $k \in \ZZ_{\ge 0}$. Then $\flt(d,k)$ is an algebraic number. 
\end{theorem}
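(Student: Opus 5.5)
We show that $\flt(d,k)$ is the optimal value of a finite family of problems, each definable in the first-order language of real closed fields with rational (indeed integer) data. By Tarski--Seidenberg, the supremum of such a definable bounded set of reals is itself definable over $\QQ$, and hence algebraic.

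\emph{Step 1: reduce to polytopes with a bounded number of vertices.} By the last theorem of Section~\ref{ssec:max}, $\flt(d,k)$ is attained on a polytope $P$ with at most $N:=2^{d+1}-2$ vertices and $\LEinter{P}\le k$. Allowing repeated vertices, we may take exactly $N$ vertices $\vv_1,\dots,\vv_N\in\RR^d$.

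\emph{Step 2: bound the geometry by a compact box.} By the proof of Lemma~\ref{gen:flt:attained}, applied to the family $\cX$ used in the attainment theorem, any optimizer may be moved by a unimodular transformation into the fixed box $Q=[0,\gamma+1]^d$, where $\gamma=\gamma_d(\cX)$. Inside $Q$ only the finite set $Z=Q\cap\ZZ^d$ of lattice points matters. The same argument as in the continuity lemma, using a ball $\rho B_d$ inside $P$, bounds the relevant width directions. I would instead simply note that $\wdt(P)\le \flt(d,k)$, combined with Lemma~\ref{lambda1:bound:k}, gives $\lambda_1(P-P)\ge\epsilon>0$. Then every $\vu$ with $\|\vu\|$ large has $\wdt(P,\vu)$ exceeding the known a priori upper bound, so
\[
\wdt(P)=\min_{\vu\in U}\wdt(P,\vu)
\]
for a fixed finite set $U\subset\ZZ^d\setminus\{\vo\}$. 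The main obstacle is exactly this step: the bounds must hold \emph{uniformly} over all candidate polytopes that the semialgebraic formula quantifies over. I would enforce this by adding the defining constraints of $\cX$ (in particular $\wdt(P,\vu)\ge \flt(d,0)$ for all $\vu\in U$, or simply a lower bound on $\lambda_1(P-P)$) into the formula. If $\flt(d,0)$ is not yet known to be algebraic, I would replace it by a rational lower bound such as $1$, which still makes Lemma~\ref{lambda1:bound:k} give a uniform $\epsilon$ after appropriate restriction.

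\emph{Step 3: semialgebraic encoding.} Consider the set $S\subset\RR^{dN}\times\RR$ of tuples $(\vv_1,\dots,\vv_N,w)$ with $\vv_i\in Q$ satisfying the following conditions.
\begin{enumerate}
\item For each $\vu\in U$ and each pair $(i,j)$ we have $w\le \max_{i,j}\langle\vu,\vv_i-\vv_j\rangle$. This is a finite disjunction of polynomial inequalities.
\item For every subset $Z'\subseteq Z$ with $|Z'|\ge k+1$, not all points of $Z'$ lie in $\inter\conv\{\vv_i\}$.
\end{enumerate}
Interior membership of a lattice point $\vz$ is expressible as: there exist $\lambda_i\ge 0$ with $\sum_i\lambda_i=1$ and $\sum_i\lambda_i\vv_i=\vz$, and there exists $\delta>0$ such that $\vz\pm\delta\ve_j$ lies in the hull for all $j$. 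This is first-order with integer coefficients. So $S$ is semialgebraic and defined over $\QQ$, and $\flt(d,k)=\sup\{w:(\vv,w)\in S\}$ by Steps~1--2, where the reverse inequality is trivial. The set $\{t: t=\sup\pi_w(S)\}$ is then a $\QQ$-definable singleton, $\flt(d,k)$ is a real algebraic number.
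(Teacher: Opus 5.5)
\textbf{There is a genuine gap.} Your encoding fails at the inequality $\sup\{w:(\vv,w)\in S\}\le\flt(d,k)$, which you call trivial but which is the whole difficulty. (The trivial direction is the other one, since the optimizer in the box is a witness.) Condition (1) only gives $w\le\wdt(P;U)$, and $\wdt(P;U)\ge\wdt(P)$. So you need $\wdt(P;U)\le\flt(d,k)$ for \emph{every} polytope $P$ in the box with at most $k$ interior lattice points. That means a single finite $U$ that computes the lattice width uniformly over all tuples the formula quantifies over, not just near an optimizer.

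Your justification does not supply this. A lower bound on $\lambda_1(P-P)$ only says that $P-P$ is not too large. It holds automatically for every $P\subseteq[0,R]^d$, because $P-P\subseteq[-R,R]^d$ gives $\lambda_1(P-P)\ge 1/R$. What makes $\wdt(P,\vu)$ grow with $\|\vu\|$ is that $P$ is \emph{fat}, i.e.\ contains a ball.

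Here is a concrete counterexample for $d=2$. Take a primitive $\vu_0$ longer than every vector of $U$, e.g.\ $\vu_0=(n,1-n)$ with $n$ large. Let $P$ be the part of the box between the consecutive lattice lines $\langle\vu_0,\vx\rangle=0$ and $\langle\vu_0,\vx\rangle=1$.
\begin{itemize}
\item $P$ is a hollow polygon with at most six vertices.
\item No $\vu\in U$ is parallel to $\vu_0$, so $\langle\vu,\vv_0\rangle\in\ZZ\setminus\{0\}$ for the strip direction $\vv_0=(n-1,n)$.
\item The strip contains a segment of length at least $R$ along $\vv_0$, so $\wdt(P,\vu)\ge R/\|\vu_0\|$ for all $\vu\in U$.
\end{itemize}
For a box that is large compared with $U$, this exceeds $\flt(2,k)$ even though $\wdt(P)\le 1$. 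Nothing in your argument ties the size of $U$ to the size of the box.

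The constraints you float in Step 2 do not rescue this. Such a strip satisfies $\wdt(P,\vu)\ge\flt(d,0)$ for all $\vu\in U$, and the $\lambda_1$-bound is automatic. A threshold of $1$ does not make the second term of Lemma~\ref{lambda1:bound:k} positive, since $\flt(d-1,0)\ge 1$. In any case, none of these constraints actually appears in your definition of $S$.

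\textbf{The missing idea is a uniform fatness condition} on the quantified polytopes, and this is exactly why the paper argues locally. It sandwiches an optimizer $P$ between rational polytopes $A\subseteq\inter(P)$ and $B\supseteq P$, and quantifies only over $Q$ with $A\subseteq Q\subseteq B$. Containing $A$ makes all such $Q$ uniformly fat, and lying in $B$ bounds them. Hence $U=\{\vu:\wdt(A,\vu)\le\wdt(B)\}$ computes $\wdt(Q)$ for all of them at once, and $\flt(d,k)$ becomes a boundary point of a $\QQ$-semialgebraic subset of $\RR$.

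Your global set-up can be repaired in the same spirit:
\begin{itemize}
\item Put one optimizer into a rational box $[0,R]^d$. A single witness suffices, so the Banaszczyk machinery is unnecessary.
\item Add to $S$ the first-order condition that $\conv\{\vv_i\}$ contains a Euclidean ball of some rational radius $r_0$ not exceeding that optimizer's inradius.
\item Take $U=\{\vu\in\ZZ^d\setminus\{\vo\}:\|\vu\|\le R/(2r_0)\}$.
\end{itemize}
Then for $\vu\notin U$ one has $\wdt(P,\vu)\ge 2r_0\|\vu\|>R\ge\wdt(P,\ve_1)$. So $\wdt(P;U)=\wdt(P)\le\flt(d,k)$ on all of $S$, and your interior-point encoding and final definability step go through. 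Alternatively, Minkowski's theorem applied to a thin cylinder shows that a $U$ chosen large enough relative to $R$ works without the ball constraint, but that argument would have to be supplied.
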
 
\begin{proof}
	Above we have shown that $\flt(d,k)$ is attained on a polytope. So let $P$ be a polytope on which 
	$\flt(d,k)$ is attained, let 
	$Z = \inter(P) \cap \ZZ^d$ and let $m$ be the number of vertices of $P$. Let us sandwich $P$ between two rational polytopes $A$ and $B$ with the property that $A \subseteq \inter(P) \subseteq P \subseteq \inter(B)$ and 
	$A \cap \ZZ^d = Z$. Let $W = B \cap \ZZ^d$. 
	Consider polytopes of the form $Q = \conv(\vq_1,\ldots,\vq_m)$ with the property that $\vq_1,\ldots,\vq_m \in B$, $Q \supseteq A$ and $Q \cap \ZZ^d = Z$. 
	We  show that the condition 
	\[
		C(t,Q) :=  \quad \bigl(t = \wdt(Q)\bigr) \wedge \bigl(A \subseteq Q \subseteq B\bigr) \wedge \bigl(Q \cap \ZZ^d = Z\bigr)
	\]
	can be formulated as a first-order formula over reals. We note that $Q$ is given by $\vq_1,\ldots,\vq_m \in \RR^d$, so that this condition is a constraint involving in $1 + dm$ variables $(t,q_1,\ldots,q_m)$. 
	For the condition $t = \wdt(Q)$ to be phrased as a first-order formula, we observe that the fact $A \subseteq Q \subseteq B$ allows us to formulate $\wdt(Q)$ as a finite mini-max $\wdt(Q) = \min_{\vu \in U} \max_{i,j= 1,\ldots,m}  \langle \vq_i - \vq_j, \vu \rangle $, where $U \subseteq \ZZ^d \setminus \{\vo\}$  is the set of all vectors $\vu \in \ZZ^d \setminus \{\vo \}$ satisfying $\wdt(A,\vu) \le \wdt(B)$. This shows that $t = \wdt(Q)$ can be formulated via a first-order formula over reals. The condition $A \subseteq Q$ means that for every vertex $\va$ of $A$ one has $\va \in Q$, where $\va \in Q$ can be phrased as the existence of a representation as a convex combination $\va = \sum_{i=1}^m \lambda_i \vq_i$, with $\lambda_i \ge 0$ and $\sum_{i=1}^m \lambda_i = 1$. This can be described via first order formula, too. The condition $Q \subseteq B$ can be phrased as $\vq_i \in B$ for each $i  \in \{1,\ldots,m\}$ and so it can be formulated as a system of linear inequalities with rational coefficients using the inequality description of $B$. Finally, the condition $Q \cap \ZZ^d = Z$ can be formulated as the fact that among points of $W$, the points of $Z$ belong to $Q$, while the points of $W \setminus Q$ do  not belong to $Q$. Again, this can be phrased as the existence of a convex combination $\vz = \sum_{i=1}^m \lambda_i \vq_i$ for the points in $\vz \in Z$ and as a negation of the existence of a convex combination $\vw = \sum_{i=1}^m \lambda_i \vq_i$ for the points $\vw \in W \setminus Z$. 
	
	It follows that the condition $D(t)$ describing that $t$ is the lattice width of some polytope $Q$ of the form $Q = \conv(\vq_1,\ldots,\vq_m)$ that satisfies 
	$A 
	\subseteq Q \subseteq B$ and 
	$Q \cap \ZZ^d = Z$ is a first-order formula over reals that is written using coefficients in the field $\QQ$. 
	
	By quantifier elimination algorithm from real algebra, we can re-formulate the formula $D(t)$ using a quantifier-free formula with the coefficients in $\QQ$. Such a quantifier-free reformulation of $D(t)$ is a  Boolean combination 
	$\Phi (g_1 (t) \ge 0,\ldots, g_N(t) \ge 0)$ of polynomial inequalities with coefficients in $\QQ$. That is, $\Phi$ is a Boolean formula and $g_1,\ldots,g_N$ are polynomials in $\QQ[t]$. The formula $D(t)$ describes the set 
	$T:=\{ t \in \RR \colon D(t) \ \text{is satisfied} \}$. By construction, the every element of $T$ is the lattice-width of some polytope $Q$ that contains at most $k$ lattice points. Hence, $\sup T \le \flt(d,k)$. 
	On the other hand, by taking $\vq_1,\ldots,\vq_m$ arbitrarily close to the $m$ vertices of $P$, the polytope $Q = \conv(q_1,\ldots,q_m)$ gets arbitrarily close to the maximizer $P$ of $\flt(d,k)$. Even more specifically, we can pick $Q \subseteq P$ that is a slightly smaller homothetical copy of $P$. 
	Consequently, $\sup T = \flt(d,k)$ and $\sup T - \epsilon  \in T$ for every sufficiently small $\epsilon>0$. This shows that in a small neighborhood of $\sup T$ as $t$  increases and passes through $\sup T$, the truth value of the Boolean formula $\Phi (g_1 (t) \ge  0,\ldots, g_N(t) \ge 0)$ gets flipped from true to false. This is only possible if some of the polynomials $g_1,\ldots,g_N$ is zero at $\sup T$. In fact, if none of these polynomials were zero at $\sup T$, their sign would not change in a small neighborhood of $\sup T$, so that the truth value of the formula for $t$ would remain the same, which is a contradiction. It follows that $\sup T = \flt(d,k)$ is a zero of a polynomial from $\QQ[t]$. Hence, $\flt(d,k)$ is an algebraic number. 
 \end{proof} 

\begin{remark}
	For background in  real algebraic geometry, including Tarski's theorem and  quantifier elimination, we refer to \cite{bochnak2013real} and \cite{scheiderer2024course}. 
\end{remark} 

\begin{remark} \label{rem:alg-degree}
		The argument of Theorem~\ref{alg:number} is local as it only involves polytopes that are sandwiched between $A$ and $B$, where $A$ and $B$ can be chosen arbitrarily close to the optimizer $P$ of $\flt(d,k)$. 
		Hence, the same argument also shows that if a polytope $P$ is a local optimizer of $\flt(d,k)$, then its width $\wdt(P)$ is still an algebraic number. That is, all local optima of the $\flt(d,k)$ problem are algebraic numbers. For example, for the problem of determination of $\flt(2,0)$, apart from the global optimum $1 + \frac{2}{\sqrt{3}}$, one also has a local optimum $2$ attained on the quadrilateral $\conv \{ (3/2,1/2), (1/2,3/2), (-1/2,1/2), (1/2,-1/2)\}$. 
\end{remark} 

\begin{remark} 
	One may wonder if Theorem~\ref{alg:number} can be further improved by narrowing the underlying field. The flatness constant $\flt(2,0) = 1 + \frac{2}{\sqrt{3}}$ is an algebraic number of degree $2$, whereas the flatness constants $\flt(2,1) = 3$ (determined in this paper) and $\flt(2,2) = 10 / 3$ are rational (in other words, they algebraic numbers of degree $1$). Is it possible that $\flt(d,k)$ is a rational number when $k> 0$? Can we say something more specific about the degree of the algebraic numbers $\flt(d,0)$? For the local optimizer $P$ of $\flt(3,0)$ determined  in \cite{acms_local_3d_2021}, the degree of the algebraic number $\wdt(P) = 2 + \sqrt{2}$ is $2$. For local optimizers $P$ of $\flt(d,0)$ that have been found  \cite{MSW} for $d=4$ and $d=5$, the degree of the algebraic number $\wdt(P)$ is $4$ and $2$, respectively. If these local optimizers are actually global, then it looks as if the degree does not grow. Is it possible that there is a uniform bound on the degree of the algebraic numbers $\flt(d,k)$? Is it possible that the degree of the algebraic number $\flt(d,0)$ is $2$ when $d \ge 3$ is odd? Resolving these questions would require some further ideas. 
\end{remark} 

\begin{remark} 
	We remark that $\flt(d,\infty)$, too, is an algebraic number, as follows from the reformulation of $\flt(d,\infty)$ in algebraic terms. This can be seen via connecting $\flt(d,\infty)$ to the reverse isodiametric problem, which has a completely algebraic formulation. See, for example, \cite{Aliev}.  
\end{remark}

\section{Planar convex bodies with at most one interior point}
\label{sec:flt21}


In this section we prove Theorem \ref{thm:flt21}, that is, we show that $\flt(2,1) = 3$.

\subsection{1-maximal polygons and their vertices} \label{sec:shards}

In \cite{hurkens}, Hurkens handled the case of convex bodies with no interior lattice points, i.e., he computes $\flt(2,0)$, so we now have to treat convex bodies with exactly one interior lattice point. We can further reduce to treating only $1$-maximal convex bodies (cf.\ Definition \ref{def:k-maximal}). That is, it suffices to show the following statement.

%

\begin{proposition} \label{prop:1-maximal}
  Let $K$ be a $1$-maximal convex body. Then $\wdt(K) \leq 3$ and equality holds if and only if $K$ is unimodularly equivalent to $3\Tst$.
\end{proposition}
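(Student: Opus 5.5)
The plan is to reduce Proposition~\ref{prop:1-maximal} to finitely many combinatorial types, each described by finitely many real parameters. Within each type we bound the lattice width by a handful of explicit width functions.

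\emph{Normalization and the blocking polygon.} By Theorem~\ref{theorem:k-maximality}, $K$ is a polygon with exactly one interior lattice point, and every edge contains a blocking point in its relative interior. After a lattice translation the interior lattice point is $\vo$. For each edge we choose one blocking point and let $B$ be the convex hull of the chosen points, so that $B\subseteq K$. Every lattice point in $\inter(B)$ lies in $\inter(K)$, hence $B$ is a lattice polygon with at most one interior lattice point, namely $\vo$.

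\emph{Classification of $B$.} Next we classify the possible $B$ up to unimodular equivalence, using the classification of lattice polygons with at most one interior lattice point. When $\vo\in\inter(B)$, these are the $16$ reflexive polygons. When $\vo\notin\inter(B)$, the degenerate types have $B$ hollow, or $\vo$ on its boundary, or $B$ a segment or triangle with few vertices. Two reductions shorten this list:
\begin{itemize}
\item If $\vo\notin\inter(K)$ relative to the blocking structure, we compare with Hurkens' bound via Theorem~\ref{thm:hurkens}.
\item Configurations in which $K$ is trapped in a strip of width $\le 2$ or $\le 3$ in some lattice direction can be discarded or handled directly.
\end{itemize}
This should leave a short list of essential configurations: hexagonal, pentagonal, cross, kite, pyramid, trapezoid, the standard-triangle type and the terminal-triangle type (the $B^{\mathrm{hex}},\dots,B^{\mathrm{term}}$ of the paper). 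The terminal-triangle case should be treated with the technique of \cite{codenottifreyer}, as indicated for Hurkens' theorem.

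\emph{Casework inside each configuration.} For a fixed $B$, each edge of $K$ is a line through its blocking point. That line is parametrized by one slope, constrained so that no other lattice point enters $\inter(K)$; this gives linear inequalities on the slopes coming from the lattice points adjacent to $B$. The vertices of $K$ are then rational functions of the slopes. For each configuration we select two or three primitive directions $\vu$, typically among $(1,0),(0,1),(1,1),(1,-1)$ up to the symmetry of $B$. We then show
\[
\min_{\vu}\wdt(K,\vu)\le 3,
\]
by an averaging or convexity argument in the slopes. A typical instance is showing that a weighted sum of two width functions is at most $6$, with equality forcing specific slope values. Tracing the equality conditions should force $K=3\Tst$ up to unimodular equivalence. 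Concretely, $K$ must have vertices $(-1,-1),(-1,2),(2,-1)$, with edges through the points $(-1,0),(0,-1),(1,0)$ and so on.

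\emph{Main obstacle.} The difficulty is that $3\Tst$ lies on the boundary (in the limit) of essentially every parameter space. Each individual bound is therefore tight, and crude estimates will not work. For each case I would first identify the limiting degeneration towards $3\Tst$ and choose the width directions that are active at $3\Tst$ (the three edge-normal directions). I would then prove the inequality by linearizing around that degeneration, and handle any remaining region of the parameter space by a cruder bound in a fourth direction.
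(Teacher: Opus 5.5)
Your architecture matches the paper's: a blocking polygon, a reduction to finitely many blocking configurations, then bounds on the width over a few directions such as $\hex$. But the two load-bearing steps are not supplied.

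\textbf{First gap: the reduction to the eight configurations.} Your proposed reductions do not work. The first bullet is vacuous as written, since $\vo\in\inter(K)$ by your normalization. If you meant $\vo\notin\inter(B)$, Theorem~\ref{thm:hurkens} still says nothing, because $K$ is not hollow. Moreover, $3\Tst$ is a limit of $1$-maximal polygons in every one of the eight configurations, so no bound below $3$ (such as $1+2/\sqrt{3}$) can dispose of any of them. The ``strip'' bullet names no mechanism. What is missing is a reason why the infinitely many hollow candidates (hulls of lattice segments at heights $0$ and $1$) and eleven of the sixteen reflexive polygons drop out.

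The paper takes $B$ to be the hull of \emph{all} blocking points and proves a separate lemma: if an edge $f_0$ of $K$ contains three collinear lattice points, then $\wdt(K)\le 3$, with equality only for $3\Tst$. After normalizing, concavity of the section length forces the length $\ell_0$ of $f_0$ to be at most $3$ whenever $\wdt(K,\ve_2)\ge 3$. The neighbouring edges cut off $(\pm 1,0)$, so $\wdt(K,\ve_1)=\ell_0$. Together with a parity argument excluding the unit square, this yields the list.

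With your one-point-per-edge choice there is a shorter route you did not state. The open segment between blocking points on distinct edges lies in $\inter(K)$, so every lattice point of $B$ other than its vertices equals $\vo$. This immediately leaves:
\begin{itemize}
\item the five reflexive polygons with primitive edges;
\item the pyramid and trapezoid, with $\vo$ the midpoint of an edge;
\item the empty triangle (the unit square again falls to parity).
\end{itemize}
In particular $B$ is never a segment. But with this choice $3\Tst$ itself occurs in several cases. For example, the blocking points $(0,-1),(1,0),(-1,1)$ of $\conv\{(-1,-1),(2,-1),(-1,2)\}$ span a copy of $\Bterm$. So you must characterize equality in each case, whereas in the paper equality arises only for $\Bhex$ and in the three-collinear lemma.

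\textbf{Second gap: the sharp bound inside each configuration.} The ``two widths sum to at most $6$'' inequality is what the paper uses for $\Bhex$, $\Bpent$ and $\Bcross$, and one-variable calculus suffices for $\Bpyr$. For $\Bkite$, $\Btrap$ and $\Bst$ it needs more:
\begin{itemize}
\item further subdivision of the vertex regions;
\item perturbations of one vertex that make one width exactly $3$ without decreasing the other relevant width;
\item sign analysis of explicit rational functions on the reduced domain.
\end{itemize}
For $\Bterm$ it needs:
\begin{itemize}
\item a perturbation lemma showing that the width over $\hex$ may be assumed attained in two directions;
\item a Lagrange-multiplier argument excluding interior maxima, where a $3\times 3$ minor of the constraint gradients factors as $(y+1)(t^2-xy)\neq 0$;
\item a boundary analysis;
\item an elimination-ideal computation reducing the equal-width locus to the ellipse $x^2+xy+y^2-3x=0$, on which $x+y\le 3$.
\end{itemize}
Your plan to ``linearize around the degeneration towards $3\Tst$'' controls only a neighbourhood of that configuration. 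It cannot give a global inequality on a region whose supremum sits on its boundary. Your ``cruder bound in a fourth direction'' is unspecified. So the actual content of the proof, a sharp global estimate in each of the eight cases, is still missing from your plan.
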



%
%
%

The notion of blocking points (cf.\ Section \ref{ssec:max}) allows us to associate to any $1$-maximal polygon a lattice polygon that we call \emph{blocking polygon}.
\begin{definition} \label{def:blocking-polytope}
  Let $P\subset\RR^2$ be a $1$-maximal polygon. Its \emph{blocking polygon} $\BP$ is the convex hull of the blocking points of $P$, that is,
\[
  \BP = \conv \{ \vec b \in \ZZ^2 :  \vec b \in \relint(e)\text{ for some edge }e\subset P \}.
  \]
We sometimes refer to $P$ as a \textit{circumscriber} of $\BP$. 
\end{definition}

Our first goal is to show that the vertices of the circumscriber $P$ in the above definition correspond in a weak sense to the edges of its blocking polygon $\BP$ and that they necessarily lie in specific regions that depend on the lattice point structure of $\BP$.

We let $B=\{\vx\in\RR^2 : \langle \va_i,\vx\rangle\leq b_i,~0\leq i \leq m\}$ be a lattice polygon. For an edge $e = \{\vx \in B : \langle \va_j,\vx\rangle = b_j\}$, $0\leq j \leq m$, of $B$ we define the polyhedron $S_e$ obtained from $B$ by reversing the inequality constraint responsible for $e$, i.e., 
\begin{equation}
\label{eq:shard_definition_formal}
S_e = \{\vx\in\RR^2 : \langle \va_j,\vx\rangle\geq b_j,~\langle \va_i,\vx\rangle \leq b_i,~i\neq j\}.
\end{equation}
Moreover, for $\vx_0\in\inter (B)$, we consider the face cone 
\[
F(\vx_0,e) = \pos(e-\vx_0) + \vx_0,
\]
where $\pos(X) = \{\lambda \vx : \vx\in X,~\lambda\geq 0\}$.
These polyhedra are related as follows.

\begin{lemma}
\label{lemma:fcone}
In the above setting, we have
\[
S_e = \bigcap_{\vx_0\in B\setminus e} F(\vx_0,e).
\]
\end{lemma}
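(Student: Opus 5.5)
The plan is to prove the two inclusions separately, working directly with the inequality description of $B$ and $S_e$ from \eqref{eq:shard_definition_formal}. Throughout, $B$ is a two-dimensional lattice polygon and $e$ is the edge with $\langle \va_j,\vx\rangle = b_j$. We use $B\cap\aff(e)=e$. Hence every $\vx_0\in B\setminus e$ satisfies $\langle \va_j,\vx_0\rangle<b_j$, and the cone $F(\vx_0,e)=\{\vx_0+\lambda(\vy-\vx_0):\vy\in e,\ \lambda\ge 0\}$ has its apex off the line $\aff(e)$.

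\emph{Inclusion $\subseteq$.} Fix $\vx\in S_e$ and $\vx_0\in B\setminus e$.
\begin{enumerate}
\item Since $\langle\va_j,\vx_0\rangle<b_j\le\langle\va_j,\vx\rangle$, the segment $[\vx_0,\vx]$ meets the line $\langle\va_j,\cdot\rangle=b_j$ in a point $\vy=\vx_0+\mu(\vx-\vx_0)$ with $\mu\in(0,1]$.
\item Both $\vx_0$ and $\vx$ satisfy every constraint $\langle\va_i,\cdot\rangle\le b_i$ with $i\ne j$, so their convex combination $\vy$ does too. Hence $\vy\in B$ with equality in constraint $j$, that is, $\vy\in e$.
\item Then $\vx=\vx_0+\mu^{-1}(\vy-\vx_0)$, so $\vx\in F(\vx_0,e)$.
\end{enumerate}

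\emph{Inclusion $\supseteq$.} Let $\vx$ lie in every cone $F(\vx_0,e)$. We must verify the constraints defining $S_e$.

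For a constraint $i\neq j$, take the edge $f_i$ of $B$ given by $\langle\va_i,\cdot\rangle=b_i$. Since $f_i\neq e$, it contains a point $\vx_0\notin e$ (for instance, an endpoint of $f_i$ not on $e$). Write $\vx=\vx_0+\lambda(\vy-\vx_0)$ with $\vy\in e\subseteq B$ and $\lambda\ge0$. Then
\[
\langle\va_i,\vx\rangle=b_i+\lambda(\langle\va_i,\vy\rangle-b_i)\le b_i.
\]

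For the constraint $j$, argue by contradiction and suppose $\langle\va_j,\vx\rangle<b_j$. For any $\vx_0\in B\setminus e$ and any representation $\vx=\vx_0+\lambda(\vy-\vx_0)$ with $\vy\in e$, we have
\[
\langle\va_j,\vx\rangle=(1-\lambda)\langle\va_j,\vx_0\rangle+\lambda b_j .
\]
Since $b_j>\langle\va_j,\vx_0\rangle$, this quantity is at least $\langle\va_j,\vx_0\rangle$ in both cases $\lambda\in[0,1]$ and $\lambda>1$. Now choose $\vx_0\in B\setminus e$ close to $e$, for example on a segment from an interior point of $B$ toward $e$, so that $\langle\va_j,\vx_0\rangle>\langle\va_j,\vx\rangle$. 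This contradicts the previous bound, so $\langle\va_j,\vx\rangle\ge b_j$ and $\vx\in S_e$.

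The main obstacle is this last step, the reversed inequality $j$. It is the only place where the intersection over all $\vx_0$, rather than a single well-chosen apex, is essential. The key is to let the apex approach $e$, using that $\sup_{B\setminus e}\langle\va_j,\cdot\rangle=b_j$ is not attained on $B\setminus e$.
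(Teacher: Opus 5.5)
Your proof is correct and is essentially the paper's argument. For the forward inclusion you use the point where $[\vx_0,\vx]$ crosses $\aff(e)$. For the reverse inclusion you put the apex $\vx_0$ on the edge $f_i$ for each constraint $i\neq j$, and close to $e$ for constraint $j$; the paper runs the same case split contrapositively, starting from $\vy\notin S_e$. Like the paper, you implicitly use an irredundant inequality description of $B$, so that each constraint $i\neq j$ actually defines an edge $f_i$.
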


\begin{proof}
Let us write $B = \{ \vx\in\RR^2 : \langle \va_i,\vx\rangle \leq b_i,~0\leq i \leq m\}$, where $\va_i\in\RR^2$, $b_i\in\RR$, $m\in\NN$. We may assume that the inequality description is irredundant. Without loss of generality, let $e$ be the edge defined by the outer normal vector $\va_0$. Then,
\begin{equation}
\label{eq:be_eq}
S_e = \{ \vx\in\RR^2 : \langle \va_0,\vx\rangle \geq b_0,~ \langle \va_i,\vx\rangle \leq b_i,~1\leq i \leq m\},
\end{equation}
and we define $\overline S_e = B\cup S_e =  \{ \vx\in\RR^2 : \langle \va_i,\vx\rangle \leq b_i,~1\leq i \leq m\}$.

First, let $\vy\in S_e$ and $\vx_0\in B\setminus e$. The segment $[\vx_0,\vy]$ intersects $S_e \cap \{\langle \va_0,\vx\rangle = b_0\} = e$ in a point $\vx_1\neq \vx_0$. It follows that there exists some $\lambda > 0$ such that $\vy = \vx_0 +\lambda (\vx_1 - \vx_0)\in F(\vx_0,e)$.

For the reverse inclusion, consider $\vy\in \RR^d \setminus S_e$. Then, $\vy$ violates one of the inequalities defining $S_e$ (cf.\ \eqref{eq:be_eq}). First, we assume that $\langle \va_0,\vy \rangle < b_0$. There exists $\vx_0$ in $B\setminus e$ such that $\langle \va_0,\vy\rangle < \langle \va_0,\vx_0 \rangle$ and, thus, $\langle \va_0,\vx \rangle > \langle \va_0,\vy\rangle$ holds for all $\vx\in F(\vx_0,e)$.

Finally, assume that $\langle \va_i, \vy\rangle > b_i$ for some $1\leq i \leq m$. There exists $\vx_0 \in B\setminus e$ with $\langle \va_i,\vx_0\rangle = b_i$. Since $e\subset B \subset \{\langle\va_i,\vx\rangle \leq b_i\}$ we have $\langle \va_i,\vx\rangle \leq b_i$ for all $\vx\in F(\vx_0,e)$. Thus, in both cases, there exists $\vx_0\in B\setminus e$ with $\vy\not\in F(\vx_0,e)$.
\end{proof}


\begin{lemma}
\label{lemma:initial_shards}
Let $B$ be a lattice polygon with edge set $E(B)$ and let $P$ be a 1-maximal polygon with $\BP=B$. For $e\in E(B)$ let $S_e$ be the region described in Equation \eqref{eq:shard_definition_formal}. Then for each $e\in E(B)$ there exists a point $\vp_e\in S_e$ such that $P = \conv\{\vp_e : e\in E(B)\}$.
\end{lemma}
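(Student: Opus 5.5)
The plan is to place every vertex of $P$ into one of the regions $S_e$. Any edge $e$ of $B$ that receives no vertex will be given a harmless point $\vp_e$ lying on $e$ itself. Throughout, write $B=\{\vx : \langle \va_i,\vx\rangle\le b_i,\ 0\le i\le m\}$ irredundantly, so that the inequalities correspond bijectively to the edges of $B$.

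First, I record two facts. Since every blocking point lies on $\partial P$ and $P$ is convex, we have $B\subseteq P$. Moreover, every vertex $\vw$ of $B$ is a blocking point, because $B$ is the convex hull of the blocking points. Hence $\vw$ lies in the relative interior of some edge $f$ of $P$, and the line $\aff(f)$ supports $P$, and therefore $B$, at $\vw$.

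Second, let $\vv$ be a vertex of $P$. I claim $\vv\notin B$. Suppose instead $\vv\in B$. Since $\vv$ is extreme in $P\supseteq B$, it is extreme in $B$, so it is a vertex of $B$ and hence a blocking point. But blocking points lie in relative interiors of edges of $P$, and a vertex of $P$ cannot. So $\vv$ violates at least one inequality $\langle\va_i,\vx\rangle\le b_i$.

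The key step is to show that $\vv$ violates exactly one inequality. For a point outside a convex polygon, the violated constraints form a cyclically consecutive block of edges. If the block had size at least two, it would contain two consecutive edges $e,e'$ meeting at a vertex $\vw$ of $B$, with $\langle \va_e,\vv-\vw\rangle>0$ and $\langle\va_{e'},\vv-\vw\rangle>0$. Let $\vu$ be the outer normal of the edge $f$ of $P$ through the blocking point $\vw$. Since $\aff(f)$ supports $B$ at $\vw$, the vector $\vu$ lies in the normal cone of $B$ at $\vw$, so $\vu=\alpha\va_e+\beta\va_{e'}$ with $\alpha,\beta\ge0$, not both zero. This gives $\langle\vu,\vv-\vw\rangle>0$, so $\vv$ lies strictly outside the supporting half-plane of $f$, contradicting $\vv\in P$. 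Hence each vertex $\vv$ of $P$ violates exactly one constraint, say the one of edge $e$, and satisfies all the others. By \eqref{eq:shard_definition_formal} this means $\vv\in S_e$.

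Finally, I assemble the points $\vp_e$. For each edge $e$ that receives at least one vertex of $P$, the segment joining the vertices of $P$ in $S_e$ lies in $P$, and also in $S_e$ because $S_e$ is convex. I expect that at most two consecutive vertices of $P$ land in one $S_e$; if there are two, I must still represent $P$ with a single $\vp_e$, and this is the step I expect to be delicate. To resolve it I would first try to show that at most one vertex of $P$ lies in each $S_e$. Two vertices in the same $S_e$ would span an edge $f$ of $P$ lying in the open half-plane beyond $\aff(e)$, since neither vertex lies on $\aff(e)$. The blocking point of $f$ would then lie in that open half-plane, yet blocking points are in $B$, which lies on the other side of $\aff(e)$; this is a contradiction. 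So the assignment of vertices of $P$ to edges of $B$ is injective, and I set $\vp_e$ equal to the assigned vertex. For edges $e$ with no vertex assigned, I pick any $\vp_e\in e$, which is possible since $e\subseteq S_e$ and $e\subseteq B\subseteq P$. Then $\conv\{\vp_e : e\in E(B)\}$ contains all vertices of $P$ and is contained in $P$, so it equals $P$.
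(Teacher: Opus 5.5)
Your proof is correct, and it takes a different route from the paper.

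\textbf{The paper's route.} The paper works with the face cones $F(\vx_0,e)=\pos(e-\vx_0)+\vx_0$. It places each vertex of $P$ in one cone of the fan $\{F(\vx_0,e)\}_{e}$ and argues that each cone contains at most one vertex. It then uses the fact that this vertex lies on the two edges of $P$ through the endpoints of $e$, so that segments from it to any $\vx_1\in B\setminus e$ cross $e$. This shows the vertex lies in every $F(\vx_1,e)$. Finally it invokes Lemma~\ref{lemma:fcone}, $S_e=\bigcap_{\vx_0\in B\setminus e}F(\vx_0,e)$, to conclude $\vp_e\in S_e$.

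\textbf{Your route.} You bypass the cones and Lemma~\ref{lemma:fcone} entirely and use the inequality description of $S_e$ directly. A vertex of $P$ cannot lie in $B$, since it would then be a blocking point. It cannot violate two adjacent edge inequalities of $B$: the outer normal of the edge of $P$ through their common vertex (a blocking point) lies in the normal cone $\pos\{\va_e,\va_{e'}\}$, which would put the vertex strictly outside $P$.

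\textbf{Comparison.} Your normal-cone argument is local and elementary, and it avoids the bookkeeping about which cone of the fan a vertex falls into. The paper's route, in exchange, exhibits the colinearity structure that the later case analysis relies on throughout: each $\vp_e$ lies on the edges of $P$ through the endpoints of $e$. The step ``at most one vertex of $P$ beyond $\aff(e)$'' is essentially the same in both.

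\textbf{One small omission.} In that last step, two vertices of $P$ strictly beyond $\aff(e)$ need not be adjacent. You should add that the vertices of $P$ lying in an open half-plane form a cyclically consecutive chain along $\partial P$. Hence if there are two of them, two are adjacent and span an edge inside the open half-plane, whose blocking point would lie outside $B$. The paper states this step just as tersely.
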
 

\begin{proof}
Fix an edge $e\in E(B)$ and $\vx_0\in B\setminus e$. Since the cones $(F(\vx_0,e))_{e\in E(P)}$ form a fan, each vertex $\vp$ of $P$ is contained in some $F(\vx_0,e)$. Within $F(\vx_0,e)$ there can be at most one vertex of $P$. Otherwise, there exists an edge of $P$ that is separated from $B$ by the affine hull of $e$. This would contradict $\BP=B$. 

If $\inter(F(\vx_0,e))$ does contain a vertex of $P$ we 
denote it by $\vp_e(\vx_0)$. Note that $\vp_e(\vx_0) \notin B$, since it is a vertex of $P$ and $P$ circumscribes $B$. Moreover, $\vp_e(\vx_0)$ is contained in two edges of $P$ that support $B$ at the endpoints of $e$. Let $\vx_1$ be a point in $B\setminus e \subset P$.
 Then the segment $[\vp_e(\vx_0), \vx_1]$ is contained in $P$ and, thus, crosses $e$. Hence, $\vp_e(\vx_0)\in F(\vx_1,e)$ and we may set $\vp_e = \vp_e(\vx_0)$.
If there exists no $\vx_0\in B\setminus e$ such that $F(\vx_0,e)$ contains a vertex of $P$, we define $\vp_e$ to be some point in the relative interior of $e$. In either case, we have $\vp_e \in S_e$ (cf.\ Lemma \ref{lemma:fcone}).

By the fan property of $(F(\vx_0,e))_{e\in E(B)}$, the vertices of $P$ are covered by the points $\vp_e$.
\end{proof}

We observe that the interiors of the regions $S_e$ are pairwise disjoint. Moreover, we can further restrict the regions where the vertices of $P$ can lie by taking into account the lattice point structure of $P$.

\begin{lemma}
\label{lemma:forbidden_cones}
Let $P$ be a 1-maximal polygon with interior lattice point $\vp$. Then, for any $\vq\in\ZZ^2\setminus\{ \vp\}$ we have
\[
P \cap (- \inter((\pos(\BP-\vq)) + \vq) = \emptyset.
\]
\end{lemma}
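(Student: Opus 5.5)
Let $C_{\vq} := \vq - \inter(\pos(\BP-\vq))$, which is the set in the statement. The plan is to argue by contradiction: assume some $\vy \in P \cap C_{\vq}$ and produce a second interior lattice point of $P$, or else a violation of the blocking structure.

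First I make the set concrete. A point $\vy\neq\vq$ lies in $C_{\vq}$ exactly when $\vq-\vy \in \inter(\pos(\BP-\vq))$. In other words, the ray from $\vy$ through $\vq$, continued past $\vq$, enters the cone over $\BP$ with apex $\vq$ through its interior. So $\vq$ lies strictly on the segment from $\vy$ to some point $\vb$ of $\pos(\BP-\vq)+\vq$ in the interior direction. Two subcases arise.

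\emph{Case $\vq\notin\BP$.} Then $\pos(\BP-\vq)$ is a pointed cone. The ray $\vq+\lambda(\vq-\vy)$, $\lambda>0$, meets $\inter(\BP)$ or at least $\relint$ of a chord of $\BP$. Choose a point $\vb \in \BP$ with $\vq-\vy$ a positive multiple of $\vb-\vq$, such that $\vb$ lies in the interior of the cone. Since $\vb$ is in the interior of the cone spanned by $\BP$ from $\vq$, I can perturb it and write $\vb$ as a positive combination of blocking points that span the cone interior. Then $\vq\in\relint[\vy,\vb]$. Because $\vy\in P$ and $\vb\in\BP\subset P$, this gives $\vq\in P$. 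I then want $\vq\in\inter(P)$. For this I use the interior-direction condition: small perturbations of $\vb$ within $\BP$ sweep a neighbourhood of $\vq$ inside $\conv(\{\vy\}\cup\BP)\subseteq P$. The key fact is that $\vq-\vy$ points into the interior of the cone $\pos(\BP-\vq)$, so $\conv(\{\vy\}\cup \BP)$ contains a full neighbourhood of $\vq$. This yields an interior lattice point $\vq\neq\vp$, contradicting $\LEinter{P}=1$.

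\emph{Case $\vq\in\BP$.} If $\vq\in\inter(\BP)$, the cone is all of $\RR^2$, and $\vq$ is already an interior point of $P$, since $\BP\subseteq P$ and $\inter(\BP)\subseteq\inter(P)$. So $\vq=\vp$, which is excluded. If $\vq$ lies on the boundary of $\BP$, the cone is a half-plane or a pointed cone at a vertex. Any $\vy$ on the opposite side, together with $\BP$, again surrounds $\vq$ by the same neighbourhood argument. So $\vq\in\inter(P)$, and hence $\vq=\vp$, a contradiction. Degenerate $\BP$ (a segment or a point) is handled the same way with relative interiors. Since $P$ is 1-maximal, every edge contains a blocking point, so $\BP$ is nonempty. $\BP$ may be lower-dimensional, in which case the interior of the cone is taken in $\RR^2$; if the cone has empty interior, the statement is vacuous.

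The main obstacle is rigor in the claim that $\conv(\{\vy\}\cup\BP)$ contains a neighbourhood of $\vq$ whenever $\vq-\vy\in\inter(\pos(\BP-\vq))$. To prove it, I write $\vq-\vy=\sum\mu_i(\vb_i-\vq)$ with $\mu_i>0$, using blocking points $\vb_i$ whose directions positively span $\RR^2$ together with the direction $\vy-\vq$. Then $\vq$ is a strictly positive convex combination of $\vy$ and the $\vb_i$. If these points affinely span $\RR^2$, then $\vq$ is interior to their convex hull.
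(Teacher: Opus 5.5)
Your argument is correct and is essentially the paper's: you place $\vq$ strictly between a point $\vy\in P$ and $\BP$, along a direction in the interior of $\pos(\BP-\vq)$, conclude $\vq\in\inter(P)$, and so contradict the uniqueness of the interior lattice point. The paper avoids your case split and your strictly-positive-combination step by using that $\BP$ is $2$-dimensional. This lets it take the far endpoint $\vb\in\inter(\BP)\subseteq\inter(P)$, and then $\vq\in\inter(P)$ follows at once because the half-open segment from $\vy\in P$ to $\vb\in\inter(P)$, excluding $\vy$, lies in $\inter(P)$.
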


In what follows we write $\sigma_\vq = - \inter(\pos(\BP-\vq) )+ \vq$.

\begin{proof}
Since $\BP$ is 2-dimensional, we have $ \sigma_\vq = -\pos(\inter(B)-\vq) + \vq$. Towards a contradiction, let $\vx\in P \cap \sigma_\vq$. By definition of $\sigma_\vq$, for some $\vy \in \inter(B)$ we have $\vq \in [\vx, \vy]$. Since $B\subset P$, we have $\vy\in\inter(P)$ and, thus, $\vq\in\inter(P)$. This contradicts the fact that $\vp$ is the unique interior lattice point of $P$.
\end{proof}

In summary, Lemmas \ref{lemma:initial_shards} and \ref{lemma:forbidden_cones} give us a good idea of the vertices of a 1-maximal polygon $P$ with $B_P = P$; First, we obtain from Lemma \ref{lemma:initial_shards} that we have (at most) one vertex $\vp_e$ per edge $e$ and that it is contained in the polyhedron $S_e$. In some cases, all points in the interior of a region $S_e$ are possible as vertices of $P$, for instance if $B$ is a hexagon (cf.\ Section \ref{sec:hexagon}). In other cases, we can further reduce the set $S_e$ with the help of Lemma \ref{lemma:forbidden_cones} which gives
\begin{equation}
\label{eq:final_shards}
\vp_e \in S_e \setminus \bigcup_{\vq\in\ZZ^2\setminus\{\vp\}} \sigma_q.
\end{equation}
This yields a proper subset of $S_e$ if, for instance, $B$ is a symmetric quadrilateral (cf.\ Section \ref{subsub:sym_quad}).

\subsection{Classifying the blocking polygons}
\label{ssec:class}


In this section we list (up to unimodular equivalence) the possible blocking polygons that we can see for a 1-maximal polygon.

\begin{lemma} \label{lemma:disjoint-casework-on-blocking-polytopes}
  Let $P$ be a $1$-maximal polygon. Then its blocking polytope \hypertarget{def:BP}{$B_P$} is a 2-dimensional lattice polygon. Moreover exactly one of the following holds:

  \begin{mycases}
    \item \label{case:empty} $\BP$ is an empty polygon (either the standard triangle or the unit square),
    \item \label{case:hollow} $\BP$ is a hollow polygon with lattice points in the relative interior of some of its edges,
    \item \label{case:reflexive} $\BP$ is a polygon with exactly one interior lattice point. 
  \end{mycases}
\end{lemma}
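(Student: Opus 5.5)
The plan is to establish two facts first: $\BP\subseteq P$ has at most one interior lattice point, and $\BP$ is $2$-dimensional. After that, the trichotomy is pure bookkeeping on the lattice points of $\BP$.

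For the first fact, the blocking points lie on the boundary of $P$, so $\BP\subseteq P$ and $\inter(\BP)\subseteq\inter(P)$. Hence $\LEinter{\BP}\le\LEinter{P}=1$.

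For two-dimensionality, I argue by contradiction. Suppose all blocking points lie on a line $\ell$ (by Theorem \ref{theorem:k-maximality} there is at least one, since every edge of $P$ contains one).

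Each edge of $P$ contains a blocking point in its relative interior. Let $\vp$ be the unique interior lattice point, and consider the edges of $P$ on each side of $\ell$. If some edge of $P$ lies entirely in one open half-plane of $\ell$, then its blocking point is not on $\ell$, which is a contradiction. So every edge of $P$ meets $\ell$. A convex polygon has at most two edges crossing a line, plus possibly an edge contained in $\ell$. Therefore $P$ has at most $3$ or $4$ edges, all meeting $\ell$ at their blocking points.

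Concretely, the edges must pass through $\ell\cap P$. The segment $\ell\cap P$ meets the boundary of $P$ in at most two points, or in one edge lying on $\ell$. So every edge of $P$ contains one of the two endpoints of $\ell\cap P$ in its relative interior, or lies in $\ell$. An edge in $\ell$ would force $P$ to lie on one side of $\ell$, and then the other edges meet $\ell$ only at their endpoints. The two edges adjacent to that edge meet $\ell$ only at the shared vertices, which are not relative interior points, so this is impossible. Otherwise at most two edges have relative interiors meeting $\ell$, so $P$ has at most two edges, which is absurd for a polygon.

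Thus $\BP$ is a $2$-dimensional lattice polygon.

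The case split then follows by counting lattice points.
\begin{itemize}
\item If $\BP$ has one interior lattice point, we are in case \ref{case:reflexive}.
\item Otherwise $\BP$ is hollow. Either its only lattice points are its vertices, which is case \ref{case:empty}, or some edge contains a lattice point in its relative interior, which is case \ref{case:hollow}.
\end{itemize}
These cases are mutually exclusive by definition.

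For case \ref{case:empty}, I will invoke the standard fact that an empty lattice polygon, meaning one whose only lattice points are its vertices, is unimodularly equivalent to $\Tst$ or $[0,1]^2$. This follows from Pick's theorem: area $\tfrac{n}{2}-1$ with $n$ vertices and no other lattice points, and a lattice $n$-gon with $n\ge5$ has an interior point, so $n\in\{3,4\}$. Triangles of area $\tfrac12$ are unimodular. A quadrilateral of area $1$ splits into two unimodular triangles sharing a diagonal, hence is a parallelogram equivalent to the unit square.

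The only step with real content is the non-degeneracy argument in the second paragraph. I expect that to be the main point needing care, in particular the handling of an edge lying on $\ell$.
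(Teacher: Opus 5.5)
Your proof is correct and follows the paper's route: $B_P\subseteq P$ gives at most one interior lattice point, and a convexity argument shows that blocking points taken from distinct edges cannot all be collinear. The paper phrases the collinearity step more directly (the middle one of three collinear blocking points would lie on an open segment joining relative-interior points of two different edges, hence in $\inter(P)$), whereas your intersection-of-$\ell$-with-$\partial P$ argument, in its ``Concretely'' form, reaches the same conclusion. Your Pick's-theorem justification that an empty lattice polygon is unimodularly equivalent to $\Delta_2$ or the unit square fills in a step the paper takes for granted.
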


\begin{proof}
  We first show that $\BP$ is 2-dimensional. By Theorem \ref{theorem:k-maximality}, each edge of $P$ contains at least one blocking point. We need to show that three blocking points in distinct edges cannot be colinear. Indeed, take two such blocking points. Since they each lie in the relative interior of a different edge, the open segment connecting them is in the interior of $P$, and therefore cannot contain any blocking point.

  Since an interior lattice point of $\BP$ is also an interior lattice point of $P$, its clear that $\BP$ cannot have more than one interior lattice point, otherwise $P$ would not be $1$-maximal. 
\end{proof}


We require the following lemma in order to reduce the sets of blocking polygons to a finite list.

\begin{lemma} \label{lemma:three-colinear-boundary-points}
  Let $P$ be a $1$-maximal polygon, with three colinear lattice points on an edge of $P$. Then $\wdt(P) \leq 3$ with equality attained exactly if $P$ is unimodularly equivalent to $3 \Delta_2$. 
\end{lemma}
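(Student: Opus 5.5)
We normalize by a unimodular transformation so that the edge $e$ of $P$ carrying three collinear lattice points lies on the line $\{x_2=0\}$, with $P\subset\{x_2\ge 0\}$. The three collinear lattice points are then $(a,0),(a+1,0),(a+2,0)$, all in $e$. The plan is to bound the width of $P$ in the two directions $\ve_2$ and $\ve_2\pm\ve_1$ (or $\ve_1$), and to use the single interior lattice point to confine the height.

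\textbf{Height bound.} Consider the horizontal line $\{x_2=1\}$. Since $P$ is $1$-maximal, it has exactly one interior lattice point $\vp$.

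First suppose $P$ reaches height $x_2>1$. The lattice points on $\{x_2=1\}$ that lie in $\inter(P)$ number at most one. Convexity of $P$ with a base segment of length at least $2$ on $\{x_2=0\}$ (at least between $(a,0)$ and $(a+2,0)$, with $(a+1,0)$ in the relative interior of $e$) then gives a bound: the interior chord of $P$ at height $1$ has length at most $2$. Otherwise it would contain two lattice points, which are interior points.

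Next I split according to whether $\vp$ lies on $\{x_2=1\}$ or higher. If no interior lattice point is at height $1$, the chord at height $1$ has length $\le 1$. Comparing the chords at heights $0$ and $1$ via convexity (chord length is concave in height) forces the height $h=\max_P x_2$ to be small. Concretely, if the chord at height $0$ has length $L\ge 2$ and the chord at height $1$ has length $\ell$, concavity gives
\[
\ell \ge \bigl(1-\tfrac1h\bigr)L.
\]

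\textbf{Height $2$ and the apex region.} Similarly, at height $2$ the interior chord contains at most one lattice point, and at most one lattice point total across heights $1$ and $2$. Combining these gives $h\le 3$. For $h=3$ there must be exactly one interior lattice point, at height $1$, with chord lengths forced to be exactly $L=3,\ \ell=2,\dots$, i.e.\ $P$ is a triangle of base $3$ and height $3$. Since $\wdt(P)\le \wdt(P,\ve_2)=h$, this yields $\wdt(P)\le 3$.

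\textbf{Equality case.} In equality we have $h=3$, and the chord structure pins $P$ as a triangle with base $[a,a+3]\times\{0\}$ and apex at height $3$. The remaining shearing freedom $x_1\mapsto x_1+kx_2$ lets us place the apex in a bounded range. Then 1-maximality applies: each edge has a blocking point, and the unique interior point is $(a+1,1)$ after shear. This forces the apex to be at $(a,3)$ up to the reflection, giving $3\Tst$ up to translation.

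Alternatively, if the apex were elsewhere, the width in direction $\ve_1$ or $\ve_1+\ve_2$ would drop below $3$, so equality fails.

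\textbf{Main obstacle.} The delicate step is the chord-length argument when the three collinear points are not forced to span a base of length exactly $2$. The edge $e$ could be much longer, with many lattice points, which makes the height bound easier. Alternatively, $e$ could extend only slightly beyond the outer points, in which case the concavity estimate must be combined with the constraint that the chord at height $1$ contains at most one interior lattice point. I would handle this by the inequality $\ell\ge(1-1/h)L$ with $L\ge 2$ and $\ell<2$ (two lattice points would otherwise be interior, except in boundary cases where they lie on edges of $P$). This gives $h<2/(2-\ell)$ and needs a careful look at the boundary possibilities. Boundary lattice points at height $1$ are allowed, so $\ell$ can reach $2$ or even approach $3$. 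This is where I expect the real casework: the two subcases are $\vp$ at height $1$ (bound $\ell\le 2$ strictly unless the endpoints are boundary points) and $\vp$ at height $\ge 2$ (then $\ell\le1$ and $h\le 2$). Widths in the direction $\ve_1$ are then used to close the remaining configurations.
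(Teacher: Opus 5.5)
There is a genuine gap at the step ``Combining these gives $h\le 3$''. That claim is false, and your bound $\wdt(P)\le\wdt(P,\ve_2)=h\le 3$ depends on it entirely. Your first subcase is sound: it correctly shows that the interior point $\vp$ lies at height $1$. Write $g(t)$ for the length of the chord at height $t$. The only information you then have at height $2$ is that this chord contains no interior lattice point, so $g(2)\le 1$. Concavity gives $1\ge g(2)\ge (1-\tfrac 2h)L\ge 2(1-\tfrac 2h)$, which yields only $h\le 4$, and this is attained. Take $P=\conv\{(0,0),(2,0),(0,4)\}$:
\begin{itemize}
\item its only interior lattice point is $(1,1)$;
\item it has blocking points $(1,0)$, $(0,1)$, $(1,2)$ on its three edges, so it is $1$-maximal;
\item it has three lattice points on its bottom edge;
\item it has $h=4$.
\end{itemize}
Its lattice width is $2$, realized in direction $\ve_1$, which your argument never controls. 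For the same reason your equality analysis, which starts from $h=3$, is unfounded: a priori an extremal $P$ could have $h>3$. Your closing remark that widths in direction $\ve_1$ ``close the remaining configurations'' is exactly the step that needs an argument.

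What is missing is a bound in the direction parallel to $e$, and this is where $1$-maximality is really used.
\begin{enumerate}
\item Read the height-$2$ concavity estimate as a trade-off. If $h\ge 3$, then $g(2)\ge(1-\tfrac 2h)L\ge \tfrac13 L$. So $L\le 3$, since otherwise a second interior lattice point appears at height $2$.
\item Apply a shear $x_1\mapsto x_1+kx_2$, which fixes $\{x_2=0\}$, to place $\vp=(a+1,1)$.
\item The two edges adjacent to $e$ carry blocking points, which are lattice points at height $\ge 1$. So these edges reach strictly above height $1$ and form the left and right boundary of $P$ at height $1$.
\item Since $(a+2,1)$ is not an interior point, the right adjacent edge meets height $1$ at some $x_1\le a+2\le r$, where $(r,0)$ is the right endpoint of $e$. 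Hence that edge leans inward and $\max_P x_1=r$. The same holds on the left, so $\wdt(P,\ve_1)=L$.
\end{enumerate}
Together these give $\wdt(P)\le\min\{h,L\}\le 3$.

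In the equality case one gets $h\ge 3$ and $L=3$. Then $h=3$ is forced, because $h>3$ would make $g(2)>1$. The chords at heights $1$ and $2$ are then forced to be lattice segments of lengths $2$ and $1$, which pins $P$ down to a unimodular copy of $3\Tst$. This is the paper's argument; your sketch has the concavity ingredient but not the adjacent-edge step.
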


\begin{proof}
Suppose that $P$ contains an edge $f_0$ with three colinear points. After a unimodular transformation, we can assume that
\begin{equation}
\label{eq:pointsine}
(-1,-1),(0,-1),(1,-1) \in f_0.
\end{equation}
and that $P\subset \{y \geq -1\}$. Let $\vp$ be the interior point of $P$. We have $p_y = 0$; Otherwise, the triangle $\conv\{(0,-1),(1,-1),\vp\}$ contains an integer point which is not one of its vertices. Such a point would be an additional interior point of $P$.
After applying a shear, we can, thus, assume that $\vp = (0,0)$.

Let $\ell_0$ denote the length of $f_0$ and let $w_y = \wdt(P,\ve_2)$. We can assume that $w_y \geq 3$.  If $\ell_0 > 3$ it follows from the convexity of $P$ that the length of $P \cap \{y=1\}$ is strictly greater than 1. 
Indeed, let $g(t) = \vol_1(P\cap\{y=t\})$ be the function that records the length of a horizontal section of $P$. It is concave on its support. Thus,
\begin{equation}
\label{eq:brunn}
g(1) \geq \tfrac 23 g(2) + \tfrac 13 g(-1) > 1.
\end{equation}
Hence, there exists an additional lattice point in the interior of $P$, a contradiction. We deduce that $\ell_0\leq 3$. Now it suffices to prove $\ell_0 = \wdt(P,\ve_1)$ in order to show the claimed inequality.

  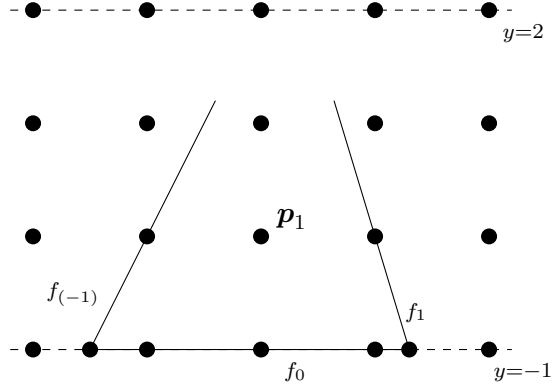
\begin{figure}[h]
  \begin{tikzpicture}[scale=1.5]

  \fill (-1.5, 0) circle (2pt);
  \fill (1.3, 0) circle (2pt);

  \draw (1.3, 0) -- (-1.5,0); 

  \draw (1.3, 0) -- (1-.3*12/10, 1+ 12/10);
  \draw (-1.5, 0) -- (-1 + 12/10*.5, 1+12/10);

  \draw[dashed] (-2.2, 3) -- (2.2, 3); 
  \node at (2.3, 2.8) {${\scriptstyle y = 2}$};
  \draw[dashed] (-2.2, 0) -- (2.2, 0);
  \node at (2.3, -.2) {${\scriptstyle y = -1}$};
  \node at (.27, 1.15) {$\vp_1$}; 
  \node at (.3, -.2) {${\scriptstyle f_0}$};
  \node at (1.36, .33) {${\scriptstyle f_1}$};
  \node at (-1.65, .49) {${\scriptstyle f_{(-1)}}$};
    \foreach \x in {-2, -1, 0,...,2}{
    \foreach \y in {0,..., 3}{
      \fill (\x,\y) circle (2pt);
    }
  }
  \end{tikzpicture}
  \caption{The edges $f_{(-1)}$ and $f_1$, emanating from $f_0$. As they separate the points $(-1, 0), (1,0)$ from $P$, respectively, we see that $w(P, \vec e_1) = \ell_0$.}
  \label{fig:3-points-lemma}
\end{figure}

To this end, let $f_{(-1)}$ and $f_1$ be the adjacent edges of $f_0$ in $P$ ordered from left to right, as in Figure \ref{fig:3-points-lemma}. Since $P$ is 1-maximal, $f_{1}$ contains an integer point in its relative interior. It follows that the vertex $\vp_{1}$ of $f_1$ which is not contained in $f_0$ has $p_{1,y} > 0$. Since $(0,0)$ is the unique interior point of $P$ we now see that $(1,0)$ is separated from $\inter(P)$ by $f_1$. In the same way, we see that $(-1,0)$ is separated from $\inter(P)$ by $f_{(-1)}$. It follows that the orthogonal projection of $P$ on $\{y=-1\}$ is $f_0$, which proves $\wdt(P,\ve_1) = \ell_0$ and, thus, $\wdt(P) \leq 3$.

Suppose that equality holds. Then we have $\ell_0 = w_y = 3$; if either of the parameters was strictly larger than 3 while the other is equal to 3, there would be an interior point on $P\cap \{y=1\}$ (cf.\ \eqref{eq:brunn}). Now it follows that $P\cap \{y=0\}$ is a segment of length at least 2 that contains $(0,0)$ as its unique interior point. Consequently, we have $P\cap \{y=0\} = [(-1,0),(1,0)]$. Similarly, we see that $P\cap\{y=1\}$ is a unit segment between two lattice points. From this we see that $P = \conv\{(-2,-1),(1,-1),(1,2)\}$ or $P =\conv\{(-1,-1),(2,-1),(-1,2)\}$.
\end{proof}

To prove Theorem \ref{thm:flt21} we want to show, for each possible lattice polytope $B$ in the classification of Proposition \ref{lemma:disjoint-casework-on-blocking-polytopes}, that all polytopes $P$ with blocking polytope $\BP=B$ satisfy $\wdt(P)\leq 3$. We will now see that Lemma \ref{lemma:three-colinear-boundary-points} allows us to take care of all but finitely many cases, collected in Proposition~\ref{prop:remaining-casework} and handled in the subsequent sections.

First consider $\BP$ from Case~\ref{case:empty} of Proposition~\ref{lemma:disjoint-casework-on-blocking-polytopes}, that is, either a unimodular triangle or unit square.
We can eliminate $\BP$ being the unit square. Indeed, if $\BP$ is a square then the interior lattice point of $P$ has the same parity as one of its blocking points. The midpoint between the interior point and that blocking point would be an additional interior lattice point, a contradiction.

Thus, up to unimodular equivalence, the only option for $\BP$ in Case~\ref{case:empty} is
\[
\conv\{(-1,-1),(-1,0),(0,-1)\}.
\]
The unique interior points $\vp$ of $P$ must be such that its convex hull with $\BP$ contains no additional points beyond $\vp$ and the lattice point of $\BP$. One sees that this amounts to $\vp\in\{(-2,0),(0,-2),(0,0)\}$. The three possible choices of $\vp$ are unimodularly equivalent, so we can assume that $\vp = (0,0)$. 

Next, consider $\BP$ as in Case~\ref{case:hollow}, that is, a hollow lattice polygon with a  lattice point $\vp$ in the relative interior of one of its edges $e$. This lattice point is either in the interior of $P$ or it is a lattice point in the relative interior of an edge of $P$; see Figure \ref{fig:3-colinear-points}. 

If $\vp$ is in the boundary of $P$, then so is $e$. In particular, $P$ has an edge that contains 3 lattice points and Theorem \ref{thm:flt21} is obtained from Lemma \ref{lemma:three-colinear-boundary-points}. So we may assume that all edges of $\BP$ that are contained in the boundary of $P$ have lattice length 1.
In that case $e$ passes through the interior of $P$ and it is the only edge of $P$ to do so. Since $P$ contains only one interior point, the lattice length of $e$ is 2 and $\vp$ is the unique interior point of $P$.


\begin{figure}[h]
	\begin{tikzpicture}
  \fill[gray!50] (-3, 0) -- (-3, .6) -- (-1, .6) -- (-1, 0) -- cycle;
  \fill[gray!50] (1, 0) -- (3, 0) -- (3, .6) -- (1, .6);

  \draw[thick] (-3, 0) -- (-3, .6);
  \draw[thick] (-1, 0) -- (-1, .6);

    \draw[thick] (3, 0) -- (3, .6);
  \draw[thick] (1, 0) -- (1, .6);

	\draw[very thick, blue!40] (-2.3, -.6) -- (-3.4 , 4/7 * .6);
	\draw[very thick, blue!40] (-2.3, -.6) -- (.1, 11/13 * .6);
	\draw[very thick, blue!40, opacity = .8] (.4, 0) -- (3.3, 0);
	\draw[thick] (-3, 0) -- (-1, 0);
	\draw[thick] (1, 0) -- (3,0);
	\foreach \x in {-4, -3, -2, -1, ...,3, 4}{
		\foreach \y in {-1, 0, 1}{
			\fill (\x,\y) circle (2pt);
		}
	}
	\fill[red] (-3, 0) circle (2pt);
	\fill[red] (-2, 0) circle (2pt);
	\fill[red] (-1, 0) circle (2pt);
	\fill[red] (1, 0) circle (2pt);
	\fill[red] (2, 0) circle (2pt);
	\fill[red] (3, 0) circle (2pt);

  \node at (-2, .3) {$\BP$};
  \node at (2, .3) {$\BP$};
	\end{tikzpicture}
	\cprotect\caption{When $\BP$ has three colinear boundary points (visualized in red), either the inside point is an interior lattice point of $P$, as on the left, or all three points lie on an edge of $P$, as on the right. In this situation Lemma \ref{lemma:three-colinear-boundary-points} applies.}
	\label{fig:3-colinear-points}
\end{figure}
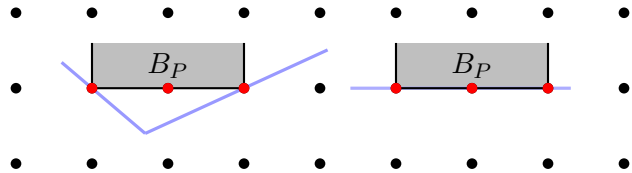

A classification of hollow lattice polygons is given in \cite{Rabinowitz89}: a hollow polygon is (up to unimodular equivalence) either $2 \Delta_2$ or the convex hull of two lattice segments, embedded in $\{y=0\}, \{y=1\}$. 
Consider then $\BP$ as in Case~\ref{case:hollow} with exactly one edge with exactly one interior lattice point. Combined with the classification, $\BP$ is thus one of the following two polygons, pictured in Figure \ref{fig:two-hollow-cases}:

$$\conv\{(-1, 0), (0, 1), (1, 0)\}, \quad \conv\{(-1, 0), (-1, 1), (0, 1), (1, 0)\}.$$

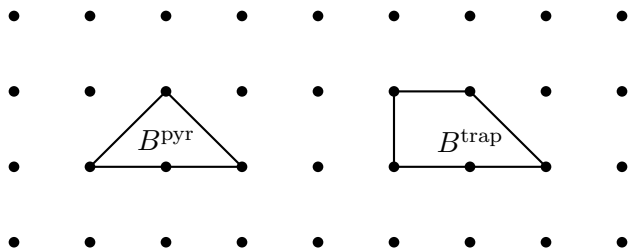
\begin{figure}[h]
\begin{tikzpicture}
\draw[thick] (-3, 0) -- (-1, 0) -- (-2, 1) -- cycle;
\draw[thick] (1, 0) -- (3, 0) -- (2, 1) -- (1, 1) -- cycle;
  \foreach \x in {-4, -3, -2, -1, ...,3, 4}{
    \foreach \y in {-1, 0,1, 2}{
      \fill (\x,\y) circle (2pt);
    }
  }
  \node at (2,.35) {$\Btrap$};
  \node at (-2,.35) {$\Bpyr$};
\end{tikzpicture}
\cprotect\caption{The two remaining polygons of Case~\ref{case:hollow} of Proposition \ref{lemma:disjoint-casework-on-blocking-polytopes}. We refer to them as $\Bpyr$ and $\Btrap$, respectively.}
\label{fig:two-hollow-cases}
\end{figure}

Now consider $\BP$ as in Case~\ref{case:reflexive}. 
There are 16 lattice polygons (up to unimodular transformation) with one interior lattice point, see \cite{Rabinowitz89}. 
By Lemma \ref{lemma:three-colinear-boundary-points}, we can assume that $\BP$ has no edge of lattice length greater than 1.  Filtering the list of the 16 so accordingly yields the following 5 polygons, seen in Figure \ref{fig:case-3-remaining}:

\begin{align*}
&\conv \{(-1, -1), (1, 0), (0, 1)\},~ \conv \{(-1, -1), (0, -1), (1, 1), (-1, 0)\}, \\
&\conv \{(-1, 0), (0, -1), (1, 0), (0, 1)\},~ \conv \{(-1, -1), (0, -1), (1, 0), (0, 1), (-1, 0) \},\\ &\conv \{(-1, -1), (0, -1), (1, 0), (1, 1), (0, 1), (-1, 0)\}.
\end{align*}

\begin{figure}[h] 
  \begin{tikzpicture}
  \draw[thick] (-4, 0) -- (-2, 1) -- (-3, 2) -- cycle;
  \draw[thick] (-1, 0) -- (0, 0) -- (1, 2) -- (-1, 1) -- cycle;
  \draw[thick] (2, 1) -- (3, 0) -- (4, 1) -- (3, 2) -- cycle;
  \draw[thick] (-4, -3) -- (-3, -3) -- (-2, -2) -- (-3, -1) -- (-4, -2) -- cycle;
  \draw[thick] (-1, -3) -- (0, -3) -- (1, -2) -- (1, -1) -- (0, -1) -- (-1, -2) -- cycle; 
    \foreach \x in {-4, -3, -2, -1, ...,3, 4}{
      \foreach \y in {-3, -2, -1, 0,1, 2}{
        \fill (\x,\y) circle (2pt);
      }
    }
    \node at (-2.8,1.25) {$\Bterm$};
    \node at (-.52, .6) {$\Bkite$};
    \node at (3,.65) {$\Bcross$};
    \node at (-3.5, -2.4) {$\Bpent$};
    \node at (-.52, -2.4) {$\Bhex$};
  \end{tikzpicture}
\cprotect\caption{The 5 remaining polygons of Case~\ref{case:reflexive} of Proposition \ref{lemma:disjoint-casework-on-blocking-polytopes}. We refer to them as $\Bterm, \Bkite, \Bcross, \Bpent,$ and $\Bhex$, respectively. Compare with Figure 6 of \cite{HaaseSchicho09}.}
\label{fig:case-3-remaining}
\end{figure}

We have thus reduced the proof of Theorem \ref{thm:flt21} to proving the following proposition, which we prove case by case in the following subsections. 
\begin{proposition} \label{prop:remaining-casework}
    Let $P$ be a $1$-maximal polygon such that $\BP$ is one of the following
  \begin{mycases}
    \item $\BP$ is $\conv\{(-1,-1),(-1,0),(0,-1)\}$, and the interior lattice point of $P$ is $(0,0)$.
    \item $\BP$ is $\conv\{(-1, 0), (0, 1), (1, 0)\}$ or $\conv\{(-1, 0), (-1, 1), (0, 1), (1, 0)\}$, and the interior lattice point of $P$ is $(0, 0)$; see Figure \ref{fig:two-hollow-cases}.
    \item $\BP$ is one of the 5 polygons as in Figure \ref{fig:case-3-remaining}.
  \end{mycases}
  Then $\wdt(P) \leq 3$, and equality holds only for $P=3\Tst$. 
\end{proposition}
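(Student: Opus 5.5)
The plan is to treat each of the eight blocking polygons $B$ separately, but with one scheme. By Lemma~\ref{lemma:initial_shards}, $P=\conv\{\vp_e : e\in E(B)\}$ with one vertex $\vp_e$ per edge $e$ of $B$. Each $\vp_e$ lies in the region \eqref{eq:final_shards}, namely $S_e$ minus the forbidden cones $\sigma_\vq$ of Lemma~\ref{lemma:forbidden_cones}. So the problem becomes finite-dimensional: maximize $\wdt(P)$ over at most six points, each in an explicitly computable polyhedral region. I would first compute these regions for every $B$. For example, for $\Bhex$ the regions are the six triangles cut out by extending the edges. For $\Bterm$ and the case-(i) triangle they are unbounded, and the forbidden cones cut them down to bounded pieces.

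Next, I would bound $\wdt(P)$ by $\min_{\vu\in U}\wdt(P,\vu)$ over a small fixed set $U$ of directions. For triangles these are $\ve_1,\ve_2,\ve_1+\ve_2$, or $\ve_1-\ve_2$, chosen by symmetry of $B$. Then I would show that this minimum is at most $3$. Each $\wdt(P,\vu)$ is a maximum of linear functionals in the vertex coordinates. Since $\langle\vu,\cdot\rangle$ is linear, I can often bound it by the extent of the corresponding shards in direction $\vu$.

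The key difficulty is that a single direction rarely suffices. A vertex pushed far out in one shard increases one width but, because it is confined by the forbidden cones (an interior lattice point must not appear), it forces another vertex inward. So I would parametrize each vertex by where its two incident edges of $P$ pivot around the blocking points. In the plane every edge of $P$ passes through a blocking point, so $P$ is determined by the slopes of its edges through the blocking points. The widths in the chosen directions then become explicit rational functions of these slopes. I would exploit the unimodular symmetry group of $B$ (and of the pair $(B,\vp)$) to assume that a particular width is the largest, and then reduce to showing that an averaged combination such as $\tfrac13\sum_{\vu\in U}\wdt(P,\vu)$, or a suitable weighted sum, is at most $3$. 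Such inequalities follow by convexity/linearity once the slopes are restricted to their feasible intervals, which come from the requirement that no additional lattice point becomes interior.

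Equality analysis comes last. Since $3\Tst$ (with blocking points the six non-vertex boundary points and interior point $(1,1)$) arises as a limit in every parameter space, equality in each sharp bound must force vertices to the boundary of their shards. I would check that in each case this degenerates to $P$ having an edge with three collinear lattice points, so that Lemma~\ref{lemma:three-colinear-boundary-points} identifies $P$ as $3\Tst$. The main obstacle is the cases with few edges, $\Bterm$, $\Bpyr$ and the case-(i) triangle. There the shards are large, the vertices have many degrees of freedom, and several width directions compete. I expect to need a further subdivision according to which forbidden cone is active for each vertex, and then a separate sharp estimate in each subcase.
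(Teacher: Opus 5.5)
Your overall architecture matches the paper's. You place one vertex per edge of $B$ in the shard $S_e$, cut down by the cones $\sigma_{\vq}$. You bound the width by a few directions from $\mathcal{A}$, parametrize the vertices through the colinearity constraints, and treat equality through degeneration.

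\textbf{The gap.} The step that carries all the weight is not proved. You claim that in every case some uniform or weighted average $\sum_{\vu\in U}\lambda_{\vu}\,\mathrm{w}(P,\vu)$ is at most $3\sum\lambda_{\vu}$, and that this ``follows by convexity/linearity'' in the slopes. But the vertex coordinates are rational, non-convex functions of the pivot slopes, and the feasible slopes are coupled rather than independent intervals. So nothing follows automatically.

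For $B^{\mathrm{term}}$ the averaged inequality is in fact false. Take $\vp_3=(\varepsilon,1+\tfrac32\varepsilon)$. Let $\vp_2$ lie on the line through $\vp_3$ and $(1,0)$, just above $\{y=-1\}$. Let $\vp_1$ be determined by the colinearities with $(-1,-1)$ and $(0,1)$.
\begin{itemize}
\item For small $\varepsilon$ this is a $1$-maximal triangle with blocking polygon $B^{\mathrm{term}}$ and only $(0,0)$ inside.
\item Its vertices lie in the interiors of the regions of Lemma~\ref{lem:terminal-triangle-shards}.
\item It is close to $\conv\{(-\tfrac43,-1),(2,-1),(0,1)\}$, so its widths in directions $\ve_1,\ve_2,\ve_1-\ve_2$ are close to $\tfrac{10}{3}$, $2$ and $4$. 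Their sum is about $\tfrac{28}{3}>9$.
\end{itemize}
The cyclic symmetry $\sigma$ of $B^{\mathrm{term}}$ preserves this configuration space and permutes these three directions. Averaging any candidate inequality over the $\sigma$-orbit therefore shows that no nonnegative weighting of the three widths can be bounded by $3$ times the total weight.

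Restricting first to the region where one width is largest does not make the bound automatic either. At that point you need a concrete nonlinear inequality, and your proposal does not supply one. Your last paragraph essentially concedes this.

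\textbf{What the paper does instead.} Your averaging idea does work in some cases: for $B^{\mathrm{hex}}$, $B^{\mathrm{pent}}$ and $B^{\mathrm{cross}}$ the paper proves $\mathrm{w}(P,\ve_1)+\mathrm{w}(P,\ve_2)\le 6$ from linear consequences of colinearity. The other cases need different ideas.
\begin{itemize}
\item For $B^{\mathrm{pyr}}$, a single direction and a one-variable function maximized at the endpoints.
\item For the kite, the trapezoid and the standard triangle, a perturbation that sets one width to exactly $3$ without decreasing the competing width. This is followed by explicit two-variable rational inequalities.
\item For $B^{\mathrm{term}}$, first Lemma~\ref{lem:terminal-triangle-one-width-direction}: any configuration can be improved unless at least two widths tie. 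Then Lagrange multipliers exclude interior critical points, and the boundary of the region is analysed. Finally, an elimination-ideal computation shows that where all three widths agree one has $x^2+xy+y^2-3x=0$, which forces width at most $3$.
\end{itemize}
This ``equalize the competing widths, then bound'' mechanism, rather than averaging, is the idea your plan is missing.
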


\subsection{Casework on the blocking polygons}

In the following we consider the finitely many blocking polygons $B$ that we identified in the previous section. We show that $\wdt(P) \leq 3$ for each of those blocking polygons with equality if and only if $P$ is equivalent to $3\Tst$, in which case $\BP$ is a hexagon.

Although the hexagon is the unique blocking polytope of $3\Tst$, it is important to note that for any blocking polytope $B$ of our list, $3\Tst$ can be approximated by 1-maximal polygons $P$ with $\BP = B$. Hence, the (strict) upper bound of 3 for the width of a 1-maximal polygon with $\BP=P$ cannot be improved in any of the upcoming cases. 

Before we begin with the case distinction, we want to give an overview of the similarities and differences of the cases. First, by a nice choice of unimodular embedding of our blocking polytopes, it will turn out that in each cases it actually suffices to consider (a subset of) the set of directions
\[
\mathcal A = \{\pm \ve_1,~\pm\ve_2,~\pm(\ve_2 -\ve_1)\}
\]
in order to measure the lattice width. Indeed, one generally has $\wdt(P)\leq \wdt(P;\mathcal A)$ and due to the positioning of the blocking polytopes, it turns out that this estimate is sharp.

In the first cases (hexagon, pentagon, long triangle and crosspolygon) it is possible to bound $\wdt(\mathcal A)$ by deriving certain linear inequalities. Next, in case of the kite, the trapezoid, and the standard triangle, to contend with the piecewise polynomial nature of the optimization problem, we use a variational argument and split our domain into lienar regions. The most exceptional case seemed to be the terminal triangle for which we furthermore employ Lagrange multipliers and the computation of elimination ideals.

\subsubsection{The hexagon} \label{sec:hexagon}
We consider polygons $P$ such that 
\hypertarget{target:Bhex}{%
$$\BP = B^{\mathrm{hex}} = \conv \{(-1, -1), (0, -1), (1, 0), (1, 1), (0, 1), (-1, 0)\}.$$
}
\begin{lemma} \label{lem:hexagon-shards}
  Let $P$ be a $1$-maximal polygon with $\BP = \Bhex$. Then each vertex of $P$ lies in one of the following regions, and each region contains (at most) one vertex:

  \begin{align*}
    S_1 &= \conv \{(-1, -1), (-1, -2), (0, -1)\}, \\
    S_2 &=\conv \{(0, -1), (1, -1), (1, 0)  \},  \\
    S_3 &=\conv \{(1, 0), (2, 1), (1, 1) \},  \\
    S_4 &=\conv \{(1, 1), (1, 2), (0, 1)  \},  \\
    S_5 &=\conv \{(0, 1), (-1, 1), (-1, 0)\},  \\
    S_6 &=\conv \{(-1, 0), (-2, -1), (-1, -1) \}.
  \end{align*}
\end{lemma}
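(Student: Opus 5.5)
The plan is to apply Lemma~\ref{lemma:initial_shards} to $B=\Bhex$ and compute the regions $S_e$ from \eqref{eq:shard_definition_formal} explicitly for each of the six edges. We write $\Bhex$ by its irredundant inequalities
\[
x\geq -1,\quad y\geq -1,\quad x\leq 1,\quad y\leq 1,\quad y-x\leq 1,\quad x-y\leq 1.
\]
The six edges $[(-1,-1),(0,-1)]$, $[(0,-1),(1,0)]$, $[(1,0),(1,1)]$, $[(1,1),(0,1)]$, $[(0,1),(-1,0)]$, $[(-1,0),(-1,-1)]$ lie on the lines $y=-1$, $x-y=1$, $x=1$, $y=1$, $y-x=1$, $x=-1$ respectively.

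For each edge $e$ we reverse its defining inequality and keep the other five. Only the two inequalities belonging to the edges adjacent to $e$ can cut the resulting region, so $S_e$ is the triangle bounded by the line of $e$ and the lines of its two neighbouring edges. For example, for $e=[(-1,-1),(0,-1)]$ we get
\[
S_e=\{y\le -1,\ x\ge -1,\ x-y\le 1\},
\]
whose vertices are $(-1,-1)$, $(0,-1)$ and the intersection $(-1,-2)$ of $x=-1$ with $x-y=1$. This is $S_1$. The remaining five cases are the same computation. One can shortcut it using the symmetry of $\Bhex$ under the order-six group generated by $(x,y)\mapsto(x-y,x)$ together with $\vx\mapsto -\vx$, which permutes the edges cyclically. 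This reproduces $S_2,\dots,S_6$.

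We must still check that the other three inequalities are redundant on this triangle, i.e. that the triangle does not escape into other half-planes. This can be verified directly at its three vertices, e.g. $(-1,-2)$ satisfies $y\le 1$, $x\le 1$ and $y-x\le 1$. Since $\Bhex$ has lattice-unit edges and its neighbouring edge directions span a unimodular cone, these regions are bounded triangles rather than unbounded polyhedra.

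Lemma~\ref{lemma:initial_shards} then gives $P=\conv\{\vp_e\}$ with $\vp_e\in S_e$, so every vertex of $P$ lies in some $S_i$. The claim that each region contains at most one vertex also comes from Lemma~\ref{lemma:initial_shards}: the cones $F(\vx_0,e)$ contain at most one vertex each. Since the interiors of the $S_e$ are disjoint, points on shared boundaries cause no double counting. The shared boundaries are the vertices of $\Bhex$, and these cannot be vertices of $P$ because they are blocking points lying in relative interiors of edges of $P$.

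The main obstacle is bookkeeping rather than conceptual: the computation of the six triangles must be correct. The one subtle point is whether Lemma~\ref{lemma:forbidden_cones} cuts anything further here. It does not need to, since the statement only asserts containment in $S_i$.
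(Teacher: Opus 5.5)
Your proposal is correct and is the paper's argument: the paper's proof is the single observation that the $S_i$ are the regions $S_e$ of Lemma~\ref{lemma:initial_shards} for $B=\Bhex$, and you carry out that computation explicitly and correctly. The one slip is your stated reason for boundedness. Lattice-unit edges and unimodular neighbouring cones do not imply it (for the unit square, $S_e$ is an unbounded half-strip), but boundedness follows directly from your explicit inequalities, e.g.\ $y\ge x-1\ge -2$ on $S_1$.
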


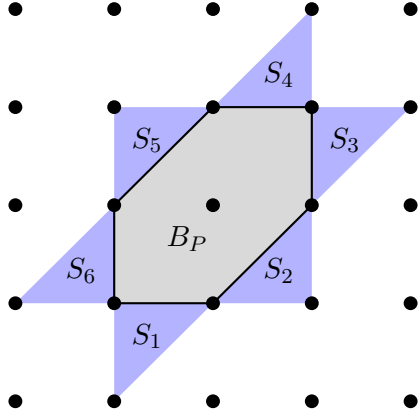
\begin{figure}[h]
\begin{tikzpicture}[scale=1.3]
    \fill[blue!30] (-1, -1) -- (-1, -2) -- (0, -1) -- cycle;
    \fill[blue!30] (0, -1) -- (1, -1) -- (1, 0) --cycle; 
    \fill[blue!30] (1, 0) -- (2, 1) -- (1, 1) --cycle;
    \fill[blue!30] (1, 1) -- (1, 2) -- (0, 1)  -- cycle; 
    \fill[blue!30] (0, 1) -- (-1, 1) -- (-1, 0) -- cycle;
    \fill[blue!30] (-1, 0) -- (-2, -1) -- (-1, -1) -- cycle;
 	\fill[gray!30] (-1, -1) -- (0, -1) -- (1, 0) -- (1, 1) -- (0, 1) -- (-1, 0) -- cycle;

  \draw[thick] (-1, -1) -- (0, -1) -- (1, 0) -- (1, 1) -- (0, 1) -- (-1, 0) -- cycle;

    \foreach \x in {-2, -1, ...,2}{
    \foreach \y in {-2,-1, ...,2}{
      \fill (\x,\y) circle (2pt);
    }
  }
  \node at (-.26, -.34) {$\BP$};
  \node at (-.6666, -1.3333) {$S_1$};
  \node at (.6666, -.6666) {$S_2$};
  \node at (1.3333, .6666) {$S_3$};
  \node at (.66666, 1.33333) {$S_4$};
  \node at (-.66666, .6666) {$S_5$};
  \node at (-1.333, -.6666) {$S_6$};
  \end{tikzpicture}
  \caption{The configuration as in Lemma \ref{lem:hexagon-shards}. The regions $S_1, ..., S_6$ are as labeled, in blue.}

\end{figure}

\begin{proof}
The regions $S_i$ are the $S_e$ from Lemma 	\ref{lemma:initial_shards}.
\end{proof}

\begin{lemma} \label{lem:hexagon-width}
  Let $P$ be a $1$-maximal polygon such that $\BP = \Bhex$. Then $\wdt(P) \leq 3$, with equality if and only if $P$ is
unimodularly equivalent to $3 \Tst$.
\end{lemma}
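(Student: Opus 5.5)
The plan is to bound $\wdt(P)$ by $\wdt(P;\hex)$ and to show that the three widths in the directions $\ve_1$, $\ve_2$ and $\ve_2-\ve_1$ have total at most $9$. By Lemma \ref{lem:hexagon-shards}, $P=\conv\{\vp_1,\dots,\vp_6\}$ with $\vp_i\in S_i$. If a region contains no vertex of $P$, I will put $\vp_i$ at a point of the corresponding edge of $\Bhex$, as in Lemma \ref{lemma:initial_shards}. Since $\Bhex\subseteq P$, each width is the hexagon's width $2$ plus two "excess'' terms coming from the two opposite regions that stick out in that direction.

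\emph{Direction $\ve_1$.} The points of $S_2,S_4$ have $x\le 1$ and those of $S_1,S_5$ have $x\ge -1$. Hence
\[
\wdt(P,\ve_1)=2+\alpha_3+\alpha_6,\qquad \alpha_3=x(\vp_3)-1\in[0,1],\quad \alpha_6=-1-x(\vp_6)\in[0,1].
\]

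\emph{Direction $\ve_2$.} In the same way, $\wdt(P,\ve_2)=2+\beta_4+\beta_1$ with $\beta_4=y(\vp_4)-1$ and $\beta_1=-1-y(\vp_1)$.

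\emph{Direction $\ve_2-\ve_1$.} Using the linear form $y-x$, we get $\wdt(P,\ve_2-\ve_1)=2+\gamma_5+\gamma_2$. Here $\gamma_5=(y-x)(\vp_5)-1$ and $\gamma_2=-1-(y-x)(\vp_2)$.

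Summing the three directions gives
\[
\wdt(P,\ve_1)+\wdt(P,\ve_2)+\wdt(P,\ve_2-\ve_1)=6+\sum_{i=1}^6(\text{excess of }\vp_i).
\]

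The key step is a local inequality for cyclically adjacent regions: the excesses of $\vp_i$ and $\vp_{i+1}$ sum to at most $1$. This uses $1$-maximality. The edge of $P$ joining $\vp_i$ and $\vp_{i+1}$ must pass through the common vertex of $\Bhex$, which is a blocking point. By the symmetries of $\Bhex$ (the central reflection and the order-$6$ unimodular rotation permuting the $S_i$), it suffices to treat one pair, say $(S_3,S_4)$ around the common vertex $(1,1)$.

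Write $\vp_3=(1+s,1-u)$. Membership $\vp_3\in S_3$ gives $s,u\ge 0$, $u\le s$ and $s+u\le 1$. Collinearity with $(1,1)$ forces $\vp_4=(1-\lambda s,1+\lambda u)$ for some $\lambda\ge0$. Membership $\vp_4\in S_4$, through its constraint $y\le x+1$, gives $\lambda(s+u)\le 1$. Therefore
\[
\alpha_3+\beta_4=s+\lambda u\le s+\tfrac{u}{s+u}\le 1,
\]
where the last step uses $s+u\le 1$. Degenerate placements (a missing vertex, $s+u=0$) are checked directly.

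Applying this to the three disjoint adjacent pairs $(S_1,S_2)$, $(S_3,S_4)$, $(S_5,S_6)$ bounds the total excess by $3$. So the sum of the three widths is at most $9$, and hence $\wdt(P)\le\wdt(P;\hex)\le 3$.

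The part I expect to be the main obstacle is the equality case. If $\wdt(P)=3$, then all three widths equal $3$ and all three pair inequalities are tight. I would then apply the same tightness argument to the complementary pairs $(S_2,S_3)$, $(S_4,S_5)$, $(S_6,S_1)$, which are also forced to be tight. Tightness in the chain above requires $s+u=1$ and $\lambda(s+u)=1$, and in addition either $u=0$ or $s+u=1$ together with equality in $u/(s+u)\le 1$. Tracing these conditions should force every $\vp_i$ to be either a vertex of $S_i$ far from $\Bhex$ or a vertex of $\Bhex$. Combining this with the requirement that every edge of $P$ contains a blocking point in its relative interior, I expect to be left with only $\conv\{(-1,-2),(2,1),(-1,1)\}$ and its central reflection. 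Both are unimodularly equivalent to $3\Tst$. Conversely, $3\Tst$ indeed has $\Bhex$ as its blocking polygon and width $3$.
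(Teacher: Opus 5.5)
Your proof of the inequality is correct. Your pair inequality for $(S_3,S_4)$ is exactly the paper's estimate $p_{3,x}+p_{4,y}\le 3$; the difference lies in how the pieces are assembled. The paper uses only $\ve_1,\ve_2$ and the two opposite pairs $(S_6,S_1)$, $(S_3,S_4)$ to get $\wdt(P,\ve_1)+\wdt(P,\ve_2)\le 6$, which is more economical for the bound. You use all three directions of $\hex$ with the alternating pairs. This setup is equivariant under the rotation $\vx\mapsto(x-y,x)$, which cycles the $S_i$ and the excess functionals, so a single local computation governs all six adjacent pairs. That pays off in the equality case. The paper pushes the vertices onto the far diagonal edges, is still left with one-parameter families, and has to examine the width in direction $\ve_1-\ve_2$ separately. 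Your complementary-pair argument handles all of this uniformly.

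However, your description of single-pair tightness is wrong: tightness does not require $s+u=1$ and $\lambda(s+u)=1$. The tight configurations for $(S_3,S_4)$ are the following:
\begin{itemize}
\item $\vp_3=(2,1)$ with $\vp_4$ anywhere on $[(0,1),(1,1)]$;
\item $\vp_3$ anywhere on $[(1,0),(1,1)]$ with $\vp_4=(1,2)$;
\item the family $\vp_3=(1+s,s)$, $\vp_4=(1-s,2-s)$ with $0\le s\le 1$.
\end{itemize}
So a single pair does not force vertices. What does force them is intersecting with the tight set of $(S_2,S_3)$, which by the rotation only allows $\vp_3\in[(1,0),(1,1)]\cup[(1,1),(2,1)]$. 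Together these leave $\vp_3\in\{(2,1)\}\cup[(1,0),(1,1)]$, and the same holds for every $i$. Hence each tight pair consists of exactly one far vertex and one point on the corresponding edge of $\Bhex$, i.e.\ a missing vertex. Two far vertices are not collinear with the common vertex, and two missing vertices contribute excess $0$. This yields the alternation and the two triangles directly, with no further appeal to blocking points.

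Finally, $u\le s$ is not a constraint of $S_3$: its vertex $(1,0)$ has $s=0$, $u=1$. It is harmless in the inequality, where you never use it. You must not use it in the equality analysis, though, because the $s=0$ configurations are tight and they produce the second triangle $\conv\{(1,-1),(1,2),(-2,-1)\}$.
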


\begin{proof}
  For $1 \leq i \leq 6$, let $\vec p_i$ be (possibly degenerate, see Lemma \ref{lemma:initial_shards}) vertices of $P$ in the regions $S_i$ of Lemma \ref{lem:hexagon-shards}, respectively. Then the width of $P$ in directions $\ve_1, \ve_2$ is, respectively, 
  \begin{align*}
    \wdt(P, \ve_1)  = p_{3,x} - p_{6,x}, \quad \wdt(P, \ve_2)  = p_{4,y} - p_{1,y}. 
  \end{align*}


  Observe that the points $\vec p_1, \vec p_6$ must lie between the lines $\{y-x = -1\}$ and $\{y-x = 1\}$, and that $\vec p_1, \vec p_6$ and $(-1, -1)$ are colinear: that is, there exists $\lambda \geq 0$ such that $\vec p_1, \vec p_6$ lie on the line $\{y+1=-\lambda (x+1)\}$. We then see that 
  $p_{1,y} + p_{6,x}$ is minimized if $\vp_1$, $\vp_6$ lie respectively on lines $\{y-x=-1 \}$, $\{ y-x = 1\}$, in which case we have 
  $$p_{1,y} + p_{6,x} = -\frac{2+\lambda}{1+\lambda} + -\frac{2\lambda+1}{1+\lambda} = -3.$$
  
  Similarly, we obtain
  $$p_{3,x} + p_{4,y}  \leq 3,$$ 
  with equality if $\vp_3, \vp_4$ lie respectively on lines $\{y-x=-1\},~ \{y-x = 1\}$. 

  Altogether, we have
  \begin{align*}
  \wdt(P, \ve_1) + \wdt(P, \ve_2)  = p_{3,x} + p_{4,y} - p_{6,x} - p_{1,y} \leq 6.
  \end{align*}
  Thus $\min \{\wdt(P, \ve_1), \wdt(P, \ve_2)\}\leq 3$. 

  Suppose now that $\wdt(P) = 3$, thus in particular both $\wdt(P, \ve_1), \wdt(P, \ve_2)$ are $3$. Then it necessarily holds that $\vec p_1, \vec p_4\in\{y = x-1\}$ and $\vec p_4, \vec p_6\in \{y = x+1\}$. 

  Examining the width in direction $\ve_1 - \ve_2$, we see it is $2$ except when $\vec p_1, \vec p_3$ are not vertices but lie in the interior of edges of $P$, or when the same holds for $\vp_4$ and $\vp_6$. The first scenario can happen only if $\vp_2=(1,-1)$ and thus $\vp_4 =(1,2)$ and $\vp_6=(-2,-1)$, while the second forces $\vp_5=(-1, 1)$, and thus $\vp_1= (-1,-2), \vp_3=(2,1)$, and in both cases we have that $P\cong3\Tst$.
\end{proof}

\subsubsection{The pentagon} \label{sec:pentagon}
We consider $P$ such that 
\hypertarget{target:Bpent}{%
$$\BP = B^{\mathrm{pent}} = \conv \{ (-1, 1), \pm \ve_1, \pm \ve_2 \}.$$
}

\begin{lemma} \label{lem:pentagon-shards}
  Let $P$ be $1$-maximal with $\BP= \Bpent$. Then each vertex of $P$ lies in one of the following regions, and each region contains (at most) one vertex:
  \begin{align*}
    S_1 &= \conv \{(-1, -1), (-1, -2), (0, -1)\},  \\
    S_2 &= \conv \{(0, -1), (2, -1), (1, 0)\},  \\
    S_3 &= \conv \{(1, 0), (2,1), (0, 1)\} \cup \conv \{(1, 0), (1, 2), (0, 1)\},  \\
    S_4 &= \conv \{(0, 1), (-1, 2), (-1, 0)\},  \\
    S_5 &= \conv \{(-1, 0), (-2, -1), (-1, -1)\}. \\
  \end{align*}
\end{lemma}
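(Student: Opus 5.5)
The plan is to read the five regions off from the general machinery of Section~\ref{sec:shards}: Lemma~\ref{lemma:initial_shards} gives the regions $S_e$, and for the single region that comes out unbounded we cut it down with the forbidden cones of Lemma~\ref{lemma:forbidden_cones}.

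One point must be fixed first. The regions listed in the statement match the pentagon drawn in Figure~\ref{fig:case-3-remaining}, namely $\conv\{(-1,-1),(0,-1),(1,0),(0,1),(-1,0)\}$, whose unique interior lattice point is $\vo$. I will work with this polygon. It has the irredundant description
\[
y\ge -1,\quad x-y\le 1,\quad x+y\le 1,\quad y-x\le 1,\quad x\ge -1,
\]
and its edges are, in cyclic order,
\[
e_1=[(-1,-1),(0,-1)],\ e_2=[(0,-1),(1,0)],\ e_3=[(1,0),(0,1)],\ e_4=[(0,1),(-1,0)],\ e_5=[(-1,0),(-1,-1)].
\]

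\emph{Step 1.} By Lemma~\ref{lemma:initial_shards}, $P=\conv\{\vp_{e_1},\dots,\vp_{e_5}\}$ with $\vp_{e_i}\in S_{e_i}$. Here $S_{e_i}$ is defined in \eqref{eq:shard_definition_formal}: reverse the inequality of $e_i$ and keep the other four. The proof of that lemma already shows that each $S_{e_i}$ contains at most one vertex of $P$. If $\vp_{e_i}$ is not a genuine vertex, it was chosen in $\relint(e_i)$, which lies in every region listed. I then compute each $S_{e_i}$ directly.
\begin{itemize}
\item $S_{e_1}=\{y\le-1,\ x\ge-1,\ y\ge x-1\}$ (the other two inequalities are then implied). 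This is the triangle with vertices $(-1,-1),(0,-1),(-1,-2)$, which is $S_1$.
\item By the same vertex computation, $S_{e_2}$ is the triangle $\conv\{(0,-1),(2,-1),(1,0)\}=S_2$.
\item $S_{e_4}$ is $\conv\{(0,1),(-1,2),(-1,0)\}=S_4$.
\item $S_{e_5}$ is $\conv\{(-1,0),(-2,-1),(-1,-1)\}=S_5$.
\end{itemize}

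\emph{Step 2.} The remaining region is
\[
S_{e_3}=\{x+y\ge1,\ |y-x|\le1\},
\]
an unbounded strip. This is the only real obstacle, and the plan is to apply \eqref{eq:final_shards} with $\vq=(1,1)$. The translate $\BP-\vq$ has vertices $(-2,-2),(-1,-2),(0,-1),(-1,0),(-2,-1)$, so $\pos(\BP-\vq)$ is the closed negative quadrant, with extreme rays spanned by $(0,-1)$ and $(-1,0)$. Hence
\[
\sigma_\vq=\vq+\inter(\RR_{\ge0}^2)=\{x>1,\ y>1\}.
\]
By Lemma~\ref{lemma:forbidden_cones}, $P\cap\sigma_\vq=\emptyset$, so
\[
\vp_{e_3}\in S_{e_3}\cap\{x\le 1 \text{ or } y\le 1\}.
\]
Intersecting the strip with $\{y\le1\}$ gives $\conv\{(1,0),(2,1),(0,1)\}$, and intersecting it with $\{x\le1\}$ gives $\conv\{(1,0),(1,2),(0,1)\}$. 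Their union is exactly $S_3$. I expect the verification that these two triangles are correct to be a short vertex computation: the corners come from intersecting $y=1$ with $y=x-1$, and $x=1$ with $y=x+1$.

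For the record, the other regions $S_{e_1},S_{e_2},S_{e_4},S_{e_5}$ are bounded already, so no further forbidden cones are needed there. Combining Steps 1 and 2 yields the lemma.
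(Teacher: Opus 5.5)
Your proposal is correct and matches the paper's proof: the paper likewise reads off $S_1,S_2,S_4,S_5$ as the regions $S_e$ of Lemma~\ref{lemma:initial_shards}, and obtains $S_3$ by removing the forbidden cone $\sigma_{(1,1)}$ from the unbounded strip $S_{e_3}$. You are also right that the intended blocking pentagon is $\conv\{(-1,-1),\pm\ve_1,\pm\ve_2\}$, as drawn in the figures, so the vertex $(-1,1)$ in the displayed definition of $\Bpent$ is a typo.
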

\begin{figure}[h]
  \begin{tikzpicture}[scale=1.3]
  \fill[blue!30] (-2.2, -2.2) -- (-2.2, 2.2) -- (2.2, 2.2) -- (2.2, -2.2) -- cycle;

  \fill[purple!20] (-1, 0) -- (-1, 2.2) -- (-2.2, 2.2) -- (-2.2, -1.2) -- cycle;
  \fill[purple!20] (-1, -1) -- (-2.2, -1) -- (-2.2, -2.2) -- (-1, -2.2) -- cycle;
  \fill[purple!20] (0, -1) -- (-1.2, -2.2) -- (2.2, -2.2) -- (2.2, -1) -- cycle;

  \fill[purple!20] (1, 0) -- (2.2, -1.2) -- (2.2, 1.2) -- cycle;
  \fill[purple!20] (1,1) -- (2.2, 1) -- (2.2, 2.2) -- (1, 2.2) -- cycle;
  \fill[purple!20] (0, 1) -- (1.2, 2.2) -- (-1.2, 2.2) -- cycle;

  \draw[purple!50, dashed] (-1, 0) -- (-1, 2.2);
  \draw[purple!50, dashed] (-1, 0) -- (-2.2, -1.2);
  \draw[purple!50, dashed] (-1, -1) -- (-2.2, -1);
  \draw[purple!50, dashed] (-1, -1) -- (-1, -2.2);
  \draw[purple!50, dashed] (0, -1) -- (-1.2, -2.2);
  \draw[purple!50, dashed] (0, -1) -- (2.2, -1);
  \draw[purple!50, dashed] (1, 0) -- (2.2, -1.2);
  \draw[purple!50, dashed] (1, 0) -- (2.2, 1.2);
  \draw[purple!50, dashed] (1, 1) -- (2.2, 1);
  \draw[purple!50, dashed] (1, 1) -- (1, 2.2);
  \draw[purple!50, dashed] (0, 1) -- (1.2, 2.2);
  \draw[purple!50, dashed] (0, 1) -- (-1.2, 2.2);

  \fill[gray!40] (-1, 0) -- (-1, -1) -- (0, -1) -- (1, 0) -- (0, 1) -- cycle;

  \draw[thick] (-1, 0) -- (-1, -1) -- (0, -1) -- (1, 0) -- (0, 1) -- cycle;

    \foreach \x in {-2, -2, -1, ...,2}{
    \foreach \y in {-2, -2,-1, ...,2}{
      \fill (\x,\y) circle (2pt);
    }
  }
  \node at (-.26, -.34) {$\BP$};
  \node at (-.666, -1.333) {$S_1$};
  \node at (1, -.666) {$S_2$}; 
  \node at (.666, .666) {$S_3$};
  \node at (-.666, 1) {$S_4$};
  \node at (-1.3333, -.6666) {$S_5$};
  \end{tikzpicture}
\caption{The regions as in Lemma \ref{lem:pentagon-shards}. The regions $S_1, ..., S_5$ are as labeled, in blue. }
\end{figure}
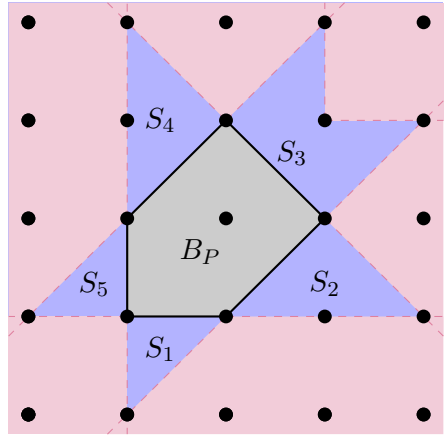

\begin{proof}
	We apply Lemma \ref{lemma:initial_shards}, and further cut out $\sigma_{(1,1)}$ to obtain $S_3$. 
\end{proof}
 
\begin{lemma}\label{lem:pent}
Let $P$ be a $1$-maximal polygon such that $\BP = \Bpent$. Then $\wdt(P) < 3$.
\end{lemma}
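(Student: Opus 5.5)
We follow the template of the proof of Lemma~\ref{lem:hexagon-width}. Let $\vp_1,\dots,\vp_5$ be the (possibly degenerate) vertices of $P$ in the regions $S_1,\dots,S_5$ of Lemma~\ref{lem:pentagon-shards}. We only bound the widths in the directions of $\hex$, i.e.\ $\ve_1$, $\ve_2$ and $\ve_2-\ve_1$, since $\wdt(P)\le \wdt(P;\hex)$. From the shape of the regions we read off which vertices realize the extreme values:
\[
\wdt(P,\ve_1)=\max\{p_{2,x},p_{3,x}\}-p_{5,x},\qquad \wdt(P,\ve_2)=\max\{p_{3,y},p_{4,y}\}-p_{1,y}.
\]
Similarly, $\wdt(P,\ve_2-\ve_1)$ is attained with its maximum at $\vp_4$ (or $\vp_5$) and its minimum at $\vp_2$ (or $\vp_1$).

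The regions $S_1$ and $S_5$ coincide with the regions $S_1$ and $S_6$ of the hexagon case. Moreover, $\vp_1$, $\vp_5$ and $(-1,-1)$ are collinear, since the edge of $P$ through the corner $(-1,-1)$ of $\Bpent$ joins them. So the computation from Lemma~\ref{lem:hexagon-width} applies verbatim and gives
\[
p_{1,y}+p_{5,x}\ \ge\ -3.
\]
On the opposite side we use the remaining collinearities: $\vp_2,\vp_3,(1,0)$ and $\vp_3,\vp_4,(0,1)$ are collinear. We split into two cases according to which of the two triangles forming $S_3$ contains $\vp_3$.

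If $\vp_3\in\conv\{(1,0),(2,1),(0,1)\}$, then $p_{3,y}\le 1$, so $\max\{p_{3,y},p_{4,y}\}=p_{4,y}$ whenever $\vp_4$ lies above height $1$. In that case we aim for a linear estimate of the form $\max\{p_{2,x},p_{3,x}\}+p_{4,y}\le 3$. This is analogous to the hexagon bound $p_{3,x}+p_{4,y}\le 3$, and should follow by parametrizing the edge through $(0,1)$ by its slope and using that $\vp_4$ stays in $x\ge -1$ while $\vp_3$ stays in $y\le 1$. Summing with the bound above then gives $\wdt(P,\ve_1)+\wdt(P,\ve_2)\le 6$. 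The other case, $\vp_3\in\conv\{(1,0),(1,2),(0,1)\}$, is symmetric under the reflection $(x,y)\mapsto(y,x)$ applied to the top-right part, with the roles of $\vp_2$ and $\vp_4$ exchanged.

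The main obstacle is to obtain \emph{strict} inequality. In the hexagon case equality forced all four outer vertices onto the lines $y-x=\pm1$, and the extremal configuration was $3\Tst$. Here such a configuration is impossible, because $\Bpent$ has no lattice point $(1,1)$ on its boundary, which is what the region $S_3$ reflects. So we analyse the equality case $\wdt(P,\ve_1)=\wdt(P,\ve_2)=3$. It forces $\vp_1$ and $\vp_5$ onto the lines $y=x-1$ and $y=x+1$, and forces $\vp_3$ to a corner such as $(2,1)$ or $(1,2)$ of $S_3$. For these configurations we then compute $\wdt(P,\ve_2-\ve_1)$. We expect it to be at most $2$ (as in the hexagon case), or at least $<3$, because a vertex at $(2,1)$ pulls $P$ toward the line through $(0,-1)$ and $(1,0)$. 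If this direct approach fails, the fallback is to add $\wdt(P,\ve_2-\ve_1)$ as a third term and show that the sum of all three widths is strictly less than $9$ via the same slope parametrizations.
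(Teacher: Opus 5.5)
Your template is the right one: bound $\wdt(P,\ve_1)+\wdt(P,\ve_2)\le 6$ by pairing vertices through the collinearities, then analyse the equality case. Your bound $p_{1,y}+p_{5,x}\ge -3$ is also correct. The gap is the other half. The estimate $\max\{p_{2,x},p_{3,x}\}+p_{4,y}\le 3$ is false whenever $\vp_3$ lies in the central triangle $\conv\{(1,0),(1,1),(0,1)\}$. That triangle is contained in both triangles you split along, so neither of your cases avoids it.

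Here is a counterexample. For small $\varepsilon>0$, take the pentagon bounded by the lines through $(-1,-1)$, $(0,-1)$, $(1,0)$, $(0,1)$, $(-1,0)$ with slopes $-\varepsilon$, $\varepsilon$, $-(1+\varepsilon)$, $-(1-\varepsilon)$, $3$, respectively. Its vertices are $\vp_1\approx(-\tfrac12,-1)$, $\vp_2\approx(2,-1)$, $\vp_3=(\tfrac12,\tfrac{1+\varepsilon}{2})$, $\vp_4\approx(-\tfrac12,\tfrac32)$ and $\vp_5\approx(-\tfrac43,-1)$, each in the interior of its region. The point $(0,0)$ is the only interior lattice point, and each edge contains exactly one lattice point, namely its blocking point. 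So $P$ is $1$-maximal with $\BP=\Bpent$, yet $p_{2,x}+p_{4,y}\approx\tfrac72$. Steepening the line through $(-1,0)$ while letting $\varepsilon\to0$ pushes this sum towards $4$.

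The reason is structural. When $\vp_3$ is near $(\tfrac12,\tfrac12)$, the collinearities through $(1,0)$ and $(0,1)$ let $\vp_2$ approach the corner $(2,-1)$ and $\vp_4$ approach $(-1,2)$ simultaneously. So the slope of the edge through $(0,1)$ gives no control over $p_{2,x}$. Your fallback fails as well: in the same example the widths in directions $\ve_1,\ve_2,\ve_2-\ve_1$ are about $\tfrac{10}{3},\tfrac52,5$, whose sum exceeds $9$.

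What is missing is the right case distinction, together with a different pairing in one of the cases; this is what the paper does. Use the reflection in $\{y=x\}$ to assume $p_{3,x}\le 1$, so that $\vp_2\in\conv\{(1,-1),(2,-1),(1,0)\}$. Then split according to whether $p_{3,y}\ge 1$ or $p_{3,y}\le 1$.

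In the outer case $p_{3,y}\ge 1$, your pairing works (mirrored): $p_{1,y}+p_{5,x}\ge -3$, and parametrizing the edge through $(1,0)$ by its slope gives $p_{2,x}+p_{3,y}\le 3$.

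In the central case $p_{3,y}\le 1$, you must pair across the blocking points $(-1,0)$ and $(0,-1)$ instead. Let $L$ be the line through $\vp_5,(-1,0),\vp_4$, and let $A,B$ be its intersections with $\{y=1\}$ and $\{y=-1\}$. Then $A_x=-2-B_x\ge -2-p_{5,x}$ and $p_{4,y}\le 1-p_{4,x}\le 1-A_x$, which give $p_{4,y}-p_{5,x}\le 3$. Symmetrically, $p_{2,x}-p_{1,y}\le 3$.

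In both cases $\wdt(P,\ve_1)+\wdt(P,\ve_2)\le 6$. Strictness then comes from the equality analysis of these particular chains, not from a third direction. In the central case, equality forces $\vp_5$ onto $\{y=-1\}$ and $\vp_1$ onto $\{x=-1\}$. That is incompatible with $(-1,-1)$ lying in the relative interior of the edge $[\vp_5,\vp_1]$.
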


\begin{proof}
Let $\vp_i$, $1\leq i \leq 5$, be (possibly degenerate) vertices of $P$ in the regions $S_i$ of Lemma \ref{lem:pentagon-shards}, respectively. 
Up to reflection of $P$ over $\{ y = x \}$, we can assume that $\vp _3$ lies in $\{ x \leq 1 \}$.
The colinearity of $\vp _3, (1, 0), \vp _2$ then forces $\vp_2 \in S'_2 = \conv \{(1, -1), (2, -1), (1, 0)\}$.
We now consider separate cases depending on $p_{3,y}$.

\textbf{Case 1: } $p_{3,y} \leq 1$.
Here the colinearity $\vp _3, (0, 1), \vp _4$ forces $\vp_4 \in S'_4 = \conv \{ (0, 1), (-1, 2), (-1, 1) \}$, as visualized in Figure \ref{fig:pentagon-shards-case-1}.

\begin{figure}[h]
  \begin{tikzpicture}[scale=1.3]
  \fill[blue!30] (-2, -1) -- (-1, -1) -- (-1, 0) -- cycle;

  \fill[blue!30] (-1, -1) -- (-1, -2) -- (0, -1) -- cycle;
  \fill[blue!30] (1, 0) -- (1, -1) -- (2, -1) -- cycle;
  \fill[blue!30] (1, 0) -- (1, 1) -- (0, 1) -- cycle;
  \fill[blue!30] (0, 1) -- (-1, 2) -- (-1, 1) -- cycle;

  \fill[gray!40] (-1, 0) -- (-1, -1) -- (0, -1) -- (1, 0) -- (0, 1) -- cycle;

  \draw[thick] (-1, 0) -- (-1, -1) -- (0, -1) -- (1, 0) -- (0, 1) -- cycle;

    \foreach \x in {-2, -1, ...,2}{
    \foreach \y in {-2,-1, ...,2}{
      \fill (\x,\y) circle (2pt);
    }
  }
  \node at (-.666, -1.333) {$S_1$};
  \node at (1.333, -.666) {$S'_2$}; 
  \node at (.666, .666) {$S'_3$};
  \node at (-.666, 1.333) {$S'_4$};
  \node at (-1.3333, -.6666) {$S_5$};
  \node at (-.26, -.34) {$\BP$};
  \end{tikzpicture}
  \hspace{2cm}
  \begin{tikzpicture}[scale=1.7]

  \fill[blue!30] (-2, -1) -- (-1, -1) -- (-1, 0) -- cycle;

  \fill[blue!30] (0, 1) -- (-1, 2) -- (-1, 1) -- cycle;

  \fill[gray!40] (-1, 0) -- (0,1) -- (.333, .666) -- (.333, -.666) -- (0, -1) -- (-1, -1) -- cycle;
  \draw[thick] (-1, 0) -- (0,1) -- (.333, .666);
  \draw[thick] (.333, -.666) -- (0, -1) -- (-1, -1) -- (-1,0);

  \draw[thick] (-1.3333333, -.6666666) -- (-.4, -.666666+1.8666666);
  \draw[thick, dashed] (-1.333333,-.6666666) -- (-1.5, -1);
  \fill (-1.3333333, -.6666666) circle (2pt);
  \fill (-.4, -.66666666+1.8666666666) circle (2pt); 

  \fill[red] (-1.5, -1) circle (2pt);
  \fill[red] (-.5, 1) circle (2pt);

  \foreach \x in {-2,-1,0}{
    \foreach \y in {-1,0, ...,2}{
      \fill (\x,\y) circle (2pt);
    }
  }

  \node at (-.4, .9) {$\vec A$}; 
  \node at (-1.6333, -.8666) {$\vec B$};
  \node at (-1.15, -.66666) {$\vp_5$}; 
  \node at (-.2, 1.3) {$\vp_4$};
  \node at (-.26, -.34) {$\BP$};
  \end{tikzpicture}
\caption{The regions as in {Case 1} of Lemma \ref{lem:pent}. On the right we visualize the points $\vec A, \vec B$ which help relate $p_{4,y}$ and $p_{5,x}$.}
\label{fig:pentagon-shards-case-1}
\end{figure}
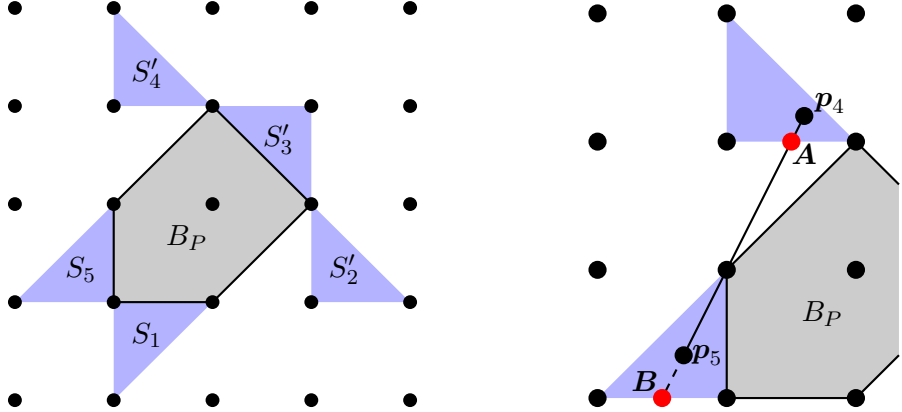

We consider now the width of $P$ in directions $\ve_1, \ve_2$. We have
\begin{align*}
  \wdt(P, \ve_1)  = p_{2,x} - p_{5,x}, \quad \wdt(P, \ve_2)  = p_{4,y} - p_{1,y}.
\end{align*}

Let $L$ denote the line given by $\vp _5, (-1, 0), \vp_4$, which are colinear by construction, and let $\vec A = L \cap \{y = 1\}$, $\vec B = L \cap \{y = -1\}$, as in Figure \ref{fig:pentagon-shards-case-1}. We have
$$A_x = -2 - B_x \geq -2 - p_{5,x}.$$

Since $\vp_4$ lies below the line $\{ y=-x+1 \}$, we have $p_{4,y} \leq -p_{4,x} +1 \leq -  A _x + 1$, hence
$$p_{4,y} -p_{5,x} \leq  3.$$

A symmetric argument yields
$$p_{2,x} - p_{1,y} \leq 3.$$

Hence the sum of the widths is bounded 
$\wdt(P,\ve_1) + \wdt(P,\ve_2) \leq 6, $
and in particular one must be $\leq 3$. Let us examine when $\wdt(P)=3$ can hold. For this to be the case, equality has to hold in all the inequalities above, which forces $\vp_5$ to lie on the horizontal edge of $S_5$ and $\vp_4$ on the diagonal of $S'_4$, and symmetrically, $\vp_1$ lies on the vertical edge of $S_1$ and $\vp_2$ on the diagonal of $S'_2$. The only way to have the required colinearity of $\vp_1, (-1,-1)$ and $\vp_5$ in this situation is if $\vp_1, \vp_5$ are both equal to $(-1, -1)$. This forces $P$ to be $2\Delta_3$, contradicting $\BP=B^{\text{pent}}$. 

\textbf{Case 2: } $p_{3,y} \geq 1$.

In this case, we restrict $S_2, S_4$ to obtain the regions $S'_2, S'_4$ as in Figure \ref{fig:pentagon-shards-case-2}. 

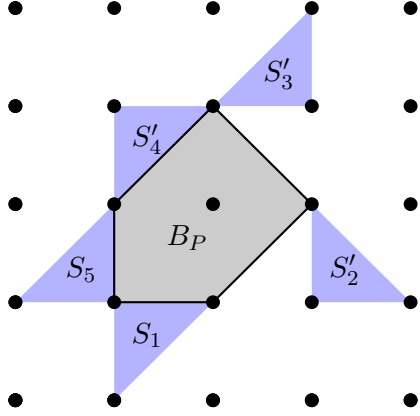
\begin{figure}[h]
  \begin{tikzpicture}[scale=1.3]
  \fill[blue!30] (-2, -1) -- (-1, -1) -- (-1, 0) -- cycle;

  \fill[blue!30] (-1, -1) -- (-1, -2) -- (0, -1) -- cycle;
  \fill[blue!30] (1, 0) -- (1, -1) -- (2, -1) -- cycle;
  \fill[blue!30] (1, 2) -- (1, 1) -- (0, 1) -- cycle;
  \fill[blue!30] (0, 1) -- (-1, 0) -- (-1, 1) -- cycle;

  \fill[gray!40] (-1, 0) -- (-1, -1) -- (0, -1) -- (1, 0) -- (0, 1) -- cycle;

  \draw[thick] (-1, 0) -- (-1, -1) -- (0, -1) -- (1, 0) -- (0, 1) -- cycle;

  \foreach \x in {-2, -2, -1, ...,2}{
    \foreach \y in {-2, -2,-1, ...,2}{
      \fill (\x,\y) circle (2pt);
    }
  }
  \node at (-.26, -.34) {$\BP$};
  \node at (-.666, -1.333) {$S_1$};
  \node at (1.333, -.666) {$S'_2$}; 
  \node at (.666, 1.333) {$S'_3$};
  \node at (-.666, .666) {$S'_4$};
  \node at (-1.3333, -.6666) {$S_5$};
  \end{tikzpicture}
\caption{The regions as in {Case 2} of Lemma \ref{lem:pent}.}
 \label{fig:pentagon-shards-case-2}
\end{figure}

Arguing analogously to case 1, colinearity of $\vp_1, (-1, -1)$ and $\vp_5$ yields
$$p_{1,y} + p_{5,x} \geq -3,$$

while colinearity $\vp_2, (1,0)$ and $\vp_3$ yields
$$p_{2,x} + p_{3,y} \leq 3.$$

Putting these together we obtain
\begin{align*}
  \wdt(P, \ve_1) + \wdt(P,\ve_2)  = p_{2,x} - p_{5,x} + p_{3,y} - p_{1,y}  \leq 6.
\end{align*}

Just as in case 1, equality $\wdt(P) = 3$ can occur only if $\vp_3, \vp_5$ lie on $\{y = x + 1\}$, $\vp_2$ on $\{ y = 2 \}$, and $\vp_1$ on $\{y=x-1\}$. The only way this can occur is if $\vp_2=(1,-1)$, which forces $P=3\Tst,$ contradicting the assumption $\BP=B^{\text{pent}}$.
\end{proof}

\subsubsection{The crosspolygon}
\label{subsub:sym_quad}
We consider $P$ such that 
\hypertarget{target:Bcross}{%
$$\BP = B^{\mathrm{cross}} = \conv \{ \pm \ve_1, \pm \ve_2 \}.$$
}

\begin{lemma} \label{lem:symm-quad-shards}
  Let $P$ be $1$-maximal such that $\BP = \Bcross$. Then each vertex of $P$ lies in one of the following regions, and each region contains (at most) one vertex:

  \begin{align*}
    S_1 &= \conv\{(1, 0), (2, 1), (0, 1)\} \cup \conv \{(1, 0), (1, 2), (0, 1)\}, \\
    S_2 &= \conv \{(0,1),(-1,2),(-1,0)\} \cup \conv \{(0,1),(-2,1),(-1,0)\}, \\
    S_3 &= \conv \{(-1,0),(-2,-1),(0,-1)\} \cup \conv \{(-1,0),(-1,-2),(0,-1)\}, \\
    S_4 &= \conv \{(0,-1),(1,-2),(1,0)\} \cup \conv \{(0,-1),(2,-1),(1,0)\}.
  \end{align*}

\end{lemma}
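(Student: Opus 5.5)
The argument follows the pentagon case (Lemma~\ref{lem:pentagon-shards}): first Lemma~\ref{lemma:initial_shards} localizes one vertex per edge, then Lemma~\ref{lemma:forbidden_cones} trims the regions.

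\textbf{Step 1: the initial regions.} Label the edges of $\Bcross$ as $e_1=[(1,0),(0,1)]$, $e_2=[(0,1),(-1,0)]$, $e_3=[(-1,0),(0,-1)]$ and $e_4=[(0,-1),(1,0)]$. Consider $S_{e_1}$ from \eqref{eq:shard_definition_formal}. It is obtained by reversing the inequality $x+y\le 1$ and keeping $x-y\le 1$ and $-x+y\le 1$; the inequality $-x-y\le1$ is implied by the others and can be dropped. So $S_{e_1}$ is the unbounded region $\{x+y\ge1,\ |x-y|\le 1\}$, a strip of width one in direction $(1,1)$ beyond $e_1$. By Lemma~\ref{lemma:initial_shards}, each vertex of $P$ lies in one of the $S_{e_i}$, with at most one vertex in each. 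The interior lattice point of $P$ is $\vp=(0,0)$, since it is the interior point of $\Bcross$.

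\textbf{Step 2: trimming by forbidden cones.} The key trimming uses the lattice point $\vq=(1,1)\ne\vp$. The cone $\pos(\Bcross-\vq)$ is spanned by $(0,-1)$ and $(-1,0)$, because $(1,0)$ and $(0,1)$ are the extreme points of $\Bcross$ seen from $\vq$. Hence
\[
\sigma_{(1,1)}=\{x>1,\ y>1\},
\]
the open quadrant at $(1,1)$, and by Lemma~\ref{lemma:forbidden_cones}, $P$ does not meet it. Intersect the strip $S_{e_1}$ with the complement of this quadrant. The strip is bounded by the lines $y=x+1$ and $y=x-1$ through $(0,1)$ and $(1,0)$, so removing the quadrant leaves the part with $x\le1$ or $y\le1$. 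This part is exactly the two triangles in the statement:
\begin{itemize}
\item $\conv\{(1,0),(1,2),(0,1)\}$, the part with $x\le1$, cut off at $(1,2)$ on the line $y=x+1$;
\item $\conv\{(1,0),(2,1),(0,1)\}$, the part with $y\le1$, cut off at $(2,1)$ on the line $y=x-1$.
\end{itemize}
Their union is $S_1$.

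\textbf{Step 3: the other edges.} The symmetry $(x,y)\mapsto(-y,x)$ of $\Bcross$ about $\vp$ permutes the edges cyclically. It also permutes the cones $\sigma_\vq$ for $\vq\in\{(\pm1,\pm1)\}$. Applying it to Step 2 gives $S_2$, $S_3$ and $S_4$.

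\textbf{Main obstacle.} The only delicate point is checking that no other $\vq$ cuts $S_1$ further. This check is not needed for the inclusion claimed in the statement, since we only need the vertices to lie in the $S_i$; further trimming would only strengthen the result. I would still verify, for instance with $\vq=(2,0)$ or $(0,2)$, that the resulting cones miss the two triangles, so that the regions are stated sharply. The claim that each region contains at most one vertex is inherited directly from Lemma~\ref{lemma:initial_shards}.
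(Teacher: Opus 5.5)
Your proposal is correct and is essentially the paper's argument. The paper also obtains $S_1,\dots,S_4$ by intersecting the regions $S_e$ from Lemma~\ref{lemma:initial_shards} with the complements of the cones $\sigma_\vq$ from Lemma~\ref{lemma:forbidden_cones} for $\vq=(\pm1,\pm1)$; you make the computation explicit for one edge, transfer it to the others via the rotation $(x,y)\mapsto(-y,x)$, and rightly note that no further trimming is needed for the claimed inclusion.
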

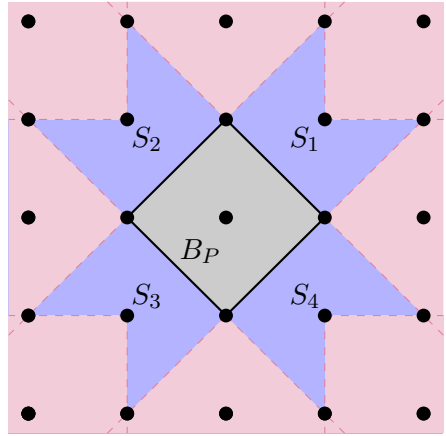
\begin{figure}[h]
  \begin{tikzpicture}[scale=1.3]
  \fill[blue!30] (-2.2, -2.2) -- (-2.2, 2.2) -- (2.2, 2.2) -- (2.2, -2.2) -- cycle;

  \fill[purple!20] (-1, 0) -- (-2.2, 1.2) -- (-2.2, -1.2) -- cycle;
  \fill[purple!20] (-1, -1) -- (-2.2, -1) -- (-2.2, -2.2) -- (-1, -2.2) -- cycle;
  \fill[purple!20] (0, -1) -- (-1.2, -2.2) -- (1.2, -2.2) -- cycle;
  \fill[purple!20] (1, -1) -- (1, -2.2) -- (2.2, -2.2) -- (2.2, -1) -- cycle;
  \fill[purple!20] (1, 0) -- (2.2, -1.2) -- (2.2, 1.2) -- cycle;
  \fill[purple!20] (1,1) -- (2.2, 1) -- (2.2, 2.2) -- (1, 2.2) -- cycle;
  \fill[purple!20] (0, 1) -- (1.2, 2.2) -- (-1.2, 2.2) -- cycle;
  \fill[purple!20] (-1, 1) -- (-1, 2.2) -- (-2.2, 2.2) -- (-2.2, 1) -- cycle;

  \draw[purple!50, dashed] (-1, 0) -- (-2.2, 1.2);
  \draw[purple!50, dashed] (-1, 0) -- (-2.2, -1.2);
  \draw[purple!50, dashed] (-1, -1) -- (-2.2, -1);
  \draw[purple!50, dashed] (-1, -1) -- (-1, -2.2);
  \draw[purple!50, dashed] (0, -1) -- (-1.2, -2.2);
  \draw[purple!50, dashed] (0, -1) -- (1.2, -2.2);
  \draw[purple!50, dashed] (1, -1) -- (1, -2.2);
  \draw[purple!50, dashed] (1, -1) -- (2.2, -1);
  \draw[purple!50, dashed] (1, 0) -- (2.2, -1.2);
  \draw[purple!50, dashed] (1, 0) -- (2.2, 1.2);
  \draw[purple!50, dashed] (1, 1) -- (2.2, 1);
  \draw[purple!50, dashed] (1, 1) -- (1, 2.2);
  \draw[purple!50, dashed] (0, 1) -- (1.2, 2.2);
  \draw[purple!50, dashed] (0, 1) -- (-1.2, 2.2);
  \draw[purple!50, dashed] (-1, 1) -- (-1, 2.2);
  \draw[purple!50, dashed] (-1, 1) -- (-2.2, 1);

  \fill[gray!40] (-1, 0) -- (0, -1) -- (1, 0) -- (0, 1) -- cycle;

  \draw[thick] (-1, 0) -- (0, -1) -- (1, 0) -- (0, 1) -- cycle;

    \foreach \x in {-2, -2, -1, ...,2}{
    \foreach \y in {-2, -2,-1, ...,2}{
      \fill (\x,\y) circle (2pt);
    }
  }

  \node at (-.26, -.34) {$\BP$};
  \node at (.8, .8) {$S_1$};  
  \node at (-.8, .8) {$S_2$};
  \node at (-.8, -.8) {$S_3$};
  \node at (.8, -.8) {$S_4$};
  \end{tikzpicture}
\caption{The regions as in Lemma \ref{lem:symm-quad-shards}.}
\label{fig:cross-polytope-shards}
\end{figure}
\begin{proof}
  These regions arise by combining Lemma \ref{lemma:initial_shards} and Lemma \ref{lemma:forbidden_cones} applied to the points $(1,1), (-1,1), (1,-1)$, and $(-1,-1)$.
\end{proof}

\begin{lemma}
  Let $P$ be a $1$-maximal polygon such that $\BP=\Bcross$. 

  Then $\wdt(P) < 3$. 
\end{lemma}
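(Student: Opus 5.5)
We follow the pattern of the hexagon and pentagon cases. Let $\vp_i\in S_i$, $1\le i\le 4$, be the (possibly degenerate) vertices of $P$ from Lemma \ref{lem:symm-quad-shards}, so that $P=\conv\{\vp_1,\dots,\vp_4\}$. The edges of $P$ pass through the blocking points $\pm\ve_1,\pm\ve_2$. Thus $\vp_1,\ve_2,\vp_2$ are colinear, and likewise $\vp_2,-\ve_1,\vp_3$, then $\vp_3,-\ve_2,\vp_4$, and $\vp_4,\ve_1,\vp_1$. Each region $S_i$ is a union of two triangles that are exchanged by a reflection fixing $\BP$. In $S_1$ one triangle lies in $\{y\le 1\}$ and the other in $\{x\le 1\}$. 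Using the symmetries of $\Bcross$ (the dihedral group generated by $(x,y)\mapsto(y,x)$ and $(x,y)\mapsto(-x,y)$), we normalize the position of $\vp_1$, say $p_{1,y}\le 1$, so that $\vp_1\in\conv\{(1,0),(2,1),(0,1)\}$. Colinearity with $\ve_1$ then forces $\vp_4$ into the triangle $\conv\{(0,-1),(1,-2),(1,0)\}$, i.e.\ into $\{x\le 1\}$. Similarly, colinearity with $\ve_2$ restricts $\vp_2$ to one triangle of $S_2$ in $\{y\le 1\}$ or $\{y\ge1\}$ depending on the slope. We then split into a small number of cases according to which triangle of $S_2$ and $S_3$ contains $\vp_2$ and $\vp_3$, in the same way as Cases 1 and 2 for the pentagon.

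In each case we pick two of the directions in $\hex$, typically $\ve_1$ and $\ve_2$. For instance, when $\vp_1$ lies in the triangle with $x\le 2$ on $\{y\le1\}$ and $\vp_2$ lies in the triangle in $\{x\ge -1\}$, we have
\[
\wdt(P,\ve_1)=\max_i p_{i,x}-\min_i p_{i,x},\qquad \wdt(P,\ve_2)=\max_i p_{i,y}-\min_i p_{i,y}.
\]
Here the maxima and minima are attained at identified vertices. We pair the extremal coordinates through the colinearity constraints, exactly as with the auxiliary points $\vec A,\vec B$ in Lemma \ref{lem:pent}. If $\vp_i,\vq,\vp_j$ are colinear and $\vp_i,\vp_j$ lie in the prescribed triangles, then the sum of the relevant coordinates, e.g.\ $p_{1,x}+p_{2,y}$, is bounded by $3$. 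The bound is sharp only when both points lie on the diagonal edges $\{y=x\pm1\}$ or $\{y=-x\pm 1\}$ of their triangles. Adding two such inequalities gives $\wdt(P,\ve_1)+\wdt(P,\ve_2)\le 6$, hence $\wdt(P)\le 3$. In cases where this pairing fails, the appropriate pair of directions from $\hex$ should be $\ve_1$ and $\ve_2-\ve_1$, or $\ve_2$ and $\ve_2-\ve_1$, chosen to match the triangles in which the vertices sit.

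For strictness we analyse equality. Equality forces all four vertices onto prescribed boundary segments of their triangles. It also forces $\vp_1,\dots,\vp_4$ to satisfy the colinearity through the blocking points simultaneously. As in the pentagon, this should only be possible if two consecutive vertices coincide with a lattice point such as $(1,-1)$ or $(-1,1)$. Then either $P$ acquires a lattice point on an edge that is not a blocking point of $\Bcross$, or $P\cong 3\Tst$, whose blocking polygon is $\Bhex$. Both contradict $\BP=\Bcross$. The main obstacle is organizing the case split so that in every configuration some pair of directions yields the sum bound $6$; the equality analysis should be routine once the extremal positions are listed. If a configuration resists the linear pairing, we would first try replacing one of the two widths by $\wdt(P,\ve_2-\ve_1)$, which sees the vertices in the $\{x\le1\}$ and $\{y\le1\}$ triangles differently.
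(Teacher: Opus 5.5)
Your core estimate is the paper's. Once the vertices sit in the alternating triangles, you get the bound you want. In your orientation these triangles are $\vp_1\in\conv\{(1,0),(2,1),(1,1)\}$, $\vp_2\in\conv\{(0,1),(-1,2),(-1,1)\}$, $\vp_3\in\conv\{(-1,0),(-2,-1),(-1,-1)\}$ and $\vp_4\in\conv\{(0,-1),(1,-2),(1,-1)\}$. Pairing through $\pm\ve_2$ gives $p_{1,x}+p_{2,y}\le 3$ and $p_{3,x}+p_{4,y}\ge -3$, hence $\wdt(P,\ve_1)+\wdt(P,\ve_2)\le 6$. The paper does the mirror image, pairing through $\pm\ve_1$.

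\textbf{The gap: your normalization step is false.} The gap lies in how you reach this configuration. You claim that colinearity with $\ve_1$ forces $\vp_4$ into $\conv\{(0,-1),(1,-2),(1,0)\}$. However, $\conv\{(1,0),(2,1),(0,1)\}$ contains the piece $\conv\{(1,0),(1,1),(0,1)\}$ of the square $[-1,1]^2$. Take $\vp_1$ there with $p_{1,x}<1$, e.g.\ $\vp_1=(\tfrac12,\tfrac{9}{10})$. The line through $\vp_1$ and $(1,0)$ then exits into $\{x>1\}$, so $\vp_4$ lands in the other triangle of $S_4$. In the same way, $\vp_2$ is pushed into $\{y\ge 1\}$ or onto $\{y=1\}$. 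In that situation the maxima of $x$ and $y$ sit at the opposite vertices $\vp_4$ and $\vp_2$, which no blocking point joins. Your pairing scheme then has nothing to pair. Switching to $\ve_2-\ve_1$ is not the remedy: this case has to be handled or excluded separately.

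\textbf{The missing step: dispose of the square first.} The paper does this before anything else.
\begin{itemize}
\item If a vertex lies on $\partial[-1,1]^2$, say $p_{1,y}=1$, then colinearity through $(0,1)$ puts $\vp_2$ on $\{y=1\}$ as well. Since all regions lie in $\{y\ge -2\}$, this gives $\wdt(P,\ve_2)\le 3$ directly. Equality forces $P$ to be a subpolygon of $3\Tst$, hence $P=3\Tst$ by reducedness, which contradicts $\BP=\Bcross$.
\item If a vertex lies in the open square, say $\vp_1$, then $p_{2,y}\ge 1$ and $p_{4,x}\ge 1$. The colinearities through $(-1,0)$ and $(0,-1)$ then squeeze $\vp_3$ to $(-1,-1)$, which is back in the boundary case.
\end{itemize}
Only after $[-1,1]^2$ is excluded does the triangle containing $\vp_1$ determine those of $\vp_4,\vp_3,\vp_2$ by successive colinearities. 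So there is exactly one configuration up to reflection in $\{y=x\}$, and your anticipated case split over $S_2,S_3$ is unnecessary.

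\textbf{Equality analysis.} Your equality discussion misidentifies the extremal situation. In the main configuration, equality forces $\vp_1$ onto the blocking-point corner of its triangle, namely $\vp_1=(1,0)$ in your orientation. That point lies on $\partial[-1,1]^2$, so strictness comes from the boundary case. It does not come from consecutive vertices landing on $(1,-1)$ or $(-1,1)$.
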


\begin{proof}

  Let $\vp_1, \vp_2, \vp_3, \vp_4$ be (possibly degenerate) vertices of $P$ in $S_1, S_2, S_3, S_4$, respectively. 
  We first handle some degenerate cases. 

\begin{claim*} If any $\vp_i$ is on the boundary of the square $[-1, 1]^2$, then $\wdt(P) < 3$. \end{claim*}

 Due to the symmetry of the cross-polytope, we can assume without loss of generality that $p_{1,y} = 1$. Then colinearity of $\vp_1, (0, 1), \vp_2$ yields that $\vp_2$ also lies on $\{ y = 1 \}$. Combined with the regions as in Lemma \ref{lem:symm-quad-shards}, we see that $\wdt(P,\ve_2) \leq 3$. Equality occurs if $\vp_3 = (-1, -2)$ or $\vp_4 = (1, -2)$, but then $P$ is a subpolygon of $3 \Tst$; reducedness of $3 \Tst$ yields that $\wdt(P) = 3$ if and only if $P = 3 \Tst$, contradicting $\BP= \Bcross$.  

  Now suppose that one of the $\vp_i$ lies in the open square $(-1, 1)^2$, without loss of generality $\vp_1$. Then the the colinearities $\vp_1, (0,1), \vp_2$ and $\vp_1, (1, 0), \vp_4$ yield that $p_{2,y} \geq 1$ and $p_{4,x} \geq 1$, respectively. But then the colinearity of $\vp_2, (-1,0), \vp_3$ forces $\vp_3$ to lie in $\conv\{(-1,0), (-1,-1), (-2,-1)\}$, the left triangle of $S_3$, while the colinearity of $\vp_3, (0,-1), \vp_4$ forces $p_3$ to be in the lower triangle of $S_3$. The only way both of these conditions can hold is if $\vp_3 = (-1, -1)$, so we are in the situation treated by the claim above. 

  We can therefore assume that none of the $\vp_i$ lie in the square $[-1,1]^2$.
  Up to reflection over $\{ y = x \}$, we can assume that $\vp_1 \in \conv \{(1, 1), (1, 2), (0, 1)\} \setminus [(0, 1), (1, 1)]$. Considering the colinearities yields the regions $S_1',\dots, S_4'$ in Figure \ref{fig:cross-polytope-final-shards} as possible locations for the vertices.

  \begin{figure}[h]
  \begin{tikzpicture}[scale=1.3]
  \fill[blue!30] (1, 1) -- (1, 2) -- (0, 1) -- cycle;
  \fill[blue!30] (-1, 1) -- (-2, 1) -- (-1, 0) -- cycle;
  \fill[blue!30] (-1, -1) -- (0, -1) -- (-1, -2) -- cycle;
  \fill[blue!30] (1, 0) -- (2, -1) -- (1, -1) -- cycle;

  \fill[gray!40] (-1, 0) -- (0, -1) -- (1, 0) -- (0, 1) -- cycle;

  \draw[thick] (-1, 0) -- (0, -1) -- (1, 0) -- (0, 1) -- cycle;

    \foreach \x in {-2, -2, -1, ...,2}{
    \foreach \y in {-2, -2,-1, ...,2}{
      \fill (\x,\y) circle (2pt);
    }
  }

  \node at (-.26, -.34) {$\BP$};
  \node at (.6, 1.2) {$S_1'$};  
  \node at (-1.4, .7) {$S_2'$};
  \node at (-.7, -1.3) {$S_3'$};
  \node at (1.3, -.7) {$S_4'$};
  \end{tikzpicture}
  \hspace {1cm}
    \begin{tikzpicture}[scale=1.7]
  \fill[blue!30] (1, 1) -- (1, 2) -- (0, 1) -- cycle;
  \fill[blue!30] (1, 0) -- (2, -1) -- (1, -1) -- cycle;

  \fill[gray!40] (-.33333, -.66666) -- (0, -1) -- (1, 0) -- (0, 1) -- (-.3333333,.666666) -- cycle;

  \draw[thick] (-.333333, -.66666) -- (0, -1) -- (1, 0) -- (0, 1) -- (-.333333,.666666);
  \fill (1.333, -.6666) circle (2pt);
  \fill (.4, -.6666+1.86666) circle (2pt);

  \draw[thick] (1.333, -.6666) -- (.4, -.6666+1.86666);
  \draw[thick, dashed] (1.3333,-.6666) -- (1.5, -1); 

  \fill[red] (.5, 1) circle (2pt);
  \fill[red] (1.5, -1) circle (2pt); 

    \foreach \x in {0, 1 ,2}{
    \foreach \y in {-1, ...,2}{
      \fill (\x,\y) circle (2pt);
    }
  }
  
  \node at (-.15, -.43) {$\BP$};
  \node at (1.63, -.86) {$\vec B$};
  \node at (.4, .85) {$\vec A$};
  \node at (.26, 1.35) {$\vp_1$}; 
  \node at (1.08, -.6666) {$\vp_4$};
  \end{tikzpicture}
  \caption{The regions that the vertices of $P$ can lie in. On the right we visualize the points $\vec A, \vec B$.}
  \label{fig:cross-polytope-final-shards}
  \end{figure}
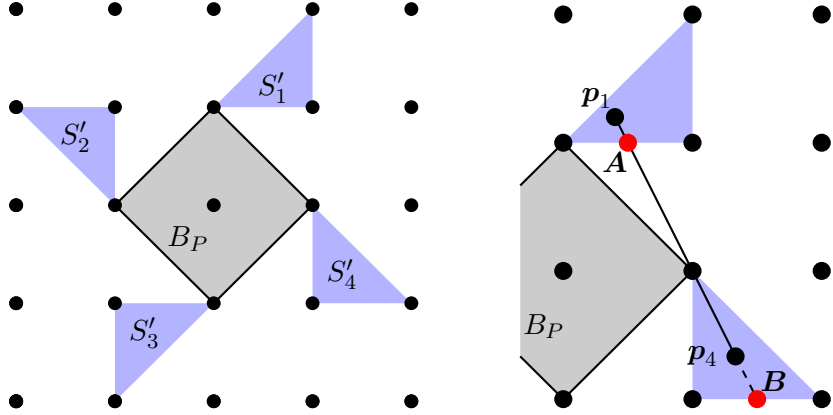

  Thus, we can express $\wdt(P,  \ve_1), \wdt(P, \ve_2)$ as 
  \begin{align*}
    \wdt(P, \ve_1) = p_{4,x} - p_{2,x}, \quad \wdt(P, \ve_2) = p_{1,y} - p_{3,y}. 
  \end{align*}
  We use a similar technique as in Section \ref{sec:pentagon} to show $\wdt(P) \leq \min\{\wdt(P, \ve_1), \wdt(P, \ve_2)\} < 3$.

  Let $L$ be the line through the colinear points $\vp_4, (1,0), \vp_1$, and let $\vec A$ be the intersection point $L \cap \{y=1\}$ and $\vec B$ be the intersection point $L \cap \{y=-1\}$; see Figure \ref{fig:cross-polytope-final-shards}, on the right.

  By symmetry of $\vec A, \vec B$ around $(1,0)$, we have that 
  $$A_x = 2 - B_x \leq 2- p_{4,x}.$$

  Since $\vp_1 \in \{ y \leq x + 1 \}$, it holds
  that 
  \begin{equation}\label{eq:cross_pointA}
  p_{1,y} \leq 1 + p_{1,x} \leq 1 + A_x.
  \end{equation}

  Combining the inequalities, we have
  $p_{1,y} +  p_{4,x} \leq 3.$

  A symmetric argument yields  
  $p_{2,x} + p_{3,y} \geq -3.$
  Together these inequalities give
  \begin{align*} 
    \wdt(P, \ve_1) + \wdt(P, \ve_2) = p_{4,x} + p_{1,y} - p_{2,x} - p_{3,y} \leq 6.
  \end{align*}
  Hence  $ \min\{\wdt(P, \ve_1), \wdt(P, \ve_2)\}\leq 3$. 

  For equality to hold, all inequalities above should be equalities. In particular, both inequalities of \eqref{eq:cross_pointA} must be equalities, which implies $\vp_1 = (0,1)$. Then, by the claim, equality occurs if and only if $P$ is unimodularly equivalent to $3 \Tst$, which contradicts our assumption that $\BP=B^{\text{cross}}$.
\end{proof}

\subsubsection{The long triangle} \label{sec:long-triangle}
In this section, we consider polygons $P$ such that 
\hypertarget{target:Bpyr}{%
	$$\BP = B^{\mathrm{pyr}} = \conv \{ (-1, 0), (1, 0), (0, 1) \} $$
}
with $\vec 0$ as the interior point of $P$.

\begin{lemma}
	Let $P$ be a $1$-maximal polygon such that $\BP = \Bpyr$, and $\vec 0$ is the interior point of $P$. Then $\wdt(P) < 3$.
\end{lemma}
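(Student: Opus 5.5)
The plan is the same as for the hexagon, pentagon and crosspolygon: first confine the vertices of $P$ to small regions, then bound a sum of two widths by $6$ using linear inequalities, and finally rule out equality.

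\textbf{Regions for the vertices.} Write $B=\Bpyr$. Its edges are $e_0=[(-1,0),(1,0)]$, $e_-=[(-1,0),(0,1)]$ and $e_+=[(1,0),(0,1)]$. By Lemma~\ref{lemma:initial_shards}, $P=\conv\{\vp_0,\vp_-,\vp_+\}$ with $\vp_0\in S_{e_0}$, $\vp_-\in S_{e_-}$ and $\vp_+\in S_{e_+}$. Since $B$ is a triangle, $P$ is a triangle, possibly degenerate in the sense of Lemma~\ref{lemma:initial_shards}.

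Its three edges are the lines $\vp_0\vp_-\ni(-1,0)$, $\vp_0\vp_+\ni(1,0)$ and $\vp_-\vp_+\ni(0,1)$. The point $\vo$ lies in $\relint(e_0)$, so it is the interior point of $P$. Hence $\vp_0$ lies strictly below the $x$-axis, and $\vp_\pm$ lie on the corresponding sides of the lines $y=1\mp x$.

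The sets $S_e$ are unbounded here, so I would cut them down with Lemma~\ref{lemma:forbidden_cones}, using $\vq\in\{(0,-1),(\pm1,-1),(\pm1,1),(\pm2,0),(0,2)\}$. For instance, $\sigma_{(0,-1)}=\{y<-1-|x|\}$, so $\vp_0$ lies in the square $\conv\{(\pm1,0),(0,-1)\}$ shifted down, i.e.\ in $\{y\geq -1-|x|,\ y\le 0\}$. Likewise $\sigma_{(\pm1,1)}$ excludes the quadrant pointing away from $B$ at $(\pm1,1)$. Together with $\sigma_{(\pm2,0)}$ and $\sigma_{(0,2)}$, this should confine $\vp_\pm$ to bounded triangles, in the spirit of Lemma~\ref{lem:symm-quad-shards}.

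\textbf{Normalisation and width inequalities.} Using the reflection $x\mapsto -x$, which preserves $B$ and $\vo$, I would normalise, for example so that $p_{0,x}\geq 0$. I would then split according to which lattice line the vertex $\vp_0$ lies beyond, e.g.\ $p_{0,y}\ge -1$ versus $p_{0,y}<-1$, and similarly for where $\vp_\pm$ sit relative to $y=1$. This mirrors Cases~1 and~2 of the pentagon.

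In each case I would choose two directions from $\hex$, or after a shear from $\{\ve_1,\ve_2,\ve_1+\ve_2\}$. The aim is that both widths are attained at explicitly known vertices, say
\[
\wdt(P,\ve_2)=\max(p_{-,y},p_{+,y})-p_{0,y},\qquad \wdt(P,\ve_1)=p_{+,x}-p_{-,x}.
\]
Then I would bound each pair of adjacent vertices by $3$, using the collinearity through the corresponding blocking point. This is the auxiliary-point trick with $\vec A,\vec B$: reflect the line through the blocking point across it, and combine with the region boundary lines. This should give inequalities like $p_{+,x}-p_{0,y}\le 3$ and $-p_{-,x}+p_{-,y}\le 3$, which sum to $\wdt(P,\ve_1)+\wdt(P,\ve_2)\le 6$.

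\textbf{Strict inequality and the main obstacle.} For the equality case, equality in every step should force the vertices onto specific lattice lines and then onto lattice points. This would make $P$ a unimodular image of $3\Tst$ or of a subpolygon of it. By reducedness of $3\Tst$, as in the crosspolygon claim, this contradicts $\BP=\Bpyr$, since the blocking polygon of $3\Tst$ is $\Bhex$. Hence $\wdt(P)<3$.

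The hardest step is choosing the pair of directions and the case split so that each summed inequality is exactly sharp. The bound $3$ is approached by polygons degenerating to $3\Tst$, so no slack is available. If a fixed pair of directions fails in some subregion, I would first try a third direction from $\hex$ and a convex combination of the three width inequalities. Failing that, I would fall back on the variational argument mentioned in the overview: move one vertex along its constraint line, which is linear in each region, to reduce to boundary configurations.
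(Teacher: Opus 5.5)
The step that fails is the central one: bounding $\wdt(P,\ve_1)+\wdt(P,\ve_2)$, or another sum of two widths, by $6$ using linear inequalities such as $p_{-,y}-p_{-,x}\le 3$. For $\Bpyr$ this is false. Up to $x\mapsto -x$, one must have $p_{0,x}>1$. Otherwise colinearity, $\sigma_{(\pm1,1)}$ and $B_P=\Bpyr$ force both upper vertices to $y\le 1$, and the edge through $(0,1)$ would then be horizontal with $(1,1)$ or $(-1,1)$ in its relative interior. So $\vp_0\in\conv\{(1,0),(3,-2),(1,-1)\}$, and $\wdt(P,\ve_1)=p_{0,x}-p_{-,x}$, not $p_{+,x}-p_{-,x}$. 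This width can be close to $6$. Take $\vp_0=(2.97,-1.98)$ and give the edge through $(0,1)$ slope $0.01$. This yields a $1$-maximal triangle with $B_P=\Bpyr$, interior point $\vo$, and vertices approximately $(0.005,1.000)$, $(-2.946,0.971)$, $(2.97,-1.98)$. It is close to $\conv\{(0,1),(-3,1),(3,-2)\}$, a unimodular copy of $3\Tst$. Here $\wdt(P,\ve_1)\approx 5.92$, $\wdt(P,\ve_2)\approx 2.98$, and $p_{-,y}-p_{-,x}\approx 3.92$.

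Changing directions does not help, and neither does your fallback of a convex combination of three widths. Admissible triangles also come arbitrarily close to the degenerate triangle $\conv\{(1,2),(1,-1),(-1,0)\}$, e.g.\ with $\vp_0=(1.01,-1.0025)$ and top edge of slope $0.99$. Suppose a bound $\sum_i\lambda_i\wdt(P,\vu_i)\le 3$, with $\lambda_i>0$ and $\sum_i\lambda_i=1$, held for all admissible $P$. By continuity it would hold at both limits. At the copy of $3\Tst$ every width is at least $3$, so each $\vu_i$ must be $\pm\ve_2$, $\pm(\ve_1+\ve_2)$ or $\pm(\ve_1+2\ve_2)$. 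At the second limit these directions have widths $3$, $4$ and $6$, so only $\ve_2$ survives. Your split into $p_{0,y}\ge -1$ versus $p_{0,y}<-1$ does not separate the two regimes, since both occur with $p_{0,y}<-1$.

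What is missing is a sharp, non-linear bound in the single direction $\ve_2$, which is what the paper proves. With $\vp_+\in\conv\{(0,1),(1,1),(1,2)\}$, the points $\vp_+$, $(1,0)$ and $\vp_0$ are colinear. Intersect this line with $y=x+1$ at $A=(a,a+1)$, $a\in[0,1]$, and with $y=-\tfrac12x-\tfrac12$ at $B$. This gives $\wdt(P,\ve_2)\le A_y-B_y=3(a+1)^2/(3a+1)$. That function is convex and equals $3$ only at the degenerate endpoints $a\in\{0,1\}$.

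Your region step also needs repair. The cones you list do not bound $\vp_\pm$: the strip $\{0\le y\le 1,\ x+y\ge 1\}$ meets no $\sigma_{\vq}$ at all. Boundedness comes only from colinearity with $\vp_0$; for $p_{0,x}>1$ it gives $p_{+,x}\le 1$.
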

\begin{proof}
	Lemma \ref{lemma:initial_shards} and Lemma \ref{lemma:forbidden_cones} applied to the points $q=(1,1), (-1,1)$, as well as symmetry and colinearity conditions allow us to assume that the vertices $\vp_1, \vp_2$ and $\vp_3$ of $P$ lie respectively in the regions (pictured in Figure \ref{fig:long-triangle-shards})
	\begin{align*}
	S_1&=\conv \{(0, 1), (1, 1), (1, 2)\},\\
	S_2&=\{-1 \leq y \leq 1,~ y \leq \tfrac{-1}{2}x - \tfrac{1}{2}\}, \\
	S_3&=\conv \{(1, 0), (3, -2), (1, -1)\}.
	\end{align*}

	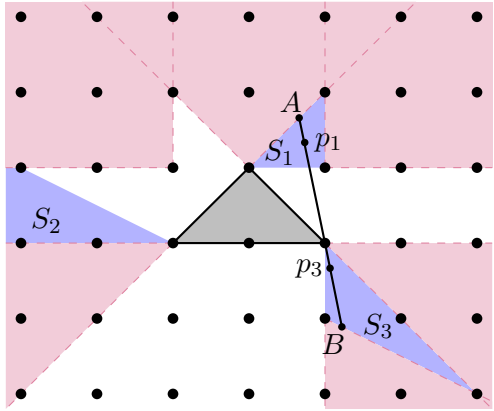
\begin{figure}[h]
		\begin{tikzpicture}
		
		\fill[blue!30] (0, 1) -- (1, 1) -- (1, 2) -- cycle;
		\fill[blue!30] (1, 0) -- (3, -2) -- (1, -1) -- cycle;
		\fill[blue!30] (-3, 1) -- (-3.2, 1) -- (-3.2, 0) -- (-1, 0) -- cycle;
		
		\fill[gray!50] (-1, 0) -- (1, 0) -- (0, 1) -- cycle;
		\fill[purple!20] (0, 1) -- (2.2, 3.2) -- (-2.2, 3.2) -- cycle;
		\fill[purple!20] (1,1) -- (3.2, 1) -- (3.2, 3.2) -- (1, 3.2) -- cycle;
		\fill[purple!20] (1, 0) -- (3.2, 0) -- (3.2, -2.2) -- cycle;
		\fill[purple!20] (1, -1) -- (1, -2.2) -- (3.2, -2.2) -- (3.2, -2.1) -- cycle;
		\fill[purple!20] (-1, 0) -- (-3.2, 0) -- (-3.2, -2.2) -- cycle;
		\fill[purple!20] (-1, 1) -- (-1, 3.2) -- (-3.2, 3.2) -- (-3.2, 1) -- cycle;
		
		\draw[purple!50, dashed] (0, 1) -- (2.2, 3.2); 
		\draw[purple!50, dashed] (0, 1) -- (-2.2, 3.2);

		\draw[purple!50, dashed] (1,1) -- (3.2, 1); 
		\draw [purple!50, dashed] (1,1) -- (1, 3.2); 
		
		\draw[purple!50, dashed] (1, 0) -- (3.2, 0);
		\draw[purple!50, dashed] (1, 0) -- (3.2, -2.2);
		
		\draw[purple!50, dashed] (1, -1) -- (3.2, -2.1);
		\draw[purple!50, dashed] (1, -1) -- (1, -2.2);
		
		\draw[purple!50, dashed] (-1, 0) -- (-3.2, 0);
		\draw[purple!50, dashed] (-1, 0) -- (-3.2, -2.2);
		\draw[purple!50, dashed] (-1, 1) -- (-1, 3.2);
		\draw[purple!50, dashed] (-1, 1) -- (-3.2, 1);
		
		\draw[thick] (-1, 0) -- (1, 0) -- (0, 1) -- cycle;
		\draw[thick] (.66, 1.66) -- (1, 0) -- (11/9,-10/9);
		
		\fill (.66, 1.66) circle (1.4pt);
		\fill (11/15, 4/3) circle (1.4pt);
		\fill (16/15,-1/3) circle (1.4pt);
		\fill (11/9,-10/9) circle (1.4pt);
		
		\node at (.4, 1.2) {$S_1$};
		\node at (1.7, -1.1) {$S_3$};
		\node at (-2.666, .3) {$S_2$};
		\node at (.55, 1.87) {$A$};
		\node at (10/9, -12/9) {$B$};
		\node at (1.05, 4/3) {$p_1$};
		\node at (.8,-1/3) {$p_3$};  
		
		\foreach \x in {-3, -2, -1, ...,2, 3}{
			\foreach \y in {-2,-1, ...,3}{
				\fill (\x,\y) circle (2pt);
			}
		}
		\end{tikzpicture}
		\caption{Constraints on $P$ circumscribing $\BP$ where $\BP$ is the long triangle. The shaded purple regions are open, pointed cones $\sigma_{\vq}$ for various lattice points $\vq$, see Lemma \ref{lemma:forbidden_cones}. By our discussion, we can assume that the vertices of $P$ lie in the shaded blue regions.}
		\label{fig:long-triangle-shards}
	\end{figure}
	
	We see that the width of $P$ in direction $\ve_2$ is realized by $\vp_1, \vp_3$. Now let $A$ be the intersection point of the line $L$ through $\vp_1$ and $\vp_3$ and the line $\{ y=x+1 \}$ and $B$ the intersection point of $L$ with $\{ y=-\frac{1}{2}x-\frac12 \}$, see Figure \ref{fig:long-triangle-shards}. We have that 
	
	\[
	\wdt(P,\ve_2) = p_{1,y}-p_{3,y} \leq A_x -B_x.
	\]
	
	
	If $A=(a,a+1)$, with $0 \leq a \leq 1$, we have $B=(\frac{a+3}{3a+1}, \frac{-2(a+1)}{3a+1})$. Thus we can estimate
	
	\[\wdt(P,\ve_2) \leq 3\frac{(a+1)^2}{3a+1}.\]	
	
	Let us call the right hand side of the above inequality $f(a)$. We easily see that $f'(a)$ is negative for $a<\frac13$, $0$ at $\frac13$ and positive after. Thus the maximum of $f$ on the interval $[0,1]$ is achieved exactly at $a=0$ and $a=1$, where we have $f(0)=f(1)=3$. We see that for $a=1$ we have $P=3\Tst$, while for $a=0$ the width in direction $\ve_1+\ve_2$ is $2$. 
\end{proof}

\subsubsection{The non-symmetric quadrilateral}
Consider $P$ such that 
\hypertarget{target:Bkite}{%
$$\BP = B^{\mathrm{kite}} = \conv \{ (-1, -1), (0, -1), (1, 1), (-1, 0) \}.$$
}

\begin{lemma} \label{lem:non-symm-quad-shards}
  Let $P$ be $1$-maximal with $\BP = \Bkite$. Then each vertex of $P$ lies in one of the following regions, and each region contains (at most) one vertex:

  \begin{align*}
    S_1 &= \conv \{(-1, -1),(-1, -3),(0,-1)\},\\
    S_2 &= \conv \{(0,-1),(1,-1),(1,1)\} \cup \conv\{(0,-1),(3,2),(1,1)\},\\
    S_3 &= \conv \{(1,1),(2,3),(-1,0)\} \cup \conv\{(1,1),(-1,1),(-1,0)\},\\
    S_4 &= \conv \{(-1,0),(-3,-1),(-1,-1)\}.
  \end{align*}

\end{lemma}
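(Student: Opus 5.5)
The plan is to treat this exactly as Lemmas~\ref{lem:hexagon-shards}, \ref{lem:pentagon-shards} and \ref{lem:symm-quad-shards}. First we compute the regions $S_e$ of Lemma~\ref{lemma:initial_shards} for the four edges of $\Bkite$. Then we cut the two unbounded ones down using the forbidden cones $\sigma_\vq$ of Lemma~\ref{lemma:forbidden_cones}.

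\textbf{Step 1: the regions $S_e$.} The interior lattice point of $P$ is $\vo$, the unique interior lattice point of $\Bkite$. An irredundant description of $\Bkite$ is
\[
y\geq -1,\qquad 2x-y\leq 1,\qquad -x+2y\leq 1,\qquad x\geq -1,
\]
with edges $e_1=[(-1,-1),(0,-1)]$, $e_2=[(0,-1),(1,1)]$, $e_3=[(1,1),(-1,0)]$ and $e_4=[(-1,0),(-1,-1)]$. Reversing one inequality at a time as in \eqref{eq:shard_definition_formal} gives the following.
\begin{itemize}
\item For $e_1$, the conditions $y\le -1$, $x\ge -1$, $y\ge 2x-1$ give exactly $\conv\{(-1,-1),(-1,-3),(0,-1)\}=S_1$.
\item For $e_4$, the conditions $x\le -1$, $y\ge -1$, $y\le (x+1)/2$ give $\conv\{(-1,-1),(-1,0),(-3,-1)\}=S_4$.
\item For $e_2$, the conditions $2x-y\geq 1$, $y\ge -1$, $y\le (x+1)/2$ give an unbounded strip-like wedge opening to the right.
\item For $e_3$, the conditions $-x+2y\geq 1$, $y\ge 2x-1$, $x\ge -1$ give an unbounded wedge opening upward.
\end{itemize}
By Lemma~\ref{lemma:initial_shards}, each vertex of $P$ equals some $\vp_{e_i}\in S_{e_i}$, one per edge. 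This already yields the ``at most one vertex per region'' part, since the interiors of the $S_{e_i}$ are disjoint.

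\textbf{Step 2: cutting $S_{e_2}$ with $\vq=(1,0)$.} We have $\Bkite-(1,0)=\conv\{(-2,-1),(-1,-1),(0,1),(-2,0)\}$, whose positive hull is the cone spanned by $(0,1)$ and $(-1,-1)$. Hence
\[
\sigma_{(1,0)}=\{x>1,\ y<x-1\},
\]
the open cone at $(1,0)$ spanned by $(0,-1)$ and $(1,1)$. Removing it from $S_{e_2}$ via \eqref{eq:final_shards} leaves two pieces.
\begin{itemize}
\item The part with $x\le 1$ is $\conv\{(0,-1),(1,-1),(1,1)\}$.
\item The part with $x\ge 1$, $y\ge x-1$, $y\le (x+1)/2$ is $\conv\{(0,-1),(3,2),(1,1)\}$. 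Here $(3,2)$ is the point where $y=x-1$ meets $y=(x+1)/2$, and the segment from $(0,-1)$ to $(3,2)$ lies on $y=x-1$.
\end{itemize}
Together these give $S_2$.

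\textbf{Step 3: cutting $S_{e_3}$ with $\vq=(0,1)$.} The same computation gives $\sigma_{(0,1)}=\{y>1,\ y>x+1\}$, the open cone at $(0,1)$ spanned by $(1,1)$ and $(-1,0)$. Removing it from $S_{e_3}$ splits the remainder along $y=1$. The part with $y\le 1$ is $\conv\{(1,1),(-1,1),(-1,0)\}$. The part with $y\ge 1$ is $\conv\{(1,1),(2,3),(-1,0)\}$, where $(2,3)$ is the intersection of $y=x+1$ with $y=2x-1$. This is $S_3$.

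The only real care needed, and the likely obstacle, is getting the orientation of each $\sigma_\vq$ right and confirming that the removed cones are open. Openness matters because the boundary rays $y=x-1$ and $y=x+1$ must stay in $S_2$ and $S_3$; otherwise the stated closed convex hulls would not be correct.
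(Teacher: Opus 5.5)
Your proposal is correct and follows essentially the same route as the paper, which obtains $S_1,\dots,S_4$ by removing open cones $\sigma_\vq$: those at the four vertices of $\Bkite$, which amount to Lemma~\ref{lemma:initial_shards}, plus $\sigma_{(1,0)}$ and $\sigma_{(0,1)}$. There is one small slip: the piece of $S_{e_2}$ with $x\ge 1$ is $\conv\{(1,0),(1,1),(3,2)\}$, and the piece of $S_{e_3}$ with $y\ge 1$ is $\conv\{(0,1),(1,1),(2,3)\}$; the triangles you name are instead the pieces cut out by $y\ge x-1$ and $y\le x+1$ respectively, but the resulting unions, and hence $S_2$ and $S_3$, are unchanged.
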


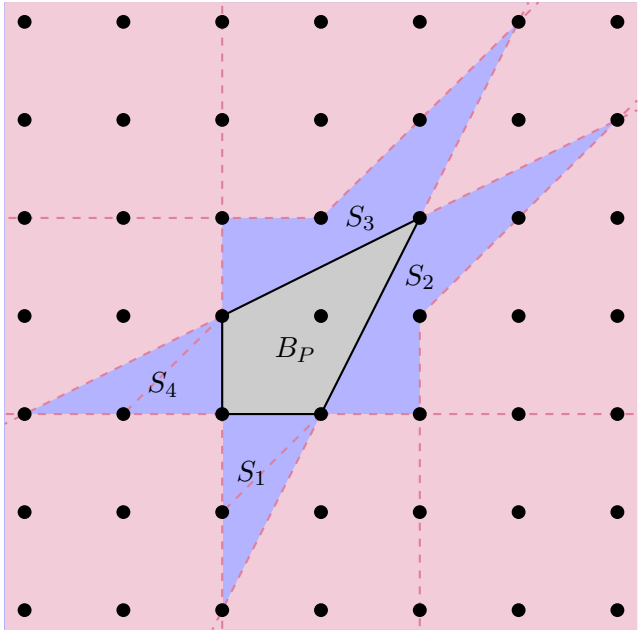
\begin{figure}[h]
\begin{tikzpicture}[scale=1.3]
  \fill[blue!30] (-3.2, -3.2) -- (3.2, -3.2) -- (3.2, 3.2) -- (-3.2, 3.2) -- cycle;

    \fill[purple!20] (-1, 0) -- (-1, 3.2) -- (-3.2, 3.2) -- (-3.2, -1.1) -- cycle;
      \fill[purple!20] (-1, -1) -- (-1, -3.2) -- (-3.2, -3.2) -- (-3.2, -1) -- cycle;
    \fill[purple!20] (0, 1) -- (2.2, 3.2) -- (-3.2, 3.2) -- (-3.2, 1) -- cycle;
    \fill[purple!20] (1, 1) -- (2.1, 3.2) -- (3.2, 3.2) -- (3.2, 2.1) -- cycle;

    \fill[purple!20] (1, 0) -- (3.2, 2.2) -- (3.2, -3.2) -- (1, -3.2) -- cycle;

    \fill[purple!20] (0, -1) -- (3.2, -1) -- (3.2, -3.2) -- (-1.1, -3.2) -- cycle;

  \draw[dashed, thick, purple!50] (-1, -1) -- (-3.2, -1);
  \draw[dashed, thick, purple!50] (-1, -1) -- (-1, -3.2);

  \draw[dashed, thick, purple!50] (-1, 0) -- (-1, 3.2);
  \draw[dashed, thick, purple!50] (-1, 0) -- (-3.2, -1.1); 

  \draw[dashed, thick, purple!50] (0, 1) -- (2.2, 3.2); 
  \draw[dashed, thick, purple!50] (0, 1) -- (-3.2, 1); 

  \draw[dashed, thick, purple!50] (1, 1) -- (3.2, 2.1);
  \draw[dashed, thick, purple!50] (1, 1) -- (2.1, 3.2); 

  \draw[dashed, thick, purple!50] (1, 0) -- (3.2, 2.2);
  \draw[dashed, thick, purple!50] (1, 0) -- (1, -3.2);

  \draw[dashed, thick, purple!50] (0, -1) -- (3.2, -1);
  \draw[dashed, thick, purple!50] (0, -1) -- (-1.1, -3.2);
  \draw[dashed, thick, purple!50] (0, -1) -- (-1, -2);
  \draw[dashed, thick, purple!50] (-1, 0) -- (-2, -1); 

  \fill[gray!40] (-1, -1) -- (0, -1) -- (1, 1) -- (-1, 0) -- cycle;

  \draw[thick] (-1, -1) -- (0, -1) -- (1, 1) -- (-1, 0) -- cycle;

    \foreach \x in {-3, -2, -1, ...,3}{
    \foreach \y in {-3, -2,-1, ...,3}{
      \fill (\x,\y) circle (2pt);
    }
  }

  \node at (-.26, -.34) {$\BP$};
  \node at (-.7, -1.6) {$S_1$};
  \node at (1, .4) {$S_2$};
  \node at (.4, 1) {$S_3$};
  \node at (-1.6, -.7) {$S_4$};

\end{tikzpicture}
\caption{The regions as described in Lemma \ref{lem:non-symm-quad-shards}.} 
\label{fig:non-symm-quad-shards}
\end{figure}

\begin{proof}
  Together with $\Bkite$, the regions $S_1, ..., S_4$ are the complements of the six open cones $\sigma_{\vq}$ as in Lemma \ref{lemma:forbidden_cones}, for $\vq = (-1, -1), (0, -1), (1, 0), (1, 1), (0, 1), (-1, 0)$. 
\end{proof}

We divide our analysis of $P$ according to where the vertices of $P$ $\vp_1 \in S_1$ and $\vp_4 \in S_4$ lie. We do so by splitting the regions $S_1$ and $S_4$ into two unimodular triangles: respectively $S_1^A$ and $S_1^B$, $S_4^A$ and $S_4^B$, divided by dashed red line in Figure \ref{fig:non-symm-quad-shards}.
\begin{equation}\label{eq:shard_decomp_ab}
\begin{split}
S_1 =& S_1^A \cup S_1^B = \conv \{(-1, -1), (-1, -2), (0, -1) \} \cup  \conv \{(0, -1), (-1, -3), (-1, -2)\},\\
S_4 = & S_4^A \cup S_4^B = \conv \{(-1, -1), (-1, 0), (-2, -1) \} \cup \conv \{(-1, 0), (-3, -1), (-2, -1)\}.
\end{split}
\end{equation}

\begin{lemma} \label{lem:non-symm-quad-both-close}
  Let $P$ such that $\BP = \Bkite$. Then we cannot have $\vp_1 \in S_1^A\setminus [(-1, 0), (-2, -1)]$ and $\vp_4 \in S_4^A\setminus [(0, -1), (-1, -2)]$.
\end{lemma}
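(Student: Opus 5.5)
We argue by contradiction. Suppose $P$ is $1$-maximal with $\BP=\Bkite$, with vertices $\vp_i\in S_i$ as in Lemma \ref{lem:non-symm-quad-shards}, and with $\vp_1\in S_1^A\setminus[(-1,0),(-2,-1)]$ and $\vp_4\in S_4^A\setminus[(0,-1),(-1,-2)]$. The interior lattice point of $P$ is $\vo$, the interior point of $\Bkite$. Each $\vp_i$ is either a vertex of $P$ or a degenerate point on an edge of $\Bkite$ (Lemma \ref{lemma:initial_shards}), so consecutive points are collinear with the blocking points between them:
\begin{itemize}
\item $\vp_1,(0,-1),\vp_2$ are collinear;
\item $\vp_2,(1,1),\vp_3$ are collinear;
\item $\vp_3,(-1,0),\vp_4$ are collinear;
\item $\vp_4,(-1,-1),\vp_1$ are collinear.
\end{itemize}

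\textbf{Step 1: remove the degenerate positions.} The excluded segment $[(-1,0),(-2,-1)]$ lies on the line $\{y=x+1\}$. It meets $S_1^A=\conv\{(-1,-1),(-1,-2),(0,-1)\}$ in no point, since $y-x\le 0$ on $S_1^A$. Likewise $[(0,-1),(-1,-2)]$ lies on $\{y=x-1\}$, and $y-x\ge 0$ on $S_4^A$. So, exactly as worded, the hypothesis says $\vp_1\in S_1^A$ and $\vp_4\in S_4^A$ (closed triangles), and I will prove the lemma in that form.

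First treat $\vp_1=(-1,-1)$ or $\vp_4=(-1,-1)$. Say $\vp_1=(-1,-1)$. Then $(-1,-1)$ is a vertex of $P$, so no edge of $P$ contains it in its relative interior. Hence the blocking point for the edge $[(-1,-1),(-1,0)]$ of $\Bkite$ must be $(-1,0)$, and the edge $[\vp_3,\vp_4]$ must cross $\{x=-1\}$ at $(-1,0)$. With $\vp_4\in S_4^A$ this forces $\vp_4$ onto the vertical segment $[(-1,-1),(-1,0)]$. Then $P$ has an edge on $\{x=-1\}$ containing the three collinear lattice points $(-1,-1),(-1,0),(-1,1)$, or it loses its blocking point there. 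Either way we get a contradiction with $1$-maximality or with $\BP=\Bkite$. The case $\vp_4=(-1,-1)$ is symmetric.

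\textbf{Step 2: the main case.} Now assume both points differ from $(-1,-1)$, so the edge $[\vp_4,\vp_1]$ contains $(-1,-1)$ in its relative interior. Write $\vp_1=(-1,-1)+s(a,-b)$ with $a,b\ge 0$ and $s>0$. Then collinearity gives $\vp_4=(-1,-1)-t(a,-b)$ with $t>0$. Membership $\vp_1\in S_1^A$ means $a+b\le 1$ in suitable normalisation, i.e.\ $\vp_1$ lies above the line $\{x-y=1\}$ through $(0,-1)$ and $(-1,-2)$. Similarly $\vp_4\in S_4^A$ means $\vp_4$ lies above $\{y-x=-\ldots\}$, namely below the line $\{y=x+1\}$ through $(-1,0)$ and $(-2,-1)$.

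Next I propagate along the two adjacent edges.
\begin{enumerate}
\item The line through $\vp_1$ and $(0,-1)$ has slope at most $1$, because $\vp_1$ lies on or above $\{x-y=1\}$. Consequently $\vp_2$, which lies beyond $(0,-1)$ on this line, satisfies $p_{2,y}\le p_{2,x}-1$.
\item Symmetrically, the line through $\vp_4$ and $(-1,0)$ gives $p_{3,x}\le p_{3,y}-1$.
\end{enumerate}
So $\vp_2$ lies weakly below $\{y=x-1\}$ and $\vp_3$ lies weakly above $\{y=x+1\}$. The edge $[\vp_2,\vp_3]$ must pass through $(1,1)$. Intersecting with the regions $S_2$ and $S_3$ forces $\vp_2\in\conv\{(0,-1),(1,-1),(1,0)\}$ and $\vp_3\in\conv\{(-1,0),(-1,1),(0,1)\}$.

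\textbf{Step 3: derive the contradiction.} In Step 2 the segment from $\vp_2$ to $\vp_3$ passes strictly on the origin side of $(1,1)$, or it reaches $(1,1)$ only when $\vp_2=(1,0)$ and $\vp_3=(0,1)$. The first alternative contradicts collinearity with $(1,1)$. In the second, $(1,0)$ and $(0,1)$ are vertices of $P$, not relative-interior blocking points, so $(1,1)$ cannot lie in the relative interior of an edge. Either way this contradicts $\BP=\Bkite$, since $(1,1)$ would no longer be a blocking point.

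\textbf{Main obstacle.} I expect the hardest part to be the sign and slope bookkeeping in Step 2. The task is to check that the linear constraints defining the closed triangles $S_1^A$ and $S_4^A$ transmit, through the collinearities at $(0,-1)$ and $(-1,0)$, to exactly the half-planes $\{y\le x-1\}$ and $\{y\ge x+1\}$. If the pushed region instead turns out to be only the weaker $\vp_2\notin\conv\{(1,0),(1,1),(3,2)\}$, I would try a different finish. Namely, show that $\conv\{\vp_2,\vp_3,\vo\}$ then contains $(1,1)$ in its interior, contradicting that $\vo$ is the unique interior lattice point. I would also fall back on Lemma \ref{lemma:forbidden_cones} with $\vq=(1,0),(0,1)$ to supply the missing strictness.
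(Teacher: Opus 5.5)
\textbf{Gap in Step 2.} Your route is the paper's: push the collinearities through $(0,-1)$ and $(-1,0)$ to confine $\vp_2$ to $T_2=\conv\{(0,-1),(1,-1),(1,0)\}$ and $\vp_3$ to $T_3=\conv\{(-1,0),(-1,1),(0,1)\}$, then observe that $(1,1)$ cannot lie on $[\vp_2,\vp_3]$. But proving the ``closed'' version opens a genuine gap. You are right that the printed exclusions miss the triangles. The two segments are swapped: the intended hypotheses are $\vp_1\notin[(0,-1),(-1,-2)]$ and $\vp_4\notin[(-1,0),(-2,-1)]$, i.e.\ neither point lies on the segment separating the $A$-triangle from the $B$-triangle. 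These exclusions are exactly what make your slope bounds strict.

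With only the weak bounds $p_{2,y}\le p_{2,x}-1$ and $p_{3,y}\ge p_{3,x}+1$, your claim that intersecting with $S_2,S_3$ forces $\vp_2\in T_2$ and $\vp_3\in T_3$ is false. $S_2$ has the edge $[(0,-1),(3,2)]$ on $\{y=x-1\}$, and $S_3$ has the edge $[(-1,0),(2,3)]$ on $\{y=x+1\}$. So you only get $\vp_2\in T_2\cup[(0,-1),(3,2)]$ and $\vp_3\in T_3\cup[(-1,0),(2,3)]$, and there the contradiction disappears. Take $\vp_1=(-\tfrac12,-\tfrac32)$, $\vp_2=(2,1)$, $\vp_3=(0,1)$, $\vp_4=(-\tfrac32,-\tfrac12)$. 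These satisfy every collinearity and region membership you use, yet $(1,1)$ lies in the relative interior of $[\vp_2,\vp_3]$. Neither your Step 3 nor your fallbacks exclude this: $(2,1)$ lies on a boundary ray of $\sigma_{(1,0)}$, not in the open cone, and $(1,1)$ is on the boundary of $\conv\{\vp_2,\vp_3,\vo\}$, not in its interior.

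\textbf{How to close it.} The missing ingredient is the full force of $\BP=\Bkite$. If $\vp_2\in[(0,-1),(3,2)]$ lies beyond $(1,0)$, then $(1,0)$ is in the relative interior of the edge $[\vp_1,\vp_2]$. It is then a blocking point outside $\Bkite$; in the example above, $\BP$ is a pentagon. Otherwise $\vp_2\in[(0,-1),(1,0)]\subset T_2$. Symmetrically, the lattice point $(0,1)$ handles $\vp_3$. Either add this argument, or adopt the intended hypotheses, which give $\vp_2\in T_2$ and $\vp_3\in T_3$ directly, as in the paper. Then finish with one line: $T_2\cup T_3\subset\{x+y\le 1\}$, while $(1,1)$ has $x+y=2$.

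\textbf{Smaller errors.} Your Step 3 wrongly suggests the segment can reach $(1,1)$ when $\vp_2=(1,0)$ and $\vp_3=(0,1)$; that segment lies on $\{x+y=1\}$. Your Step 1 is muddled: an edge through $(-1,0)$ does not force $\vp_4$ onto $\{x=-1\}$. Replace it with the one-line observation that $(-1,-1)$ is a vertex of $\BP$, hence a blocking point, hence not a vertex of $P$.
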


\begin{proof}
Towards a contradiction, let $\vp_1, \vp_4$ be as described and let $\vp_2, \vp_3$ be the vertices of $P$ in $S_2, S_3$, respectively. Then by colinearity of $\vp_1, (0, -1),$ and $\vp_2$ and considering the region $S_2$, we see that 
  $$\vp_2 \in \conv \{(0, -1), (1, -1), (1, 0)  \}.$$
  A symmetric argument yields that 
  $$\vp_3 \in \conv \{(-1, 0), (0, 1), (-1, 1) \}.$$

  These restrictions make it impossible for $\vp_2$, $\vp_3$ and $(1,1)$ to be colinear, which is a contradiction.
\end{proof}

We can also handle the case when $\vp_1, \vp_4$ are both in the farther triangles from $\BP$. 

\begin{lemma} \label{lem:non-symm-quad-both-far}
  Let $P$ be a $1$-maximal polygon such that $\BP = \Bkite$ and suppose that the  vertices $\vp_1$ and $\vp_4$ of $P$ (cf.\ Lemma \ref{lem:non-symm-quad-shards}) such that 
    $\vp_1 \in S_1^B$ and
   $ \vp_4 \in S_4^B$ (cf.\ \eqref{eq:shard_decomp_ab}).
 Then $\wdt(P) < 3$.  
\end{lemma}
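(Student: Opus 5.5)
Throughout, let $\vp_2\in S_2$ and $\vp_3\in S_3$ be the remaining (possibly degenerate) vertices of $P$ from Lemma \ref{lem:non-symm-quad-shards}. The plan is to imitate the pentagon and crosspolygon arguments. I would use the colinearity constraints to shrink the regions for $\vp_2,\vp_3$, then bound $\wdt(P,\ve_1)+\wdt(P,\ve_2)$ by $6$, and finally rule out equality.

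\emph{Step 1: shrinking $S_2$ and $S_3$.} Since $\vp_1\in S_1^B=\conv\{(0,-1),(-1,-3),(-1,-2)\}$, the line through $\vp_1$ and the blocking point $(0,-1)$ has slope in $[1,2]$. The vertex $\vp_2$ lies on the continuation of this line beyond $(0,-1)$. Intersecting with $S_2$ should give
\[
\vp_2\in S_2'=\conv\{(0,-1),(1,1),(3,2)\}.
\]
The configuration is symmetric under the reflection $(x,y)\mapsto(y,x)$, which preserves $\Bkite$ and swaps $S_1^B$ with $S_4^B$. Hence the line through $\vp_4$ and $(-1,0)$ forces
\[
\vp_3\in S_3'=\conv\{(-1,0),(1,1),(2,3)\}.
\]
With these regions, the extreme points in directions $\pm\ve_1,\pm\ve_2$ are $\vp_2,\vp_4$ and $\vp_3,\vp_1$. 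So
\[
\wdt(P,\ve_1)=p_{2,x}-p_{4,x},\qquad \wdt(P,\ve_2)=p_{3,y}-p_{1,y}.
\]

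\emph{Step 2: the key inequality.} I would parametrize $\vp_1=(-1,-1)+s\,(a,-1)$ with $a\in[0,1]$, so that $\vp_4=(-1,-1)-t\,(a,-1)$ for some $t\ge 0$. Then the two lines through $(0,-1)$ and through $(-1,0)$ are determined, and $\vp_2,\vp_3$ lie on these lines. The remaining constraint is that $\vp_2,(1,1),\vp_3$ are colinear. I expect the bound to come, as in the crosspolygon case, from the auxiliary points $\vec A,\vec B$ where the line through $\vp_2,(1,1),\vp_3$ meets $\{y=3\}$ and $\{x=3\}$. The point reflection in $(1,1)$ gives $A_x+B_y$-type relations; combined with $\vp_2$ lying left of $\{y=\tfrac12x+\tfrac12\}$ and $\vp_3$ lying below $\{y=2x-1\}$, this should give $p_{2,x}+p_{3,y}\le 2+\max(\cdot)$. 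Similarly, the colinearity through $(-1,-1)$, together with $\vp_1$ lying right of $\{y=2x-1\}$ and $\vp_4$ lying above $\{y=\tfrac12x-\tfrac12\}$, should give a lower bound on $p_{1,y}+p_{4,x}$.

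The aim is to show $p_{2,x}+p_{3,y}-p_{1,y}-p_{4,x}\le 6$. If the linear estimates do not close up directly, I would instead write both sides as rational functions of the slopes, as in the long-triangle case, and check the one- or two-variable inequality by monotonicity.

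\emph{Step 3: the equality case.} Equality forces every intermediate estimate to be tight. I expect this to push the vertices to lattice corners of $S_1^B,S_4^B,S_2',S_3'$, such as $\vp_1=(-1,-3)$ together with $\vp_2=(3,2)$. For each such configuration I would check that either $\vp_1\in\{(0,-1)\}$ or $\vp_4\in\{(-1,0)\}$ degenerates, or that the width in a third direction, e.g.\ $\ve_1-\ve_2$, drops below $3$. Each outcome gives $\wdt(P)<3$; otherwise $P$ would be a subpolygon of a copy of $3\Tst$, which is impossible since $\BP=\Bkite$.

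The main obstacle is Step 2. The four vertices are coupled through three colinearity conditions rather than two, so the clean ``sum of two sums'' decomposition from the pentagon case may not separate. In that case I would first fix the slope $a$ of the line through $(-1,-1)$, maximize over the remaining parameter explicitly, and only then optimize over $a$.
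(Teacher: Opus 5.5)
\textbf{There is a genuine gap: the directions $\ve_1,\ve_2$ cannot prove this lemma.} Under its hypotheses, both widths can exceed $3$. Take
\[
\vp_1=\bigl(-\tfrac45,-\tfrac{47}{25}\bigr),\quad \vp_2=\bigl(2,\tfrac65\bigr),\quad \vp_3=\bigl(\tfrac12,\tfrac9{10}\bigr),\quad \vp_4=\bigl(-\tfrac65,-\tfrac3{25}\bigr).
\]
This $P$ satisfies every hypothesis of the lemma:
\begin{itemize}
\item the edges $[\vp_1,\vp_2],[\vp_2,\vp_3],[\vp_3,\vp_4],[\vp_4,\vp_1]$ have slopes $\tfrac{11}{10},\tfrac15,\tfrac35,-\tfrac{22}5$ and contain $(0,-1),(1,1),(-1,0),(-1,-1)$ in their relative interiors;
\item checking the columns $x\in\{-1,0,1,2\}$ shows that $(0,0)$ is the only interior lattice point;
\item $\vp_1\in\inter S_1^B$ and $\vp_4\in\inter S_4^B$.
\end{itemize}
Yet $\wdt(P,\ve_1)=\tfrac{16}5$ and $\wdt(P,\ve_2)=\tfrac{77}{25}$, so $\min\{\wdt(P,\ve_1),\wdt(P,\ve_2)\}>3$. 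No estimate of $\wdt(P,\ve_1)+\wdt(P,\ve_2)$, linear or via rational functions, can close the argument.

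The example also refutes your Step 1 identification of extreme points. Here the top vertex is $\vp_2$ (since $\tfrac65>\tfrac9{10}$), not $\vp_3$. Similarly, whenever $\vp_2$ lies left of $\{x=1\}$, the rightmost vertex is $\vp_3$. The projections of $S_2'$ and $S_3'$ overlap, so neither formula for the widths holds in general.

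\textbf{The missing idea is to use the direction $\ve_1-\ve_2$.} Your own slope observations give what is needed:
\begin{itemize}
\item along $[\vp_1,\vp_2]$ the slope lies in $[1,2]$, so $x-y$ does not increase from $\vp_1$ to $\vp_2$;
\item along $[\vp_4,\vp_3]$ the slope lies in $[\tfrac12,1]$, so $x-y$ does not decrease from $\vp_4$ to $\vp_3$;
\item $x-y\ge 1$ on $S_1^B$, $x-y\le 0$ on $S_3'$, $x-y\le -1$ on $S_4^B$, and $x-y\ge 0$ on $S_2'$.
\end{itemize}
Hence $\wdt(P,\ve_1-\ve_2)=\langle\ve_1-\ve_2,\vp_1-\vp_4\rangle$.

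Both points lie on one line through $(-1,-1)$ with direction $(a,-1)$, $a\ge 0$. Push them outward along this line to the far edges $\{y=2x-1\}$ of $S_1^B$ and $\{y=\tfrac12x+\tfrac12\}$ of $S_4^B$; this can only increase the quantity. It gives the one-variable bound
\[
\wdt(P,\ve_1-\ve_2)\le (1+a)\Bigl(\frac{2}{1+2a}+\frac{2}{2+a}\Bigr)=3-\frac{3a}{(1+2a)(2+a)}\le 3 .
\]
Equality forces either $a=0$, which gives $\vp_4=(-1,0)$, or the horizontal limit $\vp_1=(0,-1)$. Both are incompatible with $\BP=\Bkite$. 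This is essentially the paper's argument. In the example above, $\wdt(P,\ve_1-\ve_2)=\tfrac{54}{25}$.
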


\begin{figure}[h]
\begin{tikzpicture}[scale=1.3]
  \fill[blue!30] (-3, -1) -- (-1, 0) -- (-2, -1) -- cycle;
  \fill[blue!30] (0, -1) -- (-1, -3) -- (-1, -2) -- cycle;
  \fill[blue!30] (-1, 0) -- (1, 1) -- (2, 3) -- cycle;
  \fill[blue!30] (0, -1) -- (1, 1) -- (3, 2) -- cycle;

  \fill[gray!30] (-1, -1) -- (0, -1) -- (1, 1) -- (-1, 0) -- cycle;

  \draw[thick] (-1, -1) -- (0, -1) -- (1, 1) -- (-1, 0) -- cycle;

    \foreach \x in {-3, -2, -1, ...,3}{
    \foreach \y in {-3, -2,-1, ...,3}{
      \fill (\x,\y) circle (2pt);
    }
  }
  \node at (-.26, -.34) {$\BP$};
  \node at (-2, -.7) {$S_4^B$};
  \node at (-.7, -2) {$S_1^B$};
  \end{tikzpicture}
  \caption{The configuration as in Lemma \ref{lem:non-symm-quad-both-far}. The vertices $\vp_2, \vp_3$ are further constrained (compare with $S_2, S_3$ of Figure \ref{fig:non-symm-quad-shards}), and thus the width in direction $\ve_1 - \ve_2$ is realized by $\vp_1, \vp_4$. }
\end{figure}
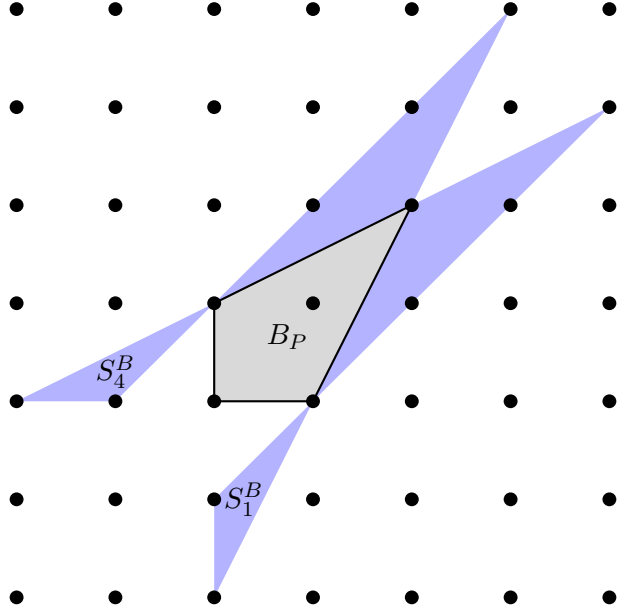

\begin{proof}
  Let $\vp_2, \vp_3$ be the vertices of $P$ in the regions $S_2, S_3$ respectively. Then the colinearity of $\vp_1, (0, -1), \vp_2$ and the colinearity of $\vp_4, ~(-1, 0),~ \vp_3$ combined with the further constraints on $S_2$ and $S_3$ from colinearity yields that the width of $P$ in direction $\ve_1 - \ve_2$ is realized by the vertices $\vp_1, \vp_4$. Hence, 
  $$\wdt(P, \ve_1 - \ve_2) = p_{1,x} - p_{4,x} - p_{1,y} + p_{4,y}.$$

  Consider the line $L$ passing through $\vp_1,~ (-1, -1),~ \vp_4$. We define new points $\vp_1', \vp_4'$ lying on the intersection of $L$ with $\{y = \frac{1}{2} x + \frac{1}{2}\}$ and $\{y = 2x - 1\}$, respectively. 

  Since the segment $[\vp_1, \vp_4]$ is contained in the segment $[\vp_1', \vp_4']$, we can upper-bound the width in direction $\ve_1 - \ve_2$.
  \begin{equation} 
    \wdt(P, \ve_1 - \ve_2) = p_{1,x} - p_{4,x} - p_{1,y} + p_{4,y} \leq p'_{1,x} - p'_{4,x} - p'_{1,y} + p'_{4,y}.
  \end{equation}
  Note that $\vp_1 ', \vp_4 '$ are fully determined by $p'_{1,x}$, so the above expression can be bounded $\leq 3$ via calculus in one variable. Furthermore, this bound is attained exactly when $\vp_1 = (0, -1)$ or $\vp_1 = (-1, -3)$, contradicting that $\BP = \Bkite$. 
\end{proof}

By symmetry across the $\{ y = x \}$ line, the remaining case to consider is when $\vp_1$ is far from $\BP$ and $\vp_4$ is close to $\BP$. Then, using colinearity to restrict the regions $S_2, S_3$, we are left with proving the following Lemma.

\begin{lemma} \label{lem:non-symm-quad-shards-final}
  Let $P$ such that $\BP = \Bkite$ and let the vertices of $P$ lie in the regions
  \begin{align*}
  	S_1&= \conv \{ (1, 1), (2, 1), (2,2) \}, \\
  	S_2&= \conv \{ (-1, 0), (0, 1), (-1, 1) \},\\
    S_3&= \conv \{ (-1, -1), (-1, 0), (-2, -1) \},\\
    S_4&= \conv \{ (0, -1), (-1, -3), (-1, -2) \},\\
  \end{align*}

  Then $\wdt(P) < 3$.
\end{lemma}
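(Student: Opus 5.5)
The plan is to follow the scheme of the hexagon, pentagon and cross-polygon cases. Denote by $\vp_1,\dots,\vp_4$ the (possibly degenerate) vertices of $P$ in the four listed regions, and work only with the three directions of $\mathcal A$. Since $\wdt(P)\le \wdt(P;\mathcal A)$, it suffices to show that some nonnegative combination $\alpha_1\wdt(P,\ve_1)+\alpha_2\wdt(P,\ve_2)+\alpha_3\wdt(P,\ve_2-\ve_1)$ with $\alpha_1+\alpha_2+\alpha_3=1$ is strictly less than $3$.

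\textbf{Step 1: fix which vertices realize each width.} Because each region is a small unimodular triangle adjacent to a vertex of $\Bkite$, the maximum and minimum of each linear functional $\pm x$, $\pm y$, $\pm(y-x)$ over $P$ are attained at specific $\vp_i$, uniformly over the whole configuration space. As in the earlier cases, I would read off each of $\wdt(P,\ve_1)$, $\wdt(P,\ve_2)$ and $\wdt(P,\ve_2-\ve_1)$ as a difference of two coordinates of two fixed vertices.

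\textbf{Step 2: use the collinearity constraints.} Consecutive vertices $\vp_i,\vp_{i+1}$ lie on a common edge of $P$ through a vertex of $\Bkite$. For each such line I would reproduce the $\vec A,\vec B$ trick of the pentagon and cross-polygon cases. Intersect the line with the two relevant lattice lines bounding the regions, use the point reflection through the blocking point to relate the two intersection coordinates, and obtain a linear inequality in the relevant coordinates. Examples of the target form are $p_{i,y}-p_{j,x}\le 3$, or, where a region is not bounded by a line of slope $\pm1$, a rational one-variable bound as in the long-triangle case.

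\textbf{Step 3: combine the bounds.} If two directions suffice, summing two such inequalities gives $\wdt(P,\ve_1)+\wdt(P,\ve_2)\le 6$ directly. Because $\Bkite$ has no symmetry, I expect one of the collinearity bounds to be nonlinear. In that case I would parametrize the line by its intersection point with one boundary of a region, as in the function $f(a)=3(a+1)^2/(3a+1)$ of the long-triangle case. This gives a one-variable estimate for the width in the third direction $\ve_2-\ve_1$, and I would take the minimum with the linear bound from the other pair.

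\textbf{Step 4: the equality case (main obstacle).} I expect the hardest part to be ruling out equality. The maximizer $3\Tst$ lies on the boundary of this parameter space, so the bounds are sharp, and the argument must show that every case of equality forces a degeneration. Typically this means some $\vp_i$ hits a lattice point, such as $(-1,-1)$ or $(0,-1)$, or a blocking point leaves the relative interior of an edge. Either outcome contradicts $\BP=\Bkite$. If the single-inequality approach leaves a residual region where no fixed combination works, I would fall back on the variational argument announced earlier. On each linear region, $\wdt(P;\mathcal A)$ is a minimum of affine functions of one free parameter per edge. Pushing vertices toward region boundaries then reduces the problem to finitely many one-parameter families, and each of these is checked by one-variable calculus to give a value strictly below $3$.
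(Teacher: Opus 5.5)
\textbf{There is a genuine gap.} The mechanism of your Steps 2--3 provably cannot work for this configuration space. That mechanism is to derive reflection-type linear inequalities and sum them into one fixed convex combination $\alpha_1\wdt(P,\ve_1)+\alpha_2\wdt(P,\ve_2)+\alpha_3\wdt(P,\ve_2-\ve_1)<3$.

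Label the vertices as in the paper's proof:
$\vp_1\in\conv\{(0,-1),(-1,-3),(-1,-2)\}$,
$\vp_2\in\conv\{(1,1),(2,1),(3,2)\}$,
$\vp_3\in\conv\{(-1,0),(-1,1),(0,1)\}$,
$\vp_4\in\conv\{(-1,-1),(-1,0),(-2,-1)\}$.
The region for $\vp_2$ is the one in the figure and behind the constraint $-a+2b\le 1$ in the proof; the $(2,2)$ in the displayed statement should be $(3,2)$.

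Admissible polygons come arbitrarily close to two limits:
\begin{itemize}
\item $\conv\{(-1,-3),(1,1),(-1,1)\}$, whose widths in the directions $\ve_1,\ve_2,\ve_2-\ve_1$ are $(2,4,4)$;
\item $\conv\{(0,-1),(3,2),(-1,0),(-2,-1)\}$, with widths $(5,3,2)$.
\end{itemize}
For a concrete instance of the second, take the lines through $(0,-1)$, $(1,1)$, $(-1,0)$, $(-1,-1)$ with slopes $1.05$, $0.45$, $10/9$, $-1/10$. They bound a $1$-maximal polygon with $B_P=B^{\mathrm{kite}}$ and vertices in these regions, whose three widths are approximately $4.41$, $2.80$, $2.02$.

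By continuity, a valid fixed combination would have to satisfy $4-2\alpha_1\le 3$ and $2+3\alpha_1+\alpha_2\le 3$. That means $\alpha_1\ge\frac12$ and $\alpha_1\le\frac13$, which is impossible. In particular, every pairwise sum of the three widths exceeds $6$ somewhere. So the bound $\wdt(P,\ve_1)+\wdt(P,\ve_2)\le 6$ you aim for in Step 3 is false here, and the difficulty is not confined to the equality case.

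\textbf{Your fallback does not close this gap.} The widths are not affine in one parameter per edge. $P$ is determined by the slopes of the four edge lines through the blocking points. Already $p_{2,x}=(2-\ell)/(k-\ell)$ when the lines through $(0,-1)$ and $(1,1)$ have slopes $k\in[1,2]$ and $\ell\in[0,\frac12]$, so the widths are rational functions of four parameters. Moreover, in a max--min problem the critical configurations typically lie where two widths coincide, not on boundaries of the regions. So pushing vertices to region boundaries does not by itself reduce the problem to finitely many one-parameter families.

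The missing idea is a decomposition of the configuration space that tells you which width drops below $3$ where, with a separate argument on each piece. The paper writes $\vp_2=(a,b)$ and splits as follows.
\begin{itemize}
\item If $a+3b\ge 5$, it shows $\wdt(P,\ve_2-\ve_1)<3$. This uses a monotone reduction to the segment $a+3b=5$ and then a one-variable rational function.
\item If $a+3b<5$ and $\wdt(P,\ve_1)\ge 3$, it slides $\vp_4$ toward $\vp_1$ along their common edge. This lowers $\wdt(P,\ve_1)$ to exactly $3$ without changing $\wdt(P,\ve_2)$. After that, $\wdt(P,\ve_2)$ is an explicit rational function of three parameters and is monotone in one of them. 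The claim $\wdt(P,\ve_2)<3$ then reduces to the sign of a polynomial in two variables.
\end{itemize}
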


\begin{figure}[h]
\begin{tikzpicture}[scale=1.3]
  \fill[blue!30] (-2, -1) -- (-1, 0) -- (-1, -1)-- cycle;
  \fill[blue!30] (0, -1) -- (-1, -3) -- (-1, -2) -- cycle;
  \fill[blue!30] (-1, 0) -- (-1, 1) -- (0, 1) -- cycle;
  \fill[blue!30] (2, 1) -- (1, 1) -- (3, 2) -- cycle;

  \fill[gray!30] (-1, -1) -- (0, -1) -- (1, 1) -- (-1, 0) -- cycle;

  \draw[thick] (-1, -1) -- (0, -1) -- (1, 1) -- (-1, 0) -- cycle;

    \foreach \x in {-2, -1, ...,3}{
    \foreach \y in {-3, -2,-1, ...,2}{
      \fill (\x,\y) circle (2pt);
    }
  }
  \node at (-.24, -.36) {$\BP$};
  \node at (-.73, -2) {$S_1$};
  \node at (1.8, 1.2) {$S_2$};
  \node at (-.666, .666) {$S_3$};
  \node at (-1.333, -.666) {$S_4$};
  \end{tikzpicture}

\caption{The regions as in Lemma \ref{lem:non-symm-quad-shards-final}. For the width directions $\ve_1, \ve_2, \ve_1 - \ve_2$, they are realized by vertices of fixed regions. }
\end{figure}
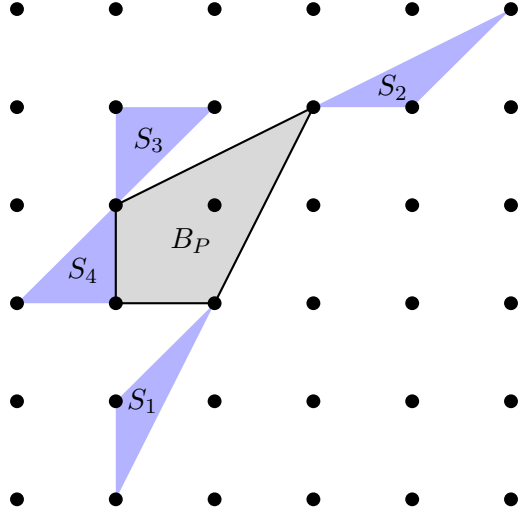

\begin{proof}
We consider the set of lattice directions $\hex = \{\pm \ve_1, \pm \ve_2, \pm (\ve_2 - \ve_1)\}$ and we show that $\wdt(P;\hex) \leq 3$ for any 1-maximal polygon $P$ with $\BP = \Bkite$, where we denote the blocking quadrilateral by $\Bkite$, and whose vertices lie in the regions $S_i$, $1\leq i \leq 4$.
Let $\vp_i$ denote the vertex of $P$ from the region $S_i$. One sees that
\begin{equation}
\label{eq:nsquad_wdt_realization}
\begin{split}
\wdt(P,\ve_1) &= \langle \ve_1, \vp_2 - \vp_4\rangle,\\
\wdt(P,\ve_2) &= \langle \ve_2, \vp_2 - \vp_1\rangle,\\
\wdt(P, \ve_2 - \ve_1) &= \langle \ve_2 - \ve_1, \vp_3 - \vp_1\rangle.
\end{split}
\end{equation}
In the following, we write $\vp_2 = (a,b)$.

\begin{claim}
\label{claim:geq5}
If $a+3b \geq 5$, then $\wdt(P,\ve_2-\ve_1)  < 3$.
\end{claim}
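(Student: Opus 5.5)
The plan is to bound $\wdt(P,\ve_2-\ve_1)$ by a function of $\vp_2=(a,b)$ using the two collinearities that tie $\vp_1$ and $\vp_3$ to $\vp_2$, and then maximize that function over the part of the region of $\vp_2$ where $a+3b\ge 5$. Throughout I use the labelling in which the width formulas \eqref{eq:nsquad_wdt_realization} make sense (as in the figure). That is, $\vp_1\in\conv\{(0,-1),(-1,-3),(-1,-2)\}$, $\vp_2\in\conv\{(1,1),(2,1),(3,2)\}$, $\vp_3\in\conv\{(-1,0),(-1,1),(0,1)\}$ and $\vp_4\in\conv\{(-1,-1),(-1,0),(-2,-1)\}$.

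\textbf{Expressing $\vp_1$ and $\vp_3$ through $\vp_2$.} The edge of $P$ through the blocking point $(0,-1)$ joins $\vp_1$ and $\vp_2$. Hence $\vp_1=(0,-1)-t(a,b+1)$ for some $t\ge 0$, and $p_{1,x}\ge -1$ gives $t\le 1/a$. Therefore
\[
p_{1,y}-p_{1,x}=-1-t(b+1-a)\ \ge\ -\tfrac{b+1}{a}.
\]
Similarly, the edge through $(1,1)$ joins $\vp_2$ and $\vp_3$. So $\vp_3=(1,1)+s(1-a,1-b)$ with $s\ge 0$, and $p_{3,x}\ge -1$ gives $s\le 2/(a-1)$. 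This yields
\[
p_{3,y}-p_{3,x}=s(a-b)\ \le\ \tfrac{2(a-b)}{a-1}.
\]
On the region $a+3b\ge 5$ we have $a\ge 7/5>1$, so this is well defined. Combining the two bounds,
\[
\wdt(P,\ve_2-\ve_1)\le f(a,b):=\frac{2(a-b)}{a-1}+\frac{b+1}{a}.
\]

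\textbf{Maximizing $f$.} We have $\partial f/\partial b=-\frac{2}{a-1}+\frac1a=-\frac{a+1}{a(a-1)}<0$, so $f$ is decreasing in $b$. The feasible set $\{a+3b\ge5\}\cap\conv\{(1,1),(2,1),(3,2)\}$ has its lower boundary made of two pieces: the segment $b=1$ with $2\le a\le 3$, and the segment of $a+3b=5$ from $(7/5,6/5)$ to $(2,1)$. It therefore suffices to check these two pieces.
\begin{itemize}
\item On $b=1$ we get $f=2+2/a\le 3$, with equality only at $a=2$.
\item On $b=(5-a)/3$, $f$ becomes a one-variable rational function. Clearing denominators, I expect to show it is $\le 3$ with equality only at $a=2$. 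For example, at $a=7/5$ its value is about $2.57$.
\end{itemize}
Hence $\wdt(P,\ve_2-\ve_1)\le 3$ in the whole case, with equality forcing $\vp_2=(2,1)$ and equality in both collinearity estimates.

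\textbf{The main obstacle: ruling out equality.} For $\vp_2=(2,1)$, the line through $\vp_2$ and $(0,-1)$ has slope $1$. The only point of $\vp_1$'s region on that line is $(0,-1)$ itself, so $\vp_1=(0,-1)$. Then $(0,-1)$ is a vertex of $P$ rather than a lattice point in the relative interior of an edge. This contradicts $\BP=\Bkite$ (equivalently, the $1$-maximality structure from Lemma~\ref{lemma:initial_shards}). So the inequality is strict, and the claim follows.
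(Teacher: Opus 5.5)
\textbf{A false step in excluding equality.} Your bound $\wdt(P,\ve_2-\ve_1)\le f(a,b)=\frac{2(a-b)}{a-1}+\frac{b+1}{a}$ is the paper's estimate: both push $\vp_1$ and $\vp_3$ to the line $x=-1$. Treating $a+3b>5$ through $\partial f/\partial b<0$, instead of the paper's sliding of $\vp_2$ along lines through $(0,-1)$, is a legitimate alternative. The error is in the equality case. The line $y=x-1$ through $(2,1)$ and $(0,-1)$ also passes through $(-1,-2)$. So it contains the whole edge $[(-1,-2),(0,-1)]$ of the region $\conv\{(0,-1),(-1,-3),(-1,-2)\}$, and your claim that $(0,-1)$ is the only point of that region on the line is false. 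Moreover, at $(a,b)=(2,1)$ you have $b+1-a=0$. Your first estimate is then an identity in $t$, so equality in it says nothing about $\vp_1$. Equality only forces $\vp_3=(-1,1)$, and the width in direction $\ve_2-\ve_1$ equals $3$ for every $\vp_1$ on that edge. So $\vp_1=(0,-1)$ does not follow, and strictness is not yet proved.

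\textbf{How to repair it.} Show that $\vp_2=(2,1)$ is infeasible outright. In that case the edge $[\vp_1,\vp_2]$ lies on $y=x-1$. Since $p_{1,x}\le 0<1<p_{2,x}$, this edge contains $(1,0)$ in its relative interior, so $(1,0)$ is a blocking point of $P$. But $(1,0)$ lies strictly on the far side of the kite edge $y=2x-1$, so $(1,0)\notin\Bkite$, which contradicts $\BP=\Bkite$. Hence $(a,b)\neq(2,1)$, and your analysis gives $f(a,b)<3$ at every feasible point.

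\textbf{Smaller points to tidy.}
\begin{itemize}
\item The two collinearity estimates silently use $b+1-a\ge 0$ and $a-b\ge 0$. Both hold on $\conv\{(1,1),(2,1),(3,2)\}$, but you should say so.
\item For $2\le a\le 3$, the lower boundary of the feasible set is the edge $b=a-1$, not $b=1$. This is harmless, because $b\ge 1$ throughout and $f$ decreases in $b$, so $f(a,1)=2+2/a$ still dominates.
\item The computation you only announce is correct. On $a+3b=5$ one gets $f=\frac{(7a-8)(a+1)}{3a(a-1)}$ and $3-f=\frac{2(a-2)^2}{3a(a-1)}\ge 0$, with equality only at $a=2$.
\end{itemize}
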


Suppose first that $a+3b = 5$. The line $\{x+3y = 5\}$ intersects the boundary of the region $S_2$ in the points $(\tfrac 75,\tfrac 65)$ and $(2,1)$. For $\lambda \in [0,1]$, let $\vp_2(\lambda) = \lambda(2,1) + (1-\lambda)(\tfrac 75,\tfrac 65)$.
 Moreover, let $\widetilde\vp_3(\lambda)\in S_3$ be the intersection point of the line $\{x=-1\}$ with the line generated by $\vp_2(\lambda)$ and $(1,1)$. Likewise, let $\widetilde\vp_1(\lambda)$ be the intersection point of the line $\{x=-1\}$ with the line generated by $\widetilde \vp_2(\lambda)$ and $(0,-1)$.
One checks that $\wdt(P',\ve_2 - \ve_1)\leq \langle \ve_2-\ve_1,\widetilde\vp_3(\lambda) - \widetilde\vp_1(\lambda) \rangle$.
The right hand side is a single variable function rational function $f(\lambda)$, which can be computed explicitly by simple planar geometry. It turns out that $f$ attains its unique maximum at $\lambda = 1$. This corresponds to $\vp_2 = (2,1)$. This implies $P = \conv\{(-1,1),(2,1),(-1,-2)\}$, i.e., $f(\lambda)\leq 3$ with equality only if $\lambda = 1$. However, $\lambda = 1$ corresponds to a polytope $P$ with $B_P \neq \Bkite$ and we deduce that $\wdt(P,\ve_2-\ve_1) < f(1) = 3$.

Next, let $a+3b > 5$. Observe that the width in direction $\ve_2 - \ve_1$ strictly increases, if we replace the point $\vp_2$ by the intersection point $(a',b')$ of the line generated by $\vp_2$ and $(0,-1)$ with the line  $\{x+3y=5\}$. In doing so, we keep $\vp_4$ and $\vp_1$ fixed and determine $\vp_3$ from the colinearity constraints. We obtain a new quadrilateral $P'$ with $\wdt(P,\ve_2-\ve_1) < \wdt(P',\ve_2 - \ve_1) < 3$. \\
\\
From now on, we will assume that $a+3b < 5$ and so, in particular, $a < 2$.
Moreover, we assume that $\wdt(P,\ve_1) \geq 3$. Suppose that this inequality was strict. Then, one can move $\vp_4$ along the segment $[\vp_4,\vp_1]$ towards $\vp_1$ and obtain a new point $\vp_4'$. Let $\vp_3'$ be the intersection of the line generated by $\vp_4'$ and $(0,-1)$ with the line generated by $\vp_2$ and $(1,1)$. Then, the quadrilateral $P' = \conv \{\vp_1,\vp_2,\vp_3',\vp_4'\}$ has $B_{P'} = \Bkite$ and its vertices lie in the respective regions. At this point we specify $\vp_4'$ such that $\wdt(P',\ve_1) = 3$ (recall that $a < 2$). By the intersection pattern of $[\vp_3',\vp_4']$ and $[\vp_3,\vp_4]$ at the blocking point $(-1,0)$, and since the vertices in $S_1$ and $S_4$ are unchanged, we find
\begin{equation}
\label{eq:wdt_comparison_PPprime}
\begin{split}
\wdt(P,\ve_2) &=\wdt(P',\ve_2).\\
\end{split}
\end{equation}
Next, we note that an arbitrary 1-maximal polygon $P'$ with $\wdt(P',\ve_1) =3$, $B_{P'} = \Bkite$ and vertices in the specified regions is determined by three parameters. Writing $\vp_2 = (a,b)$, $a,b\in\RR$, it follows that $\vp_4 = (a-3,c)$ for some $c$.
The points $\vp_1$ and $\vp_3$ are uniquely determined by the colinearity constraints.
For the values of the three parameters, we have the following inequalities, which follow from our assumptions and the containment in the regions, respectively.
\begin{equation}
\label{eq:nsquad_params}
a+3b < 5,\quad -a+2b\leq 1,\quad b\geq 1,\quad -1\leq c \leq a-2.
\end{equation}
In order to conclude the proof, by \eqref{eq:wdt_comparison_PPprime}, it suffices to prove that the polytope $P'$, parametrized by $a,b,c$ as in \eqref{eq:nsquad_params}, has $\wdt(P',\ve_2) < 3.$

The vertical width of $P'$ is $b-d$, where $d$ denotes the second coordinate of $\vp_1$ (cf.\ \eqref{eq:nsquad_wdt_realization}). Since $\vp_1$ is determined by the intersection of the line generated by $\vp_3$ and $(-1,-1)$ with the line generated by $\vp_1$ and $(0,-1)$, we can solve the corresponding linear system and compute $d$ in terms of $a,b,c$. This way, we obtain
\[
\begin{split}
\wdt(P',\ve_2) &= b-\left(-1 - \frac{(b+1)(c+1)}{a(c+1) - (a-2)(b+1)}\right).
\end{split}
\]
We observe that the value of $\langle \ve_2, \vp_1\rangle$ is increasing in $c$. By \eqref{eq:nsquad_params}, we estimate
\begin{equation}
\label{eq:D_func_est}
\begin{split}
\wdt(P',\ve_2) - 3 &\leq b-\left(-1  - \frac{(b+1)(a-1)}{a(a-1) - (a-2)(b+1)}\right) -3\\
&= \frac{(b-2)D - (b+1)(a-1)}{D},
\end{split}
\end{equation}
where $D= a(a-1) - (a-2)(b+1)$. Using the inequalities \eqref{eq:nsquad_params} we can estimate it as follows: 
\[
\begin{split}
D &= (a-1)a - (a-2)(b+1) = (a-1)(a-b-1) + b+1\\
&\geq (a-1)(a-b-1) + 3 = (a-1)(a-2b - 1 + b) + 3\\
&\geq (a-1)b+3\geq 3 > 0.
\end{split}
\]
It remains to show that the numerator $E$ in \eqref{eq:D_func_est} is negative. We substitute $x = a+3b$ and $y=b$ so that $E$ becomes
\[\begin{split}
E &= x^2y - 7xy^2 + 12y^3 - 2x^2 + 13xy - 19y^2 + 5x - 18y - 5\\
&= x^2  (y-2) + x(-7y^2 + 13y + 5) + 12y^3 - 19y^2 -18y -5.
\end{split}\]
Let $E'$ be the derivative of $E$ with respect to $x$. Since by \eqref{eq:nsquad_params} we have $y\leq \tfrac 32$ it follows that $y-2 < 0 $ and  since by the assumption of the claim we have $x < 5$, we deduce
\[
E' = 2x(y-2) - 7y^2 + 13y +5 > 10(y-2) - 7y^2 + 13y +5,
\]
where the right hand side is positive for $1\leq y\leq \tfrac 32$. Hence, $E$ is strictly increasing in $x$ and since $x< 5$, we have
\[
E < E(x=5) = 12(y-\tfrac 52)(y-1)^2 \leq 0.
\]
Thus we obtain from \eqref{eq:D_func_est} that $\wdt(P',\ve_2) < 3$ as desired.
\end{proof}

By combining Lemmas \ref{lem:non-symm-quad-both-close}, \ref{lem:non-symm-quad-both-far} and \ref{lem:non-symm-quad-shards-final} we can conclude the case of $\BP=\Bkite$.

\begin{lemma}
	Let $P$ such that $\BP = \Bkite$. Then $\wdt(P) < 3$.
\end{lemma}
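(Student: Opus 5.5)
The plan is a case analysis on where the vertices $\vp_1\in S_1$ and $\vp_4\in S_4$ of $P$ (cf.\ Lemma \ref{lem:non-symm-quad-shards}) lie with respect to the decomposition \eqref{eq:shard_decomp_ab} into the near triangles $S_1^A,S_4^A$ and the far triangles $S_1^B,S_4^B$. Each of the four possibilities is settled by one of Lemmas \ref{lem:non-symm-quad-both-close}, \ref{lem:non-symm-quad-both-far} and \ref{lem:non-symm-quad-shards-final}, in some cases after applying the reflection in the line $\{y=x\}$.

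\textbf{Both far.} If $\vp_1\in S_1^B$ and $\vp_4\in S_4^B$, then Lemma \ref{lem:non-symm-quad-both-far} gives $\wdt(P)<3$ directly.

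\textbf{Both near.} If $\vp_1\in S_1^A$ and $\vp_4\in S_4^A$ with neither lying on the shared dashed boundary segment, Lemma \ref{lem:non-symm-quad-both-close} shows that this configuration cannot occur. If $\vp_1$ or $\vp_4$ does lie on the common segment of $S^A$ and $S^B$, we regard it as a point of the corresponding far triangle $S^B$. This moves the configuration into the mixed or both-far case, so the closed triangles still cover every situation.

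\textbf{Mixed.} Since $\Bkite$ is invariant under $(x,y)\mapsto(y,x)$, and this reflection exchanges $S_1$ with $S_4$ while preserving lattice width, we may assume that $\vp_1$ is far and $\vp_4$ is near. As described before Lemma \ref{lem:non-symm-quad-shards-final}, the collinearity of $\vp_1,(0,-1),\vp_2$ and of $\vp_4,(-1,0),\vp_3$, together with the blocking point $(1,1)$, restricts $\vp_2$ and $\vp_3$ to subtriangles of $S_2$ and $S_3$. After a relabelling and a unimodular change of coordinates, these are exactly the regions listed in Lemma \ref{lem:non-symm-quad-shards-final}, and that lemma gives $\wdt(P)<3$.

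The only step needing care is the bookkeeping: checking that the restricted regions in the mixed case really match the four regions of Lemma \ref{lem:non-symm-quad-shards-final} up to unimodular equivalence, and that the excluded boundary segments in Lemma \ref{lem:non-symm-quad-both-close} are absorbed into the other cases as described above. We also have to handle degenerate vertices, meaning blocking points counted as vertices (cf.\ Lemma \ref{lemma:initial_shards}). These lie in the closed regions, so the same case split covers them.
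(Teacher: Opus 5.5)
Your proposal is correct and is essentially the paper's own argument. The paper also closes the kite case by combining Lemmas \ref{lem:non-symm-quad-both-close}, \ref{lem:non-symm-quad-both-far} and \ref{lem:non-symm-quad-shards-final}, using the reflection in $\{y=x\}$ to reduce the mixed case to $\vp_1\in S_1^B$, $\vp_4\in S_4^A$, and using collinearity to shrink $S_2$ and $S_3$. Your treatment of the shared boundary segments through the closed far triangles is correct bookkeeping that the paper leaves implicit. No genuine unimodular change of coordinates is needed: the restricted regions $\conv\{(1,1),(2,1),(3,2)\}$ and $\conv\{(-1,0),(-1,1),(0,1)\}$ already coincide, up to relabelling, with those used in the proof of Lemma \ref{lem:non-symm-quad-shards-final}.
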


\subsubsection{The trapezoid} \label{sec:trapezoid}
We consider now consider those polygons $P$ whose blocking polytope is
\hypertarget{target:Btrap}{
$$B_P=B^{\mathrm{trap}}=\conv \{ (-1, 0), (1, 0), (0, 1), (-1, 1)\}.$$
}

\begin{lemma} \label{lem:trapezoid-shards}
  Let $P$ be a $1$-maximal polygon such that $\BP = \Btrap$.
  Then each vertex of $P$ lies in one of the following regions, and each region contains at most one vertex::
  \begin{align*}
    S_1 & = \conv \{ (-1, 0), (-1, -2), (1, 0) \} \cup \conv \{ (-1, 0), (1, -2), (1, 0) \} \cup \conv \{ (-1, 0), (4, -2), (1, 0) \}, \\
    S_2 & = \{ 0 \leq y \leq 1, y \geq -x + 2 \},\\
    S_3 & = \conv \{ (-1, 1), (-1, 2), (0, 1) \}, \\
    S_4 & = \{x \leq 0, 0 \leq y \leq 1 \}.
  \end{align*}
\end{lemma}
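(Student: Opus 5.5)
The plan is to obtain the four regions exactly as in the earlier cases: start from the polyhedra $S_e$ of Lemma~\ref{lemma:initial_shards}, one per edge of $\Btrap$, and then remove the open cones $\sigma_\vq$ of Lemma~\ref{lemma:forbidden_cones} for the relevant lattice points $\vq\neq\vo$ near $\Btrap$. Throughout, the interior point of $P$ is $\vp=\vo$, which lies in the relative interior of the long edge $[(-1,0),(1,0)]$ of $\Btrap$.

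\emph{Step 1: the polyhedra $S_e$.} The trapezoid has four edges: the bottom edge $[(-1,0),(1,0)]$, the right edge $[(1,0),(0,1)]$ on $\{x+y=1\}$, the top edge $[(0,1),(-1,1)]$, and the left edge $[(-1,1),(-1,0)]$. Using \eqref{eq:shard_definition_formal}, reversing one inequality of $\Btrap=\{y\geq 0,\ y\leq 1,\ x\geq -1,\ x+y\leq 1\}$ gives:
\begin{itemize}
\item bottom: $\{y\le 0,\ x\ge -1,\ x+y\le 1\}$;
\item right: $\{0\le y\le 1,\ x+y\ge 1\}$;
\item top: $\{y\ge 1,\ x\ge -1,\ x+y\le 1\}$;
\item left: $\{0\le y\le 1,\ x\le -1\}$.
\end{itemize}

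\emph{Step 2: cutting by forbidden cones.} Next I intersect each of these with the complement of $\sigma_\vq$ for the points
\[
\vq\in\{(1,1),(2,0),(0,2),(-1,2),(-2,1),(0,-1),(\pm1,-1)\}
\]
and any further points that turn out to matter.

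For the top region, the cones at $(0,2)$, $(-1,2)$ and $(1,1)$ should cut it down to the unimodular triangle $\conv\{(-1,1),(-1,2),(0,1)\}$. Concretely, $\sigma_{(-1,2)}$ removes $\{x<-1\}$-type parts, and $\sigma_{(0,2)}$ removes everything above $y=2$. This gives $S_3$.

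For the right region, $\sigma_{(1,1)}$ and $\sigma_{(2,0)}$ should leave $\{0\le y\le 1,\ y\ge -x+2\}$ together with the strip near the edge. I will check whether the part between $x+y=1$ and $x+y=2$ is forbidden. It is: any point there with $0<y<1$ lies in $\sigma_{(1,1)}$ or in $\sigma_{(1,0)}$-like cones. So I will verify with the specific cone $\sigma_{(1,1)}=-\inter\pos(\Btrap-(1,1))+(1,1)$. Since $\Btrap-(1,1)$ spans directions from $(-2,-1)$ to $(-1,0)$ and $(0,-1)$, the negated cone opens toward $(1,0)$ and $(0,1)$ and covers the triangle $\conv\{(1,0),(2,0),(1,1)\}$ minus its boundary. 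This matches the stated $S_2$.

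The left region is not cut further, which is consistent with $S_4=\{x\le 0,0\le y\le 1\}$ as stated. That statement is in fact looser (it contains the edge-adjacent strip), and it is harmless to prove the containment in the larger set.

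For the bottom region I expect the main work. Here $\vo$ is interior, so $\sigma_{\vo}$ is not available. The cones at $(0,-1)$, $(-1,-1)$, $(1,-1)$ and further points $(\pm1,-2)$, $(k,-2)$ cut the unbounded region below the long edge into the union of the three triangles with apex set $\{(-1,-2),(1,-2),(4,-2)\}$ over the base $[(-1,0),(1,0)]$. I will compute, for each lattice point $\vq$ with $q_y=-1$ or $-2$, the cone $\sigma_\vq$, and check that its complement within $\{y\le0\}$ is exactly that union. The apex $(4,-2)$ arises because $\sigma_{(1,-1)}$ opens along direction $(1,-1)-(-1,1)=(2,-2)$ and $(1,-1)-(1,0)=(0,-1)$, bounding the region by the line through $(1,0)$ and $(4,-2)$ via $\sigma_{(2,-1)}$. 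I expect this bookkeeping of which cones are extremal to be the main obstacle, and I will do it by listing the boundary rays of each cone.

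\emph{Step 3: uniqueness.} Finally, "at most one vertex per region" is immediate from Lemma~\ref{lemma:initial_shards}, since the regions are subsets of distinct $S_e$ whose interiors are disjoint.
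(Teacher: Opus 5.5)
Your route is the one the paper intends: take the regions $S_e$ obtained by reversing one inequality of $\Btrap$, then delete the cones $\sigma_\vq$ of Lemma~\ref{lemma:forbidden_cones}. The paper itself gives no argument beyond the figure, and your Step~1 list is correct. The gap is in Step~2. The two cone computations you use to certify the printed $S_2$ and the printed apex $(4,-2)$ are wrong, and those two parts of the statement are misprints.

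\emph{The right region.} $\pos(\Btrap-(1,1))$ is the closed third quadrant, so $\sigma_{(1,1)}=\{x>1,\ y>1\}$. This does not meet $\{y\le 1\}$, and in particular does not cover $\conv\{(1,0),(2,0),(1,1)\}$. More generally, since $0\le y\le 1$ on $\Btrap$, every $\sigma_\vq$ lies in $\{y<q_y\}\subseteq\{y<0\}$ when $q_y\le 0$, and in $\{y>q_y\}\subseteq\{y>1\}$ when $q_y\ge 1$. So no forbidden cone touches the strip $\{0\le y\le 1\}$, and the method yields only $\{0\le y\le 1,\ x+y\ge 1\}$ on the right.

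\emph{The bottom region.} The extreme rays of $\sigma_{(1,-1)}-(1,-1)$ are $(1,-1)-(1,0)=(0,-1)$ and $(1,-1)-(-1,0)=(2,-1)$, not $(2,-2)$. Hence $\sigma_{(1,-1)}=\{x>1,\ x+2y<-1\}$. Removing it and $\sigma_{(0,-1)}=\{y<-1-|x|\}$ from your bottom region $\{y\le 0,\ x\ge -1,\ x+y\le 1\}$ leaves exactly $\conv\{(-1,0),(-1,-2),(1,0)\}\cup\conv\{(-1,0),(1,-2),(1,0)\}\cup\conv\{(-1,0),(3,-2),(1,0)\}$. The point $(4,-2)$ has $x+y=2$, so it is not even in the bottom region you wrote down. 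Also $\sigma_{(2,-1)}=\{x+y>1,\ x+3y<-1\}$ plays no role there.

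These errors cannot be repaired within the printed statement, because that statement is false. Consider the quadrilateral
\[
P=\{(x,y):\ 4x+y\le 4,\ 4x+7y\le 7,\ 4x-7y\ge -11,\ 4x+9y\ge -4\}.
\]
\begin{itemize}
\item Its vertices are $(\tfrac54,-1)$, $(\tfrac78,\tfrac12)$, $(-\tfrac12,\tfrac97)$ and $(-\tfrac{127}{64},\tfrac7{16})$.
\item On the rows $y=-1,0,1$, the only ones meeting $P$, the strict inequalities leave only $\vo$ as an interior lattice point.
\item Its only boundary lattice points are $(1,0)$, $(0,1)$, $(-1,1)$, $(-1,0)$, one in the relative interior of each edge.
\end{itemize}
So $P$ is $1$-maximal with $B_P=\Btrap$. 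Yet $(\tfrac78,\tfrac12)$ violates $y\ge -x+2$. And $(\tfrac54,-1)$ is not in the printed $S_1$: it has $x>1$ and violates $2x+5y\ge -2$, the lower edge of $\conv\{(-1,0),(4,-2),(1,0)\}$.

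The intended statement, which is also what the paper's figure shows, has $S_2=\{0\le y\le 1,\ y\ge -x+1\}$ and $(3,-2)$ in place of $(4,-2)$. For that statement your plan works and is simpler than you expected:
\begin{itemize}
\item the top region $S_e$ is already $\conv\{(-1,1),(-1,2),(0,1)\}$, with no cones needed;
\item the left and right regions are not cut at all;
\item only $\sigma_{(0,-1)}$ and $\sigma_{(1,-1)}$ act on the bottom region;
\item your Step~3 is fine.
\end{itemize}
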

These regions are visualized in Figure \ref{fig:trapezoid-shards}.
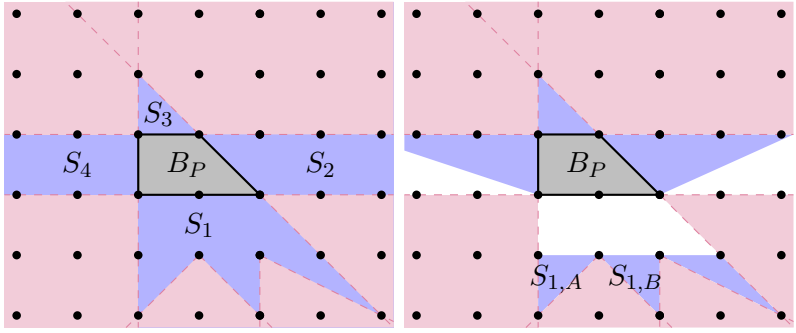
\begin{figure}[h]
\begin{tikzpicture}[scale=.8]

  \fill[blue!30] (3.2, 3.2) -- (3.2, -2.2) -- (-3.2, -2.2) -- (-3.2, 3.2) -- cycle;

  \fill[gray!50] (-1, 0) -- (1, 0) -- (0, 1) -- (-1, 1) -- cycle;
  \fill[purple!20] (0, 1) -- (3.2, 1) -- (3.2, 3.2) -- (-2.2, 3.2) -- cycle;

  \fill[purple!20] (1, 0) -- (3.2, 0) -- (3.2, -2.2) -- cycle;
  \fill[purple!20] (1, -1) -- (1, -2.2) -- (3.2, -2.2) -- (3.2, -2.1) -- cycle;
  \fill[purple!20] (-1, 0) -- (-3.2, 0) -- (-3.2, -2.2) -- (-1, -2.2) -- cycle;
  \fill[purple!20] (-1, 1) -- (-1, 3.2) -- (-3.2, 3.2) -- (-3.2, 1) -- cycle;
  \fill[purple!20] (0, -1) -- (1.2, -2.2) -- (-1.2, -2.2) -- cycle;

  \draw[purple!50, dashed] (0, 1) -- (3.2, 1); 
  \draw[purple!50, dashed] (0, 1) -- (-2.2, 3.2);

  \draw[purple!50, dashed] (1, 0) -- (3.2, 0);
  \draw[purple!50, dashed] (1, 0) -- (3.2, -2.2);

  \draw[purple!50, dashed] (1, -1) -- (3.2, -2.1);
  \draw[purple!50, dashed] (1, -1) -- (1, -2.2);

  \draw[purple!50, dashed] (-1, 0) -- (-3.2, 0);
  \draw[purple!50, dashed] (-1, 0) -- (-1, -2.2);
  \draw[purple!50, dashed] (-1, 1) -- (-1, 3.2);
  \draw[purple!50, dashed] (-1, 1) -- (-3.2, 1);

  \draw[purple!50, dashed] (0, -1) -- (1.2, -2.2);
  \draw[purple!50, dashed] (0, -1) -- (-1.2, -2.2);

  \draw[thick] (-1, 0) -- (1, 0) -- (0, 1) -- (-1, 1) -- cycle;

    \foreach \x in {-3, , -2, -1, ...,2, 3}{
    \foreach \y in {-2,-1, ...,3}{
      \fill (\x,\y) circle (2pt);
    }
  }
  \node at (-.2, .5) {$\BP$};
  \node at (0, -.5) {$S_1$};
  \node at (2, .5) {$S_2$};
  \node at (-.666, 1.333) {$S_3$};
  \node at (-2, .5) {$S_4$};

\end{tikzpicture}
\begin{tikzpicture}[scale=.8]
  \fill[blue!30] (0, -1) -- (-1, -2) -- (-1, -1) -- cycle;
  \fill[blue!30] (0, -1) -- (1, -1) -- (1, -2) -- cycle;
  \fill[blue!30] (1, -1) -- (2, -1) -- (3, -2) -- cycle;

  \fill[blue!30] (1, 0) -- (3.2, 1) -- (0, 1) -- cycle;

  \fill[blue!30] (0, 1) -- (-1, 2) -- (-1, 1) -- cycle;

  \fill[blue!30] (-1, 0) -- (-3.2, 2.2/3) -- (-3.2, 1) -- (-1, 1) -- cycle;

  \fill[gray!50] (-1, 0) -- (1, 0) -- (0, 1) -- (-1, 1) -- cycle;
  \fill[purple!20] (0, 1) -- (3.2, 1) -- (3.2, 3.2) -- (-2.2, 3.2) -- cycle;

  \fill[purple!20] (1, 0) -- (3.2, 0) -- (3.2, -2.2) -- cycle;
  \fill[purple!20] (1, -1) -- (1, -2.2) -- (3.2, -2.2) -- (3.2, -2.1) -- cycle;
  \fill[purple!20] (-1, 0) -- (-3.2, 0) -- (-3.2, -2.2) -- (-1, -2.2) -- cycle;
  \fill[purple!20] (-1, 1) -- (-1, 3.2) -- (-3.2, 3.2) -- (-3.2, 1) -- cycle;
  \fill[purple!20] (0, -1) -- (1.2, -2.2) -- (-1.2, -2.2) -- cycle;

  \draw[purple!50, dashed] (0, 1) -- (3.2, 1); 
  \draw[purple!50, dashed] (0, 1) -- (-2.2, 3.2);

  \draw[purple!50, dashed] (1, 0) -- (3.2, 0);
  \draw[purple!50, dashed] (1, 0) -- (3.2, -2.2);

  \draw[purple!50, dashed] (1, -1) -- (3.2, -2.1);
  \draw[purple!50, dashed] (1, -1) -- (1, -2.2);

  \draw[purple!50, dashed] (-1, 0) -- (-3.2, 0);
  \draw[purple!50, dashed] (-1, 0) -- (-1, -2.2);
  \draw[purple!50, dashed] (-1, 1) -- (-1, 3.2);
  \draw[purple!50, dashed] (-1, 1) -- (-3.2, 1);

  \draw[purple!50, dashed] (0, -1) -- (1.2, -2.2);
  \draw[purple!50, dashed] (0, -1) --  (-1.2, -2.2);

  \draw[thick] (-1, 0) -- (1, 0) -- (0, 1) -- (-1, 1) -- cycle;

    \foreach \x in {-3, , -2, -1, ...,2, 3}{
    \foreach \y in {-2,-1, ...,3}{
      \fill (\x,\y) circle (2pt);
    }
  }
  \node at (-.2, .5) {$\BP$};
  \node at (.6, -1.4) {$S_{1,B}$};
  \node at (-.7, -1.4) {$S_{1,A}$};
\end{tikzpicture}
\caption{Left: The regions $S_i$ as in Lemma \ref{lem:trapezoid-shards}. Right: The regions can be further constrained by consider the width in direction $\ve_2$.}
\label{fig:trapezoid-shards}
\end{figure}
\begin{lemma} \label{lem:trapezoid-thm}
  Let $P$ be a $1$-maximal polygon such that $\BP = \Btrap$. Then $\wdt(P) < 3$.
\end{lemma}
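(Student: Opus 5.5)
The plan follows the pattern of the kite case: cut the vertex regions of Lemma \ref{lem:trapezoid-shards} down further, and then bound $\wdt(P;\hex)$ with $\hex=\{\pm\ve_1,\pm\ve_2,\pm(\ve_2-\ve_1)\}$. Recall that $\wdt(P)\le \wdt(P;\hex)$.

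Let $\vp_1,\dots,\vp_4$ be the (possibly degenerate) vertices of $P$ in $S_1,\dots,S_4$, and assume $\wdt(P)\ge 3$. We derive a contradiction; equality should be excluded because the limiting configurations force $\BP\neq\Btrap$.

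\textbf{Step 1: shrink the regions using $\wdt(P,\ve_2)\ge 3$.} The top of $P$ in direction $\ve_2$ is attained at $\vp_3$ (or at height $1$), and $p_{3,y}\le 2$. Hence $p_{1,y}\le -1$, so $\vp_1$ lies in one of the three triangles $S_{1,A}=\conv\{(0,-1),(-1,-2),(-1,-1)\}$, $S_{1,B}=\conv\{(0,-1),(1,-1),(1,-2)\}$ or $\conv\{(1,-1),(2,-1),(3,-2)\}$, as on the right of Figure \ref{fig:trapezoid-shards}. Colinearity through the blocking points $(1,0)$ and $(-1,0)$ then confines $\vp_2$ to the wedge $\conv\{(1,0),(0,1)\}\cup\{y\le 1\}$ beyond $(1,0)$, and $\vp_4$ to a bounded piece of $S_4$ with $x\ge -3$, because the line through $\vp_1$ and $(-1,0)$ must meet $\{y=1\}$.

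\textbf{Step 2: identify which vertices realize the widths.} With the reduced regions, $\wdt(P,\ve_2)=p_{3,y}-p_{1,y}$ and $\wdt(P,\ve_1)=p_{2,x}-p_{4,x}$. For the diagonal direction, $\wdt(P,\ve_2-\ve_1)$ is realized by $\vp_4$ or $\vp_3$ on one side and by $\vp_1$ or $\vp_2$ on the other. I would split according to which of the three triangles contains $\vp_1$.

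\textbf{Step 3: case $\vp_1\in S_{1,A}$.} Here $\vp_1$, $(-1,0)$ and $\vp_4$ are colinear, and $\vp_3$, $(-1,1)$ and $\vp_4$ are colinear. Use the line through $\vp_4$ and $(-1,0)$, together with its intersection points with $\{y=-1\}$ and $\{y=1\}$ (the auxiliary points $\vec A,\vec B$ of the pentagon and cross cases). This should give $p_{3,y}-p_{1,y}+p_{2,x}-p_{4,x}\le 6$, with equality only in configurations where $P\cong 3\Tst$. For the two triangles on the right, the $\ve_1$-width is no longer the constraining one. Instead I would pair $\wdt(P,\ve_2)$ with $\wdt(P,\ve_2-\ve_1)$ and run the variational argument of the kite. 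First fix $\wdt(P,\ve_2)=3$ by sliding $\vp_3$, which is allowed because widths in the other relevant directions do not decrease. This leaves a three-parameter family, say $\vp_1=(a,b)$ and the slope at $(1,0)$. Solving the colinearity constraints for $\vp_2,\vp_4$ then writes $\wdt(P,\ve_2-\ve_1)-3$ as a rational function whose denominator is shown positive on the region.

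\textbf{Main obstacle.} I expect the hardest part to be this last sign analysis of the numerator polynomial, as with $E$ in Lemma \ref{lem:non-symm-quad-shards-final}. I would first substitute coordinates adapted to the boundary lines of the region, such as $x=a+3b$ in the kite case, so that the polynomial becomes monotone in one variable. Then I would reduce to a boundary line, where it should factor as a nonpositive product vanishing only at the degenerate configuration $P=3\Tst$. That configuration is excluded since $\BP=\Btrap$, which gives the strict inequality.
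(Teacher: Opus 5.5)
Your reduction to the three triangles is right, but the core of the plan fails. The direction set $\hex$ is not enough for $\Btrap$ in this position, so the inequality $\wdt(P;\hex)<3$ you aim for is false.

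\textbf{Counterexample in the rightmost triangle.} Take $\vp_1=(2.85,-1.9)$ in $\conv\{(1,-1),(2,-1),(3,-2)\}$ and $\vp_3=(-0.5,1.2)\in S_3$. The colinearity conditions give $\vp_2\approx(0.043,0.983)$ and $\vp_4\approx(-2.119,0.552)$. One checks $P\cap\{y=1\}=[(-1,1),(0,1)]$, $P\cap\{y=0\}=[(-1,0),(1,0)]$ and $P\cap\{y=-1\}\approx[1.03,1.97]\times\{-1\}$. So $P$ is $1$-maximal with $\BP=\Btrap$. Yet $\wdt(P,\ve_2)=3.1$, $\wdt(P,\ve_1)\approx 4.97$ and $\wdt(P,\ve_2-\ve_1)\approx 7.42$; only $\wdt(P,\ve_1+\ve_2)\approx 2.59$ is small. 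Here $\max_P x$ is attained at $\vp_1$, so your formula $\wdt(P,\ve_1)=p_{2,x}-p_{4,x}$ also fails in this triangle.

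\textbf{The middle triangle.} Your pairing is wrong here as well. Take $\vp_1=(0.98,-1.95)$ and $\vp_3=(-0.5,1.05)$. This gives a valid $P$ with $\wdt(P,\ve_2)=3$ and $\wdt(P,\ve_2-\ve_1)\approx 5.76$, while $\wdt(P,\ve_1)\approx 2.93$. The near-extremal polygons explain this. In the middle triangle they approach $\conv\{(1,-2),(1,1),(-2,1)\}$, in the right one $\conv\{(3,-2),(0,1),(-3,1)\}$, whose $(\ve_2-\ve_1)$-widths are $6$ and $9$. So the sign analysis of $\wdt(P,\ve_2-\ve_1)-3$ that you name as the main obstacle cannot come out negative in either right triangle.

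\textbf{The missing idea.} It is the unimodular involution $(x,y)\mapsto(-x-y,y)$. It preserves $\Btrap$, maps the middle triangle $S_{1,B}$ to itself, swaps $S_{1,A}$ with the rightmost triangle, and turns $\ve_1$-widths into $(\ve_1+\ve_2)$-widths. The paper uses it to reduce to $\vp_1\in S_{1,B}$ or $\vp_1\in S_{1,A}$. In both cases it proves $\min\{\wdt(P,\ve_2),\wdt(P,\ve_1)\}<3$, writing $\vp_3=(a,b)$:
\begin{enumerate}
\item If $p_{1,y}>b-3$, the $\ve_2$-width is below $3$.
\item Otherwise it moves $\vp_1$ up to height $b-3$, which only increases the $\ve_1$-width.
\item It then replaces the two lines through $\vp_1$ by those through the endpoints of the triangle's horizontal segment at that height.
\item This yields an explicit rational upper bound in $(a,b)$. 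Its sign is settled by treating a numerator as a quadratic in $a$ and checking the boundary of $S_3$.
\end{enumerate}

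\textbf{Step~3 for $S_{1,A}$.} This step is only asserted, and the pentagon/cross technique does not transfer. If you bound the two collinear pairs separately, it breaks down. For example, $\vp_1=(-0.95,-1.02)$ and $\vp_3=(-0.5,1.02)$ give a valid $P$ with $\vp_2\approx(2.70,0.89)$. Then $p_{2,x}+p_{3,y}\approx p_{2,x}-p_{1,y}\approx 3.72>3$, even though the sum of the two widths is about $5.8$. So any estimate there has to couple all four vertices.
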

\begin{proof}
	We denote by $\vp_1, \vp_2, \vp_3, \vp_4$ the possibly degenerate vertices in the regions $S_1, S_2, S_3, S_4$. 
	Observe that, once we choose $\vp_1$ and $\vp_3$ inside their respective regions, we have completely determined the polygon $P$ thanks to the colinearity conditions between vertices of $P$ and of $\BP$. For ease of notation, we denote the coordinates of $\vp_3$ as $(a,b) = (p_{3,x}, p_{3,y})$, for some $(a,b) \in S_3$. We consider how the width of $P$ varies as we allow $\vp_1$ to vary within $S_1$. 
	
	If $p_{1, y} > -1$, then we have $\wdt(K, \ve_2) <3$ and we are done. Thus we can further restrict $S_1$ as pictured on the right hand side of Figure \ref{fig:trapezoid-shards}.
	We further observe that the unimodular transformation defined by the matrix $\begin{bmatrix}
	-1 & -1 \\ 0 & 1
	\end{bmatrix}$, which maps the left triangle of $S_1$ to the right one while preserving $\Btrap$, ensures that we can treat the case where $\vp_1$ lies in either of these triangles as the same case. 
	Thus we distinguish two cases: 
	
	\textbf{Case 1}: $\vp_1 \in S_{1,B} = \conv((1,-2),(1,-1),(0,-1))$.
	
	\begin{figure}
		
	\begin{tikzpicture}[scale=.8]
	\fill[blue!30] (0, -1) -- (1, -1) -- (1, -2) -- cycle;
	
	
	\fill[blue!30] (0, 1) -- (-1, 2) -- (-1, 1) -- cycle;
	
	
	\fill[gray!50] (-1, 0) -- (1, 0) -- (0, 1) -- (-1, 1) -- cycle;
	
	

	
	
	
	\draw[purple!50] (-.6, 1.4)--(-1, 1) --  (-2.25, -.5);
	\draw[purple!50] (-.6, 1.4)--(0, 1) --  (2.25,-.5);
	
	\draw[green!50] (.6, -1.6)--(1, 0) --  (1.25, 1);
	\draw[green!50] (1, -1.6)--(-1, 0) --  (-2.25, 1);
	
	\draw[thick] (-1, 0) -- (1, 0) -- (0, 1) -- (-1, 1) -- cycle;
	
	\draw[thick] (.6, -1.6) -- (1, -1.6);
	
	\foreach \x in {-3, , -2, -1, ...,2, 3}{
		\foreach \y in {-2,-1, ...,2}{
			\fill (\x,\y) circle (2pt);
		}
	}
	\fill (-.6, 1.4) circle (1.5pt);
	\fill (.75,-1.6) circle (1.5pt);
    \node at (-1, 1.4) {\Small $\vp_3$};
    \node at (.4, -1.6) {\Small $\vp_1$};
    \node at (-2.5, -.5) {\Small $L_3$};
	\node at (2.5, -.5) {\Small $R_3$};
	\node at (-2.4, 1.1) {\Small $L'_1$};
	\node at (1.5, 1.1) {\Small $R'_1$};
	\end{tikzpicture}
	
\end{figure}
	
	If we have $p_{1,y}> b-3$, then $\wdt(P, \ve_2)=b-p_{1,3}<3$ and we are done. 
	We will now show that in all other cases, we have $\wdt(P, \ve_1) <3$.
	If $p_{1,y}<b-3$, we observe that $\wdt(P, \ve_1)<\wdt(P', \ve_1),$ where $P'$ is the polygon with $B_{P'}=\Btrap$ with vertices $\vp'_3 = \vp_3$ and $\vp'_1 = (p_{1,x}, b-3)$. 

	It is therefore enough to consider the situation $\vp_3=(a,b),~ \vp_1=(c, b-3)$. We can write $\wdt(P,\ve_1)$ in terms of $a,b,c$, with restrictions on $a,b,c$ given by the regions $S_{1,B}$ and $S_3$, and show that this value is always less than $3$.
	Observe that $\wdt(P, \ve_1)= p_{2,x} - p_{4,x}$. We have $\vp_2 = R_1 \cap R_3$ and $\vp_4 = L_1 \cap L_3$, where the lines are defined as
	\begin{align*}
	L_1 &= \aff(\vp_1, (-1,0)),\quad R_1 = \aff(\vp_1, (1,0)),\\
	L_3 &= \aff(\vp_3, (-1,1))= \{(x,y): y= \tfrac{b-1}{a+1}x+\tfrac{a+b}{a+1}\},\\
	R_3 &= \aff(\vp_3, (0,1)) = \{(x,y): y= \tfrac{b-1}{a}x+1\}.	
	\end{align*}
	
	The first two lines have equations depending on $c$. To remove $c$ as a parameter, we consider instead the lines $L'_1$ and $R'_1$ as follows. 

	\begin{align*}
		L'_1 &= \aff((1,b-3), (0,-1)) =  \{(x,y): y= \tfrac{b-3}{2}(x+1)\},\\
		R'_1 &= \aff((2-b,b-3), (0,1)) =  \{(x,y): y= \tfrac{b-3}{1-b}(x-1)\}.
	\end{align*}
	
	The points $(2-b, b-3)$ and $(1,b-3)$ represent the extremes of the segment at height $b-3$ within $S_{1,b}$, so the extreme values of $x$-coordinates possible for $\vp_1$. This guarantees that the points $\vq_4 = L'_1 \cap L_3$ and
	$\vq_2 = R'_1 \cap R_3$ satisfy $q_{2,x} \geq p_{2,x}$ and $q_{4,x} \leq p_{4,x}$, and hence \[\wdt(P,\ve_1) \leq q_{2,x} - q_{4,x}.\] We now have the advantage that the right hand side now only depends on $a$ and $b$. We calculate
	\[
	q_{2,x} = -\frac{2a}{ab-3a+(b-1)^2}, \quad
	q_{4,x} = \frac{-ab +5a +b+3}{ab-3a-b-1},
	\]
	and thus
	\begin{equation}\label{eq:final_ineq_trapezoid}
	\wdt(P,\ve_1) \leq -\frac{2a}{ab-3a+(b-1)^2} -\frac{-ab +5a +b+3}{ab-3a-b-1}.
	\end{equation}
	
	Let \[f(a,b) = -\frac{2(b-1)p(a,b)}
	{D_1\,D_2},\] where $p(a,b)=a^{2}b-3a^{2}+ab^{2}-4ab+3a-b^{2}+b$, $D_1=ab-3a-b-1$ and $D_2=ab-3a+b^{2}-2b+1$. A simple calculation shows that $f(a,b)+3$ is equal to the right hand side of \eqref{eq:final_ineq_trapezoid}. We thus want to show that $f(a,b)<0$ for $(a,b) \in \inter(S_3)$. In this region we have $b-1>0$, and observing that $0 <a(b-3) < 2$ and $-b-1<-2$, we have that $D_1 = a(b-3)-b-1<0$, $D_2=a(b-3)+(b-1)^2>0$. Therefore it is enough to show that $p(a,b)>0$. To see this, consider $p(a,b)$ as a quadratic polynomial in $a$ with parameter $b$, that is, $p(a,b)=q_b(a)=(b-3)a^2+(b^2-4b+3)a-b^2+b$. Since $(b-3)$ is negative, the maximum of $q_b(a)$ is achieved at the point where its derivative $q_b'(a)=2(b-3)a+(b-3)(b-1)$ is equal to $0$. This happens at $a=\frac{1-b}{2}$, and thus we have $p(a,b)\leq p(\frac{1-b}{2},b) = -\frac{(b-1)(b^2+4)}{4}\leq 0$, since $1<b<2$. 
	
	We are left with analyzing the value of $\wdt(P,\ve_1)$ on the boundary of $S_3$, which involves maximizing a one-parameter expression. We have  $\wdt(P,\ve_1)<3$ on $S_3$ except on the edge $b=1$, and on the point $(-1,2)$ where it equals $3$. In these cases $P$ is a unimodular transformation of $3\Delta_3$.

	\textbf{Case 2}: $\vp_1 \in S_{1,A} = \conv((-1,-2),(-1,-1),(0,-1))$. 
	
	With the same arguments as the previous case, we can observe that 
	$\wdt(P, \ve_1) \leq s_{2,x}-s_{4,x},$
	where $\vs_2 = L''_1 \cap L_3$, $\vs_4 = R''_1 \cap R_3$ and the lines $L''_1$ and $R''_1$ go through the points $(b-2,b-3)$, $(-1,0)$ and $(-1,b-3)$, $(1,0)$ respectively. We then see, analogously to case 1, that
	\[\wdt(P, \ve_1) < \frac{ab-5a}{ab-3a+2b-2}-\frac{2a+b^2-2b+3}{ab-3a-b^2+3b-4}.\]
	
	Let $A:=ab-3a+2b-2$ as well as $B:=ab-3a-b^{2}+3b-4$
	and consider
	\[
	f(a,b):=\frac{ab-5a}{A}-\frac{2a+b^{2}-2b+3}{B}-3.
	\]
	We want to show $f(a,b)<0$ on $S_3$. A direct algebraic simplification gives
	\[
	f(a,b) = -\frac{(b-1)p(a,b)}{AB},
	\]
	where
	$p(x,y)=2a^{2}b-6a^{2}-a b^{2}+10ab-21a-4b^{2}+14b-18$.
	
	To show $f(a,b) <0$ on the interior $S_3$, we observe that  $b-1>0$, $A>0$ and $B<0$ hold for any $(a,b) \in S_3$. It is therefore enough to show 
	$p(a,b)<0$ on the interior of $S_3$. Consider now $p(a,b)$ as a quadratic $q_b(a)$ in the variable $a$ with $b$ as a parameter. The derivative is $q_b'(a)=(b-3)(4a -(b-7))$. This is a line with negative slope (since $b<2$), and therefore $q_b'(a)$ achieves its maximum value on the allowed interval $-1<a<0$ at $-1$. Thus $q_b'(a) < -(b-3)^2 <0$. Since the derivative of $q_b$ is negative on $[-1,0]$, this means that its maximum is achieved at $-1$, so $q_b(a) = p(a,b) \leq p(1,b)= -3(b-1)^2 <0$ for $1<b<2$. 
	
	We are left with analyzing the value of $\wdt(P,\ve_1)$ on the boundary of $S_3$, which reduces to a one parameter calculation and shows that we have  $\wdt(P,\ve_1)<3$ on $S_3$ except on the edge $b=1$, where it is equal to $3$. Indeed in this case the construction yields a unimodular transformation of $3\Delta_3$.
\end{proof}

\subsubsection{The standard triangle} \label{sec:standard-triangle}

We consider the case where $\BP$ is a translate of the standard triangle
\hypertarget{target:Bst}{%
$$\BP = B^{\mathrm{st}} = \conv \{ (-1, -1), (0, -1), (-1, 0) \}.$$
}
and the interior point of $P$ lies outside $\BP$. Up to an affine unimodular equivalence, we can assume this interior point is $(0, 0)$, by $1$-maximality of $P$ (cf.\ Section \ref{ssec:class}). 

\begin{lemma} \label{lem:standard-triangle-shards}

  Let $P$ be a $1$-maximal polygon such that $\BP = \Bst$
  and $(0, 0)$ the internal lattice point of $P$. 
  Up to symmetry across the $\{y=x\}$ line, each vertex of P lies in one of the following regions, and each region contains at most one vertex
  \begin{align*}
    S_1 & = \{-1 \leq x \leq 0, y \leq 2x \}, \\
    S_2 & =\conv \{(0, 0), (1, 1), (2, 3), (1, 2)\}, \\
    S_3 & = \{-1 \leq y \leq 0, y \geq x + 1\}. 
  \end{align*}

\end{lemma}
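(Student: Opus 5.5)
The plan follows the pattern of the earlier shard lemmas (Lemmas \ref{lem:hexagon-shards}, \ref{lem:pentagon-shards}, \ref{lem:trapezoid-shards}). There is one new feature: the interior lattice point $(0,0)$ lies outside $\Bst$, so it must itself impose constraints on the circumscriber.

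\textbf{Step 1: initial regions.} First I apply Lemma \ref{lemma:initial_shards} to $\Bst$. The three edges are $e_1=[(-1,-1),(0,-1)]$ on $\{y=-1\}$, $e_3=[(-1,-1),(-1,0)]$ on $\{x=-1\}$, and the hypotenuse $e_2=[(0,-1),(-1,0)]$ on $\{x+y=-1\}$. This gives one (possibly degenerate) vertex $\vp_i\in S_{e_i}$ per edge, where by \eqref{eq:shard_definition_formal}
\[
S_{e_1}=\{y\le -1,\ x\ge -1,\ x+y\le -1\},\quad S_{e_2}=\{x+y\ge -1,\ x\ge -1,\ y\ge -1\},\quad S_{e_3}=\{x\le -1,\ y\ge -1,\ x+y\le -1\}.
\]
So $P$ is a triangle (possibly degenerate at some vertex). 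Its edges are $[\vp_1,\vp_2]\ni(0,-1)$, $[\vp_2,\vp_3]\ni(-1,0)$ and $[\vp_3,\vp_1]\ni(-1,-1)$, each with the blocking point in its relative interior. Since $(0,0)\notin\Bst$ must be interior to $P$, the vertex $\vp_2$ is non-degenerate. Moreover, $(0,0)$ lies strictly on the $\Bst$-side of both lines $\aff(\vp_2,(0,-1))$ and $\aff(\vp_2,(-1,0))$.

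\textbf{Step 2: forbidden cones and the constraint from $(0,0)$.} Next I cut down each $S_{e_i}$ by \eqref{eq:final_shards}, i.e.\ by removing $\sigma_\vq$ for the lattice points $\vq\ne(0,0)$ near $\Bst\cup\{(0,0)\}$. The relevant points are $(1,0)$, $(0,1)$, $(1,1)$, $(1,-1)$, $(-1,1)$, $(0,-2)$, $(-2,0)$, $(1,2)$, $(2,1)$.

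For $\vp_2$ I use a sharper version of Lemma \ref{lemma:forbidden_cones}, with $\conv(\Bst\cup\{(0,0)\})$ in place of $\BP$. This is legitimate because its interior, apart from lattice points, lies in $\inter(P)$. The argument of Lemma \ref{lemma:forbidden_cones} then applies verbatim, and any $\vq$ hit would be a second interior lattice point. Applied with $\vq=(1,0),(0,1),(1,1)$, this should confine $\vp_2$ to $\{y\ge x-1,\ x\ge y-1,\ldots\}$ near the diagonal, and together with $(1,2)$ and $(2,1)$ to the quadrilateral $\conv\{(0,0),(1,1),(2,3),(3,2)\}$ minus small pieces.

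The reflection $(x,y)\mapsto(y,x)$ preserves $\Bst$ and $(0,0)$ and swaps $e_1\leftrightarrow e_3$. So I may assume $p_{2,y}\ge p_{2,x}$, which gives $S_2=\conv\{(0,0),(1,1),(2,3),(1,2)\}$.

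\textbf{Step 3: propagate by colinearity.} Given $\vp_2\in S_2$, the vertex $\vp_1$ lies on the ray from $\vp_2$ through $(0,-1)$. Since $\vp_2$ ranges over directions with slope between $1$ and $2$ (the segments $[(0,-1),(1,1)]$ and $[(0,-1),(2,3)]$ have slopes $2$), the line through $(0,-1)$ has slope at least $2$. Hence $\vp_1$ satisfies $y\le 2x-1$ relative to $(0,-1)$. Combined with $x\ge-1$ from $S_{e_1}$ and $x\le 0$ from $\sigma_{(1,-1)}$ and $\sigma_{(0,-2)}$, this should produce $S_1=\{-1\le x\le 0,\ y\le 2x\}$; I would double-check which line, $y=2x$ or $y=2x-1$, is the correct boundary, since I expect the stated form to come from the extreme ray through $(0,0)$ and $(-1,-2)$. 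Similarly, the ray from $\vp_2$ through $(-1,0)$ together with $\sigma_{(-1,1)}$ and $\sigma_{(-2,0)}$ gives $S_3=\{-1\le y\le0,\ y\ge x+1\}$.

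\textbf{Main obstacle.} I expect the hardest part to be pinning down $S_2$ exactly. One has to check that every point of the claimed quadrilateral is attainable, and that nothing outside it survives the combination of the cones $\sigma_\vq$ with the requirement that both lines through $\vp_2$ keep $(0,0)$ strictly inside. I would first verify the four vertices $(0,0)$, $(1,1)$, $(2,3)$, $(1,2)$ directly: each is a limiting position where one of $(0,0)$, $(1,1)$, $(1,2)$ becomes a boundary point. Then I would argue by convexity of the feasible set in each linear piece.
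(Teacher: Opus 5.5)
Your outline is exactly the paper's: Lemma~\ref{lemma:initial_shards}, the constraint $\vp_2\in\sigma_{\vo}=\RR^2_{>0}$ forced by $(0,0)\in\inter P$, forbidden cones, the reflection, then colinearity. The problem is Step~2, and it cannot be fixed as aimed because the target is wrong. The printed $S_2=\conv\{(0,0),(1,1),(2,3),(1,2)\}$ is a misprint for $\conv\{(0,0),(1,1),(2,3),(0,1)\}$, which is the region drawn in the paper's figure and the one its proof produces. The printed version is false.

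\textbf{Counterexample.} Take $\vp_2=(0.1,0.9)$, $\vp_1=(-0.05,-1.95)$, $\vp_3=(-1.55,-0.45)$.
\begin{itemize}
\item The points $(0,-1)$, $(-1,0)$, $(-1,-1)$ lie in the relative interiors of $[\vp_1,\vp_2]$, $[\vp_2,\vp_3]$, $[\vp_3,\vp_1]$ respectively.
\item The only other lattice point in the bounding box $[-1.55,0.1]\times[-1.95,0.9]$ is $(0,0)$, and it is interior.
\item So this triangle is $1$-maximal with $B_P=B^{\mathrm{st}}$.
\end{itemize}
But $\vp_2$ lies above the diagonal with $y>2x$, so it is in none of the printed regions. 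Reflecting does not help: it moves $\vp_2$ to $(0.9,0.1)$, below the diagonal. Hence the check in your last paragraph, that nothing outside the parallelogram survives, must fail on the triangle $\conv\{(0,0),(0,1),(1,2)\}$.

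Your intermediate description is also off. The cones $\sigma_{(1,2)}$ and $\sigma_{(2,1)}$ play no role. Before using the symmetry, the feasible set is not $\conv\{(0,0),(2,3),(3,2)\}$ minus small pieces. It is the non-convex union of two quadrilaterals glued along $[(0,0),(1,1)]$, and it contains points near $(0,1)$ with $y>\tfrac{3}{2}x$.

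\textbf{What the cones actually give.} Your remark that $(0,0)$ lies strictly on the $B^{\mathrm{st}}$-side of both lines through $\vp_2$ is precisely $\vp_2\in\RR^2_{>0}$. Now remove
\begin{itemize}
\item $\sigma_{(0,1)}=\{x>0,\ y>x+1\}$, and
\item $\sigma_{(1,1)}=\{y<2x-1,\ x<2y-1\}$,
\end{itemize}
and impose $y\ge x$. This leaves $\{x>0,\ x\le y\le x+1,\ y\ge 2x-1\}$, whose closure is $\conv\{(0,0),(1,1),(2,3),(0,1)\}$. Your sharpened hull $[-1,0]^2=\conv(B^{\mathrm{st}}\cup\{\vo\})$ gives the same cones at $(0,1)$, $(1,0)$, $(1,1)$, so it changes nothing here. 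Also, the lemma asserts only containment, so no attainability check is needed.

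\textbf{Step~3 with the corrected $S_2$.} This goes through as in the paper.
\begin{itemize}
\item Lines through $(0,-1)$ and points of $S_2$ are vertical or have slope at least $2$, with slope exactly $2$ on $[(1,1),(2,3)]\subset\{y=2x-1\}$. So $\vp_1\in\{-1\le x\le 0,\ y\le 2x-1\}$. This is the region in the paper's figure, bounded by the line through $(0,-1)$ and $(-1,-3)$. It implies the printed $y\le 2x$, so you do not need a ray through $(-1,-2)$.
\item Lines through $(-1,0)$ and points of $S_2$ have slopes in $[0,1]$, with slope $1$ on $[(0,1),(2,3)]\subset\{y=x+1\}$. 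This gives $S_3$.
\end{itemize}
Finally, the bounds $x\le 0$ on $\vp_1$ and $y\le 0$ on $\vp_3$ come from this colinearity with $\vp_2\in\RR^2_{>0}$. They do not come from $\sigma_{(1,-1)}$ or $\sigma_{(0,-2)}$: for example, $(0.5,-1.6)\in S_{e_1}$ lies in no forbidden cone $\sigma_{\vq}$.
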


\begin{figure}[h]
    \begin{tikzpicture}
    
      \fill[gray!50] (-1, -1) -- (0, -1) -- (-1, 0) -- cycle;
      \fill[purple!20] (1, 1) -- (2.1, 3.2) -- (3.2, 3.2) -- (3.2, 2.1) -- cycle;
      \fill[purple!20] (0, 1) -- (0, 3.2) -- (2.2, 3.2) -- cycle;
      \fill[purple!20] (-1, -1) -- (-4.2, -1) -- (-4.2, -4.2) -- (-1, -4.2) -- cycle;

        \fill[blue!30] (0, 0) -- (1, 1) -- (2, 3) -- (0, 1) -- cycle;
        \fill[blue!30] (-1, 0) -- (-2, -1) -- (-4.2, -1) -- (-4.2, 0) -- cycle;
        \fill[blue!30] (0, -1) -- (0, -4.2) -- (-1, -4.2) -- (-1, -3);

      \draw[dashed, purple!50] (0, 1) -- (2.2, 3.2);
      \draw[dashed, purple!50] (0, 1) -- (0, 3.2);

      \draw[dashed, purple!50] (1, 1) -- (2.1, 3.2);
      \draw[dashed, purple!50] (1, 1) -- (3.2, 2.1);

      \draw[dashed, purple!50] (-1, -1) -- (-4.2, -1);
      \draw[dashed, purple!50] (-1, -1) -- (-1, -4.2);

        \coordinate (q1) at (-1, -1);
        \coordinate (q2) at (0, -1);
        \coordinate (q3) at (-1, 0); 

        \draw [thin, black] (q1) -- (q2) -- (q3) -- cycle;

        \node at (-.73, -.73) {$\BP$};
        \node at (-.5, -3) {$S1$}; 
        \node at (.5, 1) {$S2$};
        \node at (-2.5, -.5) {$S3$};
      \foreach \x in {-4, -3,-2, -1, 0, 1, 2, 3}{
        \foreach \y in {-4, -3, -2, -1,0, 1, 2, 3}{
          \fill (\x,\y) circle (2pt) {};
        }
      }
    \end{tikzpicture}
    \caption{The regions as described in Lemma \ref{lem:standard-triangle-shards}.}
    \label{fig:standard-triangle-shards}
\end{figure}
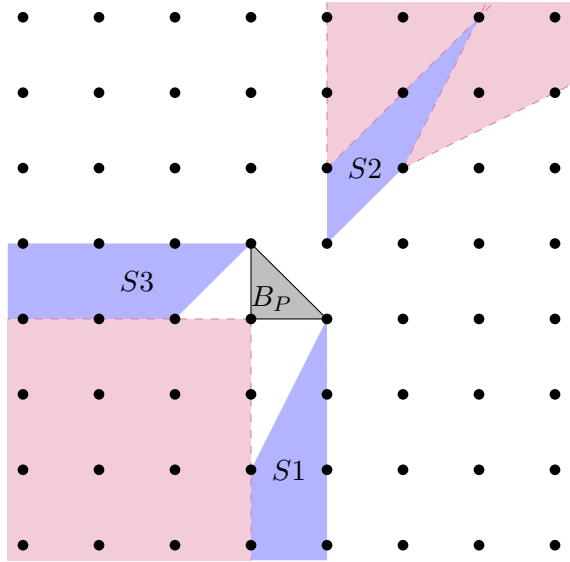

\begin{proof}
Since $\vec 0 \notin B_P$, there exists an edge $e$ of $B_P$ such that $\vec 0 \in \inter (S_e)$ (see Section \ref{sec:shards}).  Within $S_e$ there is a vertex $\vp_e$ of $P$ and we may assume that $\vp_e$ is above the line $\{y=x\}$. Since $\vec 0\in \inter P$, we have $\vec 0 \in \inter (\conv(e\cup\{\vp_e\}))$. This implies that $\vp_e \in \sigma_{\vec 0} = \RR^2_{>0}$ (cf.\ Section \ref{sec:shards}). Lemma \ref{lemma:forbidden_cones} applied to the open cones $\sigma_{(1, 1)}, \sigma_{(0, 1)}$ yields the region $S_2$ from Figure \ref{fig:standard-triangle-shards} as the feasible region for placing $\vp_e$ (from now on denoted by $\vp_2$). 
%
%
%

  Now consider the edges emanating from $\vp_2$. By definition of $\BP$, they necessarily pass through the vertices $(-1, 0)$ and $(0, -1)$, call the vertices of $P$ on the end of these edges $\vp_3, \vp_1$, respectively. Then the regions $S_3, S_1$ follow from these colinearities, combined with excluding the open cone $\sigma_{(-1, -1)}$. 
\end{proof}

We are now equipped to prove a width bound on $P$.

\begin{lemma} \label{lem:standard-triangle-width}
  Let $P$ be a $1$-maximal polygon such that $\BP = \Bst$
  and $(0, 0)$ the internal lattice point of $P$. 
  Then $\wdt(P) < 3$. 
\end{lemma}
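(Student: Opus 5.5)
The plan is to follow the template of the previous cases: use the regions of Lemma \ref{lem:standard-triangle-shards} to express the width of $P$ in a few directions from $\hex$ through explicit vertices, parametrize $P$ by a small number of parameters, and show that a suitable combination of widths is at most $3$, with equality only in degenerate configurations where $\BP \neq \Bst$.

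\textbf{Setup.} $P$ is the triangle $\conv\{\vp_1,\vp_2,\vp_3\}$ with $\vp_i\in S_i$. Its edges are $[\vp_1,\vp_2]$ through $(0,-1)$, $[\vp_2,\vp_3]$ through $(-1,0)$, and $[\vp_3,\vp_1]$ through $(-1,-1)$. Write $\vp_2=(a,b)\in S_2$. Then $\vp_1$ is the intersection of the line through $\vp_2,(0,-1)$ with a line through $(-1,-1)$ of some slope, and $\vp_3$ is the intersection of that line with the line through $\vp_2,(-1,0)$. So $P$ is determined by $(a,b)$ and one slope parameter $s$ of the edge through $(-1,-1)$.

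\textbf{Width formulas.} From the shapes of $S_1,S_2,S_3$ we read off
\[
\wdt(P,\ve_2)=b-p_{1,y},\qquad \wdt(P,\ve_1)=a-p_{3,x},\qquad \wdt(P,\ve_1-\ve_2)=p_{1,x}-p_{1,y}-(p_{3,x}-p_{3,y}).
\]
The third identity uses that $\vp_2$ lies near the diagonal.

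\textbf{Eliminating the slope.} For fixed $\vp_2$, moving $s$ trades $p_{1,y}$ against $p_{3,x}$, exactly as in the hexagon and pentagon arguments. Pushing $\vp_1$ onto the boundary $x=-1$ or $y=2x$ of $S_1$ (and likewise for $\vp_3$) gives an upper bound on $\wdt(P,\ve_1)+\wdt(P,\ve_2)$, or on $\min\{\wdt(P,\ve_1),\wdt(P,\ve_2)\}$ at the balancing slope, as a rational function of $(a,b)$ alone. This mirrors the substitution of $L_1,R_1$ by $L_1',R_1'$ in the trapezoid case.

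\textbf{Two-variable bound.} It remains to show that the resulting function $g(a,b)$ is below $3$ on $S_2$. As in the trapezoid case, write $g-3$ as a fraction, check the signs of the denominators on $S_2$, and treat the numerator as a quadratic in one variable with the other as a parameter. On the boundary of $S_2$, equality can only occur at vertices such as $\vp_2=(2,3)$ or $(1,2)$, or on the segment $[(0,0),(1,1)]$. There $P$ becomes $3\Tst$ (then $\BP=\Bhex$), or $\vec 0$ is not interior, or a blocking point becomes a vertex; each contradicts the hypotheses.

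\textbf{Main obstacle.} I expect the difficulty to be that near $\vp_2\to(2,3)$, where $P$ degenerates to $3\Tst$, the directions $\ve_1,\ve_2$ alone may not suffice. The width in direction $\ve_2-\ve_1$, and possibly the symmetric direction, may be needed, forcing a split of $S_2$ into subregions (for instance along $\{y=2x-1\}$ or $\{a+b=\text{const}\}$), each with its own realizing pair of directions. There I would first try a variational monotonicity argument as in Lemma \ref{lem:non-symm-quad-shards-final}: move $\vp_2$ along the line through $(0,-1)$ to a boundary curve to reduce to one parameter, then use one-variable calculus.
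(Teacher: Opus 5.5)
There is a genuine gap. Your central step is to bound $\wdt(P,\ve_1)+\wdt(P,\ve_2)$, or $\min\{\wdt(P,\ve_1),\wdt(P,\ve_2)\}$ at the balancing slope, by a function $g(a,b)$ of $\vp_2=(a,b)$ and then prove $g<3$ on $S_2$. This is false on an open part of $S_2$, not just near a degenerate corner.

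Take $\vp_2=(\tfrac32,\tfrac94)$, an interior point of $S_2$, and let the edge through $(-1,-1)$ have slope $-\tfrac12$. Then $\vp_1=(-\tfrac{3}{16},-\tfrac{45}{32})$ and $\vp_3=(-\tfrac{12}{7},-\tfrac{9}{14})$. A direct check shows that $(0,0)$ is the only interior lattice point, and that $(0,-1)$, $(-1,0)$, $(-1,-1)$ lie in the relative interiors of the three edges. So $P$ is $1$-maximal with $\BP=\Bst$. Nevertheless $\wdt(P,\ve_1)=\tfrac{45}{14}>3$ and $\wdt(P,\ve_2)=\tfrac{117}{32}>3$, and both stay above $3$ under small perturbations. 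Among the directions in $\hex$, only $\ve_1-\ve_2$ brings the width below $3$; there it equals $\tfrac{513}{224}$.

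This is also where the near-extremal configurations sit. As $\vp_2\to(2,3)$, $\vp_1\to(-1,-3)$, $\vp_3\to(-1,0)$, the polygon tends to $\conv\{(2,3),(-1,-3),(-1,0)\}$, which is unimodularly equivalent to $3\Tst$. In that limit $\wdt(\cdot,\ve_2)=6$, while $\wdt(\cdot,\ve_1)=\wdt(\cdot,\ve_1-\ve_2)=3$. So the sharp estimate in this region must pair $\ve_1$ with $\ve_1-\ve_2$. Your last paragraph only says something of this kind might be tried; it names no region, no pair of directions, and no inequality. That missing piece is the substance of the lemma.

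For comparison, the paper parametrizes by the left vertex $\vp_3=(a,b)$. It cuts $S_3$ along $b=\tfrac12a+\tfrac12$ (the line through $(-1,0)$ and $(1,1)$) and along $b=-a-2$ (the line through $(-1,-1)$ and $(0,-2)$).

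\textbf{Region $b\ge\tfrac12a+\tfrac12$.} Collinearity forces $\vp_2\in S_2\cap\{y\le\tfrac12x+\tfrac12\}$, so $p_{2,x},p_{2,y}\le 1$. A single coordinate direction then gives width at most $3$, strictly:
\begin{itemize}
\item use $\ve_2$ if moreover $b\le -a-2$, since then $p_{1,y}\ge -2$;
\item use $\ve_1$ otherwise, since then $p_{3,x}\ge -2$.
\end{itemize}

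\textbf{Region $b\le\tfrac12a+\tfrac12$.} This region contains the example above, and here the paper proves $\min\{\wdt(P,\ve_1),\wdt(P,\ve_1-\ve_2)\}<3$. If $\wdt(P,\ve_1)>3$, slide $\vp_2$ with $\vp_3$ fixed until $\wdt(P',\ve_1)=3$; this does not decrease the $(\ve_1-\ve_2)$-width. Then $\wdt(P',\ve_1-\ve_2)=\frac{3(a-b)(a+b+1)}{(a+1)(b-2)}$. Comparing this with $3$ reduces to positivity of $-a^2+ab-3a+b^2+2b-2$, which is handled as a concave quadratic in $a$ on $[2b-1,b-1]$.

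If you want to keep your $\vp_2$-parametrization, the same split appears in $S_2$ as the line $y=\tfrac12x+\tfrac12$. Above that line, however, you must work with $\ve_1$ and $\ve_1-\ve_2$, not with $\ve_1$ and $\ve_2$.
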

 \begin{proof}
   Let $S_1, S_2, S_3$ be as in Lemma \ref{lem:standard-triangle-shards}, and let $\vp_i \in S_i$ be the vertices of $P$. 
  Let $\vp_3 = (a, b)$. Then we can subdivide $S_3$ into subregions $S_{3,i}, S_{3,ii}, S_{3,iii}$ by adding the respective constraints (see Figure \ref{fig:standrad-triangle-first-case}):
      \begin{enumerate}
        \item $b \leq -a - 2$ and $\frac{1}{2} a + \frac{1}{2} \leq b$,
        \item $b \geq -a-2$ and $\frac{1}{2} a + \frac{1}{2} \leq b$,
        \item $\frac{1}{2} a + \frac{1}{2} \geq b$. 
    \end{enumerate}

  \textbf{Case 1}: Colinearity of $\vp_3,\vp_2$ and $(-1,0)$ implies that the vertex $p_2$ lies in $S_2 \cap \{y \leq \frac{1}{2} x + \frac{1}{2} \}$, which implies $p_{2,y}\leq 1$. 
  Similarly, colinearity of $\vp_1,(-1, -1), \vp_3$, the vertex $\vp_1$ lies in $S_1 \cap \{y \geq -x - 2\}$, and thus $p_{1,y}\geq -2$. 

    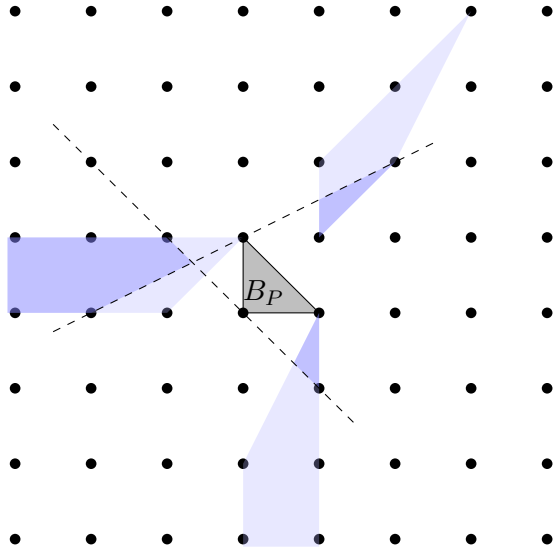
\begin{figure}[h]
    \begin{tikzpicture}

      \fill[gray!50] (-1, -1) -- (0, -1) -- (-1, 0) -- cycle;
      \foreach \x in {-4, -3,-2, -1, 0, 1, 2, 3}{
        \foreach \y in {-4, -3, -2, -1,0, 1, 2, 3}{
          \fill (\x,\y) circle (2pt) {};
        }
      }
        \coordinate (q1) at (-1, -1);
        \coordinate (q2) at (0, -1);
        \coordinate (q3) at (-1, 0); 

        \draw [thin, black] (q1) -- (q2) -- (q3) -- cycle;
        \fill[blue!30, opacity = .3] (0, 0) -- (1, 1) -- (2, 3) -- (0, 1) -- cycle;
        \fill[blue!30, opacity = .3] (-1, 0) -- (-2, -1) -- (-4.1, -1) -- (-4.1, 0) -- cycle;
        \fill[blue!30, opacity = .3] (0, -1) -- (0, -4.1) -- (-1, -4.1) -- (-1, -3) -- cycle;

        \fill[blue!60, opacity = .3] (-4.1, 0) -- (-2, 0) -- (-1.66666666, -.33333333) -- (-3, -1) -- (-4.1, -1) -- cycle;
        \fill[blue!60, opacity = .3] (0, 0) -- (1, 1) -- (0, .5) -- cycle;
        \fill[blue!60, opacity = .3] (0, -1) -- (0, -2) -- (-.333333, -1.6666666) -- cycle;

        \draw[thin, black, dashed] (-3.5, 1.5) -- (.5, -2.5); 
        \draw[thin, black, dashed] (-3.5, -1.25) -- (1.5, 1.25);
        \node at (-.73, -.73) {$\BP$};
    \end{tikzpicture}
    \cprotect\caption{The first case of Lemma \ref{lem:standard-triangle-width} is represented as the dark blue sub-region on the left.}
    \label{fig:standrad-triangle-first-case}
\end{figure}

    Thus $\wdt(P, \ve_2) \leq 3$. For equality to hold we would require $\vp_1 = (0, -1)$ and $\vp_2 = (1,1)$ (see Figure \ref{fig:standrad-triangle-first-case}), but as these two points are not colinear with $(0,-1)$, as required, this is impossible. 

    \textbf{Case 2}:
    The restrictions of $S_{3,ii}$ imply $p_{3,x} \leq -2$, and in a similar vein to the previous case, colinearities imply $p_{2,x}\leq 1$. This implies that $\wdt(P,\ve_1)\leq 3$, and since both cannot happen at the same time, the inequality is strict.
    
    \textbf{Case 3}: We will show that $$\min\{\wdt(P,\ve_1), \wdt(P,\ve_1-\ve_2)\}<3.$$
    
    The width in direction $\ve_1$ is achieved by $\vp_2$ and $\vp_1$, while that in direction $\ve_1-\ve_2$ by $\vp_3$ and $\vp_1$. Whenever $\wdt(P, \ve_1) > 3$, we can perturb $P$ 
    into a new polygon $P'$ with $B_{P'} = \Bst$, the same vertex $\vp_3=(a,b)$ as $P$ in region $S_{3, iii}$ and with  $\wdt(P', \ve_1) = 3$. See Figure \ref{fig:standard-triangle-perturbation}. The vertices $\vp'_2$ and $\vp'_1$ are then uniquely determined by colinearities, and we observe that 
        $$\wdt(P', \ve_1 - \ve_2) \geq \wdt(P, \ve_1 - \ve_2).$$
        
    It is thus enough to prove that $\wdt(P', \ve_1 - \ve_2) < 3$, which we do in the following.
    \begin{figure}[h]
    \centering
    \begin{tikzpicture}[scale = 1]
        \fill[blue!30] (0, 0) -- (1, 1) -- (2, 3) -- (0, 1) -- cycle;
        \fill[blue!30] (-1, 0) -- (-2, -1) -- (-4.1, -1) -- (-4.1, 0) -- cycle;
        \fill[blue!30] (0, -1) -- (0, -4.1) -- (-1, -4.1) -- (-1, -3) -- cycle;

        \fill[gray!50] (-1, -1) -- (-1, 0) -- (0, -1) -- cycle;
        \foreach \x in {-4, -3,-2, -1, 0, 1, 2, 3}{
            \foreach \y in {-4, -3, -2, -1,0, 1, 2, 3}{
                    \fill (\x,\y) circle (2pt) {};
            }
        }
        \coordinate (q1) at (-1, -1);
        \coordinate (q2) at (0, -1);
        \coordinate (q3) at (-1, 0); 

        \draw [thin, black] (q1) -- (q2) -- (q3) -- cycle;

        \node at (-.73, -.73) {$\BP$};
        \node at (-.5, -3) {$S1$}; 
        \node at (.6, 1.7) {$S2$};
        \node at (-3.5, -.5) {$S3$};
        \node[draw, circle, inner sep = .5pt, fill] at (-2.333, -3/4) {}; 
        \node[draw, circle, inner sep=.5pt, fill] at (.666666, .937392) {};
        \draw [thin, black] (-2.333, -3/4) -- (.666666, .937392);
        \node[draw, circle, inner sep =.5pt, fill] at (-2/33, -207/176) {};
        \draw[thin, black, dashed] (-2/33, -207/176) -- (.666666, .937392);
        \node[draw, circle, inner sep = .5pt, fill] at (1.03, 1.14216) {};
        \draw[thin, black] (1.03, 1.14216) -- (-2.333, -3/4);
        \draw[thin, black] (-2.333, -3/4) -- (-2/33, -207/176);
        \coordinate (p1) at (-.082, -1.172);
        \node[draw, circle, inner sep = .5pt, fill] at (p1) {};
        \draw[thin, black] (p1) -- (1.03, 1.14216); 
        \node at (-2.4, -.6) {$\vp_3$};
        \node at (.5, 1) {$\vec p_2'$};
        \node at (1, 1.3) {$\vp_2$};
    \end{tikzpicture}
    \cprotect\caption{When $\wdt(P, \ve_1) > 3$, we can slide $\vp_2$ to find a $P'$ (the dashed triangle) such that $\wdt(P', \ve_1 - \ve_2) \geq \wdt(P, \ve_1 - \ve_2)$}
    \label{fig:standard-triangle-perturbation}
    \end{figure}
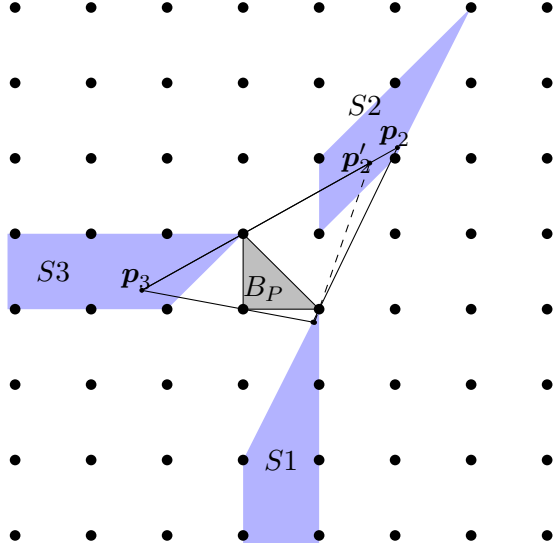
    The vertices $\vp_2', \vp_1'$ of $P'$ are completely determined by the vertex $\vp_3$, by $\wdt(P', \ve_1)=3$ and the colinearites given by $\Bst$ as the inscribed polygon. 
Their precise coordinates are 
    \begin{align*}
        \vp'_2 & = (a+3, b+ \frac{3b}{a+1}), \\
        \vp'_1 & = (\frac{(b+1)(a+3)}{b-2}, -1+\frac{(b+1)(ab+a+4b+1)}{(a+1)(b-2)}).
    \end{align*}
    Thus, we can write $\wdt(P', \ve_1-\ve_2)$ in terms of $a,b$ as follows.
    $$\wdt(P', \ve_1 - \ve_2) =  \langle \ve_1-\ve_2, \vp_3 - \vp_1\rangle = \frac{3(a-b)(a+b+1)}{(a+1)(b-2)}.$$
    To prove $\wdt(P', \ve_1 - \ve_2) < 3$, it suffices that for any $(a,b)\in S_{3,iii}$ we have 
    $$\frac{(a+1)(b-2) - (a-b)(a+b+1)}{(a+1)(b-2)} > 0.$$ 

    Note that $a+1$, $b-2$ are both negative on $S_3 \setminus \{(-1, 0)\}$, hence the denominator is always positive. So we are reduced to showing positivity of the numerator on $S_{3,iii}$, that is
    \begin{equation} \label{eq:hyperbola-eq}
      -a^2 +ab-3a+b^2+2b-2 > 0.
    \end{equation}

    To show this, we proceed as the end of Lemma \ref{lem:trapezoid-thm} for the trapezoid: we consider the left hand side $p_b(a)=-a^2 +ab-3a+b^2+2b-2 $ as a quadratic polynomial in $a$ with parameter $b$. Since its leading term is negative, it achieves its minimum value among $a \in [b-1, 2b-1]$ on the boundary. We calculate $p_b(b-1)=b^2$ and $p_b(2b-1)=-(b+1)b$. Since within $S_{3,iii}$ we have $-1 \leq b \leq 0$, we see that $p_b(a)>0$ everywhere except for $p_3=(-3,-1)$ and $p_3=(-1,0)$ and $p_3=(-1,0)$, where we see that the width is also strictly less than $3$.
  \end{proof}


\subsubsection{The terminal triangle}\label{sssec:terminal}
We consider $P$ such that 
\hypertarget{target:Bterm}{%
	$$\BP = B^{\mathrm{term}} = \conv \{\vq_1, \vq_2, \vq_3\},$$
}
where $\vq_1=(1, 0), \vq_2=(0, 1)$ and $\vq_3=(-1, -1)$.
We label the vertices of $P$ as in the right of Figure \ref{fig:terminal-triangle-one-shard}, with vertex $\vp_i$ opposite of $\vq_i$ for each $i=1,2,3$.

\begin{lemma} \label{lem:terminal-triangle-shards}
	Let $P$ be a $1$-maximal polytope such that $\BP = \Bterm$. 
Then, up to a reflection across $\{y = x\}$, each vertex of $P$ lies in one of the following regions, and each region contains at most one vertex: 
	\begin{align*}
	S_1 &= \conv \{(-2, -1), (-3, -2), (-1, -1)\}, \\
	S_2 &= \conv \{(1, -1), (2, -1), (1, 0)\}, \\
	S_3 &= \conv \{(1, 2), (1, 3), (0, 1)\}. 
	\end{align*}
	
\end{lemma}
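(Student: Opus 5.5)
The plan is to follow the pattern of the previous shard lemmas: start from the regions $S_e$ of Lemma~\ref{lemma:initial_shards}, cut them down with the forbidden cones $\sigma_\vq$ of Lemma~\ref{lemma:forbidden_cones}, and then use the colinearity constraints between consecutive vertices of $P$ and the blocking points $\vq_i$. The interior point of $P$ is the unique interior lattice point $\vo$ of $\Bterm$.

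First I would write $\Bterm$ as
\[
\Bterm=\{x+y\le 1,\ y\le 2x+1,\ y\ge \tfrac{x-1}{2}\}.
\]
For each edge this gives an unbounded region $S_e$. The vertex $\vp_3$ sits in the region beyond the edge $[\vq_1,\vq_2]$, namely $\{x+y\ge 1,\ y\le 2x+1,\ y\ge \tfrac{x-1}{2}\}$, and $\vp_1,\vp_2$ sit in the analogous regions. The group of unimodular symmetries of $\Bterm$ fixing $\vo$ contains the cyclic permutation of the $\vq_i$ and the reflection in $\{y=x\}$, so I only need to analyse one region in detail and transport the result.

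Next I would remove the cones $\sigma_\vq$ for the lattice points adjacent to $\Bterm$: $(1,1)$, $(-1,0)$, $(0,-1)$, and their images such as $(-2,-1)$, $(-1,-2)$, $(1,-1)$, $(2,1)$, $(1,2)$. For instance, $\sigma_{(1,1)}=\{x>1,\ y>1\}$ cuts out the middle of the region for $\vp_3$. What is left splits into two pieces, one attached at $\vq_2$ (inside $\{x\le 1\}$) and one attached at $\vq_1$ (inside $\{y\le 1\}$), and these are exchanged by the reflection in $\{y=x\}$. After applying this reflection if necessary, I may assume that $\vp_3$ lies in the piece at $\vq_2$. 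The same computation for the other two edges shows that each $\vp_i$ lies in one of two pieces, one at each endpoint of the opposite edge.

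The key step is to show that the three choices are consistent, in the pinwheel pattern of the statement. The edge of $P$ through $\vp_3$ and $\vq_1$ continues past $\vq_1$ to $\vp_2$. If $\vp_3$ is in the piece at $\vq_2$, then this line, extended beyond $\vq_1$, is steep enough to meet only the piece of the $\vp_2$-region attached at $\vq_1$. Intersecting with that piece and the remaining forbidden cones should cut it down to $S_2=\conv\{(1,-1),(2,-1),(1,0)\}$. Repeating the argument with the line through $\vp_2$ and $\vq_3$ forces $\vp_1$ into $S_1$. Closing the loop with the line through $\vp_1$ and $\vq_2$ then pushes $\vp_3$ into $S_3=\conv\{(1,2),(1,3),(0,1)\}$, between the lines $y=2x+1$ and $y=x+1$. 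In particular, it excludes the sliver of the $\vq_2$-piece lying below $y=x+1$.

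I expect this last closing step to be the main obstacle. The forbidden cones alone leave a thin region near $(1,1)$, and it has to be eliminated purely by chasing the three colinearity constraints around the triangle. That requires checking that the composed map, from the slope of the edge at $\vq_1$ to the slope at $\vq_3$ to the slope at $\vq_2$, is monotone and lands in the claimed ranges. My first attempt would be to parametrize $\vp_3$ by the slope of the line through $\vp_3$ and $\vq_1$, track where each successive line meets the boundary of the next region, and compare the extreme cases at the corners $(1,2)$ and $(0,1)$. Finally, the statement that each region contains at most one vertex is inherited directly from Lemma~\ref{lemma:initial_shards}.
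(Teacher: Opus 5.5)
Your proposal is correct and follows essentially the paper's own route. Both arguments restrict $\vp_3$ using Lemma~\ref{lemma:initial_shards}, the cone $\sigma_{(1,1)}$ and the reflection in $\{y=x\}$. Both then chase the colinearities through $\vq_1$ (giving $\vp_2\in S_2$), through $\vq_3$ (giving $\vp_1\in S_1$) and through $\vq_2$ (giving $\vp_3\in S_3$), using forbidden cones to cut off the remaining sides. The closing step you expect to be the main obstacle is in fact immediate, since $\vp_1\in S_1$ has already been established. The directions $\vq_2-\vp_1$ then lie in $\pos\{(1,1),(1,2)\}$, as one checks at the three vertices of $S_1$, so $\vp_3\in\vq_2+\pos\{(1,1),(1,2)\}$. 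Intersecting this with $\{x\le 1\}$ gives exactly $S_3$, so no monotonicity analysis of composed slope maps is needed.
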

\begin{proof}
	By symmetry over $\{ y = x \}$, we assume without loss of generality that $\vp_{3}$ lies above the  $\{ y = x \}$ line.
	Lemma \ref{lemma:forbidden_cones} applied to the points $(0, 1)$ and $(1, 1)$ yields that $\vp_{3}$ lies in $\conv \{(1, 0), (1, 3), (0, 1)\}$--the shaded blue triangle on the left of Figure \ref{fig:terminal-triangle-one-shard}.
	
	Then, by the colinearity of $\vp_3, \vq_1, \vp_2$, it follows that $p_{2,x} \geq 1$. Combined with the cones $\sigma_{(1, 0)}, \sigma_{(1, -1)}$ of Lemma \ref{lemma:forbidden_cones}, this yields that $\vp_2 \in S_2$. 
	Similarly, colinearity of $\vp_2, \vq_3, \vp_1$ yields one of the sides of $S_1$, and the cones $\sigma_{(-1,-1)}, \sigma_{(-2,-1)}$ the other two sides.  To conclude, we can further restrict the region of $\vp_3$ to $S_3$ by using the colinearity of $\vp_3, \vq_2, \vp_1$.
\end{proof}

The next statement now follows directly from considering the slopes of the edges $P$.

\begin{lemma} \label{lem:terminal-triangle-width-attainers}
	Let $P$ be a $1$-maximal polytope such that $\BP = \Bterm$. Let the vertices of $P$ be $\vp_i \in S_i$, $1\leq i \leq 3$ as in Lemma \ref{lem:terminal-triangle-shards}. Then,
	
	\begin{enumerate}
		\item $\wdt(P,\ve_1) = \langle \ve_1,\vp_2 - \vp_1\rangle$,
		\item $\wdt(P,\ve_2) = \langle \ve_2,\vp_3 -\vp_1\rangle$,
		\item $\wdt(P, \ve_1-\ve_2) = \langle \ve_1-\ve_2 , \vp_2 - \vp_3\rangle$.
	\end{enumerate}
	%
\end{lemma}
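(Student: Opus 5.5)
The three claims say that each directional width is attained by a fixed pair of vertices. I will reduce all three to a sign check on the edge directions of the triangle $P=\conv\{\vp_1,\vp_2,\vp_3\}$.

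For a triangle and a direction $\vu$, the maximum of $\langle\vu,\cdot\rangle$ is attained at a vertex. It is attained at $\vp_j$ as soon as $\langle \vu,\vp_j-\vp_i\rangle\ge 0$ for both other vertices $\vp_i$. The minimum is characterised analogously. So for each of the three directions it suffices to determine the signs of $\langle\vu,\vp_j-\vp_i\rangle$ for all pairs $i\neq j$. These signs are fixed once each $\vp_i$ ranges over its region $S_i$ from Lemma \ref{lem:terminal-triangle-shards}.

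I would check the signs coordinatewise from the vertex lists of the regions.
\begin{itemize}
\item $S_1$ has $x\in[-3,-1]$ and $y\in[-2,-1]$.
\item $S_2$ has $x\in[1,2]$ and $y\in[-1,0]$.
\item $S_3$ has $x\in[0,1]$ and $y\in[1,3]$.
\end{itemize}

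For $\ve_1$ we need $p_{1,x}\le p_{3,x}\le p_{2,x}$. This holds since $p_{1,x}\le -1<0\le p_{3,x}\le 1\le p_{2,x}$. The maximum is therefore at $\vp_2$ and the minimum at $\vp_1$, which gives (1).

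For $\ve_2$ we need $p_{1,y}\le p_{2,y}\le p_{3,y}$. This holds since $p_{1,y}\le-1\le p_{2,y}\le 0<1\le p_{3,y}$, which gives (2).

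For $\ve_1-\ve_2$ we evaluate $x-y$ on the vertices of each region.
\begin{itemize}
\item On $S_2$ the values are $2,3,1$, so $x-y\in[1,3]$.
\item On $S_3$ the values are $-1,-2,-1$, so $x-y\in[-2,-1]$.
\item On $S_1$ the values are $-1,-1,0$, so $x-y\in[-1,0]$.
\end{itemize}
Hence $\vp_2$ maximises $x-y$ and $\vp_3$ minimises it, and the minimum is not attained at $\vp_1$. This gives (3).

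The only delicate point is the last comparison. The ranges of $x-y$ on $S_3$ and $S_1$ touch at the value $-1$ (for instance $(0,1)\in S_3$ and $(-1,-1)\in S_1$). Ties are harmless, though: in that case the minimum is still attained at $\vp_3$, so the stated formula for the width remains valid.

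Degenerate vertices, as in Lemma \ref{lemma:initial_shards}, need no separate treatment. The points $\vp_i$ still span $P$, and the argument above only uses that each $\vp_i$ lies in $S_i$. Because every comparison is separated by constants, no parametrisation of $P$ is needed.
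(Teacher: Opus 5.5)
Your proof is correct and takes essentially the paper's approach. The paper only remarks that the lemma follows from considering the slopes of the edges of $P$. Your sign check of $\langle \vu, \vp_j-\vp_i\rangle$ using the coordinate ranges of $S_1,S_2,S_3$ (including the harmless tie at $x-y=-1$ between $S_1$ and $S_3$) is precisely the verification that remark leaves implicit.
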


\begin{figure}[h]
	\centering
	\begin{tikzpicture}[scale = 1]
	\fill[blue!30] (0, 1) -- (1, 0) -- (1, 3) -- cycle; 
	
	\fill[purple!20] (1, 0) -- (2.2, 0) -- (2.2, 3.2) -- (1, 3.2) -- cycle;
	\fill[purple!20] (0, 1) --(1.1, 3.2) -- (-2.2, 3.2) -- cycle;
	
	\draw[purple!50,dashed] (1, 0) -- (2.2, 0);
	\draw[purple!50,dashed] (1, 0) -- (1, 3.2); 
	
	\draw[purple!50,dashed] (0, 1) -- (1.1, 3.2);
	\draw[purple!50,dashed] (0, 1) -- (-2.2, 3.2); 
	\fill[gray!50] (1, 0) -- (0, 1) -- (-1, -1) -- cycle;
	\foreach \x in {-3,-2, -1, 0, 1, 2}{
		\foreach \y in {-2, -1,0, 1, 2, 3}{
			\fill (\x,\y) circle (1pt) {};
		}
	}
	
	\coordinate (q1) at (-1, -1);
	\coordinate (q2) at (0, 1);
	\coordinate (q3) at (1, 0);
	
	\draw [thick, black] (q1) -- (q2);
	\draw [thick, black] (q1) -- (q3);
	\draw [thick, black] (q2) -- (q3);

	\node at (-.86, -1.28) {$\vq_3$};
	\node at (.25, .94) {$\vq_2$};
	\node at (.85, 0) {$\vq_1$};
	
	\coordinate (p23) at (.65, 1.85);
	\coordinate (p13) at (1.03, -.15);
	
	\node[draw, circle, inner sep=1pt, fill] at (p23) {};
	\node[draw, circle, inner sep=1pt, fill] at (p13) {};
	
	
	
	\draw [thick, gray, dashed] (p13) -- (p23);
	
	
	\node at (.65, 2.02) {$\vp_{3}$};
	\node at (1.23, -.15) {$\vp_{2}$}; 

	\end{tikzpicture}
	\hspace{1cm}
	\begin{tikzpicture}[scale = 1]

	\fill[blue!30] (0, 1) -- (1, 2) -- (1, 3) -- cycle; 
	\fill[blue!30] (1, -1) -- (2, -1) -- (1,0) -- cycle;
	\fill[blue!30] (-1, -1) -- (-2, -1) -- (-3, -2) -- cycle;
	\fill[gray!50] (1, 0) -- (0, 1) -- (-1, -1) -- cycle;

	\coordinate (q1) at (-1, -1);
	\coordinate (q2) at (0, 1);
	\coordinate (q3) at (1, 0);

	\draw [thick, black] (q1) -- (q2);
	\draw [thick, black] (q1) -- (q3);
	\draw [thick, black] (q2) -- (q3);

	\fill[blue!30] (0, 1) -- (1, 2) -- (1, 3) -- cycle; 
	\fill[blue!30] (1,0) -- (1, -1) -- (2, -1) -- cycle; 
	\fill[blue!30] (-1, -1) -- (-2, -1) -- (-3, -2) -- cycle; 
	
	\node at (-.86, -1.28) {$\vq_3$};
	\node at (.35, .94) {$\vq_2$};
	\node at (1.25,.1) {$\vq_1$};
	
	\coordinate (p12) at (-1.78, -1.33);
	\coordinate (p23) at (.65, 1.85);
	\coordinate (p13) at (1.03, -.15);

	\draw [thick, gray] (p12) -- (p23);
	
	\draw [thick, gray] (p12) -- (p13);
	
	\draw [thick, gray] (p13) -- (p23);
	
	\node  at (-2, -1.38) {$\vp_{1}$};
	\node at (.65, 2.02) {$\vp_{3}$};
	\node at (1.15, -.39) {$\vp_{2}$}; 
	\node at (-.3, -.4) {$\BP$};
	
	\fill (p12) circle (1.5pt) {};
	\fill (p23) circle (1.5pt) {};
	\fill (p13) circle (1.5pt){};
	
	\foreach \x in {-3,-2, -1, 0, 1, 2}{
		\foreach \y in {-2, -1,0, 1, 2, 3}{
			\fill (\x,\y) circle (1pt) {};
		}
	}
	\end{tikzpicture}
	\caption{Considering the colinearity restrictions of the vertices of $P$ further restricts the regions its vertices can lie in.}
	\label{fig:terminal-triangle-one-shard}
\end{figure}
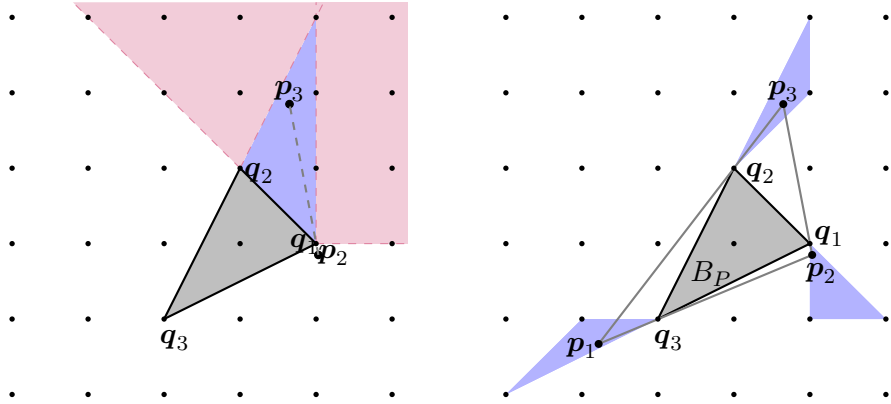

There is a cyclic group of affine unimodular transformations cycling the vertices of $\Bterm$. We can write a generator of this group of symmetries as the linear transformation given by
\begin{equation*}
\sigma = \begin{bmatrix}
0 & -1 \\
1 & -1 
\end{bmatrix},\quad \sigma^3 = I.
\end{equation*}
Moreover, the symmetry $\sigma$ is transitive on the width directions from $\mathcal A$, i.e.,
\begin{equation}\label{eq:action_on_directions}
\sigma^T (- \ve_1)  = \ve_2,\quad
\sigma^T \ve_2  = \ve_1 - \ve_2,\quad
\sigma^T (\ve_1 - \ve_2)  = - \ve_1.
\end{equation}
We further restrict the possible $1$-maximal $P$ under our attention.

\begin{lemma} \label{lem:terminal-triangle-one-width-direction}
	Let $P$ be a $1$-maximal polygon such that $\BP = \Bterm$ whose vertices lie in the regions given by Lemma \ref{lem:terminal-triangle-shards}. Suppose further that there is a \emph{unique} $\vec \phi \in\mathcal A = \{\pm \ve_1, \pm \ve_2, \pm (\ve_2 - \ve_1)\}$ with $\wdt(P,\vec\phi) = \wdt(P ; \mathcal A)$. Then there exists a $P'$ with $B_{P'} = \Bterm$ and whose vertices lie in the regions given by Lemma \ref{lem:terminal-triangle-shards}  such that 
	$\wdt(P;\mathcal A) < \wdt (P', \mathcal A)$.
	%
\end{lemma}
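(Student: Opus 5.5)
By the $\sigma$-symmetry we reduce to one direction and then perturb $P$. The symmetry $\sigma$ permutes the vertices of $\Bterm$ cyclically and acts transitively on the three width directions in $\mathcal A$, by \eqref{eq:action_on_directions}. The regions of Lemma \ref{lem:terminal-triangle-shards} are compatible with this action, up to the reflection across $\{y=x\}$ already used there. So after replacing $P$ by $\sigma^j P$ we may assume the unique minimizing direction is $\vec\phi = \ve_1$. By Lemma \ref{lem:terminal-triangle-width-attainers},
\[
\wdt(P,\ve_1)=p_{2,x}-p_{1,x} < \min\{\wdt(P,\ve_2),\wdt(P,\ve_1-\ve_2)\}.
\]

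The polygon $P$ is determined by a small number of parameters. The three edges of $P$ pass through $\vq_1,\vq_2,\vq_3$ respectively, so $P$ is determined by the three slopes of the lines through the blocking points. We would parametrize $P$ by these slopes, or equivalently by one vertex together with two slopes.

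The plan is to rotate one edge-line about its blocking point so that $p_{2,x}-p_{1,x}$ strictly increases. Rotating the line through $\vq_3$ (the edge $[\vp_1,\vp_2]$) is the natural choice, because it moves both $\vp_1$ and $\vp_2$ simultaneously. The vertex $\vp_3$ stays fixed, and $\vp_1,\vp_2$ slide along the fixed lines $\aff(\vp_3,\vq_2)$ and $\aff(\vp_3,\vq_1)$. Write this edge-line as $\{y+1 = s(x+1)\}$. Then $p_{2,x}-p_{1,x}$ is a rational function of $s$. We would check that it is monotone in $s$, or at least has a nonzero derivative in the interior of the allowed parameter range; the geometry suggests it decreases as the line becomes steeper.

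If the monotone direction would push a vertex out of its region, we would instead rotate one of the other two lines, about $\vq_1$ or $\vq_2$. Such a rotation moves $\vp_3$ together with one of $\vp_1,\vp_2$, and it also changes $p_{2,x}-p_{1,x}$ nontrivially.

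Since $\wdt(P,\ve_1)$ is strictly below the other two widths, continuity shows that a sufficiently small rotation preserves this strict inequality. Hence the new polygon $P'$ satisfies $\wdt(P';\mathcal A)=\wdt(P',\ve_1)>\wdt(P;\mathcal A)$. Small rotations also keep $\vq_1,\vq_2,\vq_3$ in the relative interiors of the edges of $P'$, and keep the vertices in the closed regions $S_i$ as long as we move inward. Note that $\vp_3$ may lie on the boundary of $S_3$, which is why we must allow a choice among the three rotations.

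The main obstacle is the boundary situations. The first concern is that $P$ might sit at a point where every admissible rotation (inward with respect to the regions) decreases or preserves $\wdt(P,\ve_1)$. To handle this, we would compute the gradient of $p_{2,x}-p_{1,x}$ with respect to the three slope parameters. We would then show that this gradient is never in the normal cone of the feasible region given by the $S_i$ constraints, except at configurations where two widths already coincide. At a corner such as $\vp_2=(1,0)$, the blocking structure degenerates instead, and such configurations are excluded by $\BP=\Bterm$.

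The second concern is that $P'$ must remain $1$-maximal with $B_{P'}=\Bterm$. For this we would argue that the regions $S_i$ were cut out precisely by Lemma \ref{lemma:forbidden_cones}. Therefore any triangle with vertices in the $S_i$ whose edges pass through the $\vq_i$ has $(0,0)$ as its only interior lattice point and is $1$-maximal by Theorem \ref{theorem:k-maximality}.
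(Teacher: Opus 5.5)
Your skeleton matches the paper's: reduce by $\sigma$ to a single direction, perturb slightly, and use continuity to keep the other two widths strictly larger. The gap is the one step the proof rests on. You never establish that some admissible perturbation strictly increases $\wdt(P,\ve_1)$, and the move you rely on is the wrong one to rely on.

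Pivoting the line through $\vq_3$ moves \emph{both} vertices realizing $\wdt(P,\ve_1)$, and they move in competing ways. Write $\vp_3=(a,b)$ and let that line be $\{y+1=s(x+1)\}$. Then
\[
\frac{d}{ds}\bigl(p_{2,x}-p_{1,x}\bigr)=-\frac{(1-a)(2b-a+1)}{(b+s(1-a))^2}+\frac{a(2a+1-b)}{(b-1-sa)^2}.
\]
On $S_3$ the first term is $\le 0$ and the second is $\ge 0$. So ``it decreases as the line becomes steeper'' is false in general. Moreover, the derivative can vanish at admissible configurations where $\ve_1$ is the unique minimizer. Take $\vp_3=(3/4,11/5)$ and $s\approx 0.27$. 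Then $\vp_1\approx(-1.30,-1.08)$ and $\vp_2\approx(1.05,-0.45)$, giving a $1$-maximal triangle with $\BP=\Bterm$. Its widths are $\approx 2.35,\,3.28,\,2.95$ in directions $\ve_1,\ve_2,\ve_1-\ve_2$, yet the first-order change of $\wdt(P,\ve_1)$ is zero. Your move can be rescued: after clearing denominators, the sign of the derivative is that of a function strictly increasing in $s$, so this pencil has no interior local maximum. But nothing like this is in your plan, and your fallback rotations are offered without any sign.

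The missing idea is to keep one of the two width-realizing vertices fixed and pivot the \emph{opposite} edge about its blocking point. The other width-realizing vertex then slides outward along the fixed edge line joining the two, and the width increases with no computation. The paper normalizes to $\vec\phi=\ve_2$. It replaces $\vp_2$ by $(1-\varepsilon)\vp_2+\varepsilon\vq_1$ and keeps $\vp_3$. Then $\vp_1$ moves away from $\vq_2$ along $\aff(\vp_3,\vq_2)$, so $\langle\ve_2,\vp_3-\vp_1\rangle$ strictly increases. In your normalization this means pivoting the line through $\vq_1$ with $\vp_1$ fixed: $\vp_2$ moves outward along $\aff(\vp_1,\vq_3)$ and $p_{2,x}$ increases. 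That is one of your fallbacks, but you never identify it as the monotone one.

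Your boundary worries are moot, while your last paragraph claims too much. If $\BP=\Bterm$, no vertex can lie on $\partial S_i$. Each edge of each $S_i$ would either put $(0,-1)$, $(1,1)$ or $(-1,0)$ into the relative interior of an edge of $P$, or make some $\vq_j$ a vertex. Also, the open $S_i$ contain no lattice points. Hence any sufficiently small perturbation that keeps the edges through $\vq_1,\vq_2,\vq_3$ is again $1$-maximal, with $B_{P'}=\Bterm$ and vertices in the $S_i$. By contrast, a triangle with vertices merely in the closed $S_i$ need not be: for instance $\vp_1=(-2,-1)$ puts $(-1,0)$ in the relative interior of an edge.
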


\begin{proof}
	Due to the $\sigma$-symmetry, we can assume that $\vec \phi =\ve_2$. 
	We construct $P'$ by replacing $\vp_2$ with $\vp'_2 = (1-\varepsilon) \vp_2 + \varepsilon \vq_1$.
	Let $\vp'_1$ be the intersection of the line $\vp'_2 \vq_3$ with the line spanned by $\vp_3 \vq_1$.
	
	\begin{figure}[h]
		\centering
		\begin{tikzpicture}[scale = 1.5]
		
		\fill[gray!50] (1, 0) -- (0, 1) -- (-1, -1) -- cycle;
		
		\fill[blue!30] (0, 1) -- (1, 2) -- (1, 3) -- cycle; 
		\fill[blue!30] (1,0) -- (1, -1) -- (2, -1) -- cycle; 
		\fill[blue!30] (-1, -1) -- (-2, -1) -- (-3, -2) -- cycle; 
		
		\coordinate (q1) at (-1, -1);
		\coordinate (q2) at (0, 1);
		\coordinate (q3) at (1, 0);
		
		\draw [thin, black] (q1) -- (q2);
		\draw [thin, black] (q1) -- (q3);
		\draw [thin, black] (q2) -- (q3);

		\node at (-.86, -1.28) {$q_3$};
		\node at (.24, .95) {$q_2$};
		\node at (.7, 0) {$q_1$};
		
		\coordinate (p12) at (-1.78, -1.33);
		\coordinate (p23) at (.65, 1.85);
		\coordinate (p13) at (1.03, -.15);
		
		\node[draw,circle,inner sep=.5pt,fill] at (p12) {};
		\node[draw, circle, inner sep=.5pt, fill] at (p23) {};
		\node[draw, circle, inner sep=.5pt, fill] at (p13) {};
		
		\draw [thin, gray] (p12) -- (p23);
		
		\draw [thin, gray] (p12) -- (p13);
		
		\draw [thin, gray] (p13) -- (p23);
		
		\node  at (-1.83, -1.17) {$p_{1}$};
		
		\node at (.65, 2.02) {$p_{3}$};
		\node at (1.05, -.35) {$p_{2}$}; 
		
		\coordinate (p'13) at (1.015, -.075);
		\node[draw, circle, inner sep = .5pt, fill] at (p'13) {};
		\coordinate (p'12) at (-1.813, -1.38); 
		\node[draw, circle, inner sep = .5pt, fill] at (p'12) {};
		\draw [thin, dotted, gray] (p'13) -- (p'12);
		\draw [thin, dotted, gray] (p'12) -- (p12); 
		\node at (-1.813, -1.53) {$p'_{1}$};
		
		\foreach \x in {-3,-2, -1, 0, 1, 2}{
			\foreach \y in {-2, -1,0, 1, 2, 3}{
				\fill (\x,\y) circle (1.5pt) {};
			}
		}
		
		\node at (-.2, -.3) {$B_P$};
		\end{tikzpicture}
		\cprotect\caption{Constructing a perturbation $P'$ of $P$ such that the width in direction $\ve_2$ increases.}
		\label{fig:shards}
	\end{figure}
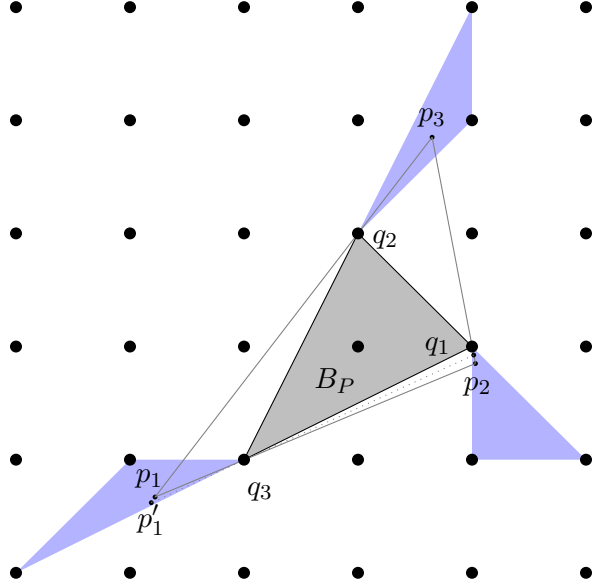
	
	The polytope 
	$P' = \conv \{\vp'_1, \vp'_2, \vp_3\}$
	is a perturbation of $P$, it is still a $1$-maximal polygon with $B_{P'} =\Bterm$ and its width in direction $\ve_2$ is larger than that of $P$, since $\vp_1$ is a non-trivial convex combination of $\vp'_1, \vq_2$. As we have strict inequalities
	\begin{align*}
	\wdt(P, \ve_2)  < \wdt(P, \ve_1),\quad
	\wdt(P, \ve_2)  < \wdt(P, \ve_1 - \ve_2), 
	\end{align*}
	and widths are continuous in Hausdorff distance, we can choose $\varepsilon$ small enough that the same inequalities still hold for $P'$. 
\end{proof}

In view of Lemma \ref{lem:terminal-triangle-one-width-direction}, to prove Lemma \ref{lem:terminal-triangle} we can restrict our attention to those $P$ which attain the width in two of the three directions in $\hex$. Due to the symmetry induced by $\sigma$ we can assume they are $\ve_1, \ve_2$ (cf.\ \eqref{eq:action_on_directions}). 

\begin{figure}[h]
	\centering
	\begin{tikzpicture}[scale = 1.5]
	\fill[gray!50] (1, 0) -- (0, 1) -- (-1, -1) -- cycle;

	\fill[blue!30] (0, 1) -- (1, 2) -- (1, 3) -- cycle; 
	\fill[blue!30] (1,0) -- (1, -1) -- (2, -1) -- cycle; 
	\fill[blue!30] (-1, -1) -- (-2, -1) -- (-3, -2) -- cycle; 
	\coordinate (q1) at (-1, -1);
	\coordinate (q2) at (0, 1);
	\coordinate (q3) at (1, 0);
	
	\draw [thin, black] (q1) -- (q2);
	\draw [thin, black] (q1) -- (q3);
	\draw [thin, black] (q2) -- (q3);

	\node at (-.8, -1.1) {$\widetilde{\vq}_3$};
	\node at (.25, .94) {$\widetilde{\vec q}_2$};
	\node at (.81, 0) {$\widetilde{\vq}_1$};
	
	\coordinate (p12) at (-1.78, -1.33);
	\coordinate (p23) at (.43333, 1.56666);
	\coordinate (p13) at (1.045, -.10);
	
	\draw [thin, gray] (p12) -- (p23);
	
	\draw [thin, gray] (p12) -- (p13);
	
	\draw [thin, gray] (p13) -- (p23);
	
	\node  at (-1.83, -1.55) {$\widetilde{\vp}_{1} = (0, 0)$};
	
	\node at (.65, 1.75) {$\widetilde{\vp}_{3} = (x, t)$};
	\node at (1.69, -.263) {$\widetilde{\vec p}_{2} = (t, y)$}; 
	\node at (.1, .15) {$(u, v)$};
	\draw (p12) -- (1.045, -1.33) -- (1.045, 1.56666) -- (-1.78, 1.56666) -- cycle; 
	
	\node at (-.2, -.3) {$B_P$};
	
	\fill  (p12) circle (1.5pt);
	\fill (p23) circle (1.5pt);
	\fill (p13) circle (1.5pt);
	\foreach \x in {-3,-2, -1, 0, 1, 2}{
		\foreach \y in {-2, -1,0, 1, 2, 3}{
			\fill (\x, \y) circle (1.5pt);
		}
	}
	\end{tikzpicture}
	\caption{Parametrizing circumscribers of $B^{\mathrm{term}}$ for which the width is attained in directions $\ve_1,\ve_2$. The lattice $\ZZ^2$ is shifted by $(u, v)$.}
	\label{fig:square-box}
\end{figure}

Let $P$ be a $1$-maximal polygon such that $\BP = \Bterm$, and $\wdt(P, \ve_1)=\wdt(P,\ve_2)$. Given such a polygon $P$, we can shift our perspective by translating $\ZZ^2$ and $P$ by $-\vp_1$ to obtain a polytope
$$\widetilde{P} = \conv \{ \widetilde{\vp}_1, \widetilde{\vp}_2, \widetilde{\vp}_3 \}.$$ 
Furthermore, since the horizontal and vertical width of $P$ have been assumed to be equal,
such $\widetilde{P}$ can be parameterized by real variables $(x,y,u,v,t)$ such that 

\begin{align*}
  \widetilde{\vp}_1 = (0,0), 
  \widetilde{\vp}_2 = (t,y), 
  \widetilde{\vp}_3 = (x,t), \\
  \widetilde{\vq}_i = \vq_i + (u,v), \\
  \widetilde{S}_i = S_i + (u,v),
\end{align*}

Thus $t$ is the width of $\widetilde{P}$ in directions $\ve_1,\ve_2$ (the side length of the square in Figure \ref{fig:square-box}). 
Note that not every choice of $(x, y, u, v, t)$ yields a $1$-maximal polygon $P$ with $\BP=\Bterm$; we further need the following colinearity constraints for $1\leq i \leq  3$ (here we understand the indices as being modulo $3$) 
\begin{equation}
\label{eq:terminal-colinearities}
f_i(x,y,u,v,t) = \det \begin{bmatrix}
\\
\widetilde{\vp}_{i} & \widetilde{\vq}_{i-1} & \widetilde{\vp}_{i+1} \\[1em]
1 & 1 & 1
\end{bmatrix}  = 0.
\end{equation}
Further, we have required $t\leq \wdt(\widetilde{P}, \ve_1 -\ve_2) = 2t - x -y $. 

To prove Lemma \ref{lem:terminal-triangle} we are thus interested in tuples $(x,y,u,v,t)$ solving the optimization problem
\begin{align*}
\max_{x,y,u,v,t}~ t \text{ subject to }
& f_i(x,y,u,v,t) = 0, ~ 1\leq i \leq 3, \\
& t  \leq 2t - x - y, \\
& \widetilde{\vp}_i \in \widetilde{S}_i,~1\leq i \leq 3.
\end{align*}

We denote the polyhedral region cut out by the inequalities in the constraints above by 
$$\mathcal{R} = \{(x, y, u, v, t) : t \leq 2t-x-y,~ \widetilde{\vp}_i \in \widetilde{S}_i \}. $$

We first show that local maximizers of $t$ cannot exist in the interior of $\mathcal{R}$. The following Proposition is a direct application of the methods of Lagrange multipliers.
\begin{proposition}[cf.\ Theorem 14.15 in \cite{GrivaNashSofer08}] \label{prop:thm translation}
	Let $r_* \in \mathcal{R}$ be a local maximizer of $t$, over $\mathcal{R}$, subject to equalities $f_i = 0$. If $r_*$ is a regular point of the constraints, that is, the gradients $\nabla f_i$ are independent at $r_*$, then $\nabla t$ is linearly dependent of the gradient vectors $\nabla f_i$, at $r_*$. 
\end{proposition}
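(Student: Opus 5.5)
The plan is to reduce the statement to the classical first-order necessary condition for equality-constrained optimization. I read it as concerning a point $r_*$ at which none of the inequalities defining $\mathcal R$ is active, i.e.\ $r_*\in\inter(\mathcal R)$. This is the situation announced just before the statement ("local maximizers of $t$ cannot exist in the interior of $\mathcal{R}$"). At a boundary point the conclusion can fail: the correct condition there is the KKT condition, which adds gradients of active inequality constraints with sign-restricted multipliers. So I first shrink to an open ball $U\ni r_*$ with $U\subseteq\inter(\mathcal R)$. Inside $U$, $r_*$ is a local maximizer of $t$ on $M=\{r\in U: f_1(r)=f_2(r)=f_3(r)=0\}$, with no further constraints.

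The first step is to note that each $f_i$ is a polynomial in $(x,y,u,v,t)$. Indeed, by \eqref{eq:terminal-colinearities} it is a $3\times 3$ determinant whose entries are affine in the variables. Hence the $f_i$ are $C^\infty$, and the Jacobian $J(r)\in\RR^{3\times 5}$, whose rows are $\nabla f_i(r)$, has rank $3$ at $r_*$ by the regularity assumption. By the implicit function theorem, after possibly shrinking $U$, the set $M$ is a $2$-dimensional $C^\infty$ submanifold near $r_*$. Its tangent space at $r_*$ is $T=\ker J(r_*)$, and every $w\in T$ is the velocity $\gamma'(0)$ of some $C^1$ curve $\gamma\colon(-\delta,\delta)\to M$ with $\gamma(0)=r_*$. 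Concretely, I would split the variables so that three of them can be solved for the other two, and take $\gamma$ to vary the two free coordinates linearly.

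Next, for such a curve the function $s\mapsto t(\gamma(s))$ has a local maximum at $s=0$, so $\langle\nabla t,\gamma'(0)\rangle=0$. Hence $\nabla t\perp\ker J(r_*)$. By the linear-algebra fact $(\ker J)^\perp=\operatorname{row}(J)$, this gives $\nabla t=\sum_{i=1}^3\mu_i\nabla f_i(r_*)$, which is the claimed linear dependence.

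The only genuinely delicate point is the scope of the statement, namely that it applies to interior points of $\mathcal R$ (or, more generally, points with all inequality constraints inactive). It is exactly this version that will be used to exclude interior maximizers. There one checks, presumably through an elimination-ideal computation, that the $4\times5$ system "$\nabla t\in\operatorname{span}\nabla f_i$ and $f_i=0$" has no feasible solution, or that regularity fails only on a controllable set. I would cite \cite[Theorem 14.15]{GrivaNashSofer08} for the general statement and include the short argument above for completeness.
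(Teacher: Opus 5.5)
Your proposal is correct and matches the paper. The paper gives no argument of its own beyond invoking the classical Lagrange multiplier theorem (Griva--Nash--Sofer, Theorem~14.15), and your implicit-function-theorem argument is exactly the standard proof of that theorem. Your caveat that the statement must be read for $r_*\in\inter(\mathcal R)$ is right, since at points where inequalities of $\mathcal R$ are active only a KKT-type condition holds. This is precisely how the paper uses the proposition, although it dismisses interior maximizers by a direct rank computation rather than by an elimination-ideal computation: it exhibits a nonvanishing $3\times 3$ minor of the $\nabla f_i$ in the $x,u,v$ coordinates, while $\nabla t$ points purely in the $t$-direction. The elimination ideal is used only for the boundary piece $t=x+y$.
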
 
To prove there are no local maximizers of $t$ in the interior of $\mathcal{R}$, we compute the matrix of gradients (with respect to the variables $x,y,u,v,t$)
\begin{equation*}
\begin{bmatrix}

\nabla \wdt(\widetilde{P},\ve_1) & \nabla f_1 & \nabla f_2 & \nabla f_3

\end{bmatrix}
=
\begin{bmatrix}
0 & \cellcolor{red} 0 & \cellcolor{red} v+1 & \cellcolor{red}-y+v \\
0 & - u + 1 & 0 & -x+u+1\\
0 & \cellcolor{red} -y & \cellcolor{red} -t & \cellcolor{red} y-t\\
0 & \cellcolor{red} t & \cellcolor{red} x & \cellcolor{red} x - t\\
\cellcolor {blue} 1 & v - 1 & -u & u-v+2t-1\\
\end{bmatrix}.
\end{equation*}

The highlighted red minor is positive on the interior of $\mathcal{R}$, as it factors as $(y+1)(t^2-xy)$. 
Indeed, we have $y > 0$ and $x,y\leq t$ by Lemma \ref{lem:terminal-triangle-width-attainers}. We cannot have $x=y=t$ as this is incompatible with the colinearity constraints. Thus, the minor is non-zero and the points in the interior of $\mathcal R$ are regular.
Moreover, the larger matrix has rank $4$ and therefore contradicts the conclusion of Proposition $\ref{prop:thm translation}$.
Hence there are no local optima on the interior of $\mathcal{R}$. 

Having proved that there are no local maximizers in the interior of $\mathcal{R}$ via Proposition \ref{prop:thm translation}, we are left with considering the boundary of $\mathcal{R}$. We first consider if one of the $\widetilde{\vp}_i$ lies on the boundary of $\widetilde{S}_i$, i.e. if one of the $\vp_i$ lies on the boundary of an $S_i$.

Up to the cyclic symmetry interchanging the $\vp_i$, it suffices to handle when $\vp_2 \in \partial S_2$. Moreover, if $\vp_2 \in [(1,0), (1, -1)]$ then $\vp_3 \in [(1,2), (1, 3)]$ and applying the symmetry $\sigma$ yields a symmetric polygon with $\vp_2 \in [(1, -1), (2, -1)]$.

So suppose that $\vp_2 \in [(1, -1), (2, -1)]$. Then we see that $(-1, -1), (0, -1)$, $(1, -1)$ are necessarily boundary points of $P$, so Lemma \ref{lemma:three-colinear-boundary-points} applies. 

If $\vp_2 \in [(1, 0), (2, -1)]$ then $\vp_3 = \vq_2$, contradicting that $\BP = \Bterm$. 

The last remaining boundary condition of $\mathcal{R}$ is when the widths of the polygon $P$ in the directions in $\hex$ are all equal. This can be expressed as $t = x + y$. 

Consider the ideal $I=\langle f_1, f_2, f_3, t-x-y\rangle$. We want to show that all points in the zero set $V(I)$ of $I$ satisfy the constraint $x+y \leq 3$. Notice that in this setup we are forgetting all inequalities coming from the constraints $\vp_i \in S_i$, so we are only requiring that $P$ is circumscribed around $\Bterm$ and that the widths in directions $\hex$ are all the same. Showing $x+y \leq 3$ is thus proving that the width of all such polygons is at most $3$.

To solve this problem, we compute a special Groebner basis $\mathcal{G}$ of $I$ with the property that $\mathcal{G} \cap \KK[x,y]$ is a Groebner basis of $I \cap \KK[x,y]$. The latter is called the \emph{elimination ideal}. 
For a thorough discussion of Groebner bases and elimination ideals see Chapter 3 of \cite{CoxLittleOSheaIVA}.


The computation can be via computer algebra systems, see for example SageMath's \texttt{elimination\_ideal} function documentation \cite{sagemath}.
We compute that the elimination ideal has a single generator,
$$g(x,y)=x^2 + xy + y^2 -3x.$$ 


In particular, $g \in I$, so all points in $V(I)$ must satifsy $g(x,y)=0$. It is thus enough to consider when $x+y \geq 3$ on the ellipse $C=\{(x, y) : g(x,y) = 0 \}$. We observe that, if we apply a change of variable $z=x+y$ and $s=x-y$, the defining equation of $C$ becomes 
\[\frac{(z-1)^2}{4}+\frac{(s-3)^2}{12}=1,\]
and we easily see that points on this curve must satisfy $z-1 \leq 4$, that is, $x+y \leq 3$, with equality if and only if $(x,y)=(3,0)$, in which case $P$ is an affine translate of $3\Tst$.
%
%


So we have established: 

\begin{lemma}\label{lem:terminal-triangle}
	Let $P$ be a $1$-maximal polygon such that $\BP = \Bterm$. Then $\wdt(P; \hex) < 3$, where $\hex = \{\pm \ve_1, \pm \ve_2, \pm (\ve_2 - \ve_1)\}$. In particular we have $\wdt(P)<3$.
\end{lemma}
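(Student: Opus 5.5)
The plan is to assemble the preceding discussion into a compactness-plus-optimization argument over the parameter space of circumscribers of $\Bterm$.

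\textbf{Setup.} By Lemma~\ref{lem:terminal-triangle-shards}, after possibly reflecting in $\{y=x\}$, the vertices satisfy $\vp_i\in S_i$. Lemma~\ref{lem:terminal-triangle-width-attainers} then expresses the three widths in directions from $\hex$ as linear functions of the vertices. The set of admissible triangles is described by points of the compact polyhedral regions $S_i$ subject to the three colinearity equations. Its closure is compact, and $\wdt(\cdot;\hex)$ is continuous on it. Hence a maximizer $P^*$ of $\wdt(\cdot;\hex)$ exists on this closure, possibly degenerate, meaning $\BP\neq\Bterm$ in the limit. It suffices to show that every such maximizer has value at most $3$, with equality only at limits that are not genuine $1$-maximal polygons with $\BP=\Bterm$.

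\textbf{Reduction to two or three active directions.} Apply Lemma~\ref{lem:terminal-triangle-one-width-direction}. If only one direction in $\hex$ attains the minimum at an admissible $P$, then $P$ is not a maximizer. The perturbation used there stays admissible as long as $\vp_2$ is not at the boundary point $\vq_1$. So at a maximizer at least two directions tie, and by the $\sigma$-symmetry \eqref{eq:action_on_directions} we may take them to be $\ve_1$ and $\ve_2$. This gives the parametrization $(x,y,u,v,t)$ with the constraints $f_i=0$, where $t\le 2t-x-y$ and $\widetilde\vp_i\in\widetilde S_i$.

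\textbf{Case analysis on $\mathcal R$.} There are three cases.

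\emph{Interior.} The red $3\times3$ minor equals $(y+1)(t^2-xy)$ and is nonzero there. Hence the point is regular, and $\nabla t$ together with the $\nabla f_i$ has rank $4$. By Proposition~\ref{prop:thm translation} no local maximum of $t$ lies in the interior.

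\emph{Some $\vp_i$ on $\partial S_i$.} By cyclic symmetry we may take $i=2$. If $\vp_2$ lies on the edge $y=-1$, then $P$ has three colinear boundary lattice points, and Lemma~\ref{lemma:three-colinear-boundary-points} applies; this gives width $\le 3$, with equality only for $3\Tst$, whose blocking polygon is $\Bhex$, not $\Bterm$. The vertical edge reduces to the previous one via $\sigma$. The slanted edge forces $\vp_3=\vq_2$, which is degenerate.

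\emph{The face $t=x+y$.} This is where all three widths coincide. Here we eliminate $u,v,t$ from $\langle f_1,f_2,f_3,t-x-y\rangle$ and obtain $g(x,y)=x^2+xy+y^2-3x$. On the ellipse $g=0$ we have $x+y\le 3$, with equality only at $(3,0)$, which yields $3\Tst$ and is again excluded.

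\textbf{Conclusion and main obstacle.} Since every admissible $P$ with $\BP=\Bterm$ has $\wdt(P;\hex)\le t^*\le 3$, and equality forces the excluded configuration, the inequality is strict. The final claim then follows from $\wdt(P)\le\wdt(P;\hex)$.

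The delicate step is the equality/strictness bookkeeping. One must check that the perturbation in Lemma~\ref{lem:terminal-triangle-one-width-direction} and the boundary pieces really cover every way a supremum of $3$ could be approached. One must also check that regularity fails nowhere inside $\mathcal R$, in particular that $t^2=xy$ is impossible there. I would verify the latter directly from $x,y\le t$ together with the colinearity constraints.
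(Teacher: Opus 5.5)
Your proposal is correct and is essentially the paper's own proof. It uses the same ingredients: the perturbation of Lemma~\ref{lem:terminal-triangle-one-width-direction} to force a tie (normalized to $\ve_1,\ve_2$ via $\sigma$), regularity in the interior of $\mathcal R$ via the minor $\pm(y+1)(t^2-xy)$, the three edges of $S_2$, and the elimination ideal generated by $x^2+xy+y^2-3x$ on the face $t=x+y$; your compactness framing only makes explicit what the paper's local-maximizer reasoning over the closed region $\mathcal R$ already assumes. One corner case is skipped, here and equally in the paper: $\vp_2=\vq_1$ and its $\sigma$-images, where the perturbation does nothing and neither the vertical-edge nor the slanted-edge argument applies; there $\vp_1$ is confined to the edge $[(-3,-2),(-1,-1)]$ of $S_1$, and a direct computation gives $\wdt(\cdot;\hex)\le\tfrac52$, so these limit configurations are harmless.
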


At this point, all the cases from Proposition \ref{prop:remaining-casework} have been settled. This also concludes the proof of Theorem \ref{thm:flt21}

\section{Consequences}
\label{sec:consequences}

\subsection{The isominwidth theorem for the lattice point enumerator}

%

We now have the necessary ingredients to prove the isominwidth inequality for the lattice point enumerator.

\begin{proof}[Proof of Corollary \ref{cor:lattice_isominwidth}]
We have (cf.\ \eqref{eq:fltdk_eq})
\[
\sup\left\{\frac{\wdt(K)}{\LEinter{K}^{1/2}} : K\in\K^2,~\LEinter{K}>0\right\} = \sup\left\{\frac{\flt(2,k)}{\sqrt{k}} : k\in\NN_{>0}\right\}.
\]
From \eqref{eq:fltdk_ub}, Theorem \ref{thm:hurkens} and \eqref{eq:makai} we see that
\[
\frac{\flt(2,k)}{\sqrt{k}} \leq \flt(2,\infty) + \frac{\flt(2,0)}{\sqrt{k}} = \sqrt{\tfrac 83} + \frac{1+\tfrac 2{\sqrt{3}}}{\sqrt{k}}.
\]
The expression on the right hand side is decreasing in $k$. Evaluating it for $k=3$ shows $\tfrac{\flt(2,k)}{\sqrt{k}} < 3$ for all $k\geq 3$. The claim follows from Theorems \ref{thm:flt21} and \ref{thm:flt22}: We have $\flt(2,1)/\sqrt{1} = 3$ and $\flt(2,2)/\sqrt{2} \approx 2.357$. The characterization of the equality case thus also follows from Theorem \ref{thm:flt21}.
\end{proof}


\subsection{Transference theorems}

We use the classification of the blocking polygons in order to prove a transference theorem.

\begin{proof}[Proof of Theorem \ref{thm:transf}]
By homogeneity, it is sufficient to prove the inequality for origin symmetric convex bodies with $\lambda_1(K) = 1$, in which case the origin is the unique interior lattice point of $K$ and $K$ contains lattice points on its boundary.
Moreover, we may assume that $K$ is 1-maximal. Otherwise, we can find an origin symmetric 1-maximal convex body $K'$ that contains $K$. Then, $K'$ contains the origin as its unique interior point and keeps the boundary points of $K$ on its boundary. This implies $\lambda_1(K') = 1$. At the same time, it follows from inclusion that $\lambda_1((K')^*) \geq \lambda_1(K^*)$. Hence, we can assume $K'=K$ in the following.

A 1-maximal origin symmetric convex body is a polytope whose blocking polytope $B_P$ is itself origin symmetric. By the classification in Section \ref{sec:flt21}, there are only two cases to consider:
\begin{enumerate}[(i)]
\item $B_P = \conv\{ \pm (1,0), \pm(0,1)\}$,
\item $B_P = \conv\{ \pm(1,0), \pm (1,1), \pm(0,1)\}.$
\end{enumerate}
In order to stay close to the notation in Section \ref{sec:flt21} we will consider $\wdt(K)$ rather than $\lambda_1(K^*)$. This is no restriction, since $\wdt(K) = 2\lambda_1(K^*)$. 

\textbf{Case 1:} We shall estimate $\wdt(K)$ by $\wdt(K;E)$, where $E=\{\pm \ve_1, \pm \ve_2\}$. Let $K$ be a 1-maximal origin symmetric polytope with the cross polygon $\conv\{\pm \ve_1, \pm \ve_2\}$ as its blocking polytope. If $K$ has a vertex in the interior of the square $[-1,1]^2$, it must have two opposite ones, and colinearities then force additional interior points, a contradiction. 

Thus, up to symmetry along the line $\{ y=x \}$, we are left with analyzing $K$ with vertices in the regions $S_1', \dots, S_4'$ of Figure \ref{fig:cross-polytope-final-shards} (cf.\ Section \ref{subsub:sym_quad}). $K$ is determined by its vertices $\vp_1=(a,b) \in S_1'$ and $\vp_4=(c,d) \in S_4'$.
After a suitable perturbation of the vertices of such $K$, the horizontal width increases as the vertical width decreases. Thus $K$ maximizes $\wdt(P,E)$ only if the horizontal and the vertical width are equal. 
So we have the following relaxed optimization problem, in which we do not yet regard inequality constraints: 
\begin{equation}
\label{eq:sym_quad_opt}
\begin{split}
\max_{a,b,c,d} ~a\quad\text{s.th.}~0 &= \det\begin{pmatrix}
c & 1 & a\\
d & 0 & b\\
1 & 1 & 1
\end{pmatrix}\\
0 &= \det\begin{pmatrix}
a & 0 & -c\\
b & 1 & -d\\
1 & 1 & 1
\end{pmatrix}\\
0 & = b-c.
\end{split}
\end{equation}

The determinantal constraints indicate the colinearities of the vertices of $K$ with the vertices of $B_K$. The linear constraint is the requirement that the two widths are equal.

Using the method of Lagrangian multipliers, one sees that a local maximum must satisfy, in addition,
\begin{equation}
\label{eq:lagrange_equation}
0 = d-a+1.
\end{equation}
Thus, the tuple $(a,b,c,d)$ is a local maximum in \eqref{eq:sym_quad_opt} (together with the constraints by the regions $S_i'$), if $\vp_1$ or $\vp_4$ lie on the boundary of their region (in which case one readily sees that $\wdt(K)\leq 2$), or if $\vp_1$ and $\vp_4$ lie in the interior of their region and satisfy the four polynomial equations from Equation \eqref{eq:sym_quad_opt} and \eqref{eq:lagrange_equation}.
Let $V\subset\RR$ be the (real) variety defined by those equations. 
One can check that $V$ is 0-dimensional and
that its only point that corresponds to a configuration with the vertices in the regions $S_i'$ is given by
\[
\vp_1=(a,b) = (1/2,(1+\sqrt{2})/2)~\text{and}~\vp_4=(c,d) = ((1+\sqrt{2}/2, -1/2).
\]
The corresponding polytope $K=\conv\{\pm \vp_1,\pm\vp_4\}$ has
 $\wdt(K;E) \leq 1+\sqrt{2}$, i.e.,
\[
\lambda_1(K)\lambda_1(K^*) \leq \frac{1+\sqrt{2}}{2} < \frac 43.
\]

\textbf{Case 2:} The six vertices $\vp_i$ of $K$ lie in the regions $S_i$, $1\leq i \leq 6$, from Proposition \ref{lem:hexagon-shards}. Since $K$ is origin symmetric, we have $\vp_i = -\vp_{i+3}$, where the indices are understood modulo 6.

If one of the $\vp_i$ is on the boundary of $S_i$, the same is true for its antipodal vertex. It is then easily seen that $\wdt(K) \leq 2$. So we assume from here on that $\vp_i\in\inter(S_i)$ for all $1\leq i \leq 6$.

Let $\vq_i$ denote the intersection point of the regions $S_i$ and $S_{i+1}$. Moreover, let $\vr_i$ be the vertex of $S_i$ which is neither $\vq_i$, nor $\vq_{i-1}$. We express $\vp_i$ in barycentric coordinates as follows:
\begin{equation}
\label{eq:bary}
\vp_i = \alpha_i \vq_{i-1} + \beta _i \vr_i + \widetilde\alpha_i \vq_{i},\text{~where~}\alpha_i,\beta_i,\widetilde\alpha_i > 0\text{ and } \alpha_i + \beta_i + \widetilde\alpha_i = 1.
\end{equation}
Since $K$ is origin symmetric it follows that $\alpha_i = -\alpha_{i+3}$, $\beta_i = -\beta_{i+3}$ and $\widetilde\alpha_i = -\widetilde\alpha_{i+3}$.
For simplicity we write, $\vd_1 = -\ve_1$, $\vd_2 = \ve_2-\ve_1$ and $\vd_3 = \ve_2$. Then, origin symmetry implies $\wdt(K,\vd_i) = 2+2\beta_i$. In order to prove the theorem, it suffices to show that $\min_i \beta_i \leq \tfrac 13$ with equality if and only if $\alpha_i=\beta_i=\widetilde\alpha_i = \tfrac 13$ for all $1\leq i \leq 6$.

\begin{claim*}
We have $\beta_{i-1} = \tfrac{\widetilde\alpha_{i-1}\alpha_i}{\beta_i}$ for any $i$.
\end{claim*}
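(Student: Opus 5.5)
The plan is to read the claim off a single colinearity condition. By Lemma \ref{lemma:initial_shards}, the edge of $K$ joining the consecutive vertices $\vp_{i-1}$ and $\vp_i$ passes through the common blocking point $\vq_{i-1}=S_{i-1}\cap S_i$ in its relative interior. Hence $\vp_{i-1}-\vq_{i-1}$ and $\vp_i-\vq_{i-1}$ are antiparallel, so
\[
\det\bigl(\vp_{i-1}-\vq_{i-1},\ \vp_i-\vq_{i-1}\bigr)=0 .
\]
I will expand this determinant using the barycentric representation \eqref{eq:bary}.

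\textbf{Step 1: subtract the pivot.} Subtracting $\vq_{i-1}$ and using that the coefficients in \eqref{eq:bary} sum to one kills the coefficient at the pivot in each vertex. What remains is a combination of the two other vertices of its region.
\begin{itemize}
\item $\vp_{i-1}-\vq_{i-1}$ involves only $\vr_{i-1}-\vq_{i-1}$ and $\vq_{i-2}-\vq_{i-1}$.
\item $\vp_i-\vq_{i-1}$ involves only $\vr_i-\vq_{i-1}$ and $\vq_i-\vq_{i-1}$.
\end{itemize}

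\textbf{Step 2: compute in one case.} By the rotational symmetry of $\Bhex$ (the order-six unimodular map cycling the regions $S_i$), it suffices to compute the case $i=2$. Here $\vq_0=(-1,-1)$, $\vq_1=(0,-1)$, $\vq_2=(1,0)$, $\vr_1=(-1,-2)$ and $\vr_2=(1,-1)$. The edge vectors are $\vq_0-\vq_1=(-1,0)$, $\vr_1-\vq_1=(-1,-1)$, $\vr_2-\vq_1=(1,0)$ and $\vq_2-\vq_1=(1,1)$. All four determinants between a vector of the first pair and a vector of the second pair lie in $\{0,\pm1\}$, so the colinearity condition collapses to a binomial identity. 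It equates $\beta_{1}\beta_2$ with the product of the two coefficients sitting at the \emph{non-shared} vertices $\vq_0$ of $\vp_1$ and $\vq_2$ of $\vp_2$; the cross terms vanish because $\vq_0-\vq_1$ is parallel to $\vr_2-\vq_1$, and $\vr_1-\vq_1$ is parallel to $\vq_2-\vq_1$.

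\textbf{Step 3: divide.} Dividing by $\beta_i>0$, which is allowed since $\vp_i\in\inter(S_i)$, gives the claim in the stated form $\beta_{i-1}=\widetilde\alpha_{i-1}\alpha_i/\beta_i$.

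\textbf{Main obstacle: the labelling of the coefficients.} The computation in Step 2 shows that only the coefficients at $\vq_{i-2}$ (for $\vp_{i-1}$) and at $\vq_i$ (for $\vp_i$) survive. The two coefficients at the shared point $\vq_{i-1}$ cannot appear, since $\vq_{i-1}$ is the pivot.

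So the claim as worded, with $\widetilde\alpha_{i-1}\alpha_i$ on the right, holds exactly when $\widetilde\alpha_{i-1}$ denotes the coefficient of $\vp_{i-1}$ at its vertex away from $S_i$, and $\alpha_i$ that of $\vp_i$ at its vertex away from $S_{i-1}$. This corresponds to matching the orientation of the labels $\vq_{i-1},\vq_i$ in \eqref{eq:bary}, within each $S_i$, to the orientation of the fan rather than taken literally. If instead one takes \eqref{eq:bary} literally, with $\vq_{i-1}$ weighted by $\alpha_i$, the same computation yields $\beta_{i-1}\beta_i=\alpha_{i-1}\widetilde\alpha_i$, and the stated formula does not follow.

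I would therefore fix the orientation convention explicitly before the computation. With the convention just described, the stated formula is verified for one $i$ by the explicit determinant and transferred to all $i$ by the hexagonal symmetry. Either orientation yields a multiplicative three-term relation of the same shape, which is what the subsequent argument bounding $\min_i\beta_i$ needs.
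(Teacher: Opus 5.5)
Your proposal is correct and follows essentially the paper's argument. The paper also expands the colinearity of $\vp_{i-1},\vq_{i-1},\vp_i$ in the coordinates \eqref{eq:bary}; it does this for general $i$ via $\vr_i=\vq_{i-1}+\vq_i$, $\vq_i=\vq_{i-1}-\vq_{i-2}$ and a coefficient comparison in the basis $\vq_{i-2},\vq_{i-1}$, instead of your explicit determinant at $i=2$ combined with the order-six symmetry.

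Your diagnosis of the labelling is also right. The paper's own computation ends with $\beta_{i-1}=\alpha_{i-1}\widetilde\alpha_i/\beta_i$, so the $\alpha\leftrightarrow\widetilde\alpha$ swap in the stated claim is a typo. It is harmless, because \eqref{eq:coeff_product} follows from either version.
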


To see this, we recall that the points $\vp_i$, $\vq_{i-1}$ and $\vp_{i-1}$ are colinear. Let $\ell_i$ be the line generated by $\vp_i$ and $\vq_{i-1}$. In view of \eqref{eq:bary} and since $\vr_i = \vq_{i-1} + \vq_i$, we have
\[
\begin{split}
\ell_i &= \{ \vq_{i-1} + \lambda( (\alpha_i-1)\vq_{i-1} + \beta_i\vr_i + \widetilde\alpha_i\vq_i ) : \lambda\in\RR\}\\
&= \{\vq_{i-1} + \lambda( -\widetilde\alpha_i\vq_{i-1} + \beta_i\vq_i + \widetilde\alpha_i \vq_i) : \lambda\in \RR\}.
\end{split}
\]
We observe that $\vq_i = \vq_{i-1} - \vq_{i-2}$ holds for any $i$. Thus, using $\alpha_i+\beta_i+\widetilde\alpha_i = 1$, we obtain
\[
\begin{split}
\ell_i &= \{ \vq_{i-1} + \lambda(\beta_i\vq_{i-1} -\beta_i\vq_{i-2} - \widetilde\alpha_i\vq_{i-2}) : \lambda\in\RR\}\\
&= \{ (1+\lambda\beta_i) \vq_{i-1} -\lambda(1-\alpha_i) \vq_{i-2} : \lambda\in\RR\}.
\end{split}
\]
From $\vp_{i-1}\in\ell_i$ we deduce that there exists some $\lambda\in\RR$ such that
\[
(1+\lambda\beta_i) \vq_{i-1} -\lambda(1-\alpha_i) \vq_{i-2} = \vp_i = (\alpha_{i-1}+\beta_{i-1})\vq_{i-2} +(\widetilde\alpha_{i-1} + \beta_{i-1})\vq_{i-1}.
\]
Extracting the coefficient of $\vq_{i-1}$ gives
\[
\lambda = \frac{\widetilde\alpha_{i-1} +\beta_{i-1} - 1}{\beta_i}=-\frac{\alpha_{i-1}}{\beta_i}.
\]
Extracting the coefficient of $\vq_{i-2}$ and rearranging yields
\[
\beta_{i-1} = \frac{\alpha_{i-1}}{\beta_i}(1-\alpha_i) - {\alpha}_{i-1} = \frac{\alpha_{i-1}(\beta_i + \widetilde\alpha_i) - \beta_i\alpha_{i-1}}{\beta_{i}} = \frac{\alpha_{i-1}\widetilde\alpha_i}{\beta_i},
\]
and the claim is proven.\\
\\
To conclude, we apply the claim three times and use $\beta_i = \beta_{i+3}$ to obtain
\[
\beta_i^2 = \frac{\widetilde\alpha_i\alpha_{i+1}\widetilde\alpha_{i+2}\alpha_{i+3}}{\widetilde\alpha_{i+1}\alpha_{i+2}}.
\]
Hence,
\begin{equation}
\label{eq:coeff_product}
\beta_1^2\beta_2^2 \beta_3^2 = \widetilde\alpha_1\alpha_2\widetilde\alpha_3\alpha_4\widetilde\alpha_5\alpha_6
= \alpha_1\widetilde\alpha_1\alpha_2\widetilde\alpha_2\alpha_3\widetilde\alpha_3.
\end{equation}
Assume that $\beta_i \geq \tfrac 13$ holds for all $1\leq i\leq 3$. Then, $\alpha_i + \widetilde\alpha_i = 1-\beta_i \leq \tfrac 23$. Using the inequality of arithmetic and geometric means on the factors $\alpha_i\widetilde\alpha_i$ in \eqref{eq:coeff_product} yields
\[
\left(\tfrac 19\right)^3 \leq \beta_1^2\beta_2^2 \beta_3^2 =  \alpha_1\widetilde\alpha_1\alpha_2\widetilde\alpha_2\alpha_3\widetilde\alpha_3
\leq \prod_{i=1}^3\left( \tfrac 12 (\alpha_i + \widetilde\alpha_i)\right)^2 \leq \left(\tfrac 19\right)^3,
\]
and it follows that $\beta_1 = \beta_2=\beta_3 = \tfrac 13$. Moreover, it follow from the characterization of the equality case in the AGM inequality that $\alpha_i=\widetilde\alpha_i = \tfrac 13$. The proof is finished.
\end{proof}

\section{Outlook}
\label{sec:outlook}

The determination of \(\flt(3) = \flt(3,0)\) remains a major open challenge. This difficulty stems from the fact that inclusion-maximal three-dimensional hollow bodies are not well understood, and a complete classification is currently unavailable. A key obstacle is the substantial change in the structure of the space of \(d\)-dimensional maximal hollow bodies (considered up to unimodular transformations) when passing from \(d=2\) to \(d=3\). For \(d=2\), this space admits a natural decomposition into finitely many cells, whereas no such finite decomposition appears to exist in dimensions \(d \ge 3\). This phenomenon is closely related to the fact that, up to unimodular transformations, there are infinitely many empty lattice polytopes in dimensions \(d \ge 3\).

We proved that $\flt(d, k)$ is an algebraic number in Theorem \ref{alg:number}. However, as discussed in Remark \ref{rem:alg-degree}, it seems that its degree may be low. Bounding the degree could be intesting--we note that $\flt(2,1)$ and $\flt(2,2)$ are moreover \textit{rational}. Is $\flt(d,k)$ rational for $k \geq 1$?

We hope that the methods developed in this paper will prove useful for addressing further instances of the flatness problem and will ultimately contribute to the determination of \(\flt(d,k)\) for additional pairs $(d, k)$.

\bibliographystyle{abbrv}
\bibliography{literature}
\end{document}